\documentclass[12pt]{article}
\usepackage{graphicx}
\usepackage{color}

%This makes the margins little smaller than the default
%\usepackage{fullpage}
%fullpage is not installed on andrew, so we'll just use these lines.
\oddsidemargin0cm
\topmargin-2cm     %I recommend adding these three lines to increase the
\textwidth16.5cm   %amount of usable space on the page (and save trees)
\textheight23.5cm

%if you need more complicated math stuff, you should use the next line
%\usepackage{amsmath}
%This next line defines a variety of special math symbols which you
%may need
\usepackage{amssymb}
\usepackage{amsmath}
\usepackage{amsfonts}
\usepackage{amsthm}

%This next line (when uncommented) allow you to use encapsulated
%postscript files for figures in your document
\usepackage{epsfig}

%plain makes sure that we have page numbers
\pagestyle{plain}

\newcommand{\R}{\ensuremath{\mathbb{R}}}

\newcommand{\integer}{\ensuremath{\mathbb{Z}}}
\newcommand{\Z}{\ensuremath{\integer}}

\newcommand{\N}{\ensuremath{\mathbb{N}}}

  \newtheorem{thm}{Theorem}[section]
  \newtheorem{cor}[thm]{Corollary}
  \newtheorem{lemma}[thm]{Lemma}
  \newtheorem{prop}[thm]{Proposition}
  \newtheorem{defn}[thm]{Definition}
  \newtheorem{remark}[thm]{Remark}
  \newtheorem{hyp}[thm]{Hypothesis}

%This defines a new command \questionhead which takes one argument and
%prints out Question #. with some space.

%-----------------------------------
\begin{document}
%-----------------------------------

\title{Parametrizing nilpotent orbits in $p$-adic symmetric spaces using Bruhat-Tits theory}

\author{Ricardo Portilla}

\maketitle

\begin{abstract}
    \centering
    \begin{minipage}{0.65\textwidth}
        Let $k$ be a field with a nontrivial discrete valuation which is complete and has perfect residue field. Let $G$ be the group of $k$-rational points of a reductive, linear algebraic group $\textbf{G}$ equipped with an involution $\theta$ defined over $k.$ Let $\mathfrak{p}$ denote the $(-1)$-eigenspace in the decomposition of the Lie algebra of $G$ under the differential $d\theta.$ If $\textbf{H}$ is a subgroup of $\textbf{G}^{\theta}$, the set of $\theta$-fixed points, which contains the connected component of $\textbf{G}^{\theta},$ then $H=\textbf{H}(k)$ acts on $\mathfrak{p}$, which we treat as a symmetric space. Let $r \in \mathbb{R}.$ Under mild restrictions on $\textbf{G}$ and $k,$ the set of nilpotent $H$-orbits in $\mathfrak{p}$ is parametrized by equivalence classes of noticed Moy-Prasad cosets of depth $r$ which lie in $\mathfrak{p}.$    \end{minipage}
\end{abstract}

\section{Introduction}

Let $k$ be a field equipped with a nontrivial discrete valuation, and let $\textbf{G}$ be a reductive, linear algebraic group defined over $k.$ Let $\mathfrak{g}$ be the vector space of $k$-rational points of Lie($\textbf{G}),$ and let $G=\textbf{G}(k).$ Consider the adjoint action of $G$ on $\mathfrak{g}.$ In [\cite{d}], DeBacker gave a uniform parametrization of the set of nilpotent $G$-orbits in $\mathfrak{g}$ using Bruhat-Tits theory. It is the purpose of this paper to establish a parametrization of nilpotent orbits in the context of $p$-adic symmetric spaces using Bruhat-Tits theory.
	
	More precisely, let $\theta: \textbf{G} \rightarrow \textbf{G}$ be a nontrivial involution defined over $k.$ Under $d\theta,$ the differential of $\theta,$ $\mathfrak{g}$ decomposes into $(+1)$ and $(-1)$-eigenspaces, which we denote $\mathfrak{h}$ and $\mathfrak{p},$ respectively. Let $\textbf{H}$ denote a subgroup with $(\textbf{G}^{\theta})^{\circ} \subset \textbf{H} \subset \textbf{G}^{\theta}$ such that $\textbf{H}$ is defined over $k.$ Vust (in [\cite{vust}]) and Prasad-Yu (in [\cite{prasad-yu}, Theorem 2.4]) showed that $\textbf{H}^{\circ}$ is reductive whenever $\textbf{G}$ is reductive. Thus, we may consider the Bruhat-Tits building of $H=\textbf{H}(k).$ (Note that $H$ preserves $\mathfrak{p}$ under the adjoint action.) Under the assumption that the residual characteristic of $k$ is not two, Prasad and Yu showed (in [\cite{prasad-yu}, Theorem 1.9]) that we may identify $\mathcal{B}(H)$ with the set of $\theta$-fixed points in $\mathcal{B}(G).$ This result was also proved in the case where $\textbf{H}$ is a classical group arising from an involution (as well as spherical buildings) in [\cite{kim}, Theorem 6.7.3]. Using this identification, it makes sense to consider elements of $\mathcal{B}(H)$ as elements lying in $\mathcal{B}(G).$

	In [\cite{moy-prasad}], for each $r \in \mathbb{R},$ Moy and Prasad associate a lattice $\mathfrak{g}_{x,r}$ to each point $x \in \mathcal{B}(G).$ If $r=0$ and $x \in \mathcal{B}(H),$ then $\theta$ acts on each Lie algebra $V_{x,0}:=\mathfrak{g}_{x,0}/\mathfrak{g}_{x,0^+},$ which then gives a decomposition $V_{x,0} = V_{x,0}^+ \oplus V_{x,0}^-$ into $(+1)$ and $(-1)$-eigenspaces. We can then define an action of $H$ on the set of degenerate cosets in $V_{x,0}^-,$ meaning those cosets which contain a nilpotent element in $\mathfrak{g}.$ Thus, in the case when $r=0,$ this paper provides a parametrization of nilpotent $H$-orbits in $\mathfrak{p}$ in terms of equivalence classes of pairs $(F,e),$ where $F$ is the set of $\theta$-fixed points of a $\theta$-stable facet of $\mathcal{B}(G),$ and $e$ is a degenerate coset in $V_{x,0}^-.$ 
	
	Ultimately, we will be interested in doing harmonic analysis on $G/H,$ which is referred to as a $p$-adic symmetric space. For $p$-adic symmetric spaces, spherical characters play the role of characters of irreducible, admissible representations of $G.$ In [\cite{rr}, Theorem 7.11], Rader-Rallis gave a local expansion for spherical characters of irreducible class one representations of $G$ (see [\cite{rr}, Section 1]) in a neighborhood about the identity in terms of $H$-invariant distributions supported on $\mathcal{N} \cap \mathfrak{p},$ the set of nilpotent elements in $\mathfrak{p}.$ We can (and do) identify the $k$-rational points of the tangent space of $\textbf{G}/\textbf{H}$ at the identity with $\mathfrak{p},$ and this is where the nilpotent $H$-orbits will live. A motivation for describing a parametrization of nilpotent $H$-orbits in $\mathfrak{p}$ is to establish a homogeneity result about the spherical character of an irreducible class one representation of $G.$ The analogous homogeneity result for characters of irreducible, admissible representations of $G,$ which occurs in harmonic analysis on $G,$ was given in [\cite{d2}, Theorem 3.5.2].
		
	In this paper, we focus on a particular type of facet which encodes the $H$-orbit structure of $\mathcal{N} \cap \mathfrak{p}.$ Suppose $r \in \mathbb{R}.$ As in [\cite{d}, Section 3.1], we say that $x,y \in \mathcal{B}(G)$ belong to the same \emph{generalized} $r$-\emph{facet} $F'^* \subset \mathcal{B}(G)$ if $\mathfrak{g}_{x,r} = \mathfrak{g}_{y,r} \textup{ and } \mathfrak{g}_{x,r^+} = \mathfrak{g}_{y,r^+}.$ We will only consider the $\theta$-fixed points of $\theta$-stable generalized $r$-facets; we call these \emph{generalized} $(r,\theta)$-\emph{facets}. The generalized $(r,\theta)$-facets form a partition of the Bruhat-Tits building of $H.$ 
	
	In Section 4.4, for $x \in F_{\theta}^*,$ a generalized $(r,\theta)$-facet, we attach an $\mathfrak{f}$-vector space $V_{F_{\theta}^*}:=\mathfrak{g}_{x,r} / \mathfrak{g}_{x,r^+}$ to $F_{\theta}^*.$ We call a coset $e$ lying in $V_{F_{\theta}^*}$ a Moy-Prasad coset. Such a coset is said to be degenerate if it intersects $\mathcal{N},$ the set of nilpotent elements in $\mathfrak{g},$ nontrivially. At this point, we restrict our attention to degenerate cosets in the $(-1)$-eigenspace of $V_{F_{\theta}^*},$ denoted $V_{F_{\theta}^*}^-,$ and let $I_r^n$ denote the set of pairs of the form $(F_{\theta}^*,e),$ with $F_{\theta}^*$ a generalized $(r,\theta$)-facet and $e \in V_{F_{\theta}^*}^-$ a degenerate coset. In Section 4.5, we define a natural equivalence relation $\sim$ on $I_r^n.$ To each pair $(F_{\theta}^*,e) \in I_r^n,$ with some restrictions on $\textbf{G}$ and $k$ described in Section 5 (which are present in the group case in [\cite{d}]), we associate a nilpotent $H$-orbit $\mathcal{O}_{\theta}(F_{\theta}^*,e)$ in $\mathfrak{p},$ which (in Section 6) is described as the unique nilpotent $H$-orbit in $\mathfrak{p}$ of minimal dimension intersecting $e$ nontrivially. Let $\mathcal{O}_{\theta}(0)$ denote the set of nilpotent $H$-orbits in $\mathfrak{p}.$ Upon restricting to a natural subset $I_r^d$ (the \emph{noticed} orbits of Definition 6.18) of pairs in $I_r^n,$ we prove the following theorem:
	
\begin{thm} There is a bijective correspondence between $I_r^d/\sim$ and $\mathcal{O}_{\theta}(0)$ given by the map that sends $(F_{\theta}^*,e)$ to $\mathcal{O}_{\theta}(F_{\theta}^*,e).$\end{thm}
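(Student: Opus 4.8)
The plan is to mirror DeBacker's strategy from [\cite{d}] for the group case, adapting each step to the $\theta$-twisted setting and the $H$-action on $\mathfrak{p}$. The correspondence has already been shown (by the construction in Section 4 and the results cited from Sections 5 and 6) to be \emph{well-defined} on $I_r^d/\!\sim$, so the theorem reduces to proving that the map $(F_\theta^*, e) \mapsto \mathcal{O}_\theta(F_\theta^*, e)$ is both surjective and injective. I would isolate these as two propositions and prove them in that order.

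\textbf{Surjectivity.} Given a nilpotent $H$-orbit $\mathcal{O} \subset \mathfrak{p}$, I would fix a nilpotent representative $X \in \mathcal{O}$ and produce a pair $(F_\theta^*, e) \in I_r^d$ with $\mathcal{O}_\theta(F_\theta^*, e) = \mathcal{O}$. The natural device is a $\theta$-adapted $\mathfrak{sl}_2$-triple: using the Jacobson–Morozov theorem together with the fact that $\mathbf{H}^\circ$ is reductive (cited from [\cite{vust}], [\cite{prasad-yu}]) and standard $\theta$-equivariant refinements (the Kostant–Sekiguchi-type normalization, available under the hypotheses of Section 5), one embeds $X$ in a triple $(X, H_0, Y)$ with $H_0 \in \mathfrak{h}$ and $Y \in \mathfrak{p}$. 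The cocharacter coming from $H_0$ determines a point (or filtration) in $\mathcal{B}(H) \subset \mathcal{B}(G)^\theta$, hence a $\theta$-stable generalized $r$-facet whose fixed-point set is the desired $F_\theta^*$, and $X$ then has depth exactly $r$ at the relevant point, so its image $e$ in $V_{F_\theta^*}^-$ is a degenerate coset. One must then check $(F_\theta^*, e) \in I_r^d$, i.e. that the pair is \emph{noticed} in the sense of Definition 6.18 — this should follow because the $\mathfrak{sl}_2$-triple was chosen inside the relevant Levi-type subgroup, making $X$ distinguished there. Finally, the minimal-dimension characterization of $\mathcal{O}_\theta(F_\theta^*,e)$ from Section 6 forces $\mathcal{O}_\theta(F_\theta^*, e) = \mathcal{O}$, since $X$ itself lies in $e \cap \mathcal{O}$.

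\textbf{Injectivity.} Suppose $(F_\theta^*, e)$ and $(F_\theta'^*, e')$ have $\mathcal{O}_\theta(F_\theta^*, e) = \mathcal{O}_\theta(F_\theta'^*, e') =: \mathcal{O}$. Pick $X \in e \cap \mathcal{O}$ and $X' \in e' \cap \mathcal{O}$; after applying an element of $H$ we may assume $X = X'$. I would then show that both cosets, being the minimal-dimension ones hit by $\mathcal{O}$ through the respective facets, force a common associated cocharacter/facet structure up to the equivalence $\sim$: concretely, associate to $X$ (via an $\mathfrak{sl}_2$-triple as above) a canonical $\theta$-stable facet $F_\theta^*(X)$, and argue that both $(F_\theta^*, e)$ and $(F_\theta'^*, e')$ are $\sim$-equivalent to the pair built from $X$. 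This step is where the precise definition of $\sim$ on $I_r^n$ (Section 4.5) does the real work, together with a comparison of Moy–Prasad filtrations along the various facets and a conjugacy statement for $\theta$-adapted $\mathfrak{sl}_2$-triples with a fixed nilpotent part (a Kostant–Rallis-type uniqueness, valid under the Section 5 restrictions).

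\textbf{Main obstacle.} I expect the hardest part to be the injectivity argument, specifically controlling the interplay between the building-theoretic equivalence $\sim$ and the orbit-theoretic data. In the group case DeBacker leverages the full theory of associated cocharacters and the fact that two such cocharacters attached to the same nilpotent are conjugate by the centralizer; in the symmetric-space setting one must carry this out $H$-equivariantly, and the centralizer $Z_H(X)$ is generally not reductive and its component behavior under $\theta$ is delicate. Ensuring that the $\theta$-fixed-point passage $\mathcal{B}(G)^\theta = \mathcal{B}(H)$ (Prasad–Yu, residual characteristic $\neq 2$) is compatible with all the filtration identities — and that the "noticed" condition is exactly what rules out spurious non-minimal pairs — will require the technical lemmas assembled in Sections 4–6, and stitching these together cleanly is the crux of the proof.
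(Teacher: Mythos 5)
Your proposal follows essentially the same path as the paper: well-definedness is already established, and one proves surjectivity by completing a nilpotent $X \in \mathcal{O}$ to a normal $\mathfrak{sl}_2(k)$-triple and taking a maximal generalized $(r,\theta)$-facet in the building set $\mathcal{B}_\theta(Y,H,X)$, and injectivity by using the $H$-conjugacy of normal triples with a common nilpotent part together with the fact that any two maximal generalized $(r,\theta)$-facets of $\mathcal{B}_\theta(Y,H,X)$ are strongly $r$-associated (Lemma \ref{maxassoc}), which yields exactly the two conditions defining $\sim$. The one point where your sketch is looser than the paper is the mechanism by which the facet and the noticed condition arise: it is not that the cocharacter "determines a point," nor a Levi-distinguishedness argument, but rather the fixed-point argument of Lemma \ref{Gopal}/Corollary \ref{bsetnonempty} producing $y\in\mathcal{B}_\theta(Y,H,X)$ and then the choice of a maximal facet there, which makes the pair noticed by Lemma \ref{distinguished} directly.
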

	
	Any reductive, linear algebraic group $\textbf{J}$ defined over a local field can be thought of as a symmetric space in the following way: let $\textbf{G} = \textbf{J} \times \textbf{J}$ and define an involution $\theta$ by $(x,y) \mapsto (y,x).$ Then, the diagonal $\textbf{H}:= \{(x,x) \mid x \in \textbf{J} \}$ occurs as the set of $\theta$-fixed points, and we may identify $\textbf{G} / \textbf{H}$ with $\textbf{J}.$ This is often referred to as the group (or diagonal) case. Note that $\mathfrak{p},$ as defined earlier, may be identified with Lie$(\textbf{J}).$ In the symmetric space setting, since we are interested in $H$-orbits, it is convenient to use the building of $H$ to describe a parametrization of nilpotent $H$-orbits in $\mathfrak{p}.$ More specifically, using arguments similar to those in [\cite{d}], we are able to lift the results from the group case to the symmetric space case at each step.
	
	For our purposes, we will primarily be concerned with the case where $\textbf{H}$ is isotropic over $k.$ This contrasts with the case considered when studying symmetric spaces of real Lie groups. More specifically, recall that $H$ preserves $\mathfrak{p}$ under the adjoint action. We say that $X \in \mathfrak{p}$ is nilpotent whenever there exists a one-parameter subgroup $\lambda \in \textbf{X}_*^{k}(\textbf{G})$ such that
	$$ \lim_{t \rightarrow 0} {}^{\lambda(t)}X = 0. $$ 
It will be demonstrated in Remark \ref{kempf} that, under mild restrictions on the characteristic of $k,$ we may assume $\lambda$ lies in $\textbf{X}_*^k(\textbf{H}).$

\subsection{Example}
	
	We demonstrate the parametrization in the case $r=0$ in the following example. Let $k=\mathbb{Q}_p,$ with $p \neq 2.$ Let $\textbf{G} = \textbf{SL}_3,$ and consider the involution $\theta : \textbf{SL}_3 \rightarrow \textbf{SL}_3$ defined by $A \mapsto J(A^t)^{-1}J,$ where $ J= \left(\begin{array}{ccc}
0 & 0 & 1 \\
0 & 1 & 0   \\
1 & 0 & 0 
\end{array}\right). $ Under $d\theta,$ the Lie algebra $\mathfrak{sl}_3(k)$ decomposes as

$$ \mathfrak{sl}_3(k) =  \left\{ \left(\begin{array}{ccc}
a & b & 0 \\
c & 0 & -b   \\
0 & -c & -a 
\end{array}\right) \right\} \oplus \left\{ \left(\begin{array}{ccc}
x & y & s \\
z & -2x & y   \\
u & z & x 
\end{array}\right) \right \}. $$
As further explained in Appendix A, representatives for the six nilpotent $H$-orbits in $\mathfrak{p}$ are $ \left(\begin{array}{ccc}
0 & 0 & 0 \\
0 & 0 & 0   \\
0 & 0 & 0 
\end{array}\right),   \left(\begin{array}{ccc}
0 & 1 & 0 \\
0 & 0 & 1  \\
0 & 0 & 0 
\end{array}\right) ,$ and  $\left\{ \left(\begin{array}{ccc}
0 & 0 & z \\
0 & 0 & 0   \\
0 & 0 & 0 
\end{array}\right) \mid \overline{z} \in \mathbb{Q}_p^{\times}/(\mathbb{Q}_p^{\times})^2 \right\}.$ The diagonal torus in the set of $\theta$-fixed points, $\textbf{H} = \textbf{PGL}_2,$ is a maximal $k$-split torus $\textbf{T}$ which lies in the diagonal maximal $k$-split torus $\textbf{T}'$ of $\textbf{SL}_3.$ If 

$$ t =  \left(\begin{array}{ccc}
a & 0 & 0 \\
0 & b & 0   \\
0 & 0 & (ab)^{-1} 
\end{array}\right) $$
is an element of $\textbf{T}',$ define $\alpha$ and $\beta$ by $\alpha(t) = ab^{-1}$ and $\beta(t) = ab^2.$ Then, $\{\alpha, \beta\}$ is a choice of simple roots of $\textbf{T}'$ in $\textbf{G}$ with respect to $k.$ We let $\check{\alpha}$ and $\check{\beta}$ denote the associated co-roots, respectively.

	Using [\cite{prasad-yu}, Theorem 1.9], we are able to identify the building of $H$ with the set of $\theta$-fixed points in $\mathcal{B}(G).$ Thus, we identify the apartment corresponding to the diagonal torus in $\textbf{PGL}_2$ with an affine subspace of the apartment corresponding to the diagonal torus in $\textbf{SL}_3.$ 
	
	In order to provide a parameterization of the nilpotent $PGL_2$-orbits in $\mathfrak{p},$ we restrict our attention to subsets of $\mathcal{B}(H)$ which arise naturally by considering $\theta$-stable facets of $\mathcal{B}(G).$ We call a subset $F$ in an apartment $\mathcal{A} \subset \mathcal{B}(H)$ a $\theta$-facet if $F$ is the set of $\theta$-fixed points of a $\theta$-stable facet $F'$ in some apartment of $\mathcal{B}(G).$ 
	
	The corresponding apartments are represented in Figure 1, along with the $\theta$-facets arising from the apartment associated to $\textbf{T}.$

\begin{figure}[ht!]
	\centering
	\resizebox{8.8cm}{!}{\input{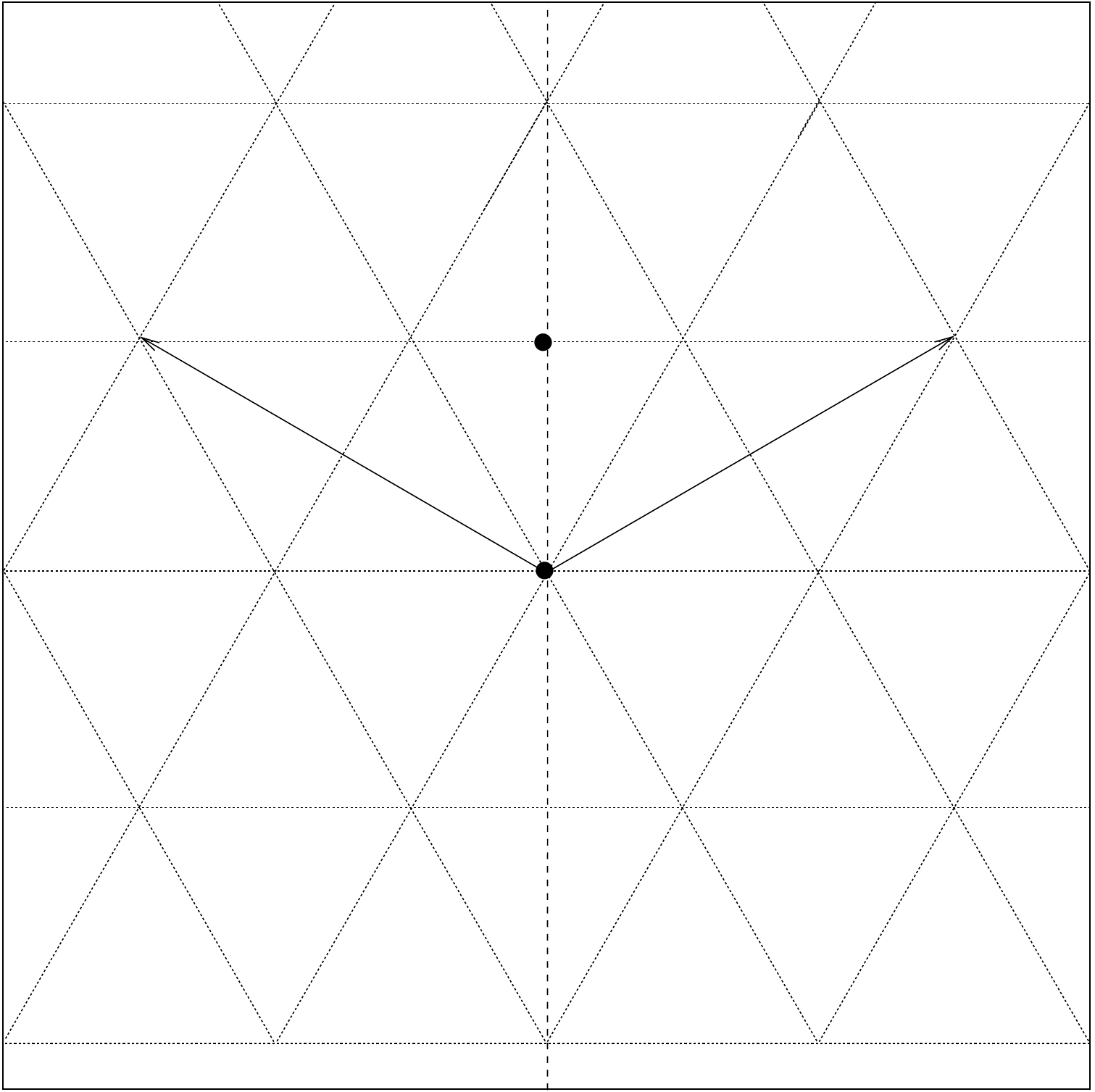_t}}
	\caption{Affine apartments of $G=\textbf{SL}_3(\mathbb{Q}_p)$ and $H=\textbf{PGL}_2(\mathbb{Q}_p)$ (dotted line represents $\mathcal{A}(\textbf{T}, \mathbb{Q}_p)$)}
\end{figure}

The $\theta$-facets in Figure 1 labelled $F_1, F_2,$ and $F_3$ are those which arise as the fixed points of facets in the closure of a fixed alcove $C$ of $\mathcal{A}(\textbf{T}', \mathbb{Q}_p).$ In particular, $F_1$ is the vertex at the base of $\overline{C}$, $F_2$ is the $\theta$-facet arising from $C$ itself, and $F_3$ is the point in the closure of the alcove whose lift is the segment at the top of $\overline{C}.$
	
	If $F$ is a $\theta$-facet containing some point $x \in \mathcal{B}(H),$ we note that $\theta$ induces a map, which we denote $d\theta_F,$ on the Lie algebra $V_F:=\mathfrak{g}_{x}/\mathfrak{g}_{x}^+.$  In this way, we may consider the decomposition of $V_F$ under $d\theta_F,$ and examine nilpotent $H$-orbits in the $(-1)$-eigenspace of $V_F.$ The corresponding Lie algebras associated to each of these $\theta$-facets are listed below:

$$ V_{F_1}=  \left\{ \left(\begin{array}{ccc}
\mathbb{Z}_p / p\mathbb{Z}_p & \mathbb{Z}_p/ p\mathbb{Z}_p & \mathbb{Z}_p/p\mathbb{Z}_p \\
\mathbb{Z}_p/p\mathbb{Z}_p & \mathbb{Z}_p/p\mathbb{Z}_p & \mathbb{Z}_p/p\mathbb{Z}_p \\
\mathbb{Z}_p/p\mathbb{Z}_p & \mathbb{Z}_p/p\mathbb{Z}_p & \mathbb{Z}_p/p\mathbb{Z}_p
\end{array}\right) \right\} =  \left\{ \left(\begin{array}{ccc}
a & b & 0 \\
c & 0 & -b   \\
0 & -c & -a 
\end{array}\right) \right\} \oplus  \left\{ \left(\begin{array}{ccc}
x & y & s \\
z & -2x & y   \\
u & z & x 
\end{array}\right) \right\},$$ 

$$ V_{F_2} =  \left\{ \left(\begin{array}{ccc}
\mathbb{Z}_p / p\mathbb{Z}_p & 0 & 0 \\
0 & \mathbb{Z}_p / p \mathbb{Z}_p & 0   \\
0 & 0 & \mathbb{Z}_p / p \mathbb{Z}_p 
\end{array}\right) \right\} =  \left\{ \left(\begin{array}{ccc}
a & 0 & 0 \\
0 & 0 & 0   \\
0 & 0 & -a 
\end{array}\right) \right\} \oplus \left\{  \left(\begin{array}{ccc}
x & 0 & 0 \\
0 & -2x & 0   \\
0 & 0 & x 
\end{array}\right) \right\}, $$
and 

$$ V_{F_3} = \left\{ \left(\begin{array}{ccc}
\mathbb{Z}_p / p\mathbb{Z}_p & 0 & p^{-1}\mathbb{Z}_p/\mathbb{Z}_p \\
0 & \mathbb{Z}_p/ p\mathbb{Z}_p & 0   \\
p\mathbb{Z}_p / p^2\mathbb{Z}_p & 0 & \mathbb{Z}_p / p \mathbb{Z}_p 
\end{array}\right) \right\}=  \left\{ \left(\begin{array}{ccc}
a & 0 & 0 \\
0 & 0 & 0   \\
0 & 0 & -a 
\end{array}\right) \right\} \oplus \left\{  \left(\begin{array}{ccc}
x & 0 & p^{-1}s  \\
0 & -2x & 0   \\
p u & 0 & x 
\end{array}\right) \right\}, $$
with all lowercase entries being representatives in $\mathbb{Z}_p.$ 
	
At this point, we would like to match up nilpotent $H$-orbits with nilpotent orbits arising from each of the above $\mathfrak{f}$-Lie algebras. In order to obtain a bijection, however, we must restrict ourselves to elements $e \in V_F^-$ whose centralizer (in $V_F^+$) does not contain certain noncentral (meaning elements in $V_F^+$ which do not belong to the center of $V_F$) semisimple elements which are fixed by $\theta.$ This may be thought of as a restriction on the type of Levi subalgebra which is allowed to contain $e$ and thus resembles the distinguished condition found in [\cite{d}, Remark 5.5.2]. We call such nilpotent elements \emph{noticed}.

	The noticed nilpotent $H$-orbits in $V_{F_1}^-$ have representatives of the form $ \left\{ \left(\begin{array}{ccc}
0 & 1 & 0 \\
0 & 0 & 1   \\
0 & 0 & 0 
\end{array}\right) \right\}$ and $ \left\{ \left(\begin{array}{ccc}
0 & 0 & s \\
0 & 0 & 0   \\
0 & 0 & 0 
\end{array}\right) \mid \overline{s} \in \mathbb{F}_p^{\times}/ (\mathbb{F}_p^{\times})^2 \right\}.$ The only noticed nilpotent $H$-orbit in $V_{F_2}^-$ is the trivial orbit. There are two more noticed nilpotent $H$-orbits lying in $V_{F_3}^-$ whose lifts modulo $p^{-1}$ correspond to the two representatives of square classes in $\mathbb{F}_p^{\times}/(\mathbb{F}_p^{\times})^2.$ Upon taking lifts, these six orbits clearly match up with the six nilpotent $H$-orbits in $\mathfrak{p}$ discussed above.

\vspace{.25in} 
\textbf{Acknowledgements:} I would like to thank my advisor, Stephen DeBacker, for all of his help, support, and extensive comments on earlier drafts of this paper. Without his guidance and constant encouragement, this work would not have been possible.

\section{Preliminaries}

\subsection{Algebraic groups, involutions, and associated notation} 

Let $k$ be a field with a nontrivial discrete valuation $\nu,$ and let $K$ be a maximal unramified extension of $k.$ Let $R$ (resp.$R_K)$ denote the ring of integers in $k$ (resp.$K),$ and let $\mathfrak{f}$ (resp.$\mathfrak{F})$ denote the residue field of $k$ (resp.$K$). We assume $k$ is complete and that $\mathfrak{f}$ is perfect. Let $\textbf{G}$ be a reductive, linear algebraic group defined over $k,$ and fix an involution $\theta$ of $\textbf{G}$ which is defined over $k.$ Let $\textbf{G}^{\circ}$ denote the connected component of $\textbf{G}.$

	We fix a uniformizer $\varpi$ of $k,$ with respect to $\nu,$ and let $L$ denote the minimal Galois extension of $K$ over which $\textbf{G}^{\circ}$ splits. Let $\ell = [L:K].$ If $\nu$ also denotes the extension of $\nu$ to $L,$ then we normalize $\nu$ so that $\nu(L^{\times}) = \mathbb{Z}.$
	
	If $k'$ is any field, we let $\overline{k'}$ denote an algebraic closure of $k'.$ Suppose $\textbf{C}$ is a linear algebraic group defined over $k'.$ We will identify $\textbf{C}$ with the $\overline{k'}$-points of $\textbf{C}.$ If $\sigma$ is an involution of $\textbf{C}$ defined over $k',$ we will almost always let $\textbf{C}^{\sigma}$ denote the set $\{ x \in \textbf{C} \mid \sigma(x) = x \};$ in the case that $\textbf{C}$ is a $\sigma$-stable torus, we will let $\textbf{C}^{\sigma}$ be the connected component of this set. Lastly, if $C$ is any group, we let $[C,C]$ denote its derived subgroup, and if $L$ is any Lie algebra, we let $[L,L]$ denote its derived subalgebra.
	
	Let $\textbf{H}$ be a subgroup of $\textbf{G}$ with $(\textbf{G}^{\theta})^{\circ} \subset \textbf{H} \subset \textbf{G}^{\theta}$ such that $\textbf{H}$ is defined over $k.$ By the second paragraph in [\cite{vust}, Section 1.0], $\textbf{H}^{\circ}$ is reductive. We let $G$ denote the group of $k$-rational points of $\textbf{G}$ and similarly let $H$ denote the group of $k$-rational points of $\textbf{H}.$ The involution $\theta$ induces an involution, which we denote $d\theta,$ on the Lie algebra, \mbox{\boldmath$\mathfrak{g}$}:=Lie$(\textbf{G}),$ of $\textbf{G}.$ Under $d\theta,$ \mbox{\boldmath$\mathfrak{g}$} decomposes as \mbox{\boldmath$\mathfrak{g}$} = \mbox{\boldmath$\mathfrak{h}$} $\oplus$ \mbox{\boldmath$\mathfrak{p}$} where \mbox{\boldmath$\mathfrak{h}$} is the Lie algebra of \textbf{H} and \mbox{\boldmath$\mathfrak{p}$} is the $(-1)$-eigenspace of \mbox{\boldmath$\mathfrak{g}$}. We let $\mathfrak{g} =$ \mbox{\boldmath$\mathfrak{g}$}($k$), $\mathfrak{h}$ = \mbox{\boldmath$\mathfrak{h}$}$(k)$ and $\mathfrak{p} =$ \mbox{\boldmath$\mathfrak{p}$}$(k)$ denote the vector spaces of $k$-rational points of \mbox{\boldmath${\mathfrak{g}}$}, \mbox{\boldmath$\mathfrak{h}$}, and \mbox{\boldmath$\mathfrak{p}$}. If $V$ is a $k'$-vector space on which some $k'$-involution $\sigma$ acts, we let $V^+$ and $V^-$ denote the sets $\{ X \in V \mid \sigma(X) = X \}$ and $\{X \in V \mid \sigma(X) = -X \},$ respectively. 
	
	We follow some notational conventions (also found in [\cite{d}, 2.1]) so that when we refer to a \emph{Levi subgroup} of $\textbf{G}$ (resp. $\textbf{H}),$ we mean a Levi subgroup of $\textbf{G}^{\circ}$ (resp. $\textbf{H}^{\circ}$). We apply the same terminology to tori and parabolic subgroups.
	
	Let Ad denote the adjoint action of $G$ on $\mathfrak{g}.$ For $g \in G$ and $X \in \mathfrak{g},$ let ${}^{g}X = \textup{Ad}(g)(X).$ Suppose $\textbf{L}$ is a linear algebraic group defined over $k'$ acting on its Lie algebra \mbox{\boldmath$\mathfrak{l}$} via the adjoint action. If $g \in \textbf{L},$ we will let Int$(g)$ denote conjugation by $g.$ Let $L \supset J$ and $\mathfrak{j}$ be subsets of $\textbf{L}$ and \mbox{\boldmath$\mathfrak{l}$} respectively. Then, let $C_{L}(\mathfrak{j}) = \{ g \in L \mid {}^{g}X = X \textup{ for all } X \in \mathfrak{j} \}.$ Similarly, we let $N_{L}(\mathfrak{j}) = \{ g \in L \mid {}^{g}\mathfrak{j} = \mathfrak{j} \}.$

	Although we restrict ourselves to discussing nilpotent elements lying in $\mathfrak{p},$ our definition of nilpotence will be as in [\cite{d}]. In particular, we call $X \in \mathfrak{p}$ nilpotent provided there exists some $\lambda \in \textbf{X}_{*}^{k}(\textbf{G})$ such that $\lim_{t \rightarrow 0} {}^{\lambda(t)}X = 0.$ We let $\mathcal{N}$ denote the set of nilpotent elements in $\mathfrak{g}$ and define $\mathcal{N}^-:=\mathcal{N} \cap \mathfrak{p}.$ We also let $\mathcal{U}$ denote the set of unipotent elements in $G.$
			
	 If the residue field $\mathfrak{f}$ has positive characteristic, we denote the characteristic of $\mathfrak{f}$ by $p.$ If the residue field has characteristic zero, we let $p=\infty.$ 

\subsection{The Bruhat-Tits building, apartments, and $\theta$-fixed points} 
	 
	We let $\mathcal{B}(G)$ denote the (enlarged) Bruhat-Tits building of $\textbf{G}^{\circ}(k),$ and, similarly, let $\mathcal{B}(H)$ denote the (enlarged) Bruhat-Tits building of $\textbf{H}^{\circ}(k).$ Unless otherwise stated, the symbol $\mathcal{B}(G)$ (resp. $\mathcal{B}(H)$) will always refer to the enlarged Bruhat-Tits building; that is, it takes the center of $\textbf{G}^{\circ}(k)$ (resp. $\textbf{H}^{\circ}(k)$) into account. We note that since $K/k$ is a maximal unramified extension, $\mathfrak{F}$ is an algebraic closure of $\mathfrak{f},$ and $\mathcal{B}(G)$ can be identified with the Gal$(K/k)$-fixed points of $\mathcal{B}(\textbf{G},K),$ the Bruhat-Tits building of $\textbf{G}^{\circ}(K).$ 
		
	If $\textbf{S}$ (resp. $\textbf{S}'$) is a maximal $k$-split torus of $\textbf{H}$ (resp. $\textbf{G}$), we will let $\mathcal{A}(\textbf{S}, k)$ (resp. $\mathcal{A}(\textbf{S}', k)$) denote the associated apartment in $\mathcal{B}(H)$ (resp. $\mathcal{B}(G)$). If $\mathcal{A}$ is an apartment in $\mathcal{B}(H),$ and $\Omega$ is a subset of $\mathcal{A},$ we let $A(\Omega,\mathcal{A})$ denote the smallest affine subspace of $\mathcal{A}$ containing $\Omega.$
	We let dist: $\mathcal{B}(G) \times \mathcal{B}(G) \rightarrow \mathbb{R}_+$ denote a nontrivial $G$-invariant distance function as described in [\cite{tits}, 2.3]. For $x,y \in \mathcal{B}(G),$ let $[x,y]$ denote the geodesic in $\mathcal{B}(G)$ from $x$ to $y$ with respect to dist. Let $(x,y]$ denote $[x,y] \backslash \{x \}.$
	
	For $x \in \mathcal{B}(\textbf{G},K),$ we let $\textbf{G}(K)_x$ and $\textbf{G}(K)_x^+$ denote the parahoric subgroup associated to $x$ and its pro-unipotent radical, respectively. The groups $\textbf{G}(K)_x$ and $\textbf{G}(K)_x^+$ only depend on the facet in $\mathcal{B}(\textbf{G},K)$ containing $x.$ Therefore, if $F \subset \mathcal{B}(\textbf{G},K)$ is the facet containing $x,$ we define $\textbf{G}(K)_F$ and $\textbf{G}(K)_F^+$ to be $\textbf{G}(K)_x$ and $\textbf{G}(K)_x^+,$ respectively. The quotient $\textbf{G}(K)_F / \textbf{G}(K)_F^+$ is the group of $\mathfrak{F}$-points of a connected, reductive group $\textsf{G}_F$ defined over $\mathfrak{f}.$ 	
	
	If $x \in \mathcal{B}(G),$ we denote the parahoric associated to $x$ and its pro-unipotent radical by $G_x$ and $G_{x}^+,$ respectively. We recall that these subgroups are obtained as the sets of Gal$(K/k)$-fixed points of parahorics defined over the maximal unramified extension. That is, we have $G_F = \textbf{G}(K)_F^{\textup{Gal}(K/k)}$ and $G_F^+ = (\textbf{G}(K)_F^+)^{\textup{Gal}(K/k)}.$ The quotient $G_x / G_x^+$ coincides with the group of $\mathfrak{f}$-rational points of the connected, reductive group $\textsf{G}_x$ defined over $\mathfrak{f}.$ Moreover, we have $\textsf{G}_x(\mathfrak{f}) = \textsf{G}_x^{\textup{Gal}(\overline{\mathfrak{f}}/\mathfrak{f})}.$
		
		If $\textbf{S}$ (resp.$\textbf{S}')$ is a maximal $k$-split torus in $\textbf{H}$ (resp.$\textbf{G}),$ we let $\Phi = \Phi(\textbf{S},k)$ (resp.$\Phi(\textbf{S}', k))$ denote the set of roots of $\textbf{S}$ (resp.$\textbf{S}')$ in $\textbf{H}$ (resp.$\textbf{G})$ with respect to $k.$ If $\mathcal{A}$ (resp.$\mathcal{A}'$) is the apartment corresponding to $\textbf{S}$ (resp.$\textbf{S}'),$ let $\Psi = \Psi(\mathcal{A})$ (resp.$\Psi(\mathcal{A}')$) denote the set of affine roots of $\textbf{H}$ (resp.$\textbf{G}$) with respect to $\textbf{S},$ $k$ and $\nu$ (resp. $\textbf{S}', k$ and $\nu).$ If $\psi \in \Psi$ is an affine root, we let $\dot{\psi} \in \Phi$ denote the gradient of $\psi.$ Whenever $\psi$ is an affine root of $\textbf{H}$ (resp. $\textbf{G}$) with respect to $\textbf{S}$ (resp. $\textbf{S}')$ and $\Omega$ is a subset of the apartment associated to $\textbf{S}$ (resp. $\textbf{S}'),$ we let res$_{\Omega}\psi$ denote the restriction of $\psi$ to $\Omega.$
	 
	 Throughout this paper, we will assume that $p \neq 2.$ Under this assumption, Prasad and Yu proved that $\mathcal{B}(H) = \mathcal{B}(G^{\theta})$ can be identified with $\mathcal{B}(G)^{\theta}.$ We will abuse notation and let $\theta$ also denote the induced map on the building of $G.$ Whenever $\Omega'$ is a subset of $\mathcal{B}(G),$ we will define $\Omega'^{\theta}:= \{ x \in \Omega' \mid \theta(x) = x\}.$
	 
\subsection{The Moy-Prasad filtrations}
	 
	 Let $x \in \mathcal{B}(H)$ and $r \in \mathbb{R}.$ We let $\mathfrak{g}_{x,r}$ denote the Moy-Prasad filtration of depth $r$ as defined in [\cite{moy-prasad}, Section 3.2]. As we will show in Section 4, the filtration lattices $\mathfrak{g}_{x,r}$ and $\mathfrak{g}_{x,r^+}$ are $\theta$-stable and thus induce an $\mathfrak{f}$-involution of the $\mathfrak{f}$-vector space $\mathfrak{g}_{x,r}/\mathfrak{g}_{x,r^+}.$ We will denote this involution $d\theta_x.$ 
	 
	 Let $\textbf{S}$ be a maximal $k$-split torus in $\textbf{H},$ and suppose $\textbf{T}$ is a maximal $K$-split $k$-torus in $\textbf{H}$ containing $\textbf{S}.$ By [\cite{prasad-yu}, Theorem 1.9], we can choose a maximal $k$-split torus $\textbf{S}'$ of $\textbf{G}$ containing $\textbf{S}$ and a maximal $K$-split $k$-torus $\textbf{T}'$ of $\textbf{G}$ containing $\textbf{S}'$ and $\textbf{T}.$ Since $\textbf{G}$ is quasi-split over $K,$ we know that $\textbf{Z}':=C_{\textbf{G}^{\circ}}(\textbf{T}')$ is a maximal $k$-torus in $\textbf{G}$ containing $\textbf{T}'.$ We will define $\textbf{Z}$ to be the $C_{\textbf{H}^{\circ}}(\textbf{T}),$ which is a maximal $k$-torus of $\textbf{H}.$
	 
	 Let \mbox{\boldmath$\mathfrak{z}$}$'$ denote the Lie algebra of $\textbf{Z}'.$ Following [\cite{moy-prasad}, 3.2], there is a filtration of \mbox{\boldmath$\mathfrak{z}$}$'(K)$ for each $r \in \mathbb{R}$ which we denote \mbox{\boldmath$\mathfrak{z}$}$'(K)_r.$ Moreover, for each affine functional $\psi \in \Psi(\mathcal{A}(\textbf{T}', K) ),$ there exists a lattice denoted  \mbox{\boldmath$\mathfrak{u}$}$_{\psi}$, which lies in the root space in  \mbox{\boldmath$\mathfrak{g}$}$(K)$ with respect to $\textbf{T}, \textbf{G},$ and $\dot{\psi}.$ The lattice  \mbox{\boldmath$\mathfrak{g}$}$(K)_{x,r}$ is defined as the the $R_K$-submodule of  \mbox{\boldmath$\mathfrak{g}$}$(K)$ spanned by \mbox{\boldmath$\mathfrak{z}$}$'(K)_r$ and the  \mbox{\boldmath$\mathfrak{u}$}$_{\psi}$'s for which $\psi(x) \geq r.$

\section{$\theta$-stable $k$-split tori}

\subsection{A result of Prasad-Yu}

In order to discuss a type of facet which takes both $\theta$ and the facet structure of $\mathcal{B}(G)$ into account, it will be useful to discuss the relationship between apartments in $\mathcal{B}(H)$ and those in $\mathcal{B}(G).$ Prasad and Yu have shown in [\cite{prasad-yu}, Theorem 1.9] that there is an $\textbf{H}^{\circ}(k)$-equivariant map $\iota: \mathcal{B}(H) \rightarrow \mathcal{B}(G)$ such that the image is $\mathcal{B}(G)^{\theta},$ uniquely defined up to translation by $\textbf{X}_{*}(\textbf{C}) \otimes \mathbb{R},$ where $\textbf{C}$ is the maximal $k$-split torus in the center of $\textbf{H}.$ Moreover, for every maximal $k$-split torus $\textbf{S}$ of $\textbf{H},$ there is a maximal $k$-split torus $\textbf{S}'$ of $\textbf{G}$ such that $\mathcal{A}(\textbf{S},k)$ is mapped into $\mathcal{A}(\textbf{S}',k)$ by an affine transformation. In [\cite{prasad-yu}, Lemma 1.9.3], it is shown that such a map is compatible with unramified base change. In particular, there is an $\textbf{H}^{\circ}(K)$-equivariant map $\iota_K : \mathcal{B}(\textbf{H}, K) \rightarrow \mathcal{B}(\textbf{G}, K)$ which is also Gal$(K/k)$-equivariant, such that the restriction of $\iota_K$ to $\mathcal{B}(H)$ shares the same properties as $\iota.$ 
	
	A proof of the next proposition is given in [\cite{prasad-yu}, Remark 1.5.4] when $k'$ is a completion of $K$ and in  [\cite{kim}, Corollary 5.7(i)] when $k'$ is a $p$-adic field or [\cite{kim}, Proposition 3.4.1] when $k'$ is a finite field of odd characteristic. In the statement of the following proposition, let $k'$ be one of the fields mentioned above, and suppose $\textbf{G}'$ is a reductive linear algebraic $k'$-group equipped with a  $k'$-involution $\sigma$ with $\textbf{H}' = (\textbf{G}^{\sigma})^{\circ}.$

\begin{prop} \label{stdapt} Let $\textbf{S}$ be a maximal $k'$-split torus of $\textbf{H}'.$ Then there exists a $\sigma$-stable maximal $k'$-split torus of $\textbf{G}'$ which contains $\textbf{S}.$
\end{prop}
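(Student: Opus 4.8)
The plan is to build the desired $\sigma$-stable torus by starting from $\textbf{S}$ and enlarging it inside $\textbf{G}'$ in a way that can be made $\sigma$-equivariant. First I would pass to the centralizer $\textbf{M} := C_{\textbf{G}'}(\textbf{S})$. Since $\textbf{S}$ is a $k'$-split torus of $\textbf{H}'$ and hence of $\textbf{G}'$, $\textbf{M}$ is a $k'$-defined reductive subgroup of $\textbf{G}'$, and because $\sigma$ fixes $\textbf{S}$ pointwise (as $\textbf{S} \subset \textbf{H}' = (\textbf{G}'^{\sigma})^{\circ}$), the centralizer $\textbf{M}$ is $\sigma$-stable. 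A $\sigma$-stable maximal $k'$-split torus of $\textbf{G}'$ containing $\textbf{S}$ is the same thing as a $\sigma$-stable maximal $k'$-split torus of $\textbf{M}$, since any maximal $k'$-split torus of $\textbf{M}$ that contains the central torus $\textbf{S}$ of $\textbf{M}$ is automatically maximal $k'$-split in $\textbf{G}'$. So the problem reduces to the case where $\textbf{S}$ is \emph{central} in $\textbf{G}'$ — equivalently, to showing that $\textbf{M}$ (reductive, with $\sigma$-action, and with $\textbf{S}$ in its center) contains \emph{some} $\sigma$-stable maximal $k'$-split torus.

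Next I would handle the reduced problem. Replacing $\textbf{G}'$ by $\textbf{M}$, we may assume $\textbf{S}$ is a maximal $k'$-split torus of $\textbf{H}'$ that is central in $\textbf{G}'$, and we want a $\sigma$-stable maximal $k'$-split torus $\textbf{T}$ of $\textbf{G}'$. Here I would use the building-theoretic input already quoted: by [\cite{prasad-yu}, Theorem 1.9] (and its unramified-base-change compatibility [\cite{prasad-yu}, Lemma 1.9.3]), the apartment $\mathcal{A}(\textbf{S},k')$ of $\mathcal{B}(\textbf{H}',k')$ is carried affinely into an apartment $\mathcal{A}(\textbf{T},k')$ of $\mathcal{B}(\textbf{G}',k')$ for a suitable maximal $k'$-split torus $\textbf{T}$ of $\textbf{G}'$; and the $\sigma$-stable points of $\mathcal{B}(\textbf{G}',k')$ are exactly $\mathcal{B}(\textbf{H}',k')$. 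Choosing a point $x$ in the (nonempty) interior of the image of $\mathcal{A}(\textbf{S},k')$, one gets a point of $\mathcal{B}(\textbf{G}',k')$ that is $\sigma$-fixed and lies in the apartment of $\textbf{T}$; since $\sigma$ acts on $\mathcal{B}(\textbf{G}',k')$ and fixes $x$, it permutes the apartments through $x$, and one argues that the apartment $\mathcal{A}(\textbf{T},k')$ — being the affine span of $\sigma$-fixed points — is $\sigma$-stable, whence $\sigma(\textbf{T})$ is a maximal $k'$-split torus whose apartment equals $\mathcal{A}(\textbf{T},k')$, so $\sigma(\textbf{T}) = \textbf{T}$. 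Alternatively, and perhaps more cleanly, I would invoke the cited references directly: [\cite{prasad-yu}, Remark 1.5.4] over a completion of $K$, [\cite{kim}, Corollary 5.7(i)] in the $p$-adic case, and [\cite{kim}, Proposition 3.4.1] over a finite field of odd characteristic, each of which gives precisely this existence statement; the role of my argument is then just the reduction to their setup.

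The main obstacle I anticipate is the reduction step's claim that $\textbf{M} = C_{\textbf{G}'}(\textbf{S})$ is reductive and $\sigma$-stable with the right maximality behavior, and — more subtly — making sure the chosen $\textbf{T}$ inside $\textbf{M}$ really is $\sigma$-stable rather than merely $\sigma$-conjugate to something containing $\textbf{S}$. Reductivity of $\textbf{M}$ is standard (centralizer of a torus in a reductive group), and $\sigma$-stability is immediate from $\sigma|_{\textbf{S}} = \mathrm{id}$. The delicate point is the last one: two maximal $k'$-split tori of $\textbf{M}$ are conjugate by $\textbf{M}(k')$, so $\sigma(\textbf{T}) = {}^{m}\textbf{T}$ for some $m \in \textbf{M}(k')$, and one must arrange $m = 1$. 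This is exactly where the fixed-point structure of the building — and in particular the fact that the relevant apartment is the affine hull of $\sigma$-fixed points, hence $\sigma$-stable — does the work, reducing it to the known classification results cited above; I would lean on those citations for the finite-field and $p$-adic cases and on [\cite{prasad-yu}, Remark 1.5.4] for completions of $K$, rather than reprove them.
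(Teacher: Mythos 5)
The paper proves Proposition~\ref{stdapt} entirely by citation: the sentence immediately preceding the statement refers the reader to [\cite{prasad-yu}, Remark 1.5.4] for completions of $K$, to [\cite{kim}, Corollary 5.7(i)] for the $p$-adic case, and to [\cite{kim}, Proposition 3.4.1] for finite fields of odd characteristic, and supplies no further argument. Your fallback --- ``invoke the cited references directly'' --- is therefore exactly the paper's proof, and that part of your proposal is fine.

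Your direct building-theoretic argument, however, has a genuine gap. You assert that $\mathcal{A}(\textbf{T},k')$ is ``the affine span of $\sigma$-fixed points'' and hence $\sigma$-stable. That is false in general: by [\cite{prasad-yu}, Theorem 1.9], the image of $\mathcal{A}(\textbf{S},k')$ is merely an affine \emph{subspace} of $\mathcal{A}(\textbf{T},k')$, of dimension equal to the $k'$-rank of $\textbf{H}'$, which is typically strictly smaller than the $k'$-rank of $\textbf{G}'$ (the extreme case $\textbf{S}=1$, $\textbf{M}=\textbf{G}'$ already shows this, and the paper's own Remark 3.3 exhibits a $\theta$-stable maximal $k$-split torus of $\textbf{SL}_2$ whose apartment contains only a single $\theta$-fixed point). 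Infinitely many apartments of $\mathcal{B}(\textbf{G}',k')$ contain that proper affine subspace, so $\sigma$ need not send $\mathcal{A}(\textbf{T},k')$ to itself, and the conclusion $\sigma(\textbf{T})=\textbf{T}$ does not follow. You correctly identify this as ``the delicate point,'' but the fix you propose does not close it. There is also a circularity risk in using [\cite{prasad-yu}, Theorem 1.9] here, since that theorem appears later than, and presumably builds on, the very [\cite{prasad-yu}, Remark 1.5.4] being cited for Proposition~\ref{stdapt}.

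Your reduction to $\textbf{M}:=C_{\textbf{G}'}(\textbf{S})$ is sound and is in fact the same move the paper makes in the paragraph immediately following the proposition (there for the valued field $k$ of Section 2.1 rather than the $k'$ of the statement), but the paper then finishes by invoking [\cite{helminck-wang}, Proposition 2.3] --- existence of a $\theta$-stable maximal $k$-torus of $\textbf{M}$ whose $k$-split part is maximal $k$-split --- rather than by a building argument. If you want a self-contained finish after the reduction, Helminck--Wang's result is the right tool; the building argument as written does not close the gap.
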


Now, let $k$ be as in Section 2.1. Suppose $\textbf{S}$ is a maximal $k$-split torus of $\textbf{H}.$ The group defined by $\textbf{M}:=C_{\textbf{G}^{\circ}}(\textbf{S})$ is a reductive group defined over $k.$ Moreover, $\textbf{M}$ is $\theta$-stable since $\textbf{S}$ is a $\theta$-stable torus. Thus, applying [\cite{helminck-wang}, Proposition 2.3] to $\textbf{M},$ there exists a $\theta$-stable maximal $k$-torus $\textbf{Z}$ such that the maximal $k$-split torus $\textbf{T}$ in $\textbf{Z}$ is a maximal $k$-split torus in $\textbf{M}.$ In particular, the torus $\textbf{T}$ is a $\theta$-stable maximal $k$-split torus of $\textbf{G}$ which contains $\textbf{S}.$

\begin{remark} Let $\textbf{S}'$ be a $\theta$-stable maximal $k$-split torus of $\textbf{G}.$ The condition for $\textbf{S}'^{\theta}$ to be a maximal $k$-split torus of $\textbf{H}$ is that $\textbf{S}'$ lies in a minimal $\theta$-stable parabolic $k$-subgroup of $\textbf{G}.$ ([\cite{helminck-wang}, Prop. 4.5]) Under some restrictions on $k$ and the derived subgroup of $\textbf{G},$ existence of a $\theta$-stable $k$-parabolic subgroup is shown in [\cite{helminck-wang}, Proposition 4.4]. \end{remark}

\begin{remark} It is not true, in general, that a $\theta$-stable apartment of $\mathcal{B}(G)$ gives rise to an apartment in $\mathcal{B}(H).$ Consider $\textbf{SL}_2$ and the involution $\theta$ defined by $X \mapsto (X^t)^{-1}.$ Supposing $-1 \in (\mathbb{Q}_p^{\times})^2,$ the fixed points under this involution consist of a maximal $k$-split torus. However, the diagonal torus is a $\theta$-stable maximal $k$-split torus in $\textbf{G}$ whose set of $\theta$-fixed points is $\{\pm 1\}.$ \end{remark}

\section{Equivalence of facets}
	
\subsection{$(r,\theta)$-facets and Moy-Prasad lattices}

Fix an apartment $\mathcal{A}'$ of $\mathcal{B}(G).$ For $\psi \in \Psi(\mathcal{A}'),$ define the hyperplane

$$H_{\psi -r } := \{ x \in \mathcal{A}' \mid \psi(x) = r \}.$$
As in [\cite{d}, Section 3.1], we call a nonempty subset $F' \subset \mathcal{A}'$ an $r$-facet of $\mathcal{A}'$ if there is some finite subset $S \subset \Psi(\mathcal{A}')$ for which 

\begin{enumerate} 
\item $F' \subset H_S:=\bigcap_{\psi \in S} H_{\psi-r}$
\item $F'$ is a connected component (in $H_S$) of $H_S \backslash \bigcup_{\psi \in \Psi(\mathcal{A}') \backslash S} (H_S \cap H_{\psi-r}).$
\end{enumerate}
If $F'$ is an $r$-facet in $\mathcal{A}',$ we define its dimension to be the dimension $A(F', \mathcal{A}').$ 

	The following remark, which is a consequence of the definitions above, will be important for later discussion of $\theta$-stable $r$-facets.

\begin{remark} \label{facetremark} If $F_1', F_2'$ are $r$-facets in $\mathcal{A}',$ and $F_2' \cap A(F_1', \mathcal{A}') \neq \emptyset,$ then $F_2'$ is entirely contained in $A(F_1', \mathcal{A}').$ To see why this is true, write 
$$ A(F_1', \mathcal{A}') = H_S:= \bigcap_{\psi \in S} H_{\psi-r}, $$
where $S$ is finite and $F_1'$ is a connected component of $H_S \backslash \bigcup_{\psi \in \Psi(\mathcal{A}') \backslash S} (H_S \cap H_{\psi-r}).$ Write
$$ A(F_2', \mathcal{A}') = H_{S'}:=\bigcap_{\rho \in S'} H_{\rho-r}, $$
where $S'$ is finite and $F_2'$ is a connected component of $H_{S'} \backslash \bigcup_{\rho \in \Psi(\mathcal{A}') \backslash S'} (H_{S'} \cap H_{\rho-r}).$ 

	It will be enough to show that $S \subset S'.$ If $S$ is empty, then the statement is obviously true, so suppose $\psi \in S \backslash S'.$ Let $x_2 \in F_2' \cap A(F_1', \mathcal{A}').$ Then, since $F_2'$ lies in $H_{S'} \backslash \bigcup_{\rho \in \Psi(\mathcal{A}') \backslash S'} (H_{S'} \cap H_{\rho-r}),$ we must have $x_2 \in \mathcal{A}' \backslash H_{\psi -r}.$ But, since $x_2 \in A(F_1', \mathcal{A}'),$ we have $\psi(x_2) = r,$ a contradiction. Thus, we must have $S \subset S',$ that is, $F_2'$ is entirely contained in $A(F_1', \mathcal{A}').$
\end{remark}

\begin{defn} \label{thetaf} Define an $(r,\theta)$-facet to be a nonempty subset $F$ in an apartment $\mathcal{A} \subset \mathcal{B}(H)$ such that there exists an apartment $\mathcal{A'} \subset \mathcal{B}(G)$ and an $r$-facet $F' \subset \mathcal{A'}$ with $F = F'^{\theta}.$ 
\end{defn}

\begin{remark} \label{thetafacetremark} In Definition \ref{thetaf}, we have defined a structure on apartments in $\mathcal{B}(H)$ which is finer than the $r$-facet structure of apartments in $\mathcal{B}(H).$ For example, take $r=0, \textbf{G} = \textbf{SL}_2$ equipped with the involution $\theta(A) = J(A^t)^{-1}J$ as in Example 1.1. From Figure 1, we see that $\theta$-facet $F_2$ is a strictly smaller subset of the $H$-alcove of $\mathcal{B}(H)$ that has boundary $\{F_1, F_1 + (\check{\alpha} + \check{\beta}) \}$  \end{remark}

\begin{defn} Let $F$ be an $(r,\theta)$-facet in an apartment $\mathcal{A} \subset \mathcal{B}(H).$ Define the dimension of an $(r,\theta)$-facet $F$ by \end{defn}

$$ \textup{ dim } F := \textup{dim }A(F, \mathcal{A}) $$

Let $\mathcal{A}'$ be an apartment of $\mathcal{B}(G).$ Suppose $F$ is an $(r,\theta)$-facet which lies inside an $r$-facet $F' \subset \mathcal{A}',$ and let $x,y \in F.$ Then, in particular, since $x$ and $y$ lie inside the $r$-facet $F',$ we have $\mathfrak{g}_{x,r} = \mathfrak{g}_{y,r}$ and $\mathfrak{g}_{x,r^+} = \mathfrak{g}_{y,r^+}.$ A proof of this statement can be found in [\cite{d}, 3.1.4]. This allows us to make the following definition. 

\begin{defn} Let $F$ be an $(r,\theta)$-facet of $\mathcal{A}.$ Fix $x \in F.$ Set \end{defn}

$$ \mathfrak{g}_{F} := \mathfrak{g}_{x,r} $$
and
$$ \mathfrak{g}_{F}^+ := \mathfrak{g}_{x,r^+}.$$

\begin{lemma} \label{facetpa} Let $F$ be an $(r,\theta)$-facet of $\mathcal{A} \subset \mathcal{B}(H).$ Suppose $x \in \mathcal{A}.$ Then $x \in F$ if and only if $\mathfrak{g}_{x,r} = \mathfrak{g}_F$ and $\mathfrak{g}_{x,r^+} = \mathfrak{g}_F^+.$ \end{lemma}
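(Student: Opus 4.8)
The plan is to prove both implications, using Remark~\ref{facetremark} as the main tool. The backward direction is the substantive one, so I would dispatch the forward direction first: if $x \in F$, then by the definition preceding this lemma (via [\cite{d}, 3.1.4]), the lattices $\mathfrak{g}_{x,r}$ and $\mathfrak{g}_{x,r^+}$ are constant on $F$, hence equal to $\mathfrak{g}_F$ and $\mathfrak{g}_F^+$ by the definition of those symbols.

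For the backward direction, fix $y \in F$, so that $\mathfrak{g}_{x,r} = \mathfrak{g}_F = \mathfrak{g}_{y,r}$ and $\mathfrak{g}_{x,r^+} = \mathfrak{g}_F^+ = \mathfrak{g}_{y,r^+}$. By Definition~\ref{thetaf}, $F = F'^\theta$ for some $r$-facet $F' \subset \mathcal{A}'$ in some apartment $\mathcal{A}'$ of $\mathcal{B}(G)$ (after identifying $\mathcal{A} \subset \mathcal{B}(H)$ with an affine subspace of $\mathcal{A}'$ via the Prasad--Yu map). Now $y \in F'$, and $y$ and $x$ both lie in $\mathcal{A} = \mathcal{A}'^{\,\theta}$. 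First I would argue that $x$ lies in the same $r$-facet $F'$ of $\mathcal{A}'$: let $F''$ be the $r$-facet of $\mathcal{A}'$ containing $x$. The equalities $\mathfrak{g}_{x,r} = \mathfrak{g}_{y,r}$ and $\mathfrak{g}_{x,r^+} = \mathfrak{g}_{y,r^+}$ say precisely that $x$ and $y$ lie in the same \emph{generalized} $r$-facet; one then needs that within a single apartment the generalized $r$-facet through a point coincides with the $r$-facet through it, which follows from the description of $\mathfrak{g}_{z,r}$ in terms of the affine roots $\psi$ with $\psi(z) \ge r$ (Section~2.3): the set $\{\psi : \psi(z) \ge r\}$, together with the set $\{\psi : \psi(z) = r\}$, determines and is determined by which $r$-facet of $\mathcal{A}'$ contains $z$. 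Hence $x \in F' = F''$.

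It then remains to see that $x \in F'$ together with $x \in \mathcal{A}$ forces $x \in F'^\theta = F$. Since $x \in \mathcal{A} = \mathcal{A}'^{\,\theta}$ we have $\theta(x) = x$, and $x \in F'$, so $x \in F' \cap \{\theta\text{-fixed points}\} = F'^\theta = F$; here one should check that $F'$ is itself $\theta$-stable, which is part of the setup ($F$ is the fixed-point set of a $\theta$-stable $r$-facet — if the given $F'$ is not a priori $\theta$-stable one replaces it by $F' \cap \theta(F')$, still an $r$-facet meeting $A(F',\mathcal{A}')$, invoking Remark~\ref{facetremark} to see this intersection is well-behaved). I expect the main obstacle to be the middle step: carefully justifying that equality of the Moy--Prasad lattices at $x$ and $y$ (a priori only a statement about generalized $r$-facets) actually pins $x$ down to the \emph{same} $r$-facet $F'$ as $y$ within the apartment $\mathcal{A}'$, rather than merely to some generalized $r$-facet that could in principle be a union of $r$-facets. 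This is exactly the content of [\cite{d}, 3.1.4] applied inside a fixed apartment, and I would cite it, but the bookkeeping of passing between $\mathcal{B}(H)$ and $\mathcal{B}(G)$ under the Prasad--Yu identification is where care is needed.
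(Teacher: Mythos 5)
Your proof follows essentially the same route as the paper's: both the paper and you reduce the substantive (backward) direction to [\cite{d}, Lemma 3.1.4], which is precisely the statement that within a single apartment of $\mathcal{B}(G)$ the Moy--Prasad pair $(\mathfrak{g}_{z,r},\mathfrak{g}_{z,r^+})$ determines the $r$-facet containing $z$. Your ``main obstacle'' paragraph is exactly that lemma; the paper simply cites it and concludes $x\in F'\cap\mathcal{A}=F$. So you have the right proof, just elaborated.

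Two remarks on the presentation, though. First, despite announcing Remark~\ref{facetremark} as ``the main tool,'' it plays no role in your actual argument --- the load-bearing citation is [\cite{d}, 3.1.4], as you discover halfway through. Second, the parenthetical about $\theta$-stability of $F'$ is both unnecessary and not quite right: once you know $x\in F'$ and $\theta(x)=x$ (because $x\in\mathcal{A}\subset\mathcal{B}(H)$), you have $x\in F'^\theta=F$ immediately, with no need for $F'$ to be $\theta$-stable --- $F'^\theta$ simply means the $\theta$-fixed points \emph{of} $F'$, not ``$F'$ is $\theta$-stable.'' And the suggested fix, replacing $F'$ by $F'\cap\theta(F')$ and calling this an $r$-facet via Remark~\ref{facetremark}, does not work: $\theta(F')$ need not lie in $\mathcal{A}'$, and even if it did, the intersection of two $r$-facets is generally not an $r$-facet (Remark~\ref{facetremark} is about containment in affine hulls, not about intersections being facets). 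Dropping that parenthetical leaves a correct argument that matches the paper's.
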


 \begin{proof} We apply the analogous result in [\cite{d}, 3.1.4]. If $x \in F,$ then, by definition, we have $\mathfrak{g}_{x,r} = \mathfrak{g}_F$ and $\mathfrak{g}_{x,r^+} = \mathfrak{g}_F^+.$ Suppose $F = F'^{\theta},$ where $F'$ is an $r$-facet in an apartment $\mathcal{A'} \subset B(G).$ Then, by [\cite{d}, Lemma 3.1.4], since $\mathfrak{g}_{x,r} = \mathfrak{g}_F = \mathfrak{g}_{F'}$ and $\mathfrak{g}_{x,r^+} = \mathfrak{g}_F^+= \mathfrak{g}_{F'}^+,$ we have $x \in F' \cap \mathcal{A} = F.$ 
 
 \end{proof}
  
Let $x \in \mathcal{B}(H).$ Before showing that there is a reasonable decomposition of the Moy-Prasad lattice $\mathfrak{g}_{x,r}$ with respect to $\theta,$ we demonstrate the relationship between the Moy-Prasad lattices $\mathfrak{h}_{x,r}$ and $\mathfrak{g}_{x,r}.$ The statements we make when discussing these lattices make sense because of [\cite{prasad-yu}, Theorem 1.9]. 

	We first make an observation. Consider the parahoric subgroups $\textbf{H}(K)_x$ and $\textbf{G}(K)_x.$ It is clear that $\textbf{H}(K)_x \subset \textup{stab}_{\textbf{G}(K)}(x).$ By the argument given in [\cite{prasad-yu}, Prop 1.7], we must have $\textbf{H}(K)_x \subset \textbf{G}(K)_x \cap \textbf{H}(K).$ On other other hand, the map $\theta$ induces an involution of the smooth, affine $R$-group scheme associated to $\textbf{G}(K)_x,$ which we denote $\mathcal{G}.$ Call the fixed points of this group scheme $\mathcal{G}^{\theta}.$ Since $\textbf{H}(K)_x \subset \textbf{G}(K)_x^{\theta},$ we have an induced inclusion of $\mathcal{H} \subset \mathcal{G}^{\theta},$ where $\mathcal{H}$ is the smooth, connected, affine $R$-group scheme associated to the parahoric $\textbf{H}(K)_x.$ Let $\mathcal{G}'$ be the smooth, affine (not necessarily connected) $R$-group scheme associated to $\textup{stab}_{\textbf{H}(K)}(x).$ Then, we have inclusions
	
	$$ \mathcal{H} \subset \mathcal{G}^{\theta} \subset \mathcal{G}',$$
where the group scheme $\mathcal{H}$ is of finite index in $\mathcal{G}'.$
Taking Lie algebras and $R_K$-rational points, since $x$ is Gal$(K/k)$-fixed, this gives us the equality $\mathfrak{h}_x = \mathfrak{g}_x \cap \mathfrak{h}.$

\begin{lemma} \label{mplattice} Let $x \in \mathcal{B}(H).$ Then, we have 

$$ \mathfrak{g}_{x,r} \cap \mathfrak{h} = \mathfrak{h}_{x,r}. $$
\end{lemma}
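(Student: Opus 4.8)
The plan is to reduce the identity to the maximal unramified extension $K$ and there to a comparison of root-space decompositions relative to a single $\theta$-stable torus.

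Since $K/k$ is unramified, $\mathcal{B}(G) = \mathcal{B}(\textbf{G},K)^{\textup{Gal}(K/k)}$, $\mathfrak{g}_{x,r} = (\textup{Lie}(\textbf{G})(K))_{x,r}^{\textup{Gal}(K/k)}$, and likewise $\mathfrak{h}_{x,r} = (\textup{Lie}(\textbf{H})(K))_{x,r}^{\textup{Gal}(K/k)}$; moreover, because $p \neq 2$, $\textup{Lie}(\textbf{H})(K)$ is the $d\theta$-fixed subspace of $\textup{Lie}(\textbf{G})(K)$, and $\mathcal{B}(\textbf{H},K)$ is identified with $\mathcal{B}(\textbf{G},K)^{\theta}$ by [\cite{prasad-yu}, Lemma 1.9.3]. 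Hence it suffices to prove $\mathfrak{g}_{x,r}\cap\mathfrak{h} = \mathfrak{h}_{x,r}$ for $x \in \mathcal{B}(\textbf{H},K)$, where $\mathfrak{g},\mathfrak{h}$ now denote the $K$-rational Lie algebras and $\mathfrak{g}_{x,r},\mathfrak{h}_{x,r}$ the corresponding Moy--Prasad lattices over $K$; the $k$-rational statement then follows by taking $\textup{Gal}(K/k)$-invariants. Both inclusions will drop out of the root-space comparison below, so I do not treat them separately.

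For the comparison, use Proposition \ref{stdapt} together with the Helminck--Wang argument recorded after it --- applied over $K$, where $\textbf{G}$ is quasi-split --- to choose a $\theta$-stable maximal $K$-split torus $\textbf{T}'$ of $\textbf{G}$ with $x \in \mathcal{A}(\textbf{T}',K)$ and such that $\textbf{T} := (\textbf{T}'^{\theta})^{\circ}$ is a maximal $K$-split torus of $\textbf{H}$ with $x \in \mathcal{A}(\textbf{T},K) = \mathcal{A}(\textbf{T}',K)^{\theta}$. By the description of $\mathfrak{g}_{x,r}$ recalled in Section 2.3, this lattice is graded by the relative root-space decomposition of $\mathfrak{g}$ with respect to $\textbf{T}$: writing $\mathfrak{g}_{\bar a}$ for the $\bar a$-weight space ($\bar a \in \Phi(\textbf{T},\textbf{G})\cup\{0\}$, with $\mathfrak{g}_0 = \textup{Lie}(C_{\textbf{G}}(\textbf{T}))$), one has $\mathfrak{g}_{x,r} = \bigoplus_{\bar a}(\mathfrak{g}_{x,r}\cap\mathfrak{g}_{\bar a})$, each summand being the span of the root lattices $\mathfrak{u}_{\psi}$ with $\dot\psi|_{\textbf{T}} = \bar a$ and $\psi(x)\ge r$ (together with $\textup{Lie}(C_{\textbf{G}}(\textbf{T}))_r$ when $\bar a = 0$). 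Since $\theta$ fixes $\textbf{T}$ pointwise it acts trivially on $X^*(\textbf{T})$, so it preserves each $\mathfrak{g}_{\bar a}$; and since $\theta$ fixes $x$ it permutes $\{\psi : \psi(x)\ge r\}$, hence preserves each $\mathfrak{g}_{x,r}\cap\mathfrak{g}_{\bar a}$. Therefore $\mathfrak{g}_{x,r}\cap\mathfrak{h} = (\mathfrak{g}_{x,r})^{d\theta} = \bigoplus_{\bar a}(\mathfrak{g}_{x,r}\cap\mathfrak{g}_{\bar a})^{d\theta}$. Applying the same description to $\textbf{H}$ with $\textbf{T}$ gives $\mathfrak{h}_{x,r} = \bigoplus_{\bar a}(\mathfrak{h}_{x,r}\cap\mathfrak{h}_{\bar a})$, and since $d\theta$ commutes with $\textup{Ad}(\textbf{T})$ one has $\mathfrak{h}_{\bar a} = (\mathfrak{g}_{\bar a})^{d\theta}$, the index set $\Phi(\textbf{T},\textbf{H})$ being exactly $\{\bar a\neq 0 : (\mathfrak{g}_{\bar a})^{d\theta}\neq 0\}$. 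So the lemma reduces to the family of equalities
$$ \mathfrak{h}_{x,r}\cap\mathfrak{h}_{\bar a} \;=\; (\mathfrak{g}_{x,r}\cap\mathfrak{g}_{\bar a})^{d\theta} \qquad \text{for each } \bar a \in \Phi(\textbf{T},\textbf{H})\cup\{0\}. $$

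These remaining equalities assert precisely that the valued root datum of $\textbf{H}$ relative to $(\textbf{T},K,\nu)$ --- the affine roots, the root lattices $\mathfrak{u}^{\textbf{H}}_{\bar\psi}$, and (for $\bar a=0$) the torus filtration $\textup{Lie}(C_{\textbf{H}}(\textbf{T}))_r$ --- is the one obtained by restriction and passage to $d\theta$-fixed points from the valued root datum of $\textbf{G}$ relative to $(\textbf{T}',K,\nu)$. This is the compatibility underlying the construction of the embedding $\mathcal{B}(\textbf{H},K)\hookrightarrow\mathcal{B}(\textbf{G},K)$ in [\cite{prasad-yu}, Theorem 1.9] (and is again where $p\neq 2$ is needed); the $\bar a=0$ case comes down to the evident $\theta$-compatibility of the Moy--Prasad filtration of a torus, using that $C_{\textbf{H}}(\textbf{T})$ is the identity component of $C_{\textbf{G}}(\textbf{T})^{\theta}$. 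Establishing this matching --- in particular, checking that no $\mathfrak{u}^{\textbf{H}}_{\bar\psi}$ is strictly larger than the $d\theta$-fixed part of the corresponding sum of $\mathfrak{u}_{\psi}$'s, which reduces to the agreement of the normalizations of $\nu$ on the two root groups --- is the main obstacle. Once it is in hand, summing the displayed equalities over $\bar a$ yields $\mathfrak{g}_{x,r}\cap\mathfrak{h} = \mathfrak{h}_{x,r}$ over $K$, and taking $\textup{Gal}(K/k)$-fixed points completes the proof. (One may equivalently phrase the final step as the uniqueness of the Moy--Prasad filtration: $r \mapsto \mathfrak{g}_{x,r}\cap\mathfrak{h}$ is a Moy--Prasad-type filtration of $\mathfrak{h}$ attached to the point $x$ of $\mathcal{B}(\textbf{H},K)$, hence equals $r\mapsto\mathfrak{h}_{x,r}$.)
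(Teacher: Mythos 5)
Your proof takes a genuinely different route from the paper's. The paper (following Yu's Lemma~8.2) scales by an element of valuation $-r$ to reduce first to depth zero, where the equality $\mathfrak{g}_x\cap\mathfrak{h}=\mathfrak{h}_x$ is supplied by the chain of inclusions $\mathcal{H}\subset\mathcal{G}^{\theta}\subset\mathcal{G}'$ of parahoric group schemes established just before the lemma; it then handles $r\in\operatorname{ord}(k)\otimes\mathbb{Q}$ by a tame base-change argument ([Adler,~1.4.1]) and passes to arbitrary real $r$ by left-continuity. Your route instead fixes a $\theta$-stable maximal $K$-split torus $\textbf{T}'\supset\textbf{T}$ with $x$ in the common apartment and compares the Moy--Prasad lattices root space by root space relative to $\textbf{T}$. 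The structure is coherent, and it does expose exactly where the content lies.

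The problem is that you stop at the content. The family of equalities $\mathfrak{h}_{x,r}\cap\mathfrak{h}_{\bar a}=(\mathfrak{g}_{x,r}\cap\mathfrak{g}_{\bar a})^{d\theta}$ \emph{is} the lemma, just rewritten summand by summand, and you explicitly flag the $\bar a\ne 0$ piece --- that no $\mathfrak{u}^{\textbf{H}}_{\bar\psi}$ is strictly larger than the $d\theta$-fixed part of the corresponding span of $\mathfrak{u}_{\psi}$'s, i.e.\ that the affine root system of $\textbf{H}$ and its valued root datum really are induced from those of $\textbf{G}$ --- as ``the main obstacle,'' and then proceed ``once it is in hand.'' This is not a cosmetic omission: matching the normalizations of the root-group filtrations between $\textbf{H}$ and $\textbf{G}$ (including in the cases where restriction to $\textbf{T}$ produces divisible roots or folds the root system to a non-reduced one) is precisely the technical heart of the Prasad--Yu embedding, and Theorem~1.9 there is stated at the level of buildings, not as a ready-made comparison of Moy--Prasad lattices for every $r$. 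The equivalent reformulation you offer at the end (uniqueness of a ``Moy--Prasad-type'' filtration) has the same issue: one would still have to verify that $r\mapsto\mathfrak{g}_{x,r}\cap\mathfrak{h}$ satisfies the defining properties of the $\mathfrak{h}_{x,\bullet}$ filtration, which is the same check in different clothing. The paper's depth-zero reduction sidesteps all of this, because at $r=0$ the inclusion of parahoric group schemes gives the lattice identity on the nose after taking Lie algebras and $R_K$-points, and the rest is formal scaling and limiting. So: different and conceptually appealing strategy, but the step you defer is the whole proof, and it needs to be carried out (or cited to a reference that actually proves the valued-root-datum compatibility) before the argument is complete.
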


\begin{proof} For this proof only, if $M/k$ is a finite extension, let $\textup{ord}(M)$ denote the image $\nu(M^{\times}),$ where $\nu$ denotes the extension of $\nu$ to $M.$ Following the proof of [\cite{jkyu}, Lemma 8.2], we will first assume that $r \in \textup{ord}(k).$ If $\pi_r$ is an element of $k$ with valuation $-r,$ then $\pi_r \mathfrak{g}_{x,r} = \mathfrak{g}_{x,0} = \mathfrak{g}_x,$ so the result follows from the observation preceding this proof. 
	
	If $r \in \textup{ord}(k) \otimes_{\mathbb{Z}} \mathbb{Q},$ we use [\cite{adler}, 1.4.1] to reduce the statement to the one above. The statement of the proposition now follows by noting that for any real number $r,$ we have
	
	$$ \mathfrak{g}_{x,r} = \bigcap_{s < r, s \in \mathbb{Q}} \mathfrak{g}_{x,s}.$$

\end{proof}
 
\begin{prop} \label{thstpa} Assume $p \neq 2.$ Let $F_i$ (i=1,2) be nonempty $(r,\theta)$-facets in some apartment $\mathcal{A} \subset \mathcal{B}(H),$ and suppose $F_i' \supset F_i$ is an $r$-facet in some apartment $\mathcal{A}' \subset \mathcal{B}(G)$ containing $\mathcal{A}.$ Define $\mathfrak{p}_{F_i}:=\mathfrak{p} \cap \mathfrak{g}_{F_i}$ and $\mathfrak{p}_{F_i}^+ := \mathfrak{p} \cap \mathfrak{g}_{F_i}^+.$ Then, for $i =1,2,$ we have: \end{prop}

\begin{enumerate} 

\item $ \mathfrak{g}_{F_i} = \mathfrak{h}_{F_i} \oplus \mathfrak{p}_{F_i} \textup{ and } \mathfrak{g}_{F_i}^+ = \mathfrak{h}_{F_i}^+ \oplus \mathfrak{p}_{F_i}^+ $

\item $F_1' = F_2' \textup{ if and only if } F_1 = F_2 $

\end{enumerate}
 
\begin{proof} Since $\theta$ is an automorphism of $\textbf{G}$ defined over $k,$ it induces an action on $\mathcal{B}(G)$ which is compatible with all structures on $\mathcal{B}(G).$ In particular, by [\cite{adler-debacker}, Proposition 2.2.1], since $F_i \subset \mathcal{B}(H),$ we have that $\mathfrak{g}_{F_i}$ and $\mathfrak{g}_{F_i}^+$ are $\theta$-stable. Let $X \in \mathfrak{g}_{F_i} = \mathfrak{g}_{x,r}.$ Since $\mathfrak{g}_{x,r}$ is $\theta$-stable, and $p \neq 2,$ we write

$$ X = \frac{X+\theta(X)}{2} + \frac{X-\theta(X)}{2} \in (\mathfrak{h} \cap \mathfrak{g}_{x,r}) \oplus (\mathfrak{p} \cap \mathfrak{g}_{x,r}). $$
Since $\mathfrak{h}, \mathfrak{p}$ and $\mathfrak{g}_{x,r}$ and $\mathfrak{g}_{x,r^+}$ are all $R$-modules, by Lemma \ref{mplattice}, the above computation shows 

$$ \mathfrak{g}_{x,r} = (\mathfrak{h} \cap \mathfrak{g}_{x,r}) \oplus (\mathfrak{p} \cap \mathfrak{g}_{x,r}) = \mathfrak{h}_{F_i} \oplus \mathfrak{p}_{F_i} $$
and
$$ \mathfrak{g}_{x,r^+} = (\mathfrak{h} \cap \mathfrak{g}_{x,r^+}) \oplus (\mathfrak{p} \cap \mathfrak{g}_{x,r^+}) = \mathfrak{h}_{F_i}^+ \oplus \mathfrak{p}_{F_i}^+.$$

	For the second claim,  the forward implication is trivial. For the other direction, note that if $F_1 = F_2,$ then

$$ \mathfrak{g}_{F_1'} = \mathfrak{g}_{F_1} = \mathfrak{g}_{F_2} = \mathfrak{g}_{F_2'}. $$
and
$$ \mathfrak{g}_{F_1'}^+ = \mathfrak{g}_{F_1}^+ = \mathfrak{g}_{F_2}^+ = \mathfrak{g}_{F_2'}^+$$
which is true if and only if $F_1' = F_2'$ by [\cite{d}, Lemma 3.1.4].

\end{proof}

\subsection{Generalized $(r,\theta)$-facets}

For the following definition, recall (from [\cite{d}, 3.2.1]) that for $x \in \mathcal{B}(G),$ the set $F^*(x) = \{ y \in \mathcal{B}(G) \mid \mathfrak{g}_{x,r} = \mathfrak{g}_{y,r} \textup{ and } \mathfrak{g}_{x,r^+} = \mathfrak{g}_{y,r^+} \}$ is called a generalized $r$-facet.

 \begin{defn} \label{genfacet} Let $x \in \mathcal{B}(H).$ Define \end{defn}
 
 $$ F_{\theta}^*(x) :=F^*(x)^{\theta}. $$

\vspace{.2in}

\begin{defn} $$ \mathcal{F}_{\theta}(r) := \{ F_{\theta}^*(x) \mid x \in \mathcal{B}(H) \}.$$ \end{defn}
We call an element of $\mathcal{F}_{\theta}(r)$ a generalized $(r,\theta)$-facet. 

\begin{remark} \label{patr} We briefly mention a fact which will be used many times throughout this section. By [\cite{bt2}, 4.6.28], if $\mathcal{A}, \tilde{\mathcal{A}}$ are two apartments in $\mathcal{B}(H)$ such that $\Omega = \{x,y\} \subset \mathcal{A} \cap \tilde{\mathcal{A}},$ then there exists an element $h \in H_{\Omega}$ such that $h\mathcal{A} = \tilde{\mathcal{A}}.$ More succinctly stated, $H_{\Omega}$ acts transitively on the apartments of $\mathcal{B}(H)$ containing $\Omega.$ \end{remark}

\begin{remark} We remark that if $F_{\theta}^*(x)$ is a generalized $(r,\theta)$-facet, and $\mathcal{A}$ is an apartment of $\mathcal{B}(H)$ such that $F_{\theta}^*(x) \cap \mathcal{A} \neq \emptyset,$ then $F_{\theta}^*(x) \cap \mathcal{A}$ is an $(r,\theta)$-facet of $\mathcal{A}.$ \end{remark}

\begin{lemma} \label{thfacetpa} Let $x \in \mathcal{B}(H)$ and $\mathcal{A}$ an apartment in $\mathcal{B}(H)$ such that $F:= F_{\theta}^*(x) \cap \mathcal{A}  \neq \emptyset.$ For all $y \in F,$ we have \end{lemma}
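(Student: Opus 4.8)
The plan is to prove that the Moy-Prasad objects attached to $x$ are constant along $F$, mirroring exactly the argument of Lemma \ref{facetpa} but now for the \emph{generalized} facet. Concretely, the statement I expect to be proving is: for all $y \in F = F_\theta^*(x) \cap \mathcal{A}$, one has $\mathfrak{g}_{y,r} = \mathfrak{g}_{x,r}$ and $\mathfrak{g}_{y,r^+} = \mathfrak{g}_{x,r^+}$ (and hence, by Proposition \ref{thstpa}(1), the same equalities hold after intersecting with $\mathfrak{h}$ and with $\mathfrak{p}$, so $\mathfrak{h}_{y,r} = \mathfrak{h}_{x,r}$, $\mathfrak{p}_{y,r} = \mathfrak{p}_{x,r}$, etc.). The first step is simply to unwind the definitions: $F_\theta^*(x) = F^*(x)^\theta$, and $F^*(x)$ is the generalized $r$-facet of $\mathcal{B}(G)$ through $x$, which by its very definition (quoted from [\cite{d}, 3.2.1]) consists of exactly those $y \in \mathcal{B}(G)$ with $\mathfrak{g}_{x,r} = \mathfrak{g}_{y,r}$ and $\mathfrak{g}_{x,r^+} = \mathfrak{g}_{y,r^+}$. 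So if $y \in F$, then in particular $y \in F^*(x)$, and the equalities hold immediately.

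The only subtlety — and the one point that needs a word of justification — is that the lattices $\mathfrak{g}_{y,r}$ appearing here are the Moy-Prasad lattices attached to $y$ viewed as a point of $\mathcal{B}(H)$, whereas the defining condition for $F^*(x)$ refers to the Moy-Prasad lattices attached to $y$ viewed as a point of $\mathcal{B}(G)$. These agree precisely because of the identification $\mathcal{B}(H) = \mathcal{B}(G)^\theta$ of [\cite{prasad-yu}, Theorem 1.9]: a point of $\mathcal{B}(H)$ \emph{is} a point of $\mathcal{B}(G)$, and the Moy-Prasad filtration $\mathfrak{g}_{z,r}$ for $z \in \mathcal{B}(H)$ is by our conventions (Section 2.3) defined via this identification. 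So there is genuinely nothing to reconcile; I will just remark this, exactly as the paragraph preceding Lemma \ref{mplattice} does. Then the $\mathfrak{h}$- and $\mathfrak{p}$-parts follow by applying Lemma \ref{mplattice} and the decompositions of Proposition \ref{thstpa}(1): from $\mathfrak{g}_{y,r} = \mathfrak{g}_{x,r}$ we get $\mathfrak{h}_{y,r} = \mathfrak{g}_{y,r} \cap \mathfrak{h} = \mathfrak{g}_{x,r} \cap \mathfrak{h} = \mathfrak{h}_{x,r}$, and likewise $\mathfrak{p} \cap \mathfrak{g}_{y,r} = \mathfrak{p} \cap \mathfrak{g}_{x,r}$.

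There is essentially no obstacle here; this is a bookkeeping lemma whose content is that ``generalized $(r,\theta)$-facet'' is well-defined, i.e. that $\mathfrak{g}_F, \mathfrak{g}_F^+$ (and their $\mathfrak{h}$- and $\mathfrak{p}$-parts) can be unambiguously attached to $F_\theta^*(x)$ rather than to a chosen base point. The one thing I would be careful about is not to over-claim: I should state only the equalities that actually follow from membership in $F^*(x)$, and flag that the reverse characterization (``$\mathfrak{g}_{y,r} = \mathfrak{g}_F$ and $\mathfrak{g}_{y,r^+} = \mathfrak{g}_F^+$ implies $y \in F$'', the generalized analogue of Lemma \ref{facetpa}) requires additionally knowing $y \in \mathcal{A} \cap \mathcal{B}(H)$, and invoking [\cite{d}, Lemma 3.1.4] together with $\theta$-stability — but if that direction is also part of the statement, I would handle it just as in the proof of Lemma \ref{facetpa}, intersecting with $\mathcal{A}$ and using that $F^*(x) \cap \mathcal{A}$ is an $r$-facet of $\mathcal{A}'$ by Remark \ref{facetremark}.
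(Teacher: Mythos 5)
You've misidentified the conclusion of the lemma. The display that follows the truncated statement is
$$ F_{\theta}^*(x) = H_y \cdot F, $$
i.e.\ the claim is that the full generalized $(r,\theta)$-facet is recovered from its intersection $F$ with a single apartment by acting with the parahoric $H_y$ at any point $y \in F$. Your proposal instead proves that $\mathfrak{g}_{y,r} = \mathfrak{g}_{x,r}$ and $\mathfrak{g}_{y,r^+} = \mathfrak{g}_{x,r^+}$ for $y \in F$; but that is not a lemma at all — it is literally the definition of $F^*(x)$ from [\cite{d}, 3.2.1] quoted in Section 4.2, so there is nothing to prove. The observation that $\mathcal{B}(H)$ sits inside $\mathcal{B}(G)$ via [\cite{prasad-yu}, Theorem 1.9] and that the Moy--Prasad lattices are computed there is correct and is tacitly used, but it is not the content here.

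The actual argument requires the building-theoretic input of Remark \ref{patr} (that $H_\Omega$ acts transitively on apartments of $\mathcal{B}(H)$ containing $\Omega$). For ``$\subset$'': given $z \in F_\theta^*(x)$, choose an apartment $\tilde{\mathcal{A}} \ni y, z$ and use Remark \ref{patr} to find $h \in H_y$ with $h\tilde{\mathcal{A}} = \mathcal{A}$; since $h$ fixes $y$ one computes $\mathfrak{g}_{hz,r} = {}^h\mathfrak{g}_{z,r} = {}^h\mathfrak{g}_{y,r} = \mathfrak{g}_{y,r} = \mathfrak{g}_{x,r}$ (and likewise for $r^+$), so $hz \in \mathcal{A} \cap F_\theta^*(x) = F$ and $z \in H_y \cdot F$. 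For ``$\supset$'': for $z \in F$ and $h \in H_y$, the same chain of equalities shows $hz \in F_\theta^*(x)$. The lattice equalities you discuss appear only as one link in this chain; the real point is the apartment-transitivity step, which your proposal is missing entirely.
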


$$ F_{\theta}^*(x) = H_y \cdot F. $$

\begin{proof} Let $y \in F.$ 
\newline
$``\subset "$: Let $z \in F_{\theta}^*(x),$ and let $\tilde{A}$ be an apartment in $\mathcal{B}(H)$ containing $y$ and $z.$ By Remark \ref{patr}, there exists an element $h \in H_y$ such that $hz \in \mathcal{A}.$ We have 

$$ \mathfrak{g}_{hz,r} = {}^{h}\mathfrak{g}_{z,r} = {}^{h}\mathfrak{g}_{x,r} = {}^{h}\mathfrak{g}_{y,r} = \mathfrak{g}_{hy,r} = \mathfrak{g}_{y,r} = \mathfrak{g}_{x,r} $$
and similarly,

$$  \mathfrak{g}_{hz,r^+} = {}^{h}\mathfrak{g}_{z,r^+} = {}^{h}\mathfrak{g}_{x,r^+} = {}^{h}\mathfrak{g}_{y,r^+} = \mathfrak{g}_{hy,r^+} = \mathfrak{g}_{y,r^+} = \mathfrak{g}_{x,r^+}. $$
Thus, $hz \in \mathcal{A} \cap F_{\theta}^*(x) = F,$ so, in particular, $z \in H_y \cdot F.$ 
\newline
$``\supset "$: Let $z \in F$ and $h \in H_y.$ Then 

$$ \mathfrak{g}_{hz,r} = {}^{h}\mathfrak{g}_{z,r} = {}^{h}\mathfrak{g}_{y,r} = \mathfrak{g}_{hy,r} = \mathfrak{g}_{y,r} = \mathfrak{g}_{x,r}.$$
and
$$  \mathfrak{g}_{hz,r^+} = {}^{h}\mathfrak{g}_{z,r^+} = {}^{h}\mathfrak{g}_{y,r^+} = \mathfrak{g}_{hy,r^+} = \mathfrak{g}_{y,r^+} = \mathfrak{g}_{x,r^+}. $$
Thus,  $hz \in F_{\theta}^*(x).$

\end{proof}

\begin{cor} \label{imbo} If $F_{\theta}^* \in \mathcal{F}_{\theta}(r),$ then the image of $F_{\theta}^*$ in the reduced building of $H$ is bounded. \end{cor}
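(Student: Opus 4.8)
The plan is to reduce the statement to the corresponding fact in the group case, namely that a generalized $r$-facet $F^*$ of $\mathcal{B}(G)$ has bounded image in the reduced building of $G$ (this is exactly [\cite{d}, Corollary 3.2.3] or its analogue). First I would fix $F_{\theta}^* = F_{\theta}^*(x) \in \mathcal{F}_{\theta}(r)$ for some $x \in \mathcal{B}(H)$, and let $F^* = F^*(x)$ be the generalized $r$-facet of $\mathcal{B}(G)$ with $F_{\theta}^* = (F^*)^{\theta}$. Using the Prasad--Yu identification $\iota : \mathcal{B}(H) \hookrightarrow \mathcal{B}(G)$, which is an isometry onto $\mathcal{B}(G)^{\theta}$ up to the affine translations it allows, the image of $F_{\theta}^*$ in the reduced building of $H$ is, up to a bounded modification coming from the central torus $\textbf{C}$, the image of $F_{\theta}^* \subset F^*$ in the reduced building of $G$. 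Since $F_{\theta}^* \subseteq F^*$, it suffices to know $F^*$ has bounded image in the reduced building of $G$.

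For that, I would quote [\cite{d}, 3.2] directly: a generalized $r$-facet of $\mathcal{B}(G)$ is a finite union of $r$-facets, and modulo the action of the center (i.e.\ in the reduced building) each such $r$-facet is a relatively compact polytope, since within a fixed apartment $\mathcal{A}'$ an $r$-facet is a connected component of $H_S \setminus \bigcup_{\psi \notin S} H_{\psi - r}$ and, after quotienting by $\textbf{X}_*(\textbf{C}) \otimes \mathbb{R}$, the relevant hyperplane arrangement is locally finite with compact chambers; combined with the fact (Lemma \ref{thfacetpa}, or the $G$-analogue from [\cite{d}]) that $F^*$ is a single $H_x$-orbit (resp.\ $G_x$-orbit) of such a facet, and that $H_x$ (resp.\ $G_x$) fixes $x$ and hence acts on the reduced building with bounded orbits on any bounded set, boundedness follows. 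Concretely: $F^* = G_x \cdot (F^* \cap \mathcal{A}')$ and $F^* \cap \mathcal{A}'$ is bounded in the reduced apartment, so applying elements of the parahoric $G_x$ — which fix $x$ and move points a bounded distance — keeps the image bounded.

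The main obstacle I anticipate is making the passage through $\iota$ clean: $\iota$ is only defined up to translation by $\textbf{X}_*(\textbf{C}) \otimes \mathbb{R}$ and maps apartments of $\mathcal{B}(H)$ into apartments of $\mathcal{B}(G)$ by affine (not necessarily isometric) transformations, so one must check that "bounded in $\mathcal{B}(G)$" pulls back to "bounded in $\mathcal{B}(H)$" after descending to the reduced buildings — equivalently, that the affine map $\mathcal{A}(\textbf{S},k) \to \mathcal{A}(\textbf{S}',k)$ restricts to a proper map on the semisimple parts. This is where I would invoke [\cite{prasad-yu}, Theorem 1.9] and [\cite{prasad-yu}, Lemma 1.9.3]: the map is affine with finite-dimensional image and the indeterminacy is exactly the central direction that gets killed in the reduced building, so properness on the semisimple part is automatic. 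Everything else is bookkeeping: $F^*$ is a finite union of $r$-facets by [\cite{d}, 3.2], each $r$-facet of $\mathcal{A}'$ has bounded image in the reduced apartment because the affine-root hyperplane arrangement is locally finite, and the parahoric-orbit description keeps boundedness under the $H$-action.
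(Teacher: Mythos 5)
Your approach matches the paper's: both reduce to the boundedness of the ambient generalized $r$-facet $F^*$ in the reduced building of $G$, which is [\cite{d}, Corollary 3.2.7] (not 3.2.3 as you guessed), together with the containment $F_{\theta}^* \subset F^*$. Your extra discussion of the Prasad--Yu embedding is reasonable but the paper does not belabor it, and your parenthetical re-derivation of DeBacker's corollary (in particular the claim that ``$F^*$ is a finite union of $r$-facets'') is both unnecessary and not quite what [\cite{d}] establishes --- the $G_x$-orbit description $F^* = G_x\cdot(F^*\cap\mathcal{A}')$ gives an a priori infinite union of facets, and boundedness comes from Lemma 3.2.6/Corollary 3.2.7 there, not from finiteness --- so you should simply cite the corollary as the paper does.
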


\begin{proof}

Let $x \in F_{\theta}^*,$ and let $F^*$ be the generalized $r$-facet in $\mathcal{B}(G)$ containing $x.$ The result follows directly from the result in [\cite{d}, Corollary 3.2.7] since the image of $F_{\theta}^*$ is contained in the image of $F^*$ in the reduced building.
\end{proof}

\begin{lemma} \label{normpa} Let $x \in B(H).$ We have \end{lemma}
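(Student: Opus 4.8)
This lemma records the normalizer in $H$ of the generalized $(r,\theta)$-facet $F_\theta^*(x)$ — equivalently, of the pair of Moy--Prasad lattices $\mathfrak{g}_{x,r} \supset \mathfrak{g}_{x,r^+}$. The plan is to prove the two inclusions separately and to import the substantive one from the corresponding computation for $G$ acting on $\mathcal{B}(G)$ in [\cite{d}, Section 3.2], via the $\theta$-equivariant identification $\mathcal{B}(H) = \mathcal{B}(G)^\theta$ of [\cite{prasad-yu}, Theorem 1.9].

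First I would dispose of the easy inclusion: if $h \in H$ stabilizes $F_\theta^*(x)$, then since $h \in G$ and the Moy--Prasad filtration is $G$-equivariant, ${}^h\mathfrak{g}_{x,r} = \mathfrak{g}_{hx,r}$; as $hx$ lies again in the generalized $r$-facet $F^*(x) \subset \mathcal{B}(G)$, on which $\mathfrak{g}_{z,r}$ and $\mathfrak{g}_{z,r^+}$ are constant (by definition and [\cite{d}, Lemma 3.1.4]), this gives ${}^h\mathfrak{g}_{x,r} = \mathfrak{g}_{x,r}$ and ${}^h\mathfrak{g}_{x,r^+} = \mathfrak{g}_{x,r^+}$.

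For the reverse inclusion I would argue directly. Take $h \in H$ with ${}^h\mathfrak{g}_{x,r} = \mathfrak{g}_{x,r}$ and ${}^h\mathfrak{g}_{x,r^+} = \mathfrak{g}_{x,r^+}$, and let $y \in F_\theta^*(x)$. Then $\mathfrak{g}_{hy,r} = {}^h\mathfrak{g}_{y,r} = {}^h\mathfrak{g}_{x,r} = \mathfrak{g}_{x,r}$ and likewise at $r^+$, so $hy$ lies in the generalized $r$-facet $F^*(x)$; moreover $hy$ is $\theta$-fixed, since $\theta(hy) = \theta(h)\theta(y) = hy$ because $h \in H \subset G^\theta$ and $y \in \mathcal{B}(G)^\theta$. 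Hence $hy \in F^*(x)^\theta = F_\theta^*(x)$, so $h \cdot F_\theta^*(x) \subseteq F_\theta^*(x)$, and running this with $h^{-1}$ forces equality. If the stated conclusion is instead phrased through a fixed apartment $\mathcal{A} \subset \mathcal{B}(H)$ (with $F_\theta^*(x) \cap \mathcal{A}$ in place of $F_\theta^*(x)$), I would first invoke Lemma \ref{thfacetpa} to write $F_\theta^*(x) = H_y \cdot (F_\theta^*(x) \cap \mathcal{A})$ and use the transitivity of $H_{\{x,y\}}$ on the apartments of $\mathcal{B}(H)$ through $\{x,y\}$ (Remark \ref{patr}) to move an arbitrary point of $F_\theta^*(x)$ back into $\mathcal{A}$, thereby reducing to the computation above or directly to its $G$-analogue.

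The hard part will be the bookkeeping between the two buildings rather than any single estimate: the result in [\cite{d}] is about $G$-stabilizers on $\mathcal{B}(G)$, we need an $H$-statement on $\mathcal{B}(H)$, the embedding $\mathcal{B}(H) \hookrightarrow \mathcal{B}(G)^\theta$ is only affine and only unique up to translation by $\textbf{X}_*(\textbf{C}) \otimes \mathbb{R}$, and $H_y$ is not $G_y \cap H$ in general. The key is to check that $\theta$-equivariance of the $G$-action on $\mathcal{B}(G)$ together with the $\theta$-stability of the Moy--Prasad lattices — via [\cite{adler-debacker}, Proposition 2.2.1], exactly as used in Proposition \ref{thstpa} — is what keeps every translate $hy$ inside $\mathcal{B}(H)$; Lemma \ref{mplattice} and Proposition \ref{thstpa} then let one pass between assertions about $\mathfrak{g}_{x,r}$ and their $\mathfrak{h}$- and $\mathfrak{p}$-components if the precise form of the conclusion demands it.
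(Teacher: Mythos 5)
Your proof is correct and follows essentially the same route as the paper: the forward inclusion from the definition of $F_\theta^*(x)$, and the reverse inclusion by showing that $n$ in the two normalizers sends any $z \in F_\theta^*(x)$ into $F^*(x) \cap \mathcal{B}(H) = F_\theta^*(x)$. Your extra observation that $h^{-1}$ can be run to upgrade $h F_\theta^*(x) \subseteq F_\theta^*(x)$ to equality is a harmless refinement, and the worries in your final paragraph (translation ambiguity, $H_y$ versus $G_y \cap H$) do not actually enter the argument.
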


$$ N_H(\mathfrak{g}_{x,r}) \cap N_H(\mathfrak{g}_{x,r^+}) = \textup{stab}_H(F_{\theta}^*(x)). $$

\begin{proof} $``\supset"$: If $h \in \textup{stab}_H(F_{\theta}^*(x)),$ then $hx \in F_{\theta}^*(x),$ so, in particular, 
$$ {}^{h}\mathfrak{g}_{x,r} = \mathfrak{g}_{hx,r} = \mathfrak{g}_{x,r}$$
and
$$ {}^{h}\mathfrak{g}_{x,r^+} = \mathfrak{g}_{hx, r^+} = \mathfrak{g}_{x,r^+}.$$
$``\subset"$: Let $n \in N_H(\mathfrak{g}_{x,r}) \cap N_H(\mathfrak{g}_{x,r^+}).$ Choose $z \in F_{\theta}^*(x).$ Since $n$ normalizes the lattices $\mathfrak{g}_{x,r}$ and $\mathfrak{g}_{x,r^+},$ we have

$$ \mathfrak{g}_{nz,r} = {}^{n}\mathfrak{g}_{z,r} = {}^{n}\mathfrak{g}_{x,r} = \mathfrak{g}_{x,r} $$
and
$$ \mathfrak{g}_{nz,r^+} = {}^{n}\mathfrak{g}_{z,r^+} = {}^{n}\mathfrak{g}_{x,r^+} = \mathfrak{g}_{x,r^+}. $$
This implies, by definition of $F^*(x),$ that $nz \in F^*(x).$ Since $z \in \mathcal{B}(H),$ and $n \in H,$ we have $nz \in F^*(x) \cap \mathcal{B}(H) = F^*(x)^{\theta} = F_{\theta}^*(x).$ Thus, since $z \in F_{\theta}^*(x)$ was arbitrary, $nF_{\theta}^*(x) \subset F_{\theta}^*(x)$ and thus $n \in \textup{stab}_H(F_{\theta}^*(x)).$

\end{proof}

\begin{lemma} \label{clfa} Let $F_{\theta}^* \in \mathcal{F}_{\theta}(r)$ and $\mathcal{A}$ an apartment in $B(H)$ such that $F:= F_{\theta}^* \cap \mathcal{A} \neq \emptyset.$ Then \end{lemma}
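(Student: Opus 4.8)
To prove the asserted closure identity $\overline{F_\theta^*}\cap\mathcal A=\overline F$ (the bar on the right denoting closure taken inside $\mathcal A$), the plan is to lift the statement to $\mathcal B(G)$, invoke the corresponding fact of DeBacker for generalized $r$-facets, and then descend by taking $\theta$-fixed points, where the only genuinely new input is a convexity argument controlling how closures interact with the fixed-point set of $\theta$. First I would fix $x\in F$ and let $F^\ast=F^\ast(x)$ be the generalized $r$-facet of $\mathcal B(G)$ containing $x$, so that $F_\theta^\ast=(F^\ast)^\theta=F^\ast\cap\mathcal B(H)$ under the Prasad--Yu identification $\mathcal B(H)=\mathcal B(G)^\theta$. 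Using Proposition \ref{stdapt} (applied as in the discussion following it), I would choose a maximal $k$-split torus $\textbf{S}$ of $\textbf{H}$ with apartment $\mathcal A$ and a $\theta$-stable maximal $k$-split torus $\textbf{S}'$ of $\textbf{G}$ containing $\textbf{S}$; since $\textbf{S}$ is maximal $k$-split in $\textbf{H}$ it is the maximal $k$-split subtorus of $(\textbf{S}'^\theta)^\circ$, so the apartment $\mathcal A':=\mathcal A(\textbf{S}',k)$ satisfies $\mathcal A'^\theta=\mathcal A$ and $\mathcal A\subset\mathcal A'$. Because $x$ is $\theta$-fixed, $F^\ast$ is $\theta$-stable, and hence so is the $r$-facet $F':=F^\ast\cap\mathcal A'$ of $\mathcal A'$; moreover $F=F_\theta^\ast\cap\mathcal A=F^\ast\cap\mathcal A=F'\cap\mathcal A=(F')^\theta$.

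Next I would apply the closure lemma for generalized $r$-facets from [\cite{d}, Section 3.2], which gives $\overline{F^\ast}\cap\mathcal A'=\overline{F'}$ (closure of $F'$ in $\mathcal A'$) and exhibits $\overline{F^\ast}$ as a union of generalized $r$-facets of $\mathcal B(G)$. The descent step then reads
\[
\overline{F_\theta^\ast}\cap\mathcal A\ \subseteq\ \overline{F^\ast}\cap\mathcal A\ =\ \overline{F'}\cap\mathcal A'^\theta\ =\ \overline{F},
\]
while the reverse inclusion $\overline F\subseteq\overline{F_\theta^\ast}\cap\mathcal A$ is immediate from $F\subseteq F_\theta^\ast$ together with the fact that $\mathcal A$ is closed in $\mathcal B(H)$. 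The crux is the middle equality $\overline{F'}\cap\mathcal A'^\theta=\overline F$. One inclusion is clear since $\mathcal A'^\theta$ is a closed affine subspace and $F\subseteq\overline{F'}\cap\mathcal A'^\theta$. For the other, given $w\in\overline{F'}\cap\mathcal A'^\theta$, pick any $z\in F$ (nonempty by hypothesis); the geodesic $[z,w]$ lies in $\overline{F'}$ by convexity of $\overline{F'}$ (it is the intersection of $A(F',\mathcal A')$ with finitely many closed half-spaces $\{\psi\lessgtr r\}$), and a standard argument with the defining affine roots — a point of $(z,w]$ satisfies $\psi=r$ exactly for the $\psi$ cutting out $A(F',\mathcal A')$ — shows $(z,w]\subseteq F'$; since $z,w\in\mathcal A'^\theta$ and $\mathcal A'^\theta$ is affine, $(z,w]\subseteq F'\cap\mathcal A'^\theta=F$, so $w\in\overline F$. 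Combining the two displayed chains yields $\overline{F_\theta^\ast}\cap\mathcal A=\overline F$, and the refinement that $\overline F$ is a union of $(r,\theta)$-facets of $\mathcal A$ follows by intersecting the union-of-facets description of $\overline{F^\ast}$ with $\mathcal A$, using Lemma \ref{facetpa} and Remark \ref{facetremark}.

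The main obstacle is precisely this descent through $\theta$-fixed points: for a general $\theta$-stable closed set the identity $(\overline{F^\ast})^\theta=\overline{(F^\ast)^\theta}$ can fail, so one must use that $\overline{F'}$ is convex and $\theta$ acts on $\mathcal A'$ as an affine isometry permuting the hyperplanes $H_{\psi-r}$, which is what makes the geodesic between two $\theta$-fixed points stay $\theta$-fixed and inside $F'$. A secondary point requiring care — flagged by Remark \ref{thfacetpa} and the example following it — is that not every $\theta$-stable apartment of $\mathcal B(G)$ descends to an apartment of $\mathcal B(H)$, so the lift must be built by choosing $\textbf{S}'$ around a given $\textbf{S}$ in $\textbf{H}$ (via Proposition \ref{stdapt}), rather than starting from an arbitrary $\theta$-stable apartment of $\mathcal B(G)$; this is also what guarantees $\mathcal A'^\theta=\mathcal A$ exactly, which is used twice above.
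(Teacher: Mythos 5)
Your proof is correct in outline but takes a genuinely different route from the paper's. The paper never leaves $\mathcal{B}(H)$: for $x\in\overline{F_\theta^*}\cap\mathcal{A}$ it chooses a sequence $x_n\to x$ in $F_\theta^*$, and for each $n$ uses transitivity of $H_\Omega$ on apartments containing a two-point set $\Omega$ (Remark~\ref{patr}) to produce $h_n\in H$ fixing $x$ and a fixed point $y\in F$, so that $h_n\mathcal{A}_n=\mathcal{A}$. Since $h_n\in H_y\subset\mathrm{stab}_H(F_\theta^*)$ by Lemma~\ref{normpa}, the conjugated sequence $h_nx_n$ lands in $F_\theta^*\cap\mathcal{A}=F$ and still converges to $x$. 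This is a direct $\theta$-adaptation of DeBacker's own argument for the closure lemma in $\mathcal{B}(G)$, re-run from scratch in the fixed-point building. Your approach instead imports DeBacker's closure lemma as a black box in $\mathcal{B}(G)$, intersects with an apartment $\mathcal{A}'\supset\mathcal{A}$, and descends with a geodesic/convexity argument. Both are valid; yours is slightly more economical if one is happy to cite [\cite{d}], while the paper's re-derivation is self-contained in $\mathcal{B}(H)$ and avoids having to set up a lifted apartment at all.

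Two small issues in your write-up. First, the interval is backwards: in the paper's convention $(z,w]$ excludes $z$ and \emph{includes} $w$, so $(z,w]\subset F'$ is false when $w$ lies on the boundary of $F'$; the correct statement is that the segment \emph{minus its endpoint $w$}, i.e.\ $[z,w)$ (or $(z,w)$), lies in $F'$, which is all you need to conclude $w\in\overline{F}$. Second, the heavy use of $\mathcal{A}'^\theta=\mathcal{A}$ is unnecessary and slightly overclaims: once you have \emph{any} apartment $\mathcal{A}'\subset\mathcal{B}(G)$ containing $\mathcal{A}$ (which Prasad--Yu supplies directly), the chain
\[
\overline{F_\theta^*}\cap\mathcal{A}\ \subset\ \overline{F^*}\cap\mathcal{A}'\cap\mathcal{A}\ =\ \overline{F'}\cap\mathcal{A}\ \subset\ \overline{F'\cap\mathcal{A}}\ =\ \overline{F}
\]
already does the job, using only that $\mathcal{A}$ is a closed convex subset of $\mathcal{A}'$; neither $\theta$-stability of $\mathcal{A}'$ nor the exact equality $\mathcal{A}'^\theta=\mathcal{A}$ is needed. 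You are right that the latter equality is not automatic for an arbitrary $\theta$-stable apartment (Remark 3.3), so it is better not to lean on it when it can be avoided.
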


$$ \overline{F} = \overline{F_{\theta}^*} \cap \mathcal{A}. $$

\begin{proof} $``\subset"$: This inclusion is clear since $\overline{A \cap B} \subset \overline{A} \cap \overline{B}$ for any two subsets $A, B$ of $\mathcal{B}(H).$ \newline
$``\supset":$ Let $x \in \overline{F_{\theta}^*} \cap \mathcal{A}.$ We will produce a sequence converging to $x$ which lies in $F.$ Since $x \in \overline{F_{\theta}^*},$ there exists a sequence $\{x_n\}$ in $F_{\theta}^*$ converging to $x.$ Fix $y \in F.$ Without loss of generality, assume $\textup{\textup{dist}}(x_n,x) < \frac{1}{n}$ for each $n.$ Note that $ \bigcup_{x \in \overline{C}} \overline{C} $ contains a neighborhood of $x,$ where $C$ ranges over all alcoves in $\mathcal{B}(H).$ Thus, for large $n,$ there exist alcoves $C_n \subset \mathcal{B}(H)$ such that $x_n$ and $x$ lie in $\overline{C_n}.$ Let $\mathcal{A}_n$ be an apartment in $\mathcal{B}(H)$ which contains $C_n$ and $y.$ We now fix $n.$ Since $x$ and $y$ lie in $\mathcal{A}_n \cap \mathcal{A},$ by Remark \ref{patr}, there is an element $h_n \in H$ that maps $\mathcal{A}_n$ to $\mathcal{A}$ and fixes $x$ and $y.$ In particular, since $h_n x = x,$ we have

$$ \textup{dist}(h_nx_n, x) = \textup{dist}(x_n, x) < \frac{1}{n}.$$
Now, by Lemma \ref{normpa}, since $h_ny = y$ for each $n,$ we have $h_n \in N_H(\mathfrak{g}_{y,r}) \cap N_H(\mathfrak{g}_{y,r^+}) = \textup{stab}_H(F_{\theta}^*).$ Thus, $h_n x_n \in F_{\theta}^*$ and since $h_n \mathcal{A}_n = \mathcal{A},$ we also have $h_nx_n \in \mathcal{A},$ so $h_nx_n \in F_{\theta}^* \cap \mathcal{A} = F.$ Therefore, $\{h_nx_n\}$ is our desired sequence. 
\end{proof}

\begin{defn} For $F_{\theta}^* \in \mathcal{F}_{\theta}(r)$ and $\delta > 0,$ define \end{defn}

$$ F_{\theta}^*(\delta) := \{ x \in F_{\theta}^* \mid \textup{\textup{dist}}(x,z) \geq \delta \textup{ for all } z \in \overline{F_{\theta}^*} \backslash F_{\theta}^* \}. $$

\begin{lemma} \label{delta} Suppose $F_{\theta}^* \in \mathcal{F}_{\theta}(r)$ and $\delta > 0.$ Then $F_{\theta}^*(\delta)$ is a convex, closed, $\textup{stab}_H(F_{\theta}^*)$-invariant subset of $\mathcal{B}(H).$ Also, $F_{\theta}^*(\delta)$ is nonempty if and only if  there exists an apartment $\mathcal{A}$ in $\mathcal{B}(H)$ such that the following subset of $F:= F_{\theta}^* \cap \mathcal{A}$ \end{lemma}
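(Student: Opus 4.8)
The plan is to follow DeBacker's treatment of the corresponding statement in the group case [\cite{d}, Section 3.2] closely: the two ``soft'' properties fall out of the metric definition, and the remaining two assertions are reduced to a single apartment, where they become elementary Euclidean geometry. For closedness, I would take a sequence $\{x_n\}\subset F_{\theta}^*(\delta)$ with $x_n\to x$ in $\mathcal{B}(H)$; then $x\in\overline{F_{\theta}^*}$, and if $x\notin F_{\theta}^*$ it would lie in $\overline{F_{\theta}^*}\setminus F_{\theta}^*$, forcing $\textup{dist}(x_n,x)\geq\delta$ for all $n$, which is absurd, so $x\in F_{\theta}^*$, and then continuity of $\textup{dist}(\cdot,z)$ for each $z\in\overline{F_{\theta}^*}\setminus F_{\theta}^*$ gives $x\in F_{\theta}^*(\delta)$. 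For $\textup{stab}_H(F_{\theta}^*)$-invariance, I would note that any $h$ in this group acts on $\mathcal{B}(H)$ by an isometry which preserves $F_{\theta}^*$, hence also $\overline{F_{\theta}^*}$ and $\overline{F_{\theta}^*}\setminus F_{\theta}^*$, and so preserves the defining condition.

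The heart of the argument is an apartment-reduction step. Fix an apartment $\mathcal{A}$ of $\mathcal{B}(H)$ with $F:=F_{\theta}^*\cap\mathcal{A}\neq\emptyset$, and set $F(\delta):=\{x\in F\mid\textup{dist}(x,z)\geq\delta\textup{ for all }z\in\overline{F}\setminus F\}$. I would prove the identity $F_{\theta}^*(\delta)\cap\mathcal{A}=F(\delta)$; granting it, the stated equivalence is immediate, since $F_{\theta}^*$ is the union of the sets $F_{\theta}^*\cap\mathcal{A}$ over all apartments $\mathcal{A}$ of $\mathcal{B}(H)$. Both inclusions reduce to showing $\textup{dist}(x,\overline{F_{\theta}^*}\setminus F_{\theta}^*)=\textup{dist}(x,\overline{F}\setminus F)$ for every $x\in F$. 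Here ``$\leq$'' is clear from $\overline{F}\setminus F\subset\overline{F_{\theta}^*}\setminus F_{\theta}^*$, using the identification $\overline{F}=\overline{F_{\theta}^*}\cap\mathcal{A}$ supplied by Lemma \ref{clfa}. For ``$\geq$'', given $z\in\overline{F_{\theta}^*}\setminus F_{\theta}^*$ I would pick an apartment $\widetilde{\mathcal{A}}$ of $\mathcal{B}(H)$ containing $x$ and $z$; by Remark \ref{patr} (with $\Omega=\{x\}$) there is $h\in H_x$ with $h\widetilde{\mathcal{A}}=\mathcal{A}$, and since $h$ fixes the point $x\in F_{\theta}^*$ it normalizes $\mathfrak{g}_{x,r}$ and $\mathfrak{g}_{x,r^+}$, so $h\in\textup{stab}_H(F_{\theta}^*)$ by Lemma \ref{normpa}. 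Then $hz\in h(\overline{F_{\theta}^*}\setminus F_{\theta}^*)\cap\mathcal{A}=(\overline{F_{\theta}^*}\setminus F_{\theta}^*)\cap\mathcal{A}=\overline{F}\setminus F$ (Lemma \ref{clfa} again), whence $\textup{dist}(x,z)=\textup{dist}(hx,hz)=\textup{dist}(x,hz)\geq\textup{dist}(x,\overline{F}\setminus F)$. This mirrors the proof of Lemma \ref{clfa}.

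With the reduction in hand, convexity is quick. Given $x,y\in F_{\theta}^*(\delta)$, I would choose an apartment $\mathcal{A}$ of $\mathcal{B}(H)$ containing $x$ and $y$, so the geodesic $[x,y]$ lies in $\mathcal{A}$; by the reduction, $x,y\in F(\delta)$ with $F=F_{\theta}^*\cap\mathcal{A}$. From Definition \ref{thetaf}, the Prasad--Yu identification, and [\cite{d}, Section 3.1], the set $F$ is a relatively open convex subset of $\mathcal{A}$; writing $V=A(F,\mathcal{A})$ for its affine hull and noting that $\overline{F}\setminus F$ is the relative boundary of the convex body $\overline{F}$ in $V$, one has for $x\in F$ that $\textup{dist}(x,\overline{F}\setminus F)=\textup{dist}(x,V\setminus\overline{F})$, so $F(\delta)=\{v\in V\mid B(v,\delta)\subset\overline{F}\}=\bigcap_{w\in V,\,|w|<\delta}(\overline{F}-w)$ is an intersection of translates of the convex set $\overline{F}$, hence convex (and closed). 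Therefore $[x,y]\subset F(\delta)\subset F_{\theta}^*(\delta)$.

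The step I expect to be the main obstacle is the apartment-reduction identity $\textup{dist}(x,\overline{F_{\theta}^*}\setminus F_{\theta}^*)=\textup{dist}(x,\overline{F}\setminus F)$: it requires transporting an \emph{arbitrary} point of $\overline{F_{\theta}^*}\setminus F_{\theta}^*$ into the chosen apartment by an element of $\textup{stab}_H(F_{\theta}^*)$ that fixes the base point $x$, and this is exactly where Remark \ref{patr}, Lemma \ref{normpa}, and (implicitly) the Prasad--Yu identification $\mathcal{B}(H)=\mathcal{B}(G)^{\theta}$ enter. Everything else is bookkeeping with closures together with the total geodesy of apartments in the $\textup{CAT}(0)$-space $\mathcal{B}(H)$, and the standard Euclidean fact that an inner parallel set of a convex body is convex.
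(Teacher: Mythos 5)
Your proposal is correct, and it follows the same overall template as the paper (closedness and $\textup{stab}_H$-invariance from the metric definition; a transport argument using Remark \ref{patr}, Lemma \ref{normpa}, and Lemma \ref{clfa} to reduce to a single apartment), but the scaffolding is organized differently in two places, and in one of them your version is actually better. The paper's key auxiliary statement is the orbit identity $(\ddagger)\ H_wF_{\theta,\mathcal{A}}(\delta)=F_{\theta}^*(\delta)$ for $w\in F_{\theta,\mathcal{A}}(\delta)$, in the spirit of Lemma \ref{thfacetpa}, whereas you prove the more local identity $F_{\theta}^*(\delta)\cap\mathcal{A}=F_{\theta,\mathcal{A}}(\delta)$; the hard inclusion in both is obtained by exactly the same transport of a boundary point $z$ into $\mathcal{A}$ by an element of $H_x\subset\textup{stab}_H(F_{\theta}^*)$, and either form gives the nonemptiness equivalence immediately. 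The genuine divergence is in the convexity argument. The paper first tries to prove convexity of $F_{\theta,\mathcal{A}}(\delta)$ by a direct computation, namely the display
$$\textup{dist}(tx+(1-t)y,z')\geq\textup{dist}(tx,tz')-\textup{dist}((1-t)y,(1-t)z')=t\,\textup{dist}(x,z')+(1-t)\,\textup{dist}(y,z')\geq\delta,$$
but the middle equality should carry a minus sign and the first inequality does not hold as stated, so this chain does not actually establish $\textup{dist}(z,z')\geq\delta$; the paper then needs a two-step transport using $(\ddagger)$ to move $x$ and $y$ separately into $\mathcal{A}$. You instead choose a single apartment $\mathcal{A}$ through both $x$ and $y$ (so $[x,y]\subset\mathcal{A}$), invoke your identity to land $x,y$ in $F_{\theta,\mathcal{A}}(\delta)$, and observe that $F_{\theta,\mathcal{A}}(\delta)$ is the inner parallel body of the relatively open convex set $F$, hence an intersection of translates of the convex set $\overline{F}$ and therefore convex. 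This is simultaneously shorter than the paper's two-step transport and supplies a correct proof of the convexity of $F_{\theta,\mathcal{A}}(\delta)$ where the paper's computation is flawed. One small point worth making explicit (you invoke it implicitly via ``relatively open''): $F$ is convex and relatively open in its affine hull $A(F,\mathcal{A})$, which follows from the convexity and relative openness of an $r$-facet in $\mathcal{B}(G)$ together with Proposition \ref{reduction}, which identifies $A(\mathcal{A},F_{\theta}^*)$ with $A(\mathcal{A}',F^*)^{\theta}$.
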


$$ F_{\theta,\mathcal{A}}(\delta) := \{ x \in F \mid \textup{\textup{dist}}(x,z) \geq \delta \textup{ for all } z \in \overline{F} \backslash F \} $$
is nonempty.

\begin{proof} To see that $F_{\theta}^*(\delta)$ is closed, suppose $\{x_n\} \subset F_{\theta}^*(\delta)$ is a sequence converging to some $x \in \mathcal{B}(H).$ By the triangle inequality, we have

$$ \textup{\textup{dist}}(x,z) \geq \textup{\textup{dist}}(x_n,z) - \textup{\textup{dist}}(x_n,x) \geq \delta - \textup{\textup{dist}}(x_n,x), $$
for all $z \in \overline{F_{\theta}^*} \backslash F_{\theta}^*,$ so taking $n \rightarrow \infty,$ we see that $\textup{\textup{dist}}(x,z) \geq \delta.$ 
	
	For $\textup{stab}_H(F_{\theta}^*)$-invariance, we note that any element $h \in \textup{stab}_H(F_{\theta}^*)$ sends the boundary of $F_{\theta}^*$ to itself. In particular, for $z \in \overline{F}_{\theta}^* \backslash F_{\theta}^*,$ there exists an element $w \in \overline{F}_{\theta}^* \backslash F_{\theta}^*$ such that $hw = z.$ Thus, if $x \in F_{\theta}^*(\delta),$ then 

$$ (\dagger) \textup{ } \textup{\textup{dist}}(hx,z) = \textup{\textup{dist}}(hx,hw) = \textup{\textup{dist}}(x,w) \geq \delta. $$
We now prove the final statement of the lemma. \newline
$``\Rightarrow ":$ If $F_{\theta}^*(\delta)$ is nonempty, then there exists some apartment $\mathcal{A}$ for which $F_{\theta}^*(\delta) \cap \mathcal{A},$ and hence $F_{\theta,\mathcal{A}}(\delta),$ is nonempty. \newline
$``\Leftarrow ":$ We will prove a stronger claim here that will be used later to prove convexity of $F_{\theta}^*(\delta).$ In particular, we show that if there is an apartment $\mathcal{A} \subset \mathcal{B}(H)$ such that $F_{\theta,\mathcal{A}}(\delta) \neq \emptyset,$ then $H_yF_{\theta,\mathcal{A}}(\delta) = F_{\theta}^*(\delta)$ for all $y \in F_{\theta,\mathcal{A}}(\delta).$ This will show that $F_{\theta}^*(\delta) \neq \emptyset$ whenever $F_{\theta,\mathcal{A}}(\delta) \neq \emptyset.$ Suppose $\mathcal{A} \subset \mathcal{B}(H)$ is an apartment such that $F_{\theta,\mathcal{A}}(\delta) \neq \emptyset,$ and let $w \in F_{\theta,\mathcal{A}}(\delta).$ We first show $H_wF_{\theta,\mathcal{A}}(\delta) \subset F_{\theta}^*(\delta).$ \newline
$``\subset":$ Note that $H_w \subset \textup{stab}_H(F_{\theta}^*)$ by an application of Lemma \ref{normpa}. Thus by $(\dagger),$ we have that $F_{\theta}^*(\delta)$ is $H_w$-invariant. As a consequence, it suffices to show that $F_{\theta,\mathcal{A}}(\delta) \subset F_{\theta}^*(\delta).$ Let $x \in F_{\theta,\mathcal{A}}(\delta)$ and $z \in \overline{F_{\theta}^*} \backslash F_{\theta}^*.$ By [\cite{bt1}, 2.3.1], we may choose an apartment $\tilde{\mathcal{A}}$ containing $x$ and $z.$ By Remark \ref{patr}, there is an element $h \in H_x$ that maps $\tilde{\mathcal{A}}$ onto $\mathcal{A}.$ Since $hx=x,$ by an application of Lemma \ref{normpa}, $h \in \textup{stab}_H(F_{\theta}^*).$ Since $h\tilde{\mathcal{A}} =\mathcal{A},$ we have $hz \in \mathcal{A},$ so in particular, $hz \in (\overline{F_{\theta}^*} \backslash F_{\theta}^*) \cap \mathcal{A}.$ Thus, by Lemma \ref{clfa}, 

$$ hz \in (\overline{F_{\theta}^*} \cap \mathcal{A}) \backslash (F_{\theta}^* \cap \mathcal{A}) = \overline{F_{\theta,\mathcal{A}}} \backslash F_{\theta,\mathcal{A}}. $$ 
	Again, since $h \in H_x,$ we have $\textup{\textup{dist}}(x,z) = \textup{\textup{dist}}(x,hz) \geq \delta,$ so since $z$ was arbitrary, we must have $x \in F_{\theta}^*(\delta).$ \newline
$``\supset":$ Now, we show that $H_wF_{\theta,\mathcal{A}}(\delta) \supset F_{\theta}^*(\delta)$ for all $w \in F_{\theta,\mathcal{A}}(\delta).$ Let $x \in F_{\theta}^*(\delta).$ By Lemma \ref{thfacetpa}, there exist elements $h \in H_w$ and $z \in F$ such that $hz=x.$ Arguing as usual, since $w \in F_{\theta}^*,$ we have $H_w \subset \textup{stab}_H(F_{\theta}^*),$ so, in particular, 
	
$$ x \in hF \cap F_{\theta}^*(\delta) = h(F \cap F_{\theta}^*(\delta)) \subset hF_{\theta,\mathcal{A}}(\delta). $$
	
	Lastly, we must show that $F_{\theta}^*(\delta)$ is a convex subset of $\mathcal{B}(H).$ Assume $F_{\theta}^*(\delta)$ is nonempty. We first show that $F_{\theta,\mathcal{A}}(\delta) \subset \mathcal{A}$ is convex. Choose an origin $O$ in $\mathcal{A}.$ Note that the geodesics of $\mathcal{A}$ are nothing more than segments, so if $x,y \in F_{\theta,\mathcal{A}}(\delta)$ and $z \in [x,y],$ then considering $x,y,$ and $z$ as the vectors, $x-O, y-O,$ and $z-O,$ respectively, we have $z = tx + (1-t)y$ for some $t \in [0,1].$ Thus, for all $z' \in \overline{F}\backslash F,$
	
$$ \textup{\textup{dist}}(tx+(1-t)y,z') \geq \textup{\textup{dist}}(tx,tz') - \textup{\textup{dist}}((1-t)y,(1-t)z') = t \textup{\textup{dist}}(x,z') + (1-t) \textup{\textup{dist}}(y,z') \geq \delta.$$
so $z$ lies in $F_{\theta,\mathcal{A}}(\delta).$	
	
	Now, suppose $F_{\theta}^*(\delta)$ is nonempty. There is an apartment $\mathcal{A}$ such that $F_{\theta,\mathcal{A}}(\delta)$ is nonempty. Let $x,y \in F_{\theta}^*(\delta)$ and $z \in F_{\theta,\mathcal{A}}(\delta).$ Previously in this proof, we showed that for all $w \in F_{\theta,\mathcal{A}}(\delta),$ we have 
	
$$ (\ddagger) \textup{ } H_wF_{\theta,\mathcal{A}}(\delta) = F_{\theta}^*(\delta),$$
 so there is some $h \in H_z$ such that $hx \in F_{\theta,\mathcal{A}}(\delta).$ Since $h \in H_z \subset \textup{stab}_{H}(F_{\theta}^*),$ we have $hy \in F_{\theta}^*,$ so for all $z' \in \overline{F_{\theta}^*} \backslash F_{\theta}^*,$ we have
	
$$ \textup{dist}(hy,z') = \textup{dist}(hy,hw') = \textup{dist}(y,w') \geq \delta$$
where $w' \in \overline{F_{\theta}^*} \backslash F_{\theta}^*$ such that $hw'=z.$ Thus, $hy \in F_{\theta}^*(\delta).$ Applying $(\ddagger)$ again, there exists some $h' \in H_{hx}$ such that $h'hy \in F_{\theta,\mathcal{A}}(\delta).$ Thus, since $F_{\theta,\mathcal{A}}^*(\delta)$ is convex, we have

$$ [h'hx,h'hy] \subset F_{\theta,\mathcal{A}}^*(\delta) \subset F_{\theta}^*(\delta)$$
so, in particular, since $F_{\theta,\mathcal{A}}^*(\delta)$ is $\textup{stab}_H(F_{\theta}^*)$-invariant, $[x,y] \subset F_{\theta}^*(\delta).$
\end{proof}

\begin{defn} For $F_{\theta}^* \in \mathcal{F}_{\theta}(r),$ define \end{defn}

$$ C(F_{\theta}^*) := \left\{ y \in F_{\theta}^* \mid \textup {for all apartments } \mathcal{A} \textup{ of } \mathcal{B}(H) \textup{ for which } \mathcal{A} \cap F_{\theta}^* \neq \emptyset \textup{ we have } y \in \mathcal{A} \right\}. $$

\begin{remark} \label{boundedgroup} Suppose $\textbf{H}$ is semisimple. Following the discussion in [\cite{tits}, 2.2.1], there is a map 

$$ \pi: H \times \mathcal{B}(H) \rightarrow \mathcal{B}(H) \times \mathcal{B}(H) $$
given by $(h,x) \mapsto (hx,x),$ with the property that the inverse images of bounded sets are bounded. We note that if $\Omega$ is a bounded subset of $\mathcal{B}(H),$ this tells us that 

$$ \textup{stab}_H(\Omega ) \times \Omega = \pi^{-1}( \Omega \times \Omega) $$
is bounded. In particular, $\textup{stab}_H(\Omega)$ is bounded whenever $\Omega$ is bounded. \end{remark}

\begin{cor} \label{Cnonempty} If $F_{\theta}^* \in \mathcal{F}_{\theta}(r),$ then $C(F_{\theta}^*) \neq \emptyset.$ \end{cor}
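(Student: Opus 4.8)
The plan is to show that $C(F_{\theta}^*)$ is nonempty by finding a point that lies in every apartment of $\mathcal{B}(H)$ meeting $F_{\theta}^*$. The natural candidate is a point in $F_{\theta}^*(\delta)$ for $\delta$ small enough that the latter is nonempty; more precisely, I would look for a point that is a ``center of mass'' type point of $F_{\theta}^*$, since the key tool available is Lemma \ref{delta} together with the transitivity statement of Remark \ref{patr}. First I would fix an apartment $\mathcal{A}$ with $F := F_{\theta}^* \cap \mathcal{A} \neq \emptyset$; by Corollary \ref{imbo} the image of $F_{\theta}^*$ (equivalently of $F$) in the reduced building is bounded, and $F$ is a relatively open, convex, nonempty subset of the affine space $A(F,\mathcal{A})$, so it has a well-defined incenter/circumcenter or, more simply, a point $y_0$ whose distance to $\overline{F}\setminus F$ is maximal; for $\delta$ equal to this maximal distance (or anything smaller) the set $F_{\theta,\mathcal{A}}(\delta)$ is nonempty, hence by Lemma \ref{delta} so is $F_{\theta}^*(\delta)$.

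Next I would argue that any point $y \in F_{\theta}^*(\delta)$, for $\delta > 0$ small, lies in $C(F_{\theta}^*)$. Let $\mathcal{A}_1$ be an arbitrary apartment of $\mathcal{B}(H)$ with $\mathcal{A}_1 \cap F_{\theta}^* \neq \emptyset$, and pick $w \in \mathcal{A}_1 \cap F_{\theta}^*$. By Remark \ref{patr} applied to the set $\{y, w\}$ — which lies in $\mathcal{A} \cap \mathcal{A}_1$ provided I first arrange $y, w$ to be in a common apartment, using [\cite{bt1}, 2.3.1] to pass through an auxiliary apartment $\tilde{\mathcal{A}}$ containing $\{y,w\}$ — there is $h \in H_{\{y,w\}}$ carrying $\tilde{\mathcal{A}}$ to $\mathcal{A}_1$. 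Since $h$ fixes $y$, Lemma \ref{normpa} gives $h \in \textup{stab}_H(F_{\theta}^*)$, and by the $\textup{stab}_H(F_{\theta}^*)$-invariance of $F_{\theta}^*(\delta)$ established in Lemma \ref{delta}, $h^{-1}$ maps $F_{\theta}^*(\delta)$ to itself; but one actually wants the reverse: one shows $y = h^{-1}(hy)$ with $hy$ computed in a controlled way, so I will instead use transitivity directly to move a point of $F$ into $\mathcal{A}_1$ and invoke $(\ddagger)$ from the proof of Lemma \ref{delta}, namely $H_w F_{\theta,\mathcal{A}}(\delta) = F_{\theta}^*(\delta)$, to conclude the stronger statement that $F_{\theta}^*(\delta)$ meets every such $\mathcal{A}_1$ — and then I must upgrade ``meets'' to ``$y$ itself lies in it'' by a convexity/uniqueness argument on the distinguished point.

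The cleanest route, and the one I would ultimately write up, avoids the last subtlety by choosing $y$ canonically: take $y$ to be the (unique) point of $F_{\theta}^*(\delta)$ for the largest $\delta$ making it nonempty, or better, the barycenter of $\overline{F_{\theta}^*(\delta_0)}$ for a fixed small $\delta_0$; since $F_{\theta}^*(\delta_0)$ is convex, closed modulo the center, bounded in the reduced building (by Corollary \ref{imbo}), and $\textup{stab}_H(F_{\theta}^*)$-invariant, its barycenter $y$ is fixed by $\textup{stab}_H(F_{\theta}^*)$. Now for any apartment $\mathcal{A}_1$ meeting $F_{\theta}^*$, pick $w \in \mathcal{A}_1 \cap F_{\theta}^*$; using Remark \ref{patr} on a common apartment of $\{y, w\}$ one gets $h \in \textup{stab}_H(F_{\theta}^*)$ with $hw$-apartment equal to $\mathcal{A}_1$ and $hy \in \mathcal{A}_1$, but $hy = y$ since $y$ is $\textup{stab}_H(F_{\theta}^*)$-fixed, so $y \in \mathcal{A}_1$, proving $y \in C(F_{\theta}^*)$.

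The main obstacle I anticipate is the bookkeeping around Remark \ref{patr}: it only gives transitivity for pairs $\{x,y\}$ already contained in two apartments simultaneously, so to relate an arbitrary apartment $\mathcal{A}_1$ (meeting $F_{\theta}^*$ at some $w$) to the reference apartment $\mathcal{A}$ (containing the distinguished point $y$) I need $\{y,w\}$ to lie in a common apartment first, which requires invoking [\cite{bt1}, 2.3.1] and possibly two applications of Remark \ref{patr} in succession, all while tracking that the conjugating elements genuinely stabilize $F_{\theta}^*$ (via Lemma \ref{normpa}) so that the $\textup{stab}_H$-fixed point $y$ is carried to itself. Once the canonical, $\textup{stab}_H(F_{\theta}^*)$-invariant point $y$ is in hand, the argument that it lies in every relevant apartment is short.
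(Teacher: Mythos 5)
Your proposal is correct and follows essentially the same route as the paper's proof: both find a $\textup{stab}_H(F_{\theta}^*)$-fixed point $y$ inside the convex, closed, stabilizer-invariant set $F_{\theta}^*(\delta)$ (the paper cites the Bruhat--Tits fixed point theorem [\cite{bt2}, 3.2.4] after reducing to $\textbf{H}$ semisimple via $\textup{pr}_{ss}$, where you construct the barycenter directly), and then both argue that $y$ lies in every apartment meeting $F_{\theta}^*$ by passing through a common apartment and using that the resulting conjugating element lies in $\textup{stab}_H(F_{\theta}^*)$ via Lemma \ref{normpa}. The only substantive difference is that the paper routes the final step through Lemma \ref{thfacetpa} ($H_z F_{\tilde{\mathcal{A}}} = F_{\theta}^*$) while you invoke Remark \ref{patr} and [\cite{bt1}, 2.3.1] directly, and the paper makes the reduction to the semisimple (reduced-building) case explicit where you only gesture at it.
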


\begin{proof} Let $\textup{pr}_{ss}$ denote the projection from the enlarged building of $H$ to the reduced building of $H.$ Since 

$$ C(\textup{pr}_{ss}(F_{\theta}^*)) \neq \emptyset \Rightarrow C(F_{\theta}^*) \neq \emptyset, $$
we may assume $\textbf{H}$ is semisimple. 

	By Corollary \ref{imbo}, $F_{\theta}^*$ is bounded in $\mathcal{B}(H),$ so, by Remark \ref{boundedgroup}, the stabilizer $N:=\textup{stab}_H(F_{\theta}^*)$ is a bounded subgroup of $H.$ If $F_{\theta}^*$ consists of a point $x,$ then $\mathcal{A} \cap F_{\theta}^* = \tilde{\mathcal{A}} \cap F_{\theta}^* = \{x\}$ for all apartments in $\mathcal{B}(H)$ that meet $\{x\},$ so clearly $x \in C(F_{\theta}^*).$ 

	Suppose $F_{\theta}^*$ is not a point, and let $\mathcal{A} \subset \mathcal{B}(H)$ be an apartment such that $F_{\theta}^* \cap \mathcal{A} \neq \emptyset.$ Then, by Lemma \ref{thfacetpa}, since dim $F_{\theta}^* > 0,$ we have dim $F_{\theta}^* \cap \mathcal{A} > 0.$ As a consequence, there exists some $\delta > 0$ for which $F_{\theta,\mathcal{A}}(\delta)$ is nonempty. By Lemma \ref{delta}, this implies $\emptyset \neq F_{\theta}^*(\delta)$ is convex and $N$-stable. Thus, by [\cite{bt2}, 3.2.4], since a bounded group of isometries acting on a nonempty, closed, convex set $F$ of $\mathcal{B}(H)$ has a fixed point, there exists some $y \in F_{\theta}^*(\delta)$ such that $ny = y$ for all $n \in N.$ Suppose now that $\tilde{\mathcal{A}}$ is an apartment of $\mathcal{B}(H)$ for which $F_{\tilde{\mathcal{A}}} := F_{\theta}^* \cap \tilde{\mathcal{A}} \neq \emptyset.$ Let $z \in F_{\tilde{\mathcal{A}}}.$ By Lemma \ref{thfacetpa}, we have $H_zF_{\tilde{\mathcal{A}}} = F_{\theta}^*,$ and by Lemma \ref{normpa}, we have $H_z \subset N.$ In particular, $hy =y$ for all $h \in H_z.$ Therefore, $y \in F_{\tilde{\mathcal{A}}} \subset \tilde{\mathcal{A}},$ so $y \in C(F_{\theta}^*).$
\end{proof}

\begin{cor} If $\mathcal{A}_1$ and $\mathcal{A}_2$ are two apartments in $\mathcal{B}(H),$ and $F_{\theta}^* \in \mathcal{F}_{\theta}(r)$ such that $F_{\theta}^* \cap \mathcal{A}_i \neq \emptyset,$ then dim $A(F_{\theta}^* \cap \mathcal{A}_1, \mathcal{A}_1) =$ dim $A(F_{\theta}^* \cap \mathcal{A}_2, \mathcal{A}_2).$ \end{cor}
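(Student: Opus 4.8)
The plan is to produce a single element $g \in \textup{stab}_H(F_{\theta}^*)$ with $g\mathcal{A}_1 = \mathcal{A}_2$. Since an element of $H$ acts on $\mathcal{B}(H)$ by simplicial isomorphisms that carry apartments to apartments by affine isomorphisms, such a $g$ will carry $F_i := F_{\theta}^* \cap \mathcal{A}_i$ onto the other one ($gF_1 = gF_{\theta}^* \cap g\mathcal{A}_1 = F_{\theta}^* \cap \mathcal{A}_2 = F_2$), hence carry $A(F_1, \mathcal{A}_1)$ onto $A(F_2, \mathcal{A}_2)$, which gives the desired equality of dimensions.

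First I would use Corollary \ref{Cnonempty} to fix a point $y \in C(F_{\theta}^*)$; since $\mathcal{A}_i \cap F_{\theta}^* \neq \emptyset$, the definition of $C(F_{\theta}^*)$ forces $y \in \mathcal{A}_i$, so $y \in F_i$ for both $i$. The case $F_{\theta}^* = \{y\}$ is immediate, since then $F_1 = F_2 = \{y\}$, so I would assume $F_{\theta}^*$ has more than one point; then $F_1 \neq \{y\}$ as well, because $F_1 = \{y\}$ would force $F_{\theta}^* = H_y \cdot F_1 = \{y\}$ by Lemma \ref{thfacetpa}. Pick $z_1 \in F_1$ with $z_1 \neq y$.

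Next, applying Lemma \ref{thfacetpa} with the apartment $\mathcal{A}_2$ and the point $y \in F_2$ gives $F_{\theta}^* = H_y \cdot F_2$, so $z_1 = hz_2$ for some $h \in H_y$ and $z_2 \in F_2$. Then $z_2 = h^{-1}z_1 \neq y$, and the two-point set $\{y, z_2\}$ lies in $h^{-1}\mathcal{A}_1 \cap \mathcal{A}_2$: it lies in $h^{-1}\mathcal{A}_1$ because $h^{-1}y = y$ and $h^{-1}z_1 = z_2$, and in $\mathcal{A}_2$ because $y \in C(F_{\theta}^*) \subset \mathcal{A}_2$ and $z_2 \in F_2 \subset \mathcal{A}_2$. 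Remark \ref{patr} then supplies an element $h'$ fixing $y$ and $z_2$ with $h'(h^{-1}\mathcal{A}_1) = \mathcal{A}_2$, and I would take $g := h'h^{-1}$. Because $h$ and $h'$ both fix the point $y$ of $F_{\theta}^*$, Lemma \ref{normpa} places them, hence also $g$, in $\textup{stab}_H(F_{\theta}^*)$; thus $g$ has the two required properties and the argument concludes as in the first paragraph.

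The one genuine obstacle is that Remark \ref{patr} requires \emph{two} points common to the two apartments, while a priori only $y$ is visibly common to $\mathcal{A}_1$ and $\mathcal{A}_2$; this is exactly what the preliminary twist by $h^{-1}$ circumvents, by producing the second common point $z_2 = h^{-1}z_1 \in F_2$ of $h^{-1}\mathcal{A}_1$ and $\mathcal{A}_2$. The remaining ingredients — that $H$ acts on $\mathcal{B}(H)$ by affine isomorphisms of apartments and that affine isomorphisms preserve affine spans and their dimensions — are standard.
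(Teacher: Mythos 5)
Your proof is correct; the paper states this corollary without proof, presenting it as an immediate consequence of Corollary~\ref{Cnonempty}, and your argument supplies exactly the details that are implicit there, using the same tools (the common point $y \in C(F_\theta^*)$, Lemma~\ref{thfacetpa}, Remark~\ref{patr}, and Lemma~\ref{normpa}) that the paper deploys in the surrounding proofs. One small remark: you take extra care to manufacture a \emph{second} common point so as to invoke Remark~\ref{patr} literally as stated, but the paper itself applies that remark with only one common point (e.g.\ in the proof of Lemma~\ref{thfacetpa}), reflecting that the underlying Bruhat--Tits fact holds for the fixer of any common subset, including a singleton; so your twist by $h^{-1}$ is safe but not strictly necessary.
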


\begin{defn} Let $F_{\theta}^* \in \mathcal{F}_{\theta}(r).$ Let $\mathcal{A}$ be an apartment in $\mathcal{B}(H)$ with $F_{\theta}^* \cap \mathcal{A} \neq \emptyset.$ Define \end{defn}

$$ \textup{dim } F_{\theta}^* := \textup{dim } A(F_{\theta}^* \cap \mathcal{A}, \mathcal{A}). $$

\begin{defn} Suppose $F_{\theta}^* \in \mathcal{F}_{\theta}(r).$ Fix $x \in F_{\theta}^*$ and define \end{defn}

$$ \mathfrak{g}_{F_{\theta}^*} := \mathfrak{g}_{x,r} $$

and

$$ \mathfrak{g}_{F_{\theta}^*}^+ := \mathfrak{g}_{x,r^+}. $$

	Let $F_{\theta}^*$ be a generalized $(r,\theta)$-facet. From this point on, we will refer to the lattice $\mathfrak{g}_{F_{\theta}^*}^+$ frequently. Recall that the quotient $V:=\mathfrak{g}_{F_{\theta}^*}/\mathfrak{g}_{F_{\theta}^*}^+$ is an $\mathfrak{f}$-vector space. Moreover, by Proposition \ref{thstpa}, both $\mathfrak{g}_{F_{\theta}^*}$ and $\mathfrak{g}_{F_{\theta}^*}^+$ are $\theta$-stable, so $V$ is equipped with an involution induced by $\theta.$ We will abuse notation and call this induced map $\theta.$ In order to avoid confusion in the next lemma, we will let $V^{\theta-1}$ denote the set of $\theta$-fixed points in $V$ and let $V^{\theta + 1} = \{ v \in V \mid \theta(v) = -v \}.$ We reserve the symbol $+$ for the lattice $\mathfrak{g}_{F_{\theta}^*}^+.$

\begin{lemma} \label{thstcoset} Assume $p \neq 2.$ Suppose $F_{\theta}^*$ is a generalized $(r,\theta)$-facet of $\mathcal{B}(H).$ Then \end{lemma}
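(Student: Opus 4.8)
The plan is to imitate the proof of Proposition \ref{thstpa}, but to push the lattice-level decomposition down to the residue quotient $V = \mathfrak{g}_{F_{\theta}^*}/\mathfrak{g}_{F_{\theta}^*}^+$. Fix $x \in F_{\theta}^*$, so that $\mathfrak{g}_{F_{\theta}^*} = \mathfrak{g}_{x,r}$ and $\mathfrak{g}_{F_{\theta}^*}^+ = \mathfrak{g}_{x,r^+}$, and set $\mathfrak{h}_{F_{\theta}^*} := \mathfrak{h} \cap \mathfrak{g}_{F_{\theta}^*}$, $\mathfrak{p}_{F_{\theta}^*} := \mathfrak{p} \cap \mathfrak{g}_{F_{\theta}^*}$, with superscript $+$ versions defined using $\mathfrak{g}_{F_{\theta}^*}^+$ in place of $\mathfrak{g}_{F_{\theta}^*}$. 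Since $x \in \mathcal{B}(H)$ and $p \neq 2$, the argument in the proof of Proposition \ref{thstpa} — which rests only on the $\theta$-stability of $\mathfrak{g}_{x,r}$ and $\mathfrak{g}_{x,r^+}$ (via [\cite{adler-debacker}, Proposition 2.2.1]) together with Lemma \ref{mplattice} — applies verbatim and yields $\mathfrak{g}_{F_{\theta}^*} = \mathfrak{h}_{F_{\theta}^*} \oplus \mathfrak{p}_{F_{\theta}^*}$ and $\mathfrak{g}_{F_{\theta}^*}^+ = \mathfrak{h}_{F_{\theta}^*}^+ \oplus \mathfrak{p}_{F_{\theta}^*}^+$, where moreover $\mathfrak{h}_{F_{\theta}^*}^+ = \mathfrak{h}_{F_{\theta}^*} \cap \mathfrak{g}_{F_{\theta}^*}^+$ and $\mathfrak{p}_{F_{\theta}^*}^+ = \mathfrak{p}_{F_{\theta}^*} \cap \mathfrak{g}_{F_{\theta}^*}^+$.

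Next I would analyze the composite $\mathfrak{h}_{F_{\theta}^*} \hookrightarrow \mathfrak{g}_{F_{\theta}^*} \twoheadrightarrow V$. Its kernel is $\mathfrak{h}_{F_{\theta}^*} \cap \mathfrak{g}_{F_{\theta}^*}^+ = \mathfrak{h}_{F_{\theta}^*}^+$ by the displayed decomposition of $\mathfrak{g}_{F_{\theta}^*}^+$, so it descends to an injection $\mathfrak{h}_{F_{\theta}^*}/\mathfrak{h}_{F_{\theta}^*}^+ \hookrightarrow V$; because $\theta$ acts trivially on $\mathfrak{h}$ and the involution on $V$ is the one induced by $\theta$, the image lands in $V^{\theta-1}$. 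Symmetrically one obtains an injection $\mathfrak{p}_{F_{\theta}^*}/\mathfrak{p}_{F_{\theta}^*}^+ \hookrightarrow V^{\theta+1}$. To see these are surjective onto $V^{\theta-1}$ and $V^{\theta+1}$, and that $V = V^{\theta-1} \oplus V^{\theta+1}$: given $v \in V$, lift it to $X \in \mathfrak{g}_{F_{\theta}^*}$ and write $X = \tfrac{1}{2}(X+\theta(X)) + \tfrac{1}{2}(X-\theta(X))$; the first summand lies in $\mathfrak{h}_{F_{\theta}^*}$ and the second in $\mathfrak{p}_{F_{\theta}^*}$, so reducing modulo $\mathfrak{g}_{F_{\theta}^*}^+$ exhibits $v$ as the sum of an element in the image of $\mathfrak{h}_{F_{\theta}^*}/\mathfrak{h}_{F_{\theta}^*}^+$ (hence in $V^{\theta-1}$) and an element in the image of $\mathfrak{p}_{F_{\theta}^*}/\mathfrak{p}_{F_{\theta}^*}^+$ (hence in $V^{\theta+1}$). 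Directness follows since $V$ is an $\mathfrak{f}$-vector space with $\operatorname{char}\mathfrak{f} \neq 2$: if $v \in V^{\theta-1} \cap V^{\theta+1}$ then $v = \theta(v) = -v$, whence $2v = 0$ and $v = 0$.

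There is no genuinely hard step here; the proof is bookkeeping around the averaging trick, with inputs limited to Lemma \ref{mplattice}, the $\theta$-stability result of [\cite{adler-debacker}], and the hypothesis $p \neq 2$. The one point requiring a little care is that the two injections above have images \emph{equal to} $V^{\theta-1}$ and $V^{\theta+1}$ rather than merely contained in them — this is exactly what the surjectivity computation supplies — and it is here that $p \neq 2$ is used twice: once to split the lift $X$ into its $\mathfrak{h}$- and $\mathfrak{p}$-components, and once to force $V^{\theta-1} \cap V^{\theta+1} = 0$. If the statement of the lemma additionally asserts the identifications $V^{\theta-1} \cong \mathfrak{h}_{F_{\theta}^*}/\mathfrak{h}_{F_{\theta}^*}^+$ and $V^{\theta+1} \cong \mathfrak{p}_{F_{\theta}^*}/\mathfrak{p}_{F_{\theta}^*}^+$, these are precisely the injections just constructed, now known to be isomorphisms.
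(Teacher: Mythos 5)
Your proposal is correct and uses the same ingredients as the paper's proof (Lemma \ref{mplattice}, Proposition \ref{thstpa}, and $p\neq 2$); the averaging $X=\tfrac12(X+\theta(X))+\tfrac12(X-\theta(X))$ is precisely the lattice-level decomposition $X=X_++X_-$ that the paper employs. The only difference is organizational: you show the two images sum to all of $V$ and then invoke $V^{\theta-1}\cap V^{\theta+1}=0$, whereas the paper checks each eigenspace inclusion separately; the two bookkeeping schemes are logically equivalent.
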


$$ \mathfrak{g}_{F_{\theta}^*} / \mathfrak{g}_{F_{\theta}^*}^{+} = \mathfrak{h}_{F_{\theta}^*} / \mathfrak{h}_{F_{\theta}^*}^+ \oplus \mathfrak{p}_{F_{\theta}^*} / \mathfrak{p}_{F_{\theta}^*}^+, $$
where we use Lemma \ref{mplattice} to identify $\mathfrak{h}_{F_{\theta}^*}/ \mathfrak{h}_{F_{\theta}^*}^+$ and $\mathfrak{p}_{F_{\theta}^*} / \mathfrak{p}_{F_{\theta}^*}^+$ with their images inside $\mathfrak{g}_{F_{\theta}^*} / \mathfrak{g}_{F_{\theta}^*}^{+}.$

\begin{proof} We first show that $\mathfrak{h}_{F_{\theta}^*} / \mathfrak{h}_{F_{\theta}^*}^+$ is the $(+1)$-eigenspace of $\mathfrak{g}_{F_{\theta}^*} / \mathfrak{g}_{F_{\theta}^*}^+$ under $\theta.$ By Lemma \ref{mplattice}, we have $\mathfrak{h}_{x,r} = \mathfrak{g}_{x,r} \cap \mathfrak{h}.$ Thus, we can identify $\mathfrak{h}_{F_{\theta}^*}/\mathfrak{h}_{F_{\theta}^*}^+$ as a subset of $\mathfrak{g}_{F_{\theta}^*}/\mathfrak{g}_{F_{\theta}^*}^+.$ Moreover, it is clear that $\theta$ fixes every element of $\mathfrak{h}_{F_{\theta}^*}/\mathfrak{h}_{F_{\theta}^*}^+.$ Thus, $\mathfrak{h}_{F_{\theta}^*}/\mathfrak{h}_{F_{\theta}^*}^+ \subset ( \mathfrak{g}_{F_{\theta}^*} / \mathfrak{g}_{F_{\theta}^*}^+ )^{\theta-1}.$ 
	
	Now, let $\overline{X} \in (\mathfrak{g}_{F_{\theta}^*} / \mathfrak{g}_{F_{\theta}^*}^{+})^{\theta-1},$ and let $X$ be a lift of $\overline{X}$ in $\mathfrak{g}_{F_{\theta}^*}.$  By Proposition \ref{thstpa}, we have $\mathfrak{g}_{F_{\theta}^*} = \mathfrak{h}_{F_{\theta}^*} \oplus \mathfrak{p}_{F_{\theta}^*}$, so we may write $X = X_+ + X_-,$ where $\theta(X_+) = X_+$ and $\theta(X_-) = -X_-,$ with $X_+ \in \mathfrak{h}_{F_{\theta}^*}$ and $X_- \in \mathfrak{p}_{F_{\theta}^*}.$ Note that $\theta(X) - X \in \mathfrak{g}_{F_{\theta}^*}^+,$ so we have 
$$-2X_- = \theta(X_+ + X_-) - (X_+ + X_-) \in \mathfrak{g}_{F_{\theta}^*}^+. $$
Thus, since $p \neq 2,$ we may conclude that $X_- \in \mathfrak{g}_{F_{\theta}^*}^+.$ Thus $X_+ + \mathfrak{h}_{F_{\theta}^*}^+$ is mapped to $X_+ + \mathfrak{g}_{F_{\theta}^*}^+ = X + \mathfrak{g}_{F_{\theta}^*}^+.$ In other words, $\overline{X}$ has a representative in $\mathfrak{h}_{F_{\theta}^*},$ so $( \mathfrak{g}_{F_{\theta}^*} / \mathfrak{g}_{F_{\theta}^*}^+ )^{\theta -1} \subset \mathfrak{h}_{F_{\theta}^*} / \mathfrak{h}_{F_{\theta}^*}^+.$
	
	Now, let $\overline{X} \in (\mathfrak{g}_{F_{\theta}^*} / \mathfrak{g}_{F_{\theta}^*}^{+} )^{\theta+1}.$ We have $\theta(X) +X \in \mathfrak{g}_{F_{\theta}^*}^+,$ so
$$ 2X_+ = \theta(X_+ + X_-) + (X_+ + X_-) \in \mathfrak{g}_{F_{\theta}^*}^+. $$
Thus, since $p \neq 2,$ we have $X_+ \in \mathfrak{g}_{F_{\theta}^*}^+.$ Thus, the coset $X_- + \mathfrak{p}_{F_{\theta}^*}^+$ is mapped to $X + \mathfrak{g}_{F_{\theta}^*}^+.$ The inclusion $\mathfrak{p}_{F_{\theta}^*} / \mathfrak{p}_{F_{\theta}^*}^+ \subset (\mathfrak{g}_{F_{\theta}^*} / \mathfrak{g}_{F_{\theta}^*}^+ )^{\theta+1}$ is clear.

\end{proof}

\begin{defn} Suppose $F_{\theta}^* \in \mathcal{F}_{\theta}(r),$ and $\mathcal{A}$ is an apartment in $\mathcal{B}(H).$ Define \end{defn}

$$ A(\mathcal{A}, F_{\theta}^*) := A(F_{\theta}^* \cap \mathcal{A}, \mathcal{A}). $$

\subsection{Standard lifts and $r$-associativity}

\begin{remark} Let $x \in \mathcal{B}(H),$ and let $F_{\theta}^*(x) \in \mathcal{F}_{\theta}(r).$ We call a generalized $r$-facet $F^*$ in $\mathcal{B}(G)$ the standard lift of $F_{\theta}^*(x)$ if $F^*$ is the generalized $r$-facet in $\mathcal{B}(G)$ containing $x,$ as defined in [\cite{d}, 3.2.1] and in Section 4.2. \end{remark}

\begin{lemma} \label{intersecth} Let $y \in \mathcal{B}(G).$ The generalized $r$-facet $F^*(y)$ is $\theta$-stable if and only if $F^*(y) \cap \mathcal{B}(H) \neq \emptyset.$ In particular, if $F_{\theta}^*(x) \in \mathcal{F}_{\theta}(r),$ then the standard lift $F^*(x)$ of $F_{\theta}^*(x)$ is $\theta$-stable. \end{lemma}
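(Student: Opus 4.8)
The plan is to prove both directions of the first claim; the second claim then follows immediately, since if $F_\theta^*(x) \in \mathcal{F}_\theta(r)$ then by definition $x \in \mathcal{B}(H)$ and $x \in F^*(x)$, so $F^*(x) \cap \mathcal{B}(H) \neq \emptyset$, whence $F^*(x)$ is $\theta$-stable by the first claim.

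For the direction $(\Leftarrow)$, suppose $F^*(y) \cap \mathcal{B}(H) \neq \emptyset$, say $z \in F^*(y) \cap \mathcal{B}(H)$. Since $z \in \mathcal{B}(H) = \mathcal{B}(G)^\theta$ (using the Prasad--Yu identification), we have $\theta(z) = z$. Because $\theta$ is defined over $k$, it acts on $\mathcal{B}(G)$ compatibly with all the structure; in particular $\theta$ carries the Moy--Prasad lattices $\mathfrak{g}_{w,r}$ and $\mathfrak{g}_{w,r^+}$ to $\mathfrak{g}_{\theta(w),r}$ and $\mathfrak{g}_{\theta(w),r^+}$ for every $w$. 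Now take any $w \in F^*(y) = F^*(z)$; I want to show $\theta(w) \in F^*(z)$, i.e. $\mathfrak{g}_{\theta(w),r} = \mathfrak{g}_{z,r}$ and $\mathfrak{g}_{\theta(w),r^+} = \mathfrak{g}_{z,r^+}$. We compute
$$ \mathfrak{g}_{\theta(w),r} = \theta(\mathfrak{g}_{w,r}) = \theta(\mathfrak{g}_{z,r}) = \mathfrak{g}_{\theta(z),r} = \mathfrak{g}_{z,r}, $$
where the second equality uses $w \in F^*(z)$ and the last uses $\theta(z) = z$; the computation for $r^+$ is identical. Hence $\theta(w) \in F^*(z) = F^*(y)$, so $F^*(y)$ is $\theta$-stable.

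For the direction $(\Rightarrow)$, suppose $F^* := F^*(y)$ is $\theta$-stable. I need to produce a point of $F^*$ fixed by $\theta$. The natural approach mirrors Corollary~\ref{Cnonempty}: the set $F^*$ is a generalized $r$-facet of $\mathcal{B}(G)$, hence (after projecting to the reduced building, and using that a $\theta$-fixed point of the reduced building lifts to a $\theta$-fixed point of the enlarged building) its image is bounded by [\cite{d}, Corollary 3.2.7]. The involution $\theta$ acts on $\mathcal{B}(G)$ as an isometry and stabilizes $F^*$, hence stabilizes a nonempty, closed, convex, bounded subset of $F^*$ — for instance one of the convex sets $F^*(\delta)$ (defined as in [\cite{d}], or via the reduced building when $\textbf{G}$ is not semisimple) for $\delta$ small enough that $F^*(\delta) \neq \emptyset$, which exists since $F^*$ contains a facet of positive codimension structure when it is not a point, and is trivial when $F^*$ is a point. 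By [\cite{bt2}, 3.2.4], the finite group $\langle \theta \rangle$ acting by isometries on this nonempty closed convex bounded set has a fixed point $x$. Then $x \in F^*$ and $\theta(x) = x$, so $x \in \mathcal{B}(G)^\theta = \mathcal{B}(H)$, giving $F^* \cap \mathcal{B}(H) \neq \emptyset$.

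The main obstacle is the $(\Rightarrow)$ direction: one must be careful that the fixed-point theorem [\cite{bt2}, 3.2.4] is being applied to a set that is genuinely bounded, closed, convex, and $\theta$-stable, and that the convexity/boundedness facts established for generalized $(r,\theta)$-facets of $\mathcal{B}(H)$ (Lemma~\ref{delta}, Corollary~\ref{imbo}) have the appropriate counterparts for generalized $r$-facets of $\mathcal{B}(G)$ — these are exactly the statements from [\cite{d}, Section 3.2] that we are permitted to cite. One also needs the passage between the enlarged and reduced buildings to be harmless for the existence of a $\theta$-fixed point, since $\theta$ acts on the central factor $\textbf{X}_*(\textbf{C}) \otimes \mathbb{R}$ linearly and its fixed subspace is nonempty; this is routine but should be noted explicitly.
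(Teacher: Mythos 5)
Your proposal is correct and follows essentially the same route as the paper. For $(\Leftarrow)$ you compute $\mathfrak{g}_{\theta(w),r} = \theta(\mathfrak{g}_{z,r}) = \mathfrak{g}_{\theta(z),r} = \mathfrak{g}_{z,r}$ exactly as the paper does (the paper phrases it as $z \in F^*(\theta(x))$ and applies $\theta$ to the equality of lattices); for $(\Rightarrow)$ both you and the paper pass to the bounded, closed, convex, $\theta$-stable set $F^*(\delta)$, using [\cite{d}, Lemma 3.2.11], that $\theta$ is an isometry preserving $\overline{F^*}\setminus F^*$, and the Bruhat--Tits fixed point theorem. Your remark about the reduced versus enlarged building is a reasonable precaution but the paper treats it as implicit; otherwise the two arguments are the same.
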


\begin{proof} $``\Leftarrow"$: Let $z \in F^*(x)$ with $x \in \mathcal{B}(H).$ We must verify that $\theta(z) \in F^*(x).$ This occurs if and only if 

$$ (\dagger) \textup{ } \mathfrak{g}_{\theta(z),r} = \mathfrak{g}_{x,r} \textup{ and } \mathfrak{g}_{\theta(z),r^+} = \mathfrak{g}_{x,r^+}. $$
Since $x$ lies in $\mathcal{B}(H),$ we have $\theta(x) = x,$ so $z \in F^*(\theta(x)).$ Thus, we have
$$ \mathfrak{g}_{z,r} = \mathfrak{g}_{\theta(x),r} \textup{ and } \mathfrak{g}_{z,r^+} = \mathfrak{g}_{\theta(x),r^+}$$
which, since $\theta$ is an involution, is equivalent to $(\dagger).$

$``\Rightarrow"$: Let $F^* = F^*(y),$ for some $y \in \mathcal{B}(G).$ By [\cite{d}, Lemma 3.2.11], $F^*(\delta)$ is a convex, closed, stab$_{G}(F^*)$-invariant set of $\mathcal{B}(G).$ Also, note that $\theta$ preserves the boundary of $F^*,$ and $\theta$ acts on $\mathcal{B}(G)$ by an isometry. Thus, if $z \in \overline{F^*} \backslash F^*,$ and $x \in F^*(\delta),$ we have 

$$ \textup{dist}(\theta(x), z) = \textup{dist}(\theta(x), \theta(z')) = \textup{dist}(x,z') \geq \delta, $$
for all $z' \in \overline{F^*} \backslash F^*.$ In particular, $F^*(\delta)$ is $\theta$-stable. Now, since $\langle \theta \rangle$ is a finite group of isometries, we apply [\cite{tits}, 2.3.1] to conclude that $\langle \theta \rangle$ has a fixed point in $F^*(\delta),$ and hence in $F^*.$
\end{proof}

The following proposition gives us a way to translate the work done in Sections 4.1 and 4.2 into the framework of $\theta$-stable $r$-facets. 

\begin{prop} \label{reduction} Let $F_{1,\theta}^*, F_{2,\theta}^* \in \mathcal{F}_{\theta}(r),$ and let $\mathcal{A} \subset \mathcal{A}'$ be apartments in $\mathcal{B}(H)$ and $\mathcal{B}(G),$ respectively, such that $F_{i,\theta}^* \cap \mathcal{A} \neq \emptyset,$ for $i=1,2.$ If $F_1^*$ and $F_2^*$ are the standard lifts of $F_{1,\theta}^*$ and $F_{2,\theta}^*,$ respectively, then 

$$ A(\mathcal{A}, F_{1,\theta}^*) = A(\mathcal{A}, F_{2,\theta}^*) \Leftrightarrow A(\mathcal{A}', F_{1}^*) = A(\mathcal{A}', F_{2}^*). $$
\end{prop}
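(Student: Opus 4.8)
The plan is to carry everything into the $G$-apartment, where $\theta$ acts as an honest affine involution, and to reduce both implications to one clean description of $A(\mathcal{A},F_{i,\theta}^*)$ inside $A(\mathcal{A}',F_i^*)$. First I would note that the truth value of the right-hand equality does not depend on which apartment $\mathcal{A}'\supset\mathcal{A}$ of $\mathcal{B}(G)$ is used: given another such $\mathcal{A}''$, choose $x_i\in F_{i,\theta}^*\cap\mathcal{A}$ ($i=1,2$); by [\cite{bt2}, 4.6.28] there is $g\in G$ fixing $x_1$ and $x_2$ with $g\mathcal{A}'=\mathcal{A}''$, and since $g$ fixes each generalized $r$-facet $F_i^*=F^*(x_i)$ setwise we get $g\bigl(A(\mathcal{A}',F_i^*)\bigr)=A(\mathcal{A}'',F_i^*)$, so the equality transfers. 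Hence, using Proposition \ref{stdapt}, I may replace $\mathcal{A}'$ by the apartment of a $\theta$-stable maximal $k$-split torus $\textbf{S}'$ of $\textbf{G}$ containing $\textbf{S}$; then $\theta$ acts on $\mathcal{A}'$. Moreover $\textbf{S}'^{\theta}$ is a $k$-split torus of $\textbf{H}$ containing the maximal $k$-split torus $\textbf{S}$, so $\textbf{S}'^{\theta}=\textbf{S}$, which forces $\dim(\mathcal{A}')^{\theta}=\dim\mathcal{A}$; since $\mathcal{A}\subseteq(\mathcal{A}')^{\theta}$ we conclude $\mathcal{A}=(\mathcal{A}')^{\theta}$. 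Writing $V'=V'^{+}\oplus V'^{-}$ for the $\pm1$-eigenspace decomposition of the translation space of $\mathcal{A}'$, the subspace $V'^{+}$ is the translation space of $\mathcal{A}$.

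By Lemma \ref{intersecth} each $F_i^*$ is $\theta$-stable, so $\widetilde F_i:=F_i^*\cap\mathcal{A}'$ is a $\theta$-stable $r$-facet of $\mathcal{A}'$ with $F_{i,\theta}^*\cap\mathcal{A}=F_i^*\cap\mathcal{A}=\widetilde F_i\cap\mathcal{A}=\widetilde F_i^{\theta}\neq\emptyset$. Put $L_i':=A(\mathcal{A}',F_i^*)=A(\widetilde F_i,\mathcal{A}')$; recall from the definition of an $r$-facet that $\widetilde F_i$ is open in $L_i'$ and that $L_i'=\bigcap\{H_{\psi-r}\mid\psi\in\Psi(\mathcal{A}'),\ \psi|_{\widetilde F_i}\equiv r\}$. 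The core step is the identity
$$ A(\mathcal{A},F_{i,\theta}^*)=L_i'\cap\mathcal{A}. $$
Here ``$\subseteq$'' is clear since $\widetilde F_i^{\theta}\subseteq L_i'\cap\mathcal{A}$ and the right side is an affine subspace of $\mathcal{A}$. For ``$\supseteq$'', fix $c\in\widetilde F_i^{\theta}$; for any $v\in(L_i'-c)\cap V'^{+}$ and sufficiently small $\varepsilon>0$ the point $c+\varepsilon v$ lies in the open set $\widetilde F_i$ and is $\theta$-fixed, hence in $\widetilde F_i^{\theta}$, so $(L_i'-c)\cap V'^{+}\subseteq\operatorname{span}(\widetilde F_i^{\theta}-c)$; as $L_i'\cap\mathcal{A}-c=(L_i'-c)\cap V'^{+}$, this gives $L_i'\cap\mathcal{A}\subseteq A(\widetilde F_i^{\theta},\mathcal{A})=A(\mathcal{A},F_{i,\theta}^*)$.

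The backward implication is now immediate: $L_1'=L_2'$ gives $A(\mathcal{A},F_{1,\theta}^*)=L_1'\cap\mathcal{A}=L_2'\cap\mathcal{A}=A(\mathcal{A},F_{2,\theta}^*)$. For the forward implication, assume $L_1'\cap\mathcal{A}=L_2'\cap\mathcal{A}$; I would show the cutting-out set $S_i:=\{\psi\in\Psi(\mathcal{A}')\mid\psi|_{\widetilde F_i}\equiv r\}$ is recovered from $L_i'\cap\mathcal{A}$, namely $S_i=\{\psi\in\Psi(\mathcal{A}')\mid\psi|_{L_i'\cap\mathcal{A}}\equiv r\}$. The inclusion ``$\subseteq$'' follows from the core identity (a root constant $r$ on $\widetilde F_i$ is constant $r$ on $\widetilde F_i^{\theta}$, hence on its affine span $L_i'\cap\mathcal{A}$). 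For ``$\supseteq$'': if $\psi$ were constant $r$ on $L_i'\cap\mathcal{A}\supseteq\widetilde F_i^{\theta}$ but $\psi\notin S_i$, then, since $\widetilde F_i$ is a connected component of $L_i'$ with the walls $H_{\rho-r}\cap L_i'$ ($\rho\notin S_i$) removed (cf. Remark \ref{facetremark} and the definition of an $r$-facet), $\widetilde F_i$ lies strictly on one side of $H_{\psi-r}$ in $L_i'$, contradicting $\psi\equiv r$ on the nonempty set $\widetilde F_i^{\theta}\subseteq\widetilde F_i$. Hence $S_1=S_2$, and so $L_1'=\bigcap_{\psi\in S_1}H_{\psi-r}=\bigcap_{\psi\in S_2}H_{\psi-r}=L_2'$.

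The main obstacle is the ``$\supseteq$'' half of the description of $S_i$ in the last paragraph: this is precisely where convexity/openness of the $r$-facet $\widetilde F_i$ together with nonemptiness of its $\theta$-fixed locus $\widetilde F_i^{\theta}$ are used in an essential way, and it is what prevents the (false in general) statement that a $\theta$-stable affine subspace of $\mathcal{A}'$ is determined by its intersection with $\mathcal{A}$. The only other delicate point is the reduction to an apartment $\mathcal{A}'$ with $\mathcal{A}=(\mathcal{A}')^{\theta}$, which rests on Proposition \ref{stdapt}, the compatibility of the Prasad--Yu embedding with tori, and the dimension count $\textbf{S}'^{\theta}=\textbf{S}$; everything else is bookkeeping with the definitions of Sections 4.1--4.2 and [\cite{d}, Section 3.1].
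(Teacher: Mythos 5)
Your proof is correct, but it follows a genuinely different route from the paper's. The paper proves, by a contradiction involving facet dimensions, the identity $A(\mathcal{A}', F_i^*)^{\theta} = A(\mathcal{A}, F_{i,\theta}^*)$ (if it fails, one finds an $(r,\theta)$-facet $C$ in $A(\mathcal{A}',F_1^*)^\theta$ with $F_{1,\theta}^*\cap\mathcal{A}\subsetneq\overline{C}$, lifts it to an $r$-facet $C'\subset A(\mathcal{A}',F_1^*)$ of strictly larger dimension than $F_1^*\cap\mathcal{A}'$, contradicting $\dim A(\mathcal{A}',F_1^*)=\dim(F_1^*\cap\mathcal{A}')$), which immediately gives the backward implication; the forward implication is then a quick double application of Remark \ref{facetremark} (each $F_i^*\cap\mathcal{A}'$ meets, hence lies in, the other affine span). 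You instead normalize first to a $\theta$-stable $G$-apartment with $(\mathcal{A}')^\theta=\mathcal{A}$ (which the paper's proof tacitly also needs in order for $A(\mathcal{A}',F_1^*)^\theta\subset\mathcal{A}$ to make sense), prove the same core identity $A(\mathcal{A},F_{i,\theta}^*)=L_i'\cap\mathcal{A}$ directly via openness of $\widetilde F_i$ in $L_i'$ rather than by contradiction, and then get the forward implication by recovering the cutting set $S_i$ from $L_i'\cap\mathcal{A}$. Both approaches are sound; the paper's is shorter because it leans on Remark \ref{facetremark} twice, while yours is more explicit about the wall structure of $r$-facets and makes the invariance in the choice of $\mathcal{A}'$ and the role of $(\mathcal{A}')^\theta=\mathcal{A}$ precise, at the cost of the extra normalization step and the slightly heavier $S_i$ bookkeeping (where the paper's Remark \ref{facetremark} already encodes what you reprove).
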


\begin{proof} ``$\Leftarrow$ " : We first claim that $A(\mathcal{A}', F_1^*)^{\theta} = A(\mathcal{A}, F_{1,\theta}^*).$ If this is not true, then, since $A(\mathcal{A}', F_1^*)^{\theta}$ is convex, there is an $(r,\theta)$-facet $C$ in $A(\mathcal{A}', F_1^*)^{\theta}$ properly containing $F_{1,\theta}^* \cap \mathcal{A}$ in its closure. Let $C'$ be the $r$-facet in $\mathcal{A}'$ containing $C.$ Since $F_{1,\theta}^* \cap \mathcal{A} \subset \overline{C},$ we have $F_1^* \cap \mathcal{A}' \subset \overline{C'}.$ Note that $\overline{C'}$ is the union of $C'$ and $r$-facets of strictly smaller dimension. Thus, if dim $F_1'$ = dim $C',$ then we must have $F_1' = C'.$ By our choice of $C',$ dim $F_1' \neq$ dim $C'.$ In particular, $C'$ is a $\theta$-stable $r$-facet (contained in $A(\mathcal{A}', F_1^*)$ by Remark \ref{facetremark}) of strictly larger dimension than $F_1^* \cap \mathcal{A}',$ a contradiction.

``$\Rightarrow$": Since $A(\mathcal{A}, F_{1,\theta}^*) = A(\mathcal{A}, F_{2,\theta}^*),$ in particular, we know $F_1^* \cap \mathcal{A}'$ intersects $A(\mathcal{A}', F_2^*)$ nontrivially. Thus, by Remark \ref{facetremark}, $F_1^* \cap \mathcal{A}'$ is entirely contained in $A(\mathcal{A}', F_2^*).$ Similarly, $F_2^* \cap \mathcal{A}'$ is entirely contained inside $A(\mathcal{A}', F_1^*).$

\end{proof}

\begin{defn} \label{defstrong} Let $F_{1, \theta}^*, F_{2, \theta}^* \in \mathcal{F_{\theta}}(r).$ We say that $F_{1, \theta}^*$ and $F_{2, \theta}^*$ are strongly $r$-associated if for all apartments $\mathcal{A}$ in $\mathcal{B}(H)$ such that $F_{1, \theta}^* \cap \mathcal{A}$ and $F_{2, \theta}^* \cap \mathcal{A}$ are nonempty, we have \end{defn}

$$ A(\mathcal{A}, F_{1,\theta}^*) = A(\mathcal{A}, F_{2,\theta}^*). $$

\begin{remark} \label{Gassoc} By [\cite{d}, Lemma 3.3.3] and Proposition \ref{reduction}, this is the same as demanding that the standard lifts $F_1^*, F_2^*$ be strongly $r$-associated in the sense of [\cite{d}, Definition 3.3.2]. \end{remark}

\begin{lemma} \label{assocalt} Two generalized $(r,\theta)$-facets $F_{1,\theta}^*, F_{2, \theta}^* \in \mathcal{F}_{\theta}(r)$ are strongly $r$-associated if and only if there exists an apartment $\mathcal{A} \subset \mathcal{B}(H)$ for which $F_{1,\theta}^* \cap \mathcal{A}, F_{2,\theta}^* \cap \mathcal{A}$ are nonempty, and

$$ A(\mathcal{A}, F_{1,\theta}^*) = A( \mathcal{A}, F_{2,\theta}^*). $$ 
 \end{lemma}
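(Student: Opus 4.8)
The plan is to reduce the statement to the corresponding fact for generalized $r$-facets in $\mathcal{B}(G)$, namely \cite{d}, Lemma 3.3.3, via the translation mechanism already set up in Proposition \ref{reduction} and recorded in Remark \ref{Gassoc}. The forward direction of the ``if and only if'' is immediate from Definition \ref{defstrong}, so all the content is in showing that the existence of a \emph{single} apartment $\mathcal{A}$ witnessing the equality $A(\mathcal{A}, F_{1,\theta}^*) = A(\mathcal{A}, F_{2,\theta}^*)$ forces the equality in \emph{every} apartment meeting both facets.

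First I would fix such a witnessing apartment $\mathcal{A} \subset \mathcal{B}(H)$ and an apartment $\mathcal{A}' \subset \mathcal{B}(G)$ containing it (available by the Prasad--Yu identification and Proposition \ref{stdapt}); let $F_1^*, F_2^*$ be the standard lifts of $F_{1,\theta}^*, F_{2,\theta}^*$. Since $F_{i,\theta}^* \cap \mathcal{A} \neq \emptyset$ we have $F_i^* \cap \mathcal{A}' \neq \emptyset$, and Proposition \ref{reduction} converts the hypothesis $A(\mathcal{A}, F_{1,\theta}^*) = A(\mathcal{A}, F_{2,\theta}^*)$ into $A(\mathcal{A}', F_1^*) = A(\mathcal{A}', F_2^*)$. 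Now I invoke \cite{d}, Lemma 3.3.3: a single apartment of $\mathcal{B}(G)$ witnessing equality of the affine spans of the standard lifts implies that $F_1^*$ and $F_2^*$ are strongly $r$-associated in the sense of \cite{d}, Definition 3.3.2, i.e.\ the equality $A(\tilde{\mathcal{A}}', F_1^*) = A(\tilde{\mathcal{A}}', F_2^*)$ holds for \emph{every} apartment $\tilde{\mathcal{A}}' \subset \mathcal{B}(G)$ meeting both $F_1^*$ and $F_2^*$.

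To finish I would take an arbitrary apartment $\tilde{\mathcal{A}} \subset \mathcal{B}(H)$ with $F_{1,\theta}^* \cap \tilde{\mathcal{A}} \neq \emptyset$ and $F_{2,\theta}^* \cap \tilde{\mathcal{A}} \neq \emptyset$, choose an apartment $\tilde{\mathcal{A}}' \subset \mathcal{B}(G)$ containing $\tilde{\mathcal{A}}$, note that then $F_i^* \cap \tilde{\mathcal{A}}' \supset F_{i,\theta}^* \cap \tilde{\mathcal{A}} \neq \emptyset$, apply the conclusion of the previous paragraph to get $A(\tilde{\mathcal{A}}', F_1^*) = A(\tilde{\mathcal{A}}', F_2^*)$, and then run Proposition \ref{reduction} in the other direction to descend to $A(\tilde{\mathcal{A}}, F_{1,\theta}^*) = A(\tilde{\mathcal{A}}, F_{2,\theta}^*)$. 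Since $\tilde{\mathcal{A}}$ was arbitrary, this is precisely strong $r$-associativity of $F_{1,\theta}^*$ and $F_{2,\theta}^*$ in the sense of Definition \ref{defstrong}.

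The main obstacle I anticipate is purely bookkeeping rather than conceptual: one must be careful that for \emph{every} apartment $\tilde{\mathcal{A}}$ of $\mathcal{B}(H)$ there genuinely is an apartment of $\mathcal{B}(G)$ containing it (so that the standard lifts are visible in a common $\mathcal{A}'$), and that the hypotheses of Proposition \ref{reduction} — requiring $\mathcal{A} \subset \mathcal{A}'$ with both $(r,\theta)$-facets meeting $\mathcal{A}$ — are actually met in both invocations. This is exactly where the Prasad--Yu result (every maximal $k$-split torus of $\textbf{H}$ extends to one of $\textbf{G}$, so every apartment of $\mathcal{B}(H)$ sits inside an apartment of $\mathcal{B}(G)$) and Lemma \ref{intersecth} (standard lifts are $\theta$-stable, hence the lifted facets are the correct objects) do the work; once those are in hand the argument is a short round trip through Proposition \ref{reduction} and \cite{d}, Lemma 3.3.3.
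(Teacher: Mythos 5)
Your proposal is correct and follows essentially the same route as the paper: the paper's proof also reduces to \cite{d}, Lemma 3.3.3 via Proposition \ref{reduction}, lifting the single witnessing apartment to $\mathcal{B}(G)$, invoking the group-case result, and descending by taking $\theta$-fixed points. You simply spell out the round trip in more detail than the paper's terse two sentences, and your flagged bookkeeping concern (every apartment of $\mathcal{B}(H)$ sits inside one of $\mathcal{B}(G)$, via Prasad--Yu) is the correct point to be careful about.
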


\begin{proof} $``\Rightarrow":$ The definition of strong $r$-associativity proves the forward implication. \newline
$``\Leftarrow":$ Let $\tilde{\mathcal{A}}$ be an apartment for which $A(\tilde{\mathcal{A}}, F_{1,\theta}^*) = A(\tilde{\mathcal{A}}, F_{2,\theta}^*) \neq \emptyset.$ By [\cite{d}, Lemma 3.3.3] and Proposition \ref{reduction}, we have that the standard lifts $F_1^*$ and $F_2^*$ (of $F_{1,\theta}^*$ and $F_{2,\theta}^*$ respectively) are strongly $r$-associated. By Proposition \ref{reduction}, the result follows by taking $\theta$-fixed points.

\end{proof}

\begin{defn} Two generalized $(r,\theta)$-facets $F_{1,\theta}^*, F_{2,\theta}^* \in \mathcal{F}_{\theta}(r)$ are said to be $r$-associated if there exists an $h \in H$ such that $F_{1,\theta}^*$ and $hF_{2,\theta}^*$ are strongly $r$-associated. \end{defn}

\begin{lemma} \label{assocequiv} $r$-associativity is an equivalence relation on $\mathcal{F}_{\theta}(r).$ Whenever two generalized $(r,\theta)$-facets $F_{1,\theta}^*, F_{2,\theta}^*$ are $r$-associated, we write $F_{1,\theta}^* \sim F_{2,\theta}^*.$ \end{lemma}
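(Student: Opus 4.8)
The plan is to reduce everything to the group case via Remark~\ref{Gassoc}, and then verify the three axioms of an equivalence relation by a short formal argument. Throughout I will use two elementary observations. First, for $h\in H$ and $x\in\mathcal{B}(H)$ one has $hF_{\theta}^*(x)=F_{\theta}^*(hx)\in\mathcal{F}_{\theta}(r)$; and since $H\subset\textbf{G}^{\theta}$ forces $\theta(h)=h$, the element $h$ commutes with $\theta$ on $\mathcal{B}(G)$, so $hF^*(x)^{\theta}=(hF^*(x))^{\theta}=F^*(hx)^{\theta}$, i.e.\ the standard lift of $hF_{\theta}^*(x)$ is $h$ times the standard lift of $F_{\theta}^*(x)$. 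Second, DeBacker's strong $r$-associativity for generalized $r$-facets of $\mathcal{B}(G)$ is $G$-equivariant: given $g\in G$ and a common apartment $\mathcal{A}'$ of $gF_1^*$ and $gF_2^*$, the apartment $g^{-1}\mathcal{A}'$ is common to $F_1^*$ and $F_2^*$, and applying the affine isometry $g$ to the resulting equality of affine spans gives $A(\mathcal{A}',gF_1^*)=A(\mathcal{A}',gF_2^*)$. Combining these two facts with Remark~\ref{Gassoc}, strong $r$-associativity of generalized $(r,\theta)$-facets is reflexive and symmetric (both immediate from the defining equality of affine spans), $H$-equivariant, and --- by \mbox{[\cite{d}, Section 3.3]} --- transitive.

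Granting this, the argument is purely formal. Reflexivity of $\sim$: take $h$ to be the identity in the definition of $r$-associativity. Symmetry: if $F_{1,\theta}^*\sim F_{2,\theta}^*$, choose $h\in H$ with $F_{1,\theta}^*$ and $hF_{2,\theta}^*$ strongly $r$-associated; by symmetry of strong $r$-associativity, $hF_{2,\theta}^*$ and $F_{1,\theta}^*$ are strongly $r$-associated, and applying $h^{-1}\in H$ together with $H$-equivariance shows $F_{2,\theta}^*$ and $h^{-1}F_{1,\theta}^*$ are strongly $r$-associated, so $F_{2,\theta}^*\sim F_{1,\theta}^*$. Transitivity: if $F_{1,\theta}^*$ and $h_1F_{2,\theta}^*$ are strongly $r$-associated and $F_{2,\theta}^*$ and $h_2F_{3,\theta}^*$ are strongly $r$-associated with $h_1,h_2\in H$, then applying $h_1$ to the second pair gives that $h_1F_{2,\theta}^*$ and $h_1h_2F_{3,\theta}^*$ are strongly $r$-associated; transitivity of strong $r$-associativity then yields that $F_{1,\theta}^*$ and $(h_1h_2)F_{3,\theta}^*$ are strongly $r$-associated, hence $F_{1,\theta}^*\sim F_{3,\theta}^*$ since $h_1h_2\in H$.

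The only ingredient that is not bookkeeping is transitivity of strong $r$-associativity itself, and I expect this to be the main obstacle: a priori the single common apartment witnessing strong $r$-associativity of $F_{1,\theta}^*$ and $F_{2,\theta}^*$ (via Lemma~\ref{assocalt}) need not meet $F_{3,\theta}^*$, so one cannot simply intersect all three facets in one apartment. My preferred route avoids this by passing to standard lifts: Remark~\ref{Gassoc} identifies strong $r$-associativity of $(r,\theta)$-facets with DeBacker's strong $r$-associativity of their standard lifts in $\mathcal{B}(G)$, and the latter is established to be transitive in \mbox{[\cite{d}, Section 3.3]} (using, in particular, the canonical points in every apartment meeting a generalized $r$-facet, the analogue of our Corollary~\ref{Cnonempty}). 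If one instead wished to argue directly inside $\mathcal{B}(H)$, one would fix $y\in C(F_{2,\theta}^*)$, which lies in every apartment meeting $F_{2,\theta}^*$, transport the witnessing data for the two strong associativities into a common apartment through $y$ using Remark~\ref{patr} and Lemma~\ref{normpa}, and then invoke Lemma~\ref{assocalt}; but the reduction to \mbox{[\cite{d}]} via Remark~\ref{Gassoc} is cleaner and is the version I would write.
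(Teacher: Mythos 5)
Your reflexivity and symmetry arguments are fine and match the paper's. The issue is transitivity, and in fact you correctly flag it as ``the only ingredient that is not bookkeeping'' --- but then your preferred resolution does not hold up.

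Your formal argument for transitivity asserts: if $F_{1,\theta}^*$ and $h_1F_{2,\theta}^*$ are strongly $r$-associated and $h_1F_{2,\theta}^*$ and $h_1h_2F_{3,\theta}^*$ are strongly $r$-associated, then so are $F_{1,\theta}^*$ and $h_1h_2F_{3,\theta}^*$. This is exactly the statement that strong $r$-associativity is transitive, which you attribute to [\cite{d}, Section~3.3]. That attribution is not correct: what DeBacker establishes there (Lemma 3.3.3 and the surrounding discussion) is the one-apartment reformulation of strong $r$-associativity and then that the \emph{conjugacy-weakened} relation (``$r$-associated'') is an equivalence relation. His transitivity argument --- like the paper's --- does not proceed by proving strong $r$-associativity transitive; it proceeds by choosing a canonical point in the middle facet, transporting one witnessing apartment to the other by an element fixing that point (hence stabilizing the middle facet), and absorbing that element into the conjugator. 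Concretely, the paper fixes $z\in C(F_{2,\theta}^*)$ (Corollary~\ref{Cnonempty}), applies Remark~\ref{patr} to obtain $h\in H_z$ with $hh_2^{-1}\mathcal{A}_{12}=\mathcal{A}_{23}$, notes $H_z\subset\textup{stab}_H(F_{2,\theta}^*)$ by Lemma~\ref{normpa}, and then the identity
\[
A(\mathcal{A}_{12},F_{1,\theta}^*)=A(\mathcal{A}_{12},h_2F_{2,\theta}^*)=h_2h^{-1}A(\mathcal{A}_{23},F_{2,\theta}^*)=A(\mathcal{A}_{12},h_2h^{-1}h_3F_{3,\theta}^*)
\]
shows that the conjugating element witnessing $F_{1,\theta}^*\sim F_{3,\theta}^*$ is $h_2h^{-1}h_3$, \emph{not} $h_2h_3$. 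The extra factor $h^{-1}$ is unavoidable in general, precisely because the apartment $h_2^{-1}\mathcal{A}_{12}$ witnessing the first strong association need not meet $F_{3,\theta}^*$ (or even $h_3F_{3,\theta}^*$). So your formal argument and your alternative sketch in the final paragraph are not two versions of the same proof --- they output different conjugators --- and the discrepancy is exactly the auxiliary $h$ you dropped. There is no reduction to [\cite{d}] that avoids producing this element, and if you did produce it on the level of standard lifts you would obtain $g\in G_z$, not $h\in H_z$; you would then still need the $\mathcal{B}(H)$-level versions of Remark~\ref{patr}, Lemma~\ref{normpa} and Corollary~\ref{Cnonempty} to replace it by something in $H$.

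The correct proof is the one you dismiss in your last paragraph: fix $z\in C(F_{2,\theta}^*)$, observe that $z\in h_2^{-1}\mathcal{A}_{12}\cap\mathcal{A}_{23}$, produce $h\in H_z\subset\textup{stab}_H(F_{2,\theta}^*)$ carrying $h_2^{-1}\mathcal{A}_{12}$ to $\mathcal{A}_{23}$, and verify by the chain of equalities above that $F_{1,\theta}^*$ and $h_2h^{-1}h_3F_{3,\theta}^*$ are strongly $r$-associated. You should commit to that route rather than to a transitivity claim for strong $r$-associativity that is neither proved by you nor supplied by the cited source.
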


\begin{proof} For reflexivity, let $F_{\theta}^* \in \mathcal{F}_{\theta}(r).$ Suppose $\mathcal{A} \subset \mathcal{B}(H)$ such that $\mathcal{A} \cap F_{\theta}^* \neq \emptyset.$ We have $A(\mathcal{A}, F_{\theta}^*) = A(\mathcal{A}, F_{\theta}^*),$ so the relation is reflexive. 

	Now, suppose $F_{1,\theta}^*  \sim F_{2,\theta}^*.$ Then, there exists an apartment $\mathcal{A} \subset \mathcal{B}(H)$ and an element $h \in H$ such that

$$ A(\mathcal{A}, F_{1,\theta}^*) = A(\mathcal{A}, hF_{2,\theta}^*) \neq \emptyset. $$ 
Recall $hA(\mathcal{A}, F_{\theta}^*) = A(h\mathcal{A}, hF_{\theta}^*)$ for any $h \in H.$ Thus, multiplying the equation above by $h^{-1},$ we obtain

$$A(h^{-1}\mathcal{A}, h^{-1}F_{1,\theta}^*) = A(h^{-1}\mathcal{A}, F_{2,\theta}^*) \neq \emptyset.$$
In particular, $F_{2,\theta}^* \sim F_{1,\theta}^*.$

	Now, suppose $F_{1,\theta}^*, F_{2,\theta}^*,$ and $F_{3,\theta}^*$ are generalized $(r,\theta)$-facets such that $F_{1,\theta}^* \sim F_{2,\theta}^*$ and $F_{2,\theta}^* \sim F_{3,\theta}^*.$ Then, by definition, there exist $h_2, h_3 \in H$ and apartments $\mathcal{A}_{12}, \mathcal{A}_{23} \subset \mathcal{B}(H)$ such that 

$$  A(\mathcal{A}_{12}, F_{1,\theta}^*) = A(\mathcal{A}_{12}, h_2F_{2,\theta}^*) \neq \emptyset $$
and
$$ A(\mathcal{A}_{23}, F_{2,\theta}^*) = A(\mathcal{A}_{23}, h_3F_{3,\theta}^*) \neq \emptyset. $$
Let $z \in C(F_{2,\theta}^*).$ Then, since $h_2^{-1}\mathcal{A}_{12} \cap F_{2,\theta}^* \neq \emptyset$ and $\mathcal{A}_{23} \cap F_{2,\theta}^* \neq \emptyset,$ $z$ lies in $h_2^{-1}\mathcal{A}_{12} \cap \mathcal{A}_{23}.$ By Remark \ref{patr}, there exists some $h \in H_z \subset \textup{stab}_H(F_{2,\theta}^*)$ (since $h$ fixes $z$) such that $hh_2^{-1}\mathcal{A}_{12} = \mathcal{A}_{23}.$ 

	Using these facts, we have

\begin{eqnarray*} 
\emptyset \neq A(\mathcal{A}_{12}, F_{1,\theta}^*) &=& A(\mathcal{A}_{12}, h_2F_{2,\theta}^*) = h_2A(h_2^{-1}\mathcal{A}_{12}, F_{2,\theta}^*) \\
&=& h_2h^{-1}A(\mathcal{A}_{23}, hF_{2,\theta}^*) = h_2h^{-1}A(\mathcal{A}_{23}, F_{2,\theta}^*) \\
&=& A(\mathcal{A}_{12}, h_2h^{-1}h_3F_{3,\theta}^*). 
\end{eqnarray*}
\end{proof}

\subsection{Identification of some $\mathfrak{f}$-vector spaces }

\begin{defn} As in [\cite{d}, 3.4.1], for $x \in \mathcal{B}(G),$ let $V_{x,r}$ denote the $\mathfrak{f}$-vector space $\mathfrak{g}_{x,r} / \mathfrak{g}_{x,r^+}.$

\end{defn}

\begin{defn} If $F_{\theta}^* \in \mathcal{F}_{\theta}(r)$ and $x \in F_{\theta}^*,$ define $V_{F_{\theta}^*}:=V_{x,r}.$

\end{defn}

Using Lemma \ref{thstcoset}, we identify $V_{x,r}^+$ with the image of $\mathfrak{h}_{x,r}/\mathfrak{h}_{x,r^+}.$ In particular, we interpret the quotient map given by $\mathfrak{h}_{x,r} \rightarrow V_{x,r}^+$ below using this identification.

\begin{lemma} \label{assocident} Suppose $F_{1, \theta}^*, F_{2, \theta}^* \in \mathcal{F}_{\theta}(r)$ are strongly $r$-associated. Then the natural maps \end{lemma}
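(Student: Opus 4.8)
The statement to be proved, Lemma~\ref{assocident}, asserts that for strongly $r$-associated generalized $(r,\theta)$-facets $F_{1,\theta}^*$ and $F_{2,\theta}^*$, certain natural maps (between the associated $\mathfrak{f}$-vector spaces $V_{F_{1,\theta}^*}$ and $V_{F_{2,\theta}^*}$, and between their $\pm 1$-eigenspaces) are well-defined isomorphisms.

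The plan is to reduce to the group case. By Remark~\ref{Gassoc}, the standard lifts $F_1^*$ and $F_2^*$ of $F_{1,\theta}^*$ and $F_{2,\theta}^*$ are strongly $r$-associated in the sense of [\cite{d}, Definition 3.3.2]. So DeBacker's results (the corresponding statement in [\cite{d}, Section 3.4], roughly Lemma 3.4.x) give natural $\mathfrak{f}$-linear isomorphisms between $V_{F_1^*} = V_{x_1,r} = \mathfrak{g}_{x_1,r}/\mathfrak{g}_{x_1,r^+}$ and $V_{F_2^*}=V_{x_2,r}$, for $x_i \in F_{i,\theta}^*$ chosen inside a common apartment $\mathcal{A}'$ of $\mathcal{B}(G)$ realizing the strong $r$-association. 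First I would recall how those maps are constructed: one picks a point $z$ on the geodesic (or in the affine span) connecting the two facets, and the inclusions of Moy--Prasad lattices $\mathfrak{g}_{z,r}\subseteq \mathfrak{g}_{x_i,r}$ together with the equality of affine spans force compatibility of the quotient lattices, yielding canonical identifications independent of the auxiliary choices. Since $V_{F_{i,\theta}^*}$ is \emph{by definition} equal to $V_{F_i^*}$ (via Definition: $V_{F_{\theta}^*}:=V_{x,r}$ for $x\in F_{\theta}^*$, which coincides with $V_{x,r}$ for the standard lift), these same maps serve as the natural maps for the $\theta$-facets.

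The one genuinely new ingredient — and the step I expect to be the main obstacle — is $\theta$-equivariance: I must check that the natural identification $V_{F_1^*}\xrightarrow{\ \sim\ } V_{F_2^*}$ commutes with the involution induced by $\theta$ on each side (the map called $\theta$ in the discussion preceding Lemma~\ref{thstcoset}). This follows because $\theta$ is an automorphism of $\textbf{G}$ defined over $k$, hence acts on $\mathcal{B}(G)$ compatibly with all Bruhat--Tits structures (as used in the proof of Proposition~\ref{thstpa}, via [\cite{adler-debacker}, Proposition 2.2.1]); in particular $\theta$ carries $\mathfrak{g}_{z,r}$ to $\mathfrak{g}_{\theta(z),r}$, and if $z$ is chosen in $\mathcal{B}(H)=\mathcal{B}(G)^\theta$ (possible by Corollary~\ref{Cnonempty}, taking $z\in C(F_{1,\theta}^*)\cap$ the relevant apartment, or more simply any $\theta$-fixed point in the common affine span, which exists by Lemma~\ref{intersecth}), then $\theta$ fixes $z$ and the construction of the identification is manifestly $\theta$-equivariant because every lattice entering it is $\theta$-stable. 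Consequently the isomorphism restricts to isomorphisms on the $(+1)$- and $(-1)$-eigenspaces, and by Lemma~\ref{thstcoset} these eigenspaces are exactly $\mathfrak{h}_{F_{i,\theta}^*}/\mathfrak{h}_{F_{i,\theta}^*}^+$ and $\mathfrak{p}_{F_{i,\theta}^*}/\mathfrak{p}_{F_{i,\theta}^*}^+$; so the induced map sends $V_{F_{1,\theta}^*}^+$ isomorphically to $V_{F_{2,\theta}^*}^+$, carrying the image of $\mathfrak{h}_{x_1,r}$ to the image of $\mathfrak{h}_{x_2,r}$, and likewise for $\mathfrak{p}$.

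Finally I would verify the two bookkeeping points needed for the statement to be meaningful: that the map is independent of the choices (the auxiliary point $z$, and the common apartments $\mathcal{A}\subset\mathcal{A}'$) — which is inherited verbatim from the group-case independence in [\cite{d}] together with Remark~\ref{patr} and Lemma~\ref{normpa} to move between apartments of $\mathcal{B}(H)$ by elements of $\textup{stab}_H(F_{i,\theta}^*)$ fixing the relevant points — and that it is functorial/transitive in the obvious way when three strongly $r$-associated facets are involved, again a consequence of the corresponding property in the group case. The upshot is that essentially all the real content is imported from [\cite{d}, Section 3.4] via Remark~\ref{Gassoc}, and the only work is checking $\theta$-equivariance, which is immediate once a $\theta$-fixed base point is chosen.
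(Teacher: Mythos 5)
Your proposal is essentially correct and, at bottom, follows the same route as the paper: reduce to the group case via Remark~\ref{Gassoc}, import DeBacker's [\cite{d}, 3.5.1] for the standard lifts, and then exploit $\theta$-stability of the lattices to pass to eigenspaces. The difference is one of emphasis rather than substance. You phrase the conclusion as ``the $G$-case identification $V_{F_1^*}\to V_{F_2^*}$ is $\theta$-equivariant, hence restricts to the $(\pm1)$-eigenspaces,'' while the paper works more directly: take $e\in V_{F_{i,\theta}^*}^{\pm}$, lift it to $X\in\mathfrak{g}_{F_{1,\theta}^*}\cap\mathfrak{g}_{F_{2,\theta}^*}$ by [\cite{d}, 3.5.1], then \emph{project} $X$ onto its $\mathfrak{h}$- or $\mathfrak{p}$-component (Proposition~\ref{thstpa}), observe that the projection still lives in the intersection of lattices (because each $\mathfrak{g}_{F_{j,\theta}^*}$ is $\theta$-stable, and $\mathfrak{h}\cap\mathfrak{g}_{F_j}=\mathfrak{h}_{F_j}$ by Lemma~\ref{mplattice}), and that it maps to $e$ (by the proof of Lemma~\ref{thstcoset}). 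This projection argument is exactly what gives surjectivity of the stated maps and the equality of the three possible kernel lattices.

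The one place you should tighten up is precisely this last step: asserting that the identification ``restricts to isomorphisms on eigenspaces'' is not by itself the same as asserting surjectivity of $\mathfrak{h}_{F_{1,\theta}^*}\cap\mathfrak{h}_{F_{2,\theta}^*}\to V_{F_{i,\theta}^*}^+$. To get the latter from the former you need to know that the intersection lattice itself splits $\theta$-equivariantly,
\[
\mathfrak{g}_{F_{1,\theta}^*}\cap\mathfrak{g}_{F_{2,\theta}^*}
=\bigl(\mathfrak{h}_{F_{1,\theta}^*}\cap\mathfrak{h}_{F_{2,\theta}^*}\bigr)\oplus\bigl(\mathfrak{p}_{F_{1,\theta}^*}\cap\mathfrak{p}_{F_{2,\theta}^*}\bigr),
\]
which requires $p\neq 2$ together with Lemma~\ref{mplattice} to identify the $\mathfrak{h}$-part of each lattice with $\mathfrak{h}_{F_{j,\theta}^*}$; you cite Lemma~\ref{thstcoset} but should also invoke Lemma~\ref{mplattice} and Proposition~\ref{thstpa} at exactly this point. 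Also, the remark about choosing a $\theta$-fixed auxiliary point $z$ on a geodesic is a heuristic, not how [\cite{d}, 3.5.1] is actually organized: the identification there is defined by the quotient of the intersection lattice with no auxiliary point, and its $\theta$-equivariance is automatic because every lattice in sight is $\theta$-stable (since both $\theta$-facets meet $\mathcal{B}(H)$). Neither of these points is a fatal gap, but spelling them out would bring your argument into precise alignment with the lemma as stated.
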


$$ \mathfrak{h}_{F_{1, \theta}^*} \cap \mathfrak{h}_{F_{2, \theta}^*} \rightarrow V_{F_{i, \theta}^*}^+ $$

$$ \mathfrak{p}_{F_{1, \theta}^*} \cap \mathfrak{p}_{F_{2, \theta}^*} \rightarrow V_{F_{i, \theta}^*}^- $$ 
are surjective with kernels $\mathfrak{h}_{F_{1,\theta}^*}^+  \cap \mathfrak{h}_{F_{2,\theta}^*} = \mathfrak{h}_{F_{1,\theta}^*} \cap \mathfrak{h}_{F_{2,\theta}^*}^+ = \mathfrak{h}_{F_{1, \theta}^*}^+ \cap \mathfrak{h}_{F_{2, \theta}^*}^+ $ and $\mathfrak{p}_{F_{1,\theta}^*}^+ \cap \mathfrak{p}_{F_{2,\theta}^*} = \mathfrak{p}_{F_{1,\theta}^*} \cap \mathfrak{p}_{F_{2,\theta}^*}^+ = \mathfrak{p}_{F_{1, \theta}^*}^+ \cap \mathfrak{p}_{F_{2, \theta}^*}^+,$ respectively.

\begin{proof} By Remark \ref{Gassoc}, we know that the standard lifts $F_1^*$ and $F_2^*$ are strongly $r$-associated. Let $e \in V_{F_{i,\theta}^*}^+.$ By [\cite{d}, 3.5.1], there is a lift $X \in \mathfrak{g}_{F_{1,\theta}^*} \cap \mathfrak{g}_{F_{2,\theta}^*}.$ Let $X_+$ denote the projection of $X$ to $\mathfrak{h}_{F_{i, \theta}^*}.$ Then, by the proof of Lemma \ref{thstcoset}, $X_+$ is mapped to $e.$ By Lemma \ref{mplattice}, $X_+$ lies in $\mathfrak{h} \cap \mathfrak{g}_{F_{1,\theta}^*} \cap \mathfrak{g}_{F_{2,\theta}^*} = \mathfrak{h}_{F_{1,\theta}^*} \cap \mathfrak{h}_{F_{2,\theta}^*}.$ Thus, the map is surjective. If $X$ lies in the kernel of the first map, then, by [\cite{d}, 3.5.1], $X$ is contained in $\mathfrak{g}_{F_{1,\theta}^*}^+ \cap \mathfrak{g}_{F_{2,\theta}^*} = \mathfrak{g}_{F_{1,\theta}^*} \cap \mathfrak{g}_{F_{2,\theta}^*}^+ = \mathfrak{g}_{F_{1,\theta}^*}^+ \cap \mathfrak{g}_{F_{2,\theta}^*}^+.$ Thus, the kernel is $\mathfrak{h} \cap \mathfrak{g}_{F_{1,\theta}^*}^+ \cap \mathfrak{g}_{F_{2,\theta}^*} = \mathfrak{h}_{F_{1,\theta}^*} \cap \mathfrak{h}_{F_{2,\theta}^*}^+ = \mathfrak{h}_{F_{1,\theta}^*}^+ \cap \mathfrak{h}_{F_{2,\theta}^*}^+.$ 
	
	Similarly, let $X \in \mathfrak{p}_{F_{1,\theta}^*} \cap \mathfrak{p}_{F_{2,\theta}^*},$ and suppose $X$ is mapped to the trivial coset in $V_{F_{i,\theta}^*}^-.$ Then, again by [\cite{d}, 3.5.1], we have $X \in \mathfrak{p}_{F_{1,\theta}^*}^+ \cap \mathfrak{p}_{F_{2,\theta}^*} = \mathfrak{p}_{F_{1,\theta}^*} \cap \mathfrak{p}_{F_{2,\theta}^*}^+ = \mathfrak{p}_{F_{1,\theta}^*}^+ \cap \mathfrak{p}_{F_{2,\theta}^*}^+.$ Let $e$ be a coset in $V_{F_{i,\theta}^*}^-.$ By the same result, there exists a lift $X \in \mathfrak{g}_{F_{1,\theta}^*} \cap \mathfrak{g}_{F_{2,\theta}^*}.$ By Proposition \ref{thstpa}, we may project $X$ to $X_- \in \mathfrak{p}_{F_{1,\theta}^*}.$ This is the desired lift which lies in $\mathfrak{p}_{F_{1,\theta}^*} \cap \mathfrak{p}_{F_{2,\theta}^*}.$ 
\end{proof}

\begin{remark} Due to the previous result, whenever $F_{1, \theta}^*$ and $F_{2,\theta}^*$ are strongly $r$-associated, we are able to identify $V_{F_{1, \theta}^*}^+$ with $V_{F_{2,\theta}^*}^+$ and $V_{F_{1, \theta}^*}^-$ with $V_{F_{2,\theta}^*}^-.$ We let $i^+$ and $i^-$ denote the respective bijective identifications.\end{remark}

\begin{defn} If $F_{\theta}^* \in \mathcal{F}_{\theta}(r)$ and $x \in F_{\theta}^*,$ then the image of $H_x$ in $\textup{Aut}_{\mathfrak{f}}(V_{F_{\theta}^*}^-)$ is denoted by $N_x^-(F_{\theta}^*).$ \end{defn}

\begin{lemma} \label{normassoc} Suppose $F_{i,\theta}^* \in \mathcal{F}_{\theta}(r)$ and $x_i \in F_{i,\theta}^*$ for $i=1,2.$ If $F_{1,\theta}^*$ and $F_{2,\theta}^*$ are strongly $r$-associated, then $N_{x_i}^-(F_{i,\theta}^*)$ is the image of $H_{x_1} \cap H_{x_2}$ in $\textup{Aut}_{\mathfrak{f}}(V_{F_{i, \theta}^*}^-)$ for $i=1,2.$ Moreover, \end{lemma}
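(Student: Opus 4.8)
The plan is to reduce the asserted equality to determining which affine root subgroups of $H$ act nontrivially on $V_{F_{1,\theta}^*}^-$, and then to read that off strong $r$-associativity. Write $\pi_i\colon H_{x_i}\to\textup{Aut}_{\mathfrak{f}}(V_{F_{i,\theta}^*}^-)$ for the action map, so that $N_{x_i}^-(F_{i,\theta}^*)=\pi_i(H_{x_i})$. The inclusion $\pi_i(H_{x_1}\cap H_{x_2})\subseteq N_{x_i}^-(F_{i,\theta}^*)$ is immediate; and since $H_{x_1}\cap H_{x_2}$ normalizes the lattices $\mathfrak{p}_{F_{1,\theta}^*}\cap\mathfrak{p}_{F_{2,\theta}^*}$ and $\mathfrak{p}_{F_{1,\theta}^*}^+\cap\mathfrak{p}_{F_{2,\theta}^*}^+$ used in Lemma \ref{assocident} to construct $i^-$, the bijection $i^-$ carries $\pi_1(H_{x_1}\cap H_{x_2})$ onto $\pi_2(H_{x_1}\cap H_{x_2})$, so I only need the reverse inclusion for $i=1$. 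There I would first recall that $H_{x_1}$ is generated by the depth-zero part of a maximal torus of $H$ in its apartment together with the affine root subgroups $U_\alpha$ of $H$ with $\alpha(x_1)\geq0$, and that the $U_\alpha$ with $\alpha(x_1)>0$ act trivially on $V_{F_{1,\theta}^*}^-=\mathfrak{p}_{F_{1,\theta}^*}/\mathfrak{p}_{F_{1,\theta}^*}^+$ (they move $\mathfrak{p}_{F_{1,\theta}^*}$ into $\mathfrak{p}_{F_{1,\theta}^*}^+$ by the Moy-Prasad commutator estimates together with $[\mathfrak{h},\mathfrak{p}]\subseteq\mathfrak{p}$). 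Hence $\pi_1(H_{x_1})$ is generated by $\pi_1$ of the torus part and the $\pi_1(U_\alpha)$ with $\alpha(x_1)=0$, and it suffices to realize each such generator by an element of $H_{x_1}\cap H_{x_2}$.

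To do this I would use [\cite{bt1}, 2.3.1] to choose an apartment $\mathcal{A}\subseteq\mathcal{B}(H)$ containing both $x_1$ and $x_2$, so that strong $r$-associativity gives $A(\mathcal{A},F_{1,\theta}^*)=A(\mathcal{A},F_{2,\theta}^*)$ (and, through Remark \ref{Gassoc} and Proposition \ref{reduction}, the corresponding equality of affine spans of the standard lifts $F_1^*,F_2^*$ in an apartment $\mathcal{A}'\supseteq\mathcal{A}$ of $\mathcal{B}(G)$). The depth-zero torus part lies in every parahoric attached to $\mathcal{A}$, hence in $H_{x_1}\cap H_{x_2}$. Now let $\alpha$ be an affine root of $H$ with $\alpha(x_1)=0$ for which $U_\alpha$ acts nontrivially on $V_{F_{1,\theta}^*}^-$; using the splitting of Lemma \ref{thstcoset} and the root-space description of $\mathfrak{p}$ over $K$ (where $\theta$ permutes the root spaces of a $\theta$-stable maximal $K$-split torus), the gradient of $\alpha$ must carry one affine functional that is constantly equal to $r$ on $A(\mathcal{A},F_{1,\theta}^*)$ to another such functional. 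Since that affine subspace is also $A(\mathcal{A},F_{2,\theta}^*)$ and $x_2$ lies in it, both functionals take the value $r$ at $x_2$, whence $\alpha(x_2)=0$ and $U_\alpha\subseteq H_{x_1}\cap H_{x_2}$. The $U_\alpha$ that act trivially on $V_{F_{1,\theta}^*}^-$ contribute nothing to $\pi_1(H_{x_1})$ and need not be matched. This yields $N_{x_1}^-(F_{1,\theta}^*)=\pi_1(H_{x_1})\subseteq\pi_1(H_{x_1}\cap H_{x_2})$, and the case $i=2$ follows by the symmetry encoded in $i^-$.

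The step I expect to be the main obstacle is this middle one: making precise which $U_\alpha$ act nontrivially on the $(-1)$-eigenspace $V_{F_{1,\theta}^*}^-$ and pairing their gradients with the strong-$r$-association data, since $\mathfrak{p}$ is not a sum of root spaces and one must keep careful track of the $\theta$-fixed and $\theta$-anti-fixed combinations. This is precisely where the running hypothesis $p\neq2$ enters, via the splittings $\mathfrak{g}_{F_{1,\theta}^*}=\mathfrak{h}_{F_{1,\theta}^*}\oplus\mathfrak{p}_{F_{1,\theta}^*}$ (Proposition \ref{thstpa}, Lemma \ref{mplattice}) and $V_{F_{1,\theta}^*}=V_{F_{1,\theta}^*}^+\oplus V_{F_{1,\theta}^*}^-$ (Lemma \ref{thstcoset}), and the lattice-compatibility of Lemma \ref{assocident}, which together let one port the bookkeeping of [\cite{d}] onto $V_{F_{1,\theta}^*}^-$. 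An alternative that is easier to state would instead quote the full statement for $G_{x_1}$ acting on $V_{x_1,r}$ from [\cite{d}] and then descend by taking $\theta$-fixed points: the relevant kernel lies inside the pro-$p$ group $G_{x_1}^+$, so $H^1(\langle\theta\rangle,\,-)$ of it vanishes and the surjection persists on $\theta$-fixed points --- at the cost of having to control the finite-index discrepancy between the parahoric $H_{x_i}$ and $(G_{x_i})^\theta$.
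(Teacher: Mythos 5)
Your overall structure is right — reduce to the statement that an affine $H$-root $\alpha$ with $\alpha(x_1)=0$ whose affine root group acts nontrivially on $V_{F_{1,\theta}^*}^-$ must also satisfy $\alpha(x_2)=0$, and handle the torus part separately — and your observations about $H_{x_1}^+$ killing the action and about $i^-$ transporting the image are all sound. But the step you yourself flag as the main obstacle is a genuine gap, and the argument you sketch there is not obviously repairable as stated. You want to produce two affine functionals $\psi_1,\psi_2$ of $\mathbf G$ (relative to the $K$-split torus $\mathbf T'$) that are constantly $r$ on $A(\mathcal A,F_{1,\theta}^*)$ with $\psi_2-\psi_1$ restricting to (a multiple of) $\alpha$ on $\mathcal A$. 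This requires: (i) showing that ``the nontrivial action sees a root space $\mathfrak u_{\psi_1}$ with $\psi_1\equiv r$ on all of $A(\mathcal A,F_{1,\theta}^*)$, not just $\psi_1(x_1)=r$''; (ii) controlling which roots $\gamma$ of $\mathbf G$ can appear as $\dot\psi_2-\dot\psi_1$, given that $U_\alpha\subset H$ unfolds into a product of several $\mathbf G$-root groups and $\mathfrak p$ is not a sum of root spaces (you note this but do not resolve it); and (iii) doing all this over $K$ while $x_1,x_2$ and the affine root $\alpha$ live in the $k$-structure. None of (i)–(iii) is carried out, and without them the sentence ``both functionals take the value $r$ at $x_2$'' is an assertion rather than a deduction.

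The paper sidesteps exactly this combinatorics. It fixes $\psi\in\Psi(\mathcal A)$ (an affine root of $H$) with $\psi(x_1)=0$ and $U_\psi$ acting nontrivially on $V_{F_{1,\theta}^*}^-$, then rules out $\psi(x_2)>0$ and $\psi(x_2)<0$ by two entirely lattice-level arguments: if $\psi(x_2)>0$ then $U_\psi\subset H_{x_2}^+$, so for $X\in\mathfrak p_{x_1,r}\cap\mathfrak p_{x_2,r}$ one has ${}^hX-X\in\mathfrak p_{x_1,r}\cap\mathfrak p_{x_2,r^+}$, which equals $\mathfrak p_{x_1,r^+}\cap\mathfrak p_{x_2,r^+}$ by Lemma \ref{assocident}, contradicting nontriviality; if $\psi(x_2)<0$ one uses strong $r$-associativity to see $x_1+\mathbb R(x_2-x_1)\subset A(\mathcal A,F_{1,\theta}^*)$, moves a small amount against the direction of $x_2$ to land at a point $x_1+\epsilon v$ still inside $F_{1,\theta}^*\cap\mathcal A$ (Lemma \ref{facetpa}), and observes $\psi(x_1+\epsilon v)>0$ forces $U_\psi$ to act trivially on the same quotient, again a contradiction. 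No $\mathbf G$-root bookkeeping, no passage to $K$, and no need to control the $\theta$-action on root spaces. Your alternative suggestion (take $\theta$-fixed points of DeBacker's statement for $G_{x_1}$ and use cohomological vanishing on the pro-$p$ radical) is not obviously wrong, but, as you note, it creates a separate problem about the index of $H_{x_i}$ in $(G_{x_i})^\theta$ that the paper's direct argument never has to face. You should replace the ``gradient carries one functional to another'' step with the paper's two-case lattice argument, or else supply the missing verifications (i)–(iii).
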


$$ N_{x_1}^-(F_{1,\theta}^*) = N_{x_2}^-(F_{2,\theta}^*) $$
under the identification induced by $i^-.$

\begin{proof} Let $\mathcal{A}$ be an apartment in $\mathcal{B}(H)$ containing $x_1$ and $x_2.$ Choose $\psi \in \Psi(\mathcal{A})$ such that the image of $U_{\psi}$ in Aut$_{\mathfrak{f}}(V_{F_{1,\theta}^*}^-)$ is nontrivial and $\psi(x_1) = 0.$ We will show that $\psi(x_2) = 0.$
	
	Suppose $\psi(x_2) > 0.$ Since the image of $U_{\psi}$ in Aut$_{\mathfrak{f}}(V_{F_{1,\theta}^*}^-)$ is nontrivial, using the identification from Lemma \ref{assocident}, there exists some $h \in U_{\psi}$ and $X \in \mathfrak{p}_{x_1,r} \cap \mathfrak{p}_{x_2,r}$ such that 
	
	$$ {}^{h}X \neq X \textup{ mod } \mathfrak{p}_{x_1,r^+} \cap \mathfrak{p}_{x_2,r^+}. $$
On the other hand, we have ${}^{h}X - X \in \mathfrak{p}_{x_2,r^+},$ so by Lemma \ref{assocident}, we have 

$$ {}^{h}X - X \in \mathfrak{p}_{x_1,r} \cap \mathfrak{p}_{x_2,r^+} = \mathfrak{p}_{x_1,r^+} \cap \mathfrak{p}_{x_2,r^+}. $$
This is a contradiction, so we must have $\psi(x_2) \leq 0.$ 

	Suppose $\psi(x_2) < 0.$ Since $x_1$ and $x_2$ lie in an affine space, we regard $v=x_2-x_1$ as a vector. Consider the function 
	
	$$ f_v : \mathbb{R} \rightarrow \mathbb{R} $$
defined by $\epsilon \mapsto \psi(x_1 + \epsilon v),$ where $x_1+\epsilon v$ is interpreted as the point $z \in \mathcal{A}$ for which $z-x_1 = \epsilon v.$  For all $\epsilon \in \mathbb{R},$ we have $x_1+\epsilon v \in \mathcal{A},$ so $f_v$ is well-defined. Since $\psi(x_2) < 0,$ we have 

$$ f_v(1) = \psi(x_2) < 0,$$
so, since $\psi$ is continuous, we must have $f_v(\epsilon) < 0$ whenever $\epsilon > 0,$ and similarly $f_v(\epsilon) > 0$ whenever $\epsilon < 0.$ Recall that $A(\mathcal{A}, F_{1,\theta}^*) = A(\mathcal{A}, F_{2,\theta}^*)$ by definition of strong $r$-associativity. Since $x_1,x_2 \in A(\mathcal{A},F_{2,\theta}^*) = A(\mathcal{A}, F_{1,\theta}^*),$ we must have $x_1 + \mathbb{R}v \subset A(\mathcal{A}, F_{1,\theta}^*).$ Thus, since $F_{1,\theta}^* \cap \mathcal{A}$ is open in $A(\mathcal{A}, F_{1,\theta}^*) = A(\mathcal{A}, F_{2,\theta}^*),$ there is some $\epsilon < 0$ for which $x_1+\epsilon v \in F_{1,\theta}^* \cap \mathcal{A}.$ In particular, by Lemma \ref{facetpa}, we have $\mathfrak{p}_{x_1+ \epsilon v, r} = \mathfrak{p}_{x_1, r}$ and $\mathfrak{p}_{x_1+ \epsilon v, r^+} = \mathfrak{p}_{x_1, r^+}.$ Moreover, $\psi(x_1+\epsilon v) = f_v(\epsilon) > 0,$ so we have $U_{\psi} \subset H_{x_1+\epsilon v,0^+}$ Thus, $h \in U_{\psi}$ acts trivially on $\mathfrak{p}_{x_1+\epsilon v,r}/\mathfrak{p}_{x_1+\epsilon v, r^+}  = V_{F_{1,\theta}^*}^-,$ a contradiction. 
	
	We have thus shown that $\psi(x_2) = 0.$ If $\mathcal{A}$ corresponds to a maximal $k$-split torus $\textbf{S}$ of $\textbf{H},$ recall that $\textbf{S}$ lies inside a maximal $k$-torus $\textbf{Z}$ as described in Section 2.3. Since the image of $H_{x_1}$ is determined by a filtration subgroup of $\textbf{Z}$ (which also lies in $H_{x_2}$) and the $U_{\psi}$'s, the proof shows that if $h \in H_{x_1}$ has nontrivial image in Aut$_{\mathfrak{f}}(V_{F_{1,\theta}^*}^-),$ then there exists some $h' \in H_{x_1} \cap H_{x_2}$ for which the images of $h$ and $h'$ in $\textup{Aut}_{\mathfrak{f}}(V_{F_{i, \theta}^*}^-)$ coincide. 
\end{proof}

\begin{defn} Let $F_{\theta}^* \in \mathcal{F}_{\theta}(r)$ and $ x \in F_{\theta}^*.$ Define $N^-(F_{\theta}^*) \subset \textup{Aut}_{\mathfrak{f}}(V_{F_{\theta}^*}^-)$ by \end{defn}

$$ N^-(F_{\theta}^*) = N_x^-(F_{\theta}^*). $$

\subsection{An equivalence relation}

\begin{defn}

$$ I_r := \{ (F_{\theta}^*, v) \mid F_{\theta}^* \in \mathcal{F}_{\theta}(r) \textup{ and } v \in V_{F_{\theta}^*}^- \} $$
\end{defn}

Let $x \in \mathcal{B}(H).$ For $v \in V_{x,r},$ we interpret ${}^{h}v$ as the image of ${}^{h}X,$ where $X$ is a lift of $v$ in $\mathfrak{g}_{x,r}.$

\begin{defn} \label{anequiv} For $(F_{1,\theta}^*, v_1)$ and $(F_{2,\theta}^*,v_2)$ in $I_r,$ we write $(F_{1,\theta}^*,v_1) \sim (F_{2,\theta}^*,v_2)$ provided that there exists some $h \in H$ and an apartment $\mathcal{A} \subset \mathcal{B}(H),$ for which $F_{1,\theta}^* \cap \mathcal{A}, F_{2,\theta}^* \cap \mathcal{A}$ are nonempty, and \end{defn}

\begin{enumerate}

\item $A(\mathcal{A}, F_{1,\theta}^*) = A(\mathcal{A}, hF_{2,\theta}^*)  \textup{ and } $
\item $v_1= {}^{h}v_2 \textup{ in } V_{F_{1,\theta}^*}^- = V_{hF_{2, \theta}^*}^- ,$

\end{enumerate}
where we use the usual identification from Lemma \ref{assocident} for the second condition.

\begin{lemma} \label{equivI} The relation defined above is an equivalence relation on $I_r.$\end{lemma}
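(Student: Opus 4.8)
The plan is to verify the three properties of an equivalence relation on $I_r$: reflexivity, symmetry, and transitivity. For each, I would reduce to the corresponding statement about strong $r$-associativity of generalized $(r,\theta)$-facets, which has already been established to be an equivalence relation (Lemma \ref{assocequiv}), and then track the extra data — the vectors $v_i \in V_{F_{i,\theta}^*}^-$ — through the same apartment manipulations, using the identifications $i^-$ from Lemma \ref{assocident} and Lemma \ref{normassoc}.

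For reflexivity, take $(F_{\theta}^*, v) \in I_r$; choose any apartment $\mathcal{A}$ with $F_{\theta}^* \cap \mathcal{A} \neq \emptyset$ and $h=1$. Then $A(\mathcal{A}, F_{\theta}^*) = A(\mathcal{A}, F_{\theta}^*)$ and $v = {}^{1}v$ trivially. For symmetry, suppose $(F_{1,\theta}^*,v_1) \sim (F_{2,\theta}^*,v_2)$ witnessed by $h$ and $\mathcal{A}$. Following the proof of Lemma \ref{assocequiv}, I would multiply through by $h^{-1}$: one gets $A(h^{-1}\mathcal{A}, h^{-1}F_{1,\theta}^*) = A(h^{-1}\mathcal{A}, F_{2,\theta}^*) \neq \emptyset$, and applying $h^{-1}$ to condition (2) gives $v_2 = {}^{h^{-1}}v_1$ in $V_{F_{2,\theta}^*}^- = V_{h^{-1}F_{1,\theta}^*}^-$, so $(F_{2,\theta}^*,v_2) \sim (F_{1,\theta}^*,v_1)$ with witnessing element $h^{-1}$ and apartment $h^{-1}\mathcal{A}$.

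For transitivity, suppose $(F_{1,\theta}^*,v_1) \sim (F_{2,\theta}^*,v_2)$ via $h_2, \mathcal{A}_{12}$ and $(F_{2,\theta}^*,v_2) \sim (F_{3,\theta}^*,v_3)$ via $h_3, \mathcal{A}_{23}$. As in Lemma \ref{assocequiv}, pick $z \in C(F_{2,\theta}^*)$ so that $z$ lies in both $h_2^{-1}\mathcal{A}_{12}$ and $\mathcal{A}_{23}$, and use Remark \ref{patr} to get $h \in H_z \subset \textup{stab}_H(F_{2,\theta}^*)$ with $h h_2^{-1}\mathcal{A}_{12} = \mathcal{A}_{23}$. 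The chain of equalities in the proof of Lemma \ref{assocequiv} then shows $A(\mathcal{A}_{12}, F_{1,\theta}^*) = A(\mathcal{A}_{12}, h_2 h^{-1} h_3 F_{3,\theta}^*)$. It remains to check the vector condition $v_1 = {}^{h_2 h^{-1} h_3} v_3$ in $V_{F_{1,\theta}^*}^-$. Here I would use that $h \in \textup{stab}_H(F_{2,\theta}^*)$ fixes a point of $F_{2,\theta}^*$, so by Lemma \ref{normassoc} its action on $V_{F_{2,\theta}^*}^-$ is compatible with the identifications, and combine $v_1 = {}^{h_2}v_2$, $v_2 = {}^{h^{-1}}({}^{h}v_2)$ (with ${}^{h}v_2 = v_2$ only up to the relevant identification), and $v_2 = {}^{h_3}v_3$; composing these transport maps — all under the $i^-$ identifications of Lemma \ref{assocident} — yields $v_1 = {}^{h_2 h^{-1} h_3} v_3$ in $V_{F_{1,\theta}^*}^-$.

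The main obstacle I anticipate is bookkeeping with the identifications: condition (2) is stated "in $V_{F_{1,\theta}^*}^- = V_{hF_{2,\theta}^*}^-$", which tacitly invokes the $i^-$-identification for the strongly $r$-associated pair $F_{1,\theta}^*$ and $hF_{2,\theta}^*$, and in transitivity one must compose several such identifications across the intermediate facet $F_{2,\theta}^*$ and check that the element $h \in \textup{stab}_H(F_{2,\theta}^*)$ introduced to align apartments acts consistently — precisely what Lemma \ref{normassoc} is for. Everything else is a direct transcription of the argument already given for Lemma \ref{assocequiv}, so the only real work is confirming the transport maps on the $(-1)$-eigenspaces commute as required.
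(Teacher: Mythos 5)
Your reflexivity and symmetry arguments are fine and agree with the paper. The transitivity argument has a genuine gap, precisely at the point you flagged as a "main obstacle" — but you then hand-wave past it rather than resolving it. The element $h$ (or $\tilde h$ in your normalization) chosen to align the two apartments $h_2^{-1}\mathcal{A}_{12}$ and $\mathcal{A}_{23}$ lies in $H_z \subset \textup{stab}_H(F_{2,\theta}^*)$, but that is only enough to guarantee that $h$ acts on $V_{F_{2,\theta}^*}^-$ through $N^-(F_{2,\theta}^*)$; it does \emph{not} make this action trivial, nor does it make ${}^{h}v_2 = v_2$ "up to the relevant identification." Consequently, composing the identifications $i^-$ across $F_{1,\theta}^*$, $h_2F_{2,\theta}^*$, and $h_2h^{-1}h_3F_{3,\theta}^*$ does \emph{not} directly yield $v_1 = {}^{h_2h^{-1}h_3}v_3$, and that claimed identity is not justified by anything you wrote. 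The obstruction is that the square relating $i^-_{23}\colon V_{F_{2,\theta}^*}^- \to V_{h_3F_{3,\theta}^*}^-$ to the corresponding identification after multiplying by $h^{-1}$ has a possibly nontrivial automorphism of $V_{F_{2,\theta}^*}^-$ on one side.

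The paper's proof deals with this head-on. It first lifts $v_1$ to a representative $X$ in the \emph{triple} intersection $\mathfrak{p}_{F_{1,\theta}^*} \cap \mathfrak{p}_{h_2F_{2,\theta}^*} \cap \mathfrak{p}_{h^{-1}h_2h_3F_{3,\theta}^*}$ (surjectivity onto $V_{F_{1,\theta}^*}^-$ coming from the proof of [DeBacker, Lemma 3.5.1] together with Lemma \ref{assocident}), so that a single element $X$ can be chased through all three quotients. The image of ${}^{h_2^{-1}h}X$ in $V_{F_{2,\theta}^*}^-$ is ${}^{h_2^{-1}hh_2}v_2$, which is a priori not $v_2$. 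This is where Lemma \ref{normassoc} does real work: since $F_{2,\theta}^*$ and $h_3F_{3,\theta}^*$ are strongly $r$-associated, it produces $h' \in H_{h_3x_3} \cap H_{x_2}$ whose image in $\textup{Aut}_{\mathfrak{f}}(V_{F_{2,\theta}^*}^-)$ matches that of $h_2^{-1}hh_2$, so ${}^{h_2^{-1}hh_2}v_2 = {}^{h'}v_2 = {}^{h'h_3}v_3$. Conjugating $h'$ back gives a correction element $h'' \in \textup{stab}_H(h^{-1}h_2h_3F_{3,\theta}^*)$, and the actual witness for $(F_{1,\theta}^*,v_1) \sim (F_{3,\theta}^*,v_3)$ is $h''h^{-1}h_2h_3$, not the naive product you propose. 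So the idea of "reduce to Lemma \ref{assocequiv} and transport the vectors" is right, but the transport step is exactly where Lemma \ref{normassoc} and the triple-intersection lift must be deployed, and your writeup does not do that.
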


\begin{proof} For reflexivity, let $h=1.$ 
	
	Now, suppose $(F_{1,\theta}^*,v_1) \sim (F_{2,\theta}^*,v_2).$ By definition, there exists an apartment $\mathcal{A} \subset \mathcal{B}(H)$ and an element $h \in H$ such that

 \begin{enumerate}

\item $A(\mathcal{A}, F_{1,\theta}^*) = A(\mathcal{A}, hF_{2,\theta}^*) \neq \emptyset \textup{ and } $
\item $v_1= {}^{h}v_2 \textup{ in } V_{F_{1,\theta}^*}^- = V_{hF_{2, \theta}^*}^- .$

\end{enumerate}
Since $h^{-1}A(\mathcal{A}, F_{1,\theta}^*) = A(h^{-1}\mathcal{A}, h^{-1}F_{2,\theta}^*),$ we have 

 \begin{enumerate}

\item $A(h^{-1}\mathcal{A}, h^{-1}F_{1,\theta}^*) = A(h^{-1}\mathcal{A}, F_{2,\theta}^*) \neq \emptyset \textup{ and } $
\item ${}^{h^{-1}}v_1= v_2 \textup{ in } V_{h^{-1}F_{1,\theta}^*}^- = V_{F_{2, \theta}^*}^- .$

\end{enumerate}
In particular, $(F_{2,\theta}^*,v_2) \sim (F_{1,\theta}^*,v_1).$ 

	For transitivity, suppose $(F_{1,\theta}, v_1), (F_{2,\theta}^*, v_2), (F_{3,\theta}^*,v_3) \in I_r$ such that $(F_{1,\theta}^*,v_1) \sim (F_{2,\theta}^*,v_2)$ and $(F_{2,\theta}^*,v_2) \sim (F_{3,\theta}^*,v_3).$ By definition, there exist $h_2,h_3 \in H$ and apartments $\mathcal{A}_{12}, \mathcal{A}_{23} \subset \mathcal{B}(H)$ such that 
	
\begin{eqnarray*}
 A(\mathcal{A}_{12}, F_{1,\theta}^*) &=& A(\mathcal{A}_{12}, h_2F_{2,\theta}^*) \neq \emptyset \\
 A(\mathcal{A}_{23}, F_{2,\theta}^*) &=& A(\mathcal{A}_{23}, h_3F_{3,\theta}^*) \neq \emptyset
 \end{eqnarray*} 
 and 
\begin{eqnarray*}
v_1 &=& {}^{h_2}v_2 \textup{ in } V_{F_{1,\theta}^*}^- = V_{h_2F_{2,\theta}^*}^- \\
v_2 &=& {}^{h_3}v_3 \textup{ in } V_{F_{2,\theta}^*}^- = V_{h_3F_{3,\theta}^*}^-. 
\end{eqnarray*}

Fix $x_i \in C(F_{i,\theta}^*).$ The first and second lines show that $A(\mathcal{A}_{12}, h_2F_{2,\theta}^*), A(h_2\mathcal{A}_{23}, h_2F_{2,\theta}^*) \neq \emptyset.$ In particular, $\mathcal{A}_{12} \cap h_2F_{2,\theta}^*, h_2\mathcal{A}_{23} \cap h_2F_{2,\theta}^* \neq \emptyset,$ so there exists an element $h \in H_{h_2x_2} \subset \textup{stab}_H(h_2F_{2,\theta}^*)$ such that $h\mathcal{A}_{12} = h_2\mathcal{A}_{23}.$ We have 

\begin{eqnarray*} 
\emptyset \neq A(\mathcal{A}_{12},F_{1,\theta}^*) &=& A(\mathcal{A}_{12},h_2F_{2,\theta}^*) = h^{-1}A(h\mathcal{A}_{12},hh_2F_{2,\theta}^*) \\
&=& h^{-1}A(h_2\mathcal{A}_{23},h_2F_{2,\theta}^*) = h^{-1}h_2A(\mathcal{A}_{23},h_3F_{3,\theta}^*) \\
&=& A(\mathcal{A}_{12}, h^{-1}h_2h_3F_{3,\theta}^*). 
\end{eqnarray*}
Now, by the proof of [\cite{d}, 3.5.1] and Lemma \ref{assocident}, we have a surjection
$$ \mathfrak{p}_{F_{1,\theta}^*} \cap \mathfrak{p}_{h_2F_{2,\theta}^*} \cap \mathfrak{p}_{h^{-1}h_2h_3F_{3,\theta}^*} \rightarrow V_{F_{1,\theta}^*}^-. $$
As a result, there is some $X \in  \mathfrak{p}_{F_{1,\theta}^*} \cap \mathfrak{p}_{h_2F_{2,\theta}^*} \cap \mathfrak{p}_{h^{-1}h_2h_3F_{3,\theta}^*}$ such that the image of $X$ in $V_{F_{1,\theta}^*}^-$ is $v_1.$ Since ${}^{h_2}v_2 = v_1$ under the standard identification, we have that the image of $X$ in $V_{h_2F_{2,\theta}^*}^-$ is ${}^{h_2}v_2,$ so the image of ${}^{h_2^{-1}}X$ in $V_{F_{2,\theta}^*}^-$ is $v_2.$ Recall that $h \in H_{h_2x_2} = \textup{Int}(h_2)H_{x_2},$ so $h_2^{-1}hh_2 \in H_{x_2}.$ Thus, the image of ${}^{h_2^{-1}h}X = {}^{(h_2^{-1}hh_2)h_2^{-1}}X$ in $V_{F_{2,\theta}^*}^-$ is ${}^{h_2^{-1}hh_2}v_2.$  By the previous computation and the fact that $X$ was chosen in $\mathfrak{p}_{h^{-1}h_2h_3F_{3,\theta}^*},$ we have ${}^{h_2^{-1}h}X \in \mathfrak{p}_{F_{2,\theta}^*} \cap \mathfrak{p}_{h_3F_{3,\theta}^*}.$ Since $F_{2,\theta}^*$ and $h_3F_{3,\theta}^*$ are strongly $r$-associated, by Lemma \ref{normassoc}, $N^-(F_{2,\theta}^*) = N^-(h_3F_{3,\theta}^*).$ Thus, again by Lemma \ref{normassoc}, there is an element $h' \in H_{h_3x_3} \cap H_{x_2}$ such that 

$$ {}^{h_2^{-1}hh_2}v_2 = {}^{h'}v_2 = {}^{h'h_3}v_3 \textup{ in } V_{F_{2,\theta}^*}^- = V_{h_3F_{3,\theta}^*}^-. $$
As a consequence, the image of $X$ in $V_{h^{-1}h_2h_3F_{3,\theta}^*}^-$ is
 
 $$ {}^{h^{-1}h_2h'h_3}v_3 = {}^{h^{-1}(h_2h'h_2^{-1})h_2h_3}v_3 = {}^{h''h^{-1}h_2h_3}v_3 $$
 with $h'' \in \textup{Int}(h^{-1}h_2)(H_{h_3x_3} \cap H_{x_2}) \subset H_{h^{-1}h_2h_3x_3}.$ Since $h'' \in H_{h^{-1}h_2h_3x_3} \subset  \textup{stab}_H(h^{-1}h_2h_3F_{3,\theta}^*) ,$ we have
 
$$ A(\mathcal{A}_{12}, F_{1,\theta}^*) = A(\mathcal{A}_{12}, h^{-1}h_2h_3F_{3,\theta}^*) = A(\mathcal{A}_{12}, h''h^{-1}h_2h_3F_{3,\theta}^*) \neq \emptyset. $$
Moreover, 
 
 $$ v_1 = {}^{h_2}v_2 = {}^{h''h^{-1}h_2h_3}v_3 \textup{ in } V_{F_{1,\theta}^*}^- = V_{h''h^{-1}h_2h_3F_{3,\theta}^*}^-. $$
 This shows that $(F_{1,\theta}^*, v_1) \sim (F_{3,\theta}^*, v_3).$
\end{proof}

\section{Jacobson-Morosov triples over $\mathfrak{f}$ and $k$}

Fix $r \in \mathbb{R}.$ Before attaching a nilpotent $H$-orbit to the types of pairs discussed at the end of Section 4, we will need a way to pass from $\mathfrak{sl}_2(\mathfrak{f})$-triples to $\mathfrak{sl}_2(k)$-triples and vice versa. In this section, we describe this procedure in detail. Recall that $\mathcal{N}$ denotes the set of nilpotent elements in $\mathfrak{g}$ as defined in the preliminaries. As in Lemma \ref{thstcoset}, we will identify $\mathfrak{p}_{x,r}/\mathfrak{p}_{x,r^+}$ with $V_{x,r}^-.$

\begin{defn} Let $F_{\theta}^* \in \mathcal{F}_{\theta}(r).$ An element $e \in V_{F_{\theta}^*}^-$ is called degenerate provided that there exists a lift $E \in \mathfrak{p}_{F_{\theta}^*} \cap \mathcal{N}.$ \end{defn}
The following lemma gives us an alternate characterization of degenerate elements.
\begin{lemma} \label{deglemma} Fix $F_{\theta}^* \in \mathcal{F}_{\theta}(r).$ An element $e \in V_{F_{\theta}^*}^-$ is degenerate if and only if zero lies in the Zariski closure of ${}^{\textsf{H}_x}e$ for all $x \in F_{\theta}^*.$ \end{lemma}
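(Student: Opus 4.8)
The plan is to lift the corresponding statement for the adjoint action of $\textbf{G}$ on $\mathfrak{g}$, proved in [\cite{d}], through the $\theta$-stable structures of Proposition \ref{thstpa} and Lemma \ref{thstcoset}. Fix $x \in F_{\theta}^*$; then $\mathfrak{p}_{F_{\theta}^*} = \mathfrak{p}_{x,r}$, $\mathfrak{p}_{F_{\theta}^*}^+ = \mathfrak{p}_{x,r^+}$, and (via Lemma \ref{thstcoset}) $V_{F_{\theta}^*}^- = \mathfrak{p}_{x,r}/\mathfrak{p}_{x,r^+}$, with $\textsf{H}_x$ the connected reductive $\mathfrak{f}$-group for which $\textsf{H}_x(\mathfrak{f}) = H_x/H_x^+$ acts on $V_{F_{\theta}^*}^-$. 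The hypothesis $p \neq 2$ will be used exactly where it is used in Proposition \ref{thstpa} and Lemma \ref{thstcoset}, namely to split lifts into their $\theta$-eigencomponents.

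For the ``if'' direction: assume $0$ lies in the Zariski closure of ${}^{\textsf{H}_x}e$ for this (equivalently, by the hypothesis, every) $x$. If $e = 0$ then $0 \in \mathfrak{p}_{x,r} \cap \mathcal{N}$ is a nilpotent lift and we are done, so assume $e \neq 0$. By Kempf's refinement of the Hilbert--Mumford criterion there is a cocharacter $\bar\mu \in \textbf{X}_*(\textsf{H}_x)$, defined over $\mathfrak{f}$, with $\lim_{t \to 0} \bar\mu(t) \cdot e = 0$. Lifting $\bar\mu$ through the smooth $R$-group scheme attached to the parahoric of $H$ at $x$, we obtain a maximal $k$-split torus $\textbf{S}$ of $\textbf{H}$ with $x \in \mathcal{A}(\textbf{S}, k)$ together with $\mu \in \textbf{X}_{*}^{k}(\textbf{S})$ reducing to $\bar\mu$. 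Since $\textbf{H} \subset \textbf{G}^{\theta}$, the cocharacter $\mu$ satisfies $\theta \circ \mu = \mu$, so each $\mu$-weight space $\mathfrak{g}(\mu)_i$ is $\theta$-stable and, as $p \neq 2$, decomposes as $(\mathfrak{h} \cap \mathfrak{g}(\mu)_i) \oplus (\mathfrak{p} \cap \mathfrak{g}(\mu)_i)$. Because $x \in \mathcal{A}(\textbf{S}, k)$, the $\mu$-grading is compatible with the Moy--Prasad filtration at $x$, so by Lemma \ref{mplattice} and Proposition \ref{thstpa} it induces a grading $V_{F_{\theta}^*}^- = \bigoplus_i V_{F_{\theta}^*}^-(\bar\mu)_i$, and the condition $\lim_{t\to 0}\bar\mu(t)\cdot e = 0$ says $e \in \bigoplus_{i>0}V_{F_{\theta}^*}^-(\bar\mu)_i$. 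Picking a lift $E \in \bigoplus_{i>0}\bigl(\mathfrak{p}_{x,r}\cap\mathfrak{p}(\mu)_i\bigr)$ of $e$, we get $E \in \mathfrak{p}_{F_{\theta}^*}$ with $\lim_{t\to 0}{}^{\mu(t)}E = 0$, hence $E \in \mathfrak{p}_{F_{\theta}^*} \cap \mathcal{N}$ and $e$ is degenerate.

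For the ``only if'' direction: assume $e$ is degenerate and fix a lift $E \in \mathfrak{p}_{F_{\theta}^*} \cap \mathcal{N}$; since $E$ is independent of $x$, it is enough to show $0 \in \overline{{}^{\textsf{H}_x}e}$ for the fixed $x \in F_{\theta}^*$. By Remark \ref{kempf} there is a $\lambda \in \textbf{X}_{*}^{k}(\textbf{H})$ with $\lim_{t\to 0}{}^{\lambda(t)}E = 0$. The argument of [\cite{d}] now applies: after replacing $E$ by an $H_x$-translate -- which replaces $e$ by an element of its $\textsf{H}_x$-orbit and so does not affect whether $0$ lies in the orbit closure -- one arranges that $x$ lies in an apartment $\mathcal{A}(\textbf{S}, k)$ of a maximal $k$-split torus $\textbf{S}$ of $\textbf{H}$ with $\lambda \in \textbf{X}_{*}^{k}(\textbf{S})$ and that $E$ is contained in the sum of the positive $\lambda$-weight spaces of $\mathfrak{p}$. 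As above, $\lambda$ is $\theta$-fixed and its grading is Moy--Prasad compatible at $x$; its reduction $\bar\lambda \in \textbf{X}_*(\textsf{H}_x)$ then satisfies that the image $e$ of $E$ in $V_{F_{\theta}^*}^-$ lies in the sum of the positive $\bar\lambda$-weight spaces, so $\lim_{t\to 0}\bar\lambda(t)\cdot e = 0$ and $0 \in \overline{{}^{\textsf{H}_x}e}$.

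I expect the crux to be the ``only if'' direction, and within it the step imported from [\cite{d}] that replaces an arbitrary $k$-rational cocharacter contracting $E$ by one adapted to the point $x$; this is precisely the passage from the $k$-rational nilpotency datum to the residue-field group $\textsf{H}_x$, and it is where Bruhat--Tits theory does the real work. The additional $\theta$-equivariant bookkeeping is comparatively routine, but it genuinely depends on two points: that $p \neq 2$ (so Proposition \ref{thstpa}, Lemma \ref{mplattice}, and Lemma \ref{thstcoset} are available and the chosen lift can be pushed into $\mathfrak{p}$), and that every cocharacter of $\textbf{H}$ is automatically $\theta$-fixed, which is what makes the reductions from $\textbf{G}$ to $\textbf{H}$ and from $\mathfrak{g}$ to $\mathfrak{p}$ compatible with the gradings.
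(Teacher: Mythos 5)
Your ``if'' direction (zero in the orbit closure implies $e$ degenerate) is essentially the paper's argument: both invoke Kempf over the residue field, conjugate the resulting cocharacter into a torus $\textsf{S}$ compatible with $x$, lift to a cocharacter in $\textbf{X}_{*}^{k}(\textbf{S})$, and use $\theta$-fixedness plus Proposition \ref{thstpa} and Lemma \ref{thstcoset} to stay inside $\mathfrak{p}$. You conclude slightly more directly (the chosen lift in the positive weight spaces is itself nilpotent) where the paper routes through [\cite{adler-debacker}, Corollary 3.2.6], but the substance is the same; you do elide the conjugation by an element $\overline{h}\in\textsf{H}_x(\mathfrak{f})$ that the paper makes explicit, which is needed to guarantee that $\bar\mu$ actually lies in the chosen $\textsf{S}$.

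The ``only if'' direction has a genuine gap, and it is precisely the step you flag as the crux. Given $E\in\mathfrak{p}_{x,r}\cap\mathcal{N}$ with contracting cocharacter $\lambda\in\textbf{X}_*^k(\textbf{H})$, you claim that after replacing $E$ by an $H_x$-translate one can arrange $\lambda\in\textbf{X}_*^k(\textbf{S})$ with $x\in\mathcal{A}(\textbf{S},k)$. This is not true in general: an $H_x$-conjugation changes $E$ and $\lambda$ simultaneously and has no reason to place $\lambda$ in a torus whose apartment passes through $x$. The nilpotent $E$ and the point $x$ are a priori unrelated, and the $\lambda$-grading need not be compatible with the Moy--Prasad filtration at $x$ --- that compatibility is available only when the cocharacter is adapted to the point, which is the substance of the Barbasch--Moy argument (Lemma \ref{Gopal} and Corollary \ref{bsetnonempty}) and produces a different point $y$, not the fixed $x$. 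The paper sidesteps this entirely by invoking [\cite{moy-prasad}, Proposition 4.3], a lattice-theoretic instability result: one takes $V=\mathfrak{p}_{x,r}$, $W=\mathfrak{p}_{x,r^+}$, checks $\varpi V\subset W$, and concludes directly that the reduction of a nilpotent element of $V$ is unstable for $\textsf{H}_x$. No cocharacter adapted to $x$ is ever produced; the instability is read off from the lattice geometry. Your appeal to ``the argument of [\cite{d}]'' does not rescue this, since [\cite{d}] also handles this direction via the Moy--Prasad lattice lemma rather than by the reduction you sketch. To fix your proof you should replace the second paragraph of the ``only if'' direction with the Moy--Prasad argument applied to the $\theta$-eigenlattices, as the paper does.
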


\begin{proof} $``\Rightarrow "$: We refer to [\cite{moy-prasad}, Proposition 4.3]. Fix $x \in F_{\theta}^*$ and a lift $E \in \mathfrak{p}_{x,r} \cap \mathcal{N}.$ In the notation of [\cite{moy-prasad}, Proposition 4.3], we take $V = \mathfrak{p}_{x,r}, W=\mathfrak{p}_{x,r^+},$ and let $\rho: H_x \rightarrow GL(V)$ be the rational representation given by the adjoint action of $H_x$ on $\mathfrak{p}$ restricted to the lattice $\mathfrak{p}_{x,r}.$ We note that $\varpi \mathfrak{p}_{x,r} = \mathfrak{p}_{x,r+1} \subset \mathfrak{p}_{x,r^+},$ and $E$ is nilpotent lift of $e,$ so all hypotheses are satisfied. From [\cite{moy-prasad}, Proposition 4.3], we conclude that zero lies in the Zariski closure of ${}^{\textsf{H}_x}e,$ with respect to the induced representation of $\rho$ from $\textsf{H}_x$ to $GL(V/W).$
	
	$``\Leftarrow"$: Fix $x \in F_{\theta}^*.$ Let $\textbf{S}$ be a maximal $k$-split torus in $\textbf{H}$ with $x \in \mathcal{A}(\textbf{S},k).$ We consider $V_{F_{\theta}^*}^-$ as the vector space of $\mathfrak{f}$-rational points of the affine $\textsf{H}_x$-scheme $\textup{Lie}(\textsf{G}_x)^-.$ Then, by [\cite{kempf}, Theorem 1.4], there exists a one-parameter subgroup $\overline{\lambda} \in \textbf{X}_{*}^{\mathfrak{f}}(\textsf{H}_x)$ such that 
	
$$ \lim_{t \rightarrow 0} {}^{\overline{\lambda}(t)}e = 0.$$
Let $\textsf{S}$ be the maximal $\mathfrak{f}$-split torus in $\textsf{H}_x$ corresponding to $\textbf{S}.$ Then, since $\textsf{H}_x(\mathfrak{f})$ acts transitively on the set of maximal $\mathfrak{f}$-split tori in $\textsf{H}_x$, there exists an element $\overline{h} \in \textsf{H}_x(\mathfrak{f})$ and a one-parameter subgroup $\overline{\mu} \in \textbf{X}_{*}^{\mathfrak{f}}(\textsf{S})$ such that 

$$ \lim_{t \rightarrow 0} {}^{\overline{\mu}(t)\overline{h}}e = 0. $$
Let $\mu \in \textbf{X}_{*}(\textbf{S})$ be a lift of $\overline{\mu}$ and let $h \in H_x$ be a lift of $\overline{h}.$ Also, let $E'$ be a lift of $e$ in $\mathfrak{p}_{x,r}.$ Without loss of generality, we may assume that 

$$ {}^{h}E' = \sum_{\psi} X_{\psi}$$
where $X_{\psi} \in \mathfrak{g}_{\psi} \cap \mathfrak{p}$ and $\psi(x) = r.$ We claim that $\psi(x+\epsilon \cdot \mu) > r$ for all $\psi $ appearing in the sum. This will happen precisely when $\langle \mu, \dot{\psi} \rangle > 0$ since 

$$ \psi(x+\epsilon\mu) = \psi(x) + \epsilon \cdot \langle \mu, \dot{\psi} \rangle = r + \epsilon \cdot \langle \mu, \dot{\psi} \rangle.$$
 Note, however, that 

$$ 0 = \lim_{t \rightarrow 0} {}^{\overline{\mu}(t)\overline{h}}\overline{E'} = \sum \lim_{t \rightarrow 0} {}^{\overline{\mu}(t)}\overline{X_{\psi}} = \sum \lim_{t \rightarrow 0} t^{\langle \mu, \dot{\psi} \rangle} \overline{X_{\psi}} $$
so, in particular, the limit is 0 if and only if $\langle \mu, \dot{\psi} \rangle > 0.$ This shows that ${}^{h}E' \in \mathfrak{p}_{x+\epsilon\mu,r^+}.$  For $\epsilon$ sufficiently small, $x+\epsilon\mu$ lies in a generalized $(r,\theta)$-facet $C_{\theta}^*$ containing $F_{\theta}^*$ in its closure. By [\cite{d}, Corollary 3.2.19], we have $\mathfrak{p}_{x,r^+} \subset \mathfrak{p}_{x+\epsilon \mu,r^+}.$ Thus,

$$ {}^{h}(E' + \mathfrak{p}_{x,r^+}) = {}^{h}(E' + \mathfrak{p}_{F_{\theta}^*}^+) \subset \mathfrak{p}_{x+ \epsilon \cdot \mu, r^+} $$
for $\epsilon$ taken to be sufficiently small. We have shown that the coset $E' + \mathfrak{p}_{x,r}$ lies in $\mathfrak{g}_{r^+}$ as defined in [\cite{adler-debacker}, 3.2.5]. In particular, by [\cite{adler-debacker}, Corollary 3.2.6], $e$ is a degenerate coset. 

\end{proof}

In order to discuss $\mathfrak{sl}_2(\mathfrak{f})$-triples, we next introduce an $\mathfrak{f}$-Lie algebra $\overline{\mathfrak{g}}_x$ which is associated to a point $x \in \mathcal{B}(H).$ In the preliminaries, we chose a uniformizer $\varpi$ for $k,$ which allows us to identify $V_{x,s}$ with $V_{x,s+j\cdot \ell}$ where $L$ is the splitting field of $\textbf{G}$ containing $K,$ and $\ell = [L:K].$ Using this identification, we define

$$ \overline{\mathfrak{g}}_{x} := \bigoplus_{s \in \R /\ell \cdot \Z} V_{x,s}. $$
If $\overline{X}_s \in V_{x,s}$ and $\overline{X}_t \in V_{x,t},$ then define $[\overline{X_s}, \overline{X_t}]$ to be the image of $[X_s,X_t] \in \mathfrak{g}_{x,(s+t)}$ in $V_{x,(s+t)}$ where $X_s \in \mathfrak{g}_{x,s}$ and $X_t \in \mathfrak{g}_{x,t}$ are any lifts of $\overline{X_s}$ and $\overline{X_t}$ respectively. We can then linearly extend to obtain a well-defined bracket on all of $\overline{\mathfrak{g}}_x.$ With this product, $\overline{\mathfrak{g}}_x$ is an $\mathfrak{f}$-Lie algebra. 

\subsection{Some hypotheses}

We now list some hypotheses (which occur also in [\cite{d}]) needed in order to utilize the theory of $\mathfrak{sl}_2$-triples and pass from the Lie algebra setting to the group setting when necessary. These hypotheses hold under mild restrictions on $\textbf{G}, \textbf{H}$ and $k,$ and we give some references for more details on when each hypothesis is valid. It should be noted that in characteristic 0, all hypotheses hold. 

\begin{hyp} \label{hyp1} Suppose $x \in \mathcal{B}(H).$ If $X \in \mathcal{N} \cap (\mathfrak{p}_{x,r} \backslash \mathfrak{p}_{x,r^+}),$ then there exist $H \in \mathfrak{h}_{x,0}$ and $Y \in \mathfrak{p}_{x,-r}$ such that \end{hyp}

\begin{eqnarray*}
{} [H,X]  &=& 2X \textup{ } mod \textup{ } \mathfrak{p}_{x,r^+} \\
{} [H,Y]  &=& -2Y \textup{ } mod \textup{ } \mathfrak{p}_{x,(-r)^+} \\
{} [X,Y]  &=& H \textup{ } mod \textup{ } \mathfrak{h}_{x,0^+}. 
\end{eqnarray*}

If $(f,h,e)$ denotes the image of $(Y,H,X)$ in $V_{x,-r} \times V_{x,0} \times V_{x,r} \subset \overline{\mathfrak{g}}_x,$ then $\{ f,h,e \}$ is an $\mathfrak{sl}_2(\mathfrak{f})$-triple, and $\overline{\mathfrak{g}}_x$ decomposes into a direct sum of irreducible $\langle f,h,e \rangle$-modules of highest weight at most $p-3.$ Moreover, there exists some $\overline{\lambda} \in \textbf{X}_{*}^{\mathfrak{f}}(\textsf{H}_x),$ uniquely determined up to an element of $\textbf{X}_{*}(\textsf{Z}_x)$ whose differential is zero, such that the following hold:

\begin{enumerate}

\item The image of $d\overline{\lambda}$ in Lie($\textsf{H}_{x})$ coincides with the subspace spanned by $h.$

\item Suppose $i \in \Z.$ For $v \in \overline{\mathfrak{g}}_x$

\end{enumerate}

$$ \textup{ if } {}^{\overline{\lambda}(t)}v = t^iv, \textup{ then } |i| \leq p-3 \textup{ and } \textup{ad}(h)v = iv. $$
More details on Hypothesis \ref{hyp1} can be found in [\cite{d}, Appendix A].

	Following [\cite{kostant-rallis}, I.2], we define a \emph{normal} $\mathfrak{sl}_2$-triple below.

\begin{defn} Let $\{ Y, H, X \}$ \textup{(}resp. $\{f,h,e\}$\textup{)} be an $\mathfrak{sl}_2(k)$-triple in $\mathfrak{g}$ \textup{(}resp. $\mathfrak{sl}_2(\mathfrak{f})$-triple in $\overline{\mathfrak{g}}_x$\textup{)}. We call $\{ Y, H, X \}$ \textup{(}resp.$\{f,h,e\}$\textup{)} a normal $\mathfrak{sl}_2(k)$-triple \textup{(}resp. $\mathfrak{sl}_2(\mathfrak{f})$-triple\textup{)} provided that $X, Y \in \mathfrak{p}$ \textup{(}resp. $e,f \in (\overline{\mathfrak{g}}_x)^-$\textup{)} and $H \in \mathfrak{h}$ \textup{(}resp. $h \in (\overline{\mathfrak{g}}_x)^+\textup{)}.$ \end{defn}

\begin{remark} \label{normal} We note that if $\{f,h,e\}$ is any $\mathfrak{sl}_2(\mathfrak{f})$-triple in $\overline{\mathfrak{g}}_x$ with $e \in V_{x,r}^-,$ then it is normal. By projecting $h$ to $V_{x,0},$ we may assume $h \in V_{x,0}.$ By Lemma \ref{thstcoset}, we may write $h = h^+ + h^-$ where $h^+ \in V_{x,0}^+$ and $h^- \in V_{x,0}^-.$ We have $[h,e] = 2e \in V_{x,r}^-,$ so $[h,e] = [h^+,e] + [h^-,e] = 2e.$ By Lemma \ref{thstcoset}, $V_{x,r} = V_{x,r}^+ \oplus V_{x,r}^-$ is direct, so, we have $[h^-,e] = 0$ since $[V_{x,0}^-, V_{x,r}^-]  \subset V_{x,r}^+.$ By a similar argument, we have $f \in V_{x,-r}^-,$ so $\{f,h,e\}$ is a normal triple. This also shows that $h$ is $\theta$-fixed. In particular, the one-parameter subgroup $\overline{\lambda} \in \textbf{X}_{*}^{\mathfrak{f}}(\textsf{G}_x)$ has image inside $\textsf{H}_x.$ Moreover, it is clear that conditions 1) and 2) hold in this context as a consequence of Appendix A in [\cite{d}].  \end{remark}

\begin{defn} Keeping the above notation, we say that $\overline{\lambda} \in \textbf{X}_{*}^{\mathfrak{f}}(\textsf{H}_x)$ is adapted to the $\mathfrak{sl}_2(\mathfrak{f})$-triple obtained from the image of $(Y,H,X)$ in $V_{x,-r} \times V_{x,0} \times V_{x,r}.$ \end{defn}

\begin{hyp} \label{hyp2} If $X \in \mathcal{N}^-,$ then there exists some $m \in \mathbb{N}$ with $m \leq p-2$ such that ad$(X)^m = 0.$ \end{hyp}

\begin{hyp} \label{hyp3} Choose $m \in \N$ such that Hypothesis \ref{hyp2} holds. Suppose the characteristic of $k$ is zero or greater than $m.$ Then there exists a $G$-equivariant map \textup{exp}$: \mathcal{N} \rightarrow \mathcal{U}$ such that for all $X \in \mathcal{N},$ the adjoint action of \textup{exp}$(X)$ on $\mathfrak{g}$ is given by: \end{hyp}

$$ \textup{Ad}(\textup{exp}(X)) = \sum_{i=0}^{m}\frac{(\textup{ad}(X))^i}{i!}. $$

In the next hypothesis, we use the letter $H$ in two different contexts. In the first occurrence, it appears as an element of $\mathfrak{g}$ which is part of an $\mathfrak{sl}_2(k)$-triple. In the last line of the hypothesis, it occurs as the group of $k$-rational points of $\textbf{H}.$ This notation is unfortunate, but in most cases, the meaning of this symbol will be clear from context.

\begin{hyp} \label{hyp4} Suppose Hypothesis \ref{hyp3} holds. Let $X \in \mathcal{N}^-.$ There exists a normal $\mathfrak{sl}_2(k)$-triple completing $X.$ Moreover,  if $\{ Y, H, X \}$ is a normal $\mathfrak{sl}_2(k)$-triple completing  $X,$ then there is an algebraic group homomorphism $ \varphi: \textbf{SL}_2 \rightarrow \textbf{G} $ defined over $k$ such that $d \varphi  \begin{pmatrix} 0 & 1 \\ 0 & 0 \end{pmatrix} = X, d\varphi  \begin{pmatrix} 0 & 0 \\ 1 & 0 \end{pmatrix}  = Y, d\varphi \begin{pmatrix} 1 & 0 \\ 0 & -1 \end{pmatrix}  = H,$ and for all $t \in k,$

\begin{enumerate}
\item $\varphi \begin{pmatrix}
1 & t \\
0 & 1 \end{pmatrix} = \textup{exp}(tX) \textup{ }$ and 

\item $\varphi \begin{pmatrix}
1 & 0 \\
t & 1 \end{pmatrix}  = \textup{exp}(tY).$

\end{enumerate}
Lastly, (see below), any two normal $\mathfrak{sl}_2(k)$-triples which complete $X$ are conjugate by an element of $C_{H}(X).$ \end{hyp}

\begin{prop} Assume Hypotheses \ref{hyp1} and \ref{hyp3} hold. If $\{Y',H', X\}$ and $\{Y,H,X\}$ are two normal $\mathfrak{sl}_2(k)$-triples completing $X,$ then there exists an element $h \in C_H(X)$ for which $Y' = {}^{h}Y$ and $H'={}^{h}H.$
\end{prop}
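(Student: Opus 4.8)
The plan is to run the classical uniqueness argument for Jacobson--Morozov triples (compare [\cite{kostant-rallis}, I.2]) $\theta$-equivariantly, using the exponential map of Hypothesis \ref{hyp3} in place of the characteristic-zero exponential. By Hypotheses \ref{hyp1} and \ref{hyp3} (equivalently, via the homomorphism $\varphi\colon\textbf{SL}_2\to\textbf{G}$ of Hypothesis \ref{hyp4}) I would integrate $\{Y,H,X\}$ to an adapted cocharacter $\lambda\in\textbf{X}_*^k(\textbf{H})$; since $H\in\mathfrak{h}$ is $\theta$-fixed, the weight decomposition $\mathfrak{g}=\bigoplus_j\mathfrak{g}(j)$ under $\textup{ad}(H)$ is defined over $k$, is $\theta$-stable, and restricts to $\mathfrak{g}(j)=\mathfrak{h}(j)\oplus\mathfrak{p}(j)$. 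Under our hypotheses the $\mathfrak{sl}_2$-module structure of $\mathfrak{g}$ relative to $\langle Y,H,X\rangle$ is as in characteristic zero; the two inputs I will use are $C_{\mathfrak{g}}(X)\subseteq\bigoplus_{j\geq0}\mathfrak{g}(j)$ (with equality in weight $0$ given by $C_{\mathfrak{g}}(\{Y,H,X\})$) and $\ker(\textup{ad}\,X)\cap\textup{im}(\textup{ad}\,X)\subseteq\bigoplus_{j\geq1}\mathfrak{g}(j)$.

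I would first record two elementary facts. (i) An $\mathfrak{sl}_2(k)$-triple is determined by its nilpositive and neutral elements: if $[X,Y_1]=[X,Y_2]=H_0$ and $[H_0,Y_i]=-2Y_i$, then $Y_1-Y_2\in C_{\mathfrak{g}}(X)$ has $\textup{ad}(H_0)$-weight $-2$, hence is $0$. (ii) If $\{Y_1,H_1,X\}$ and $\{Y_2,H_2,X\}$ are normal $\mathfrak{sl}_2(k)$-triples completing $X$, then $H_1-H_2\in\mathfrak{n}:=\bigoplus_{j\geq1}\bigl(\mathfrak{h}(j)\cap C_{\mathfrak{g}}(X)\bigr)$: indeed $H_1-H_2=[X,Y_1-Y_2]\in\textup{im}(\textup{ad}\,X)$, while $[X,H_1-H_2]=0$ puts it in $\ker(\textup{ad}\,X)$, and it lies in $\mathfrak{h}$ by normality. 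Note $\mathfrak{n}$ is a nilpotent $k$-Lie subalgebra of $\mathfrak{h}$ all of whose elements are in $\mathcal{N}$, being annihilated in the limit by $\lambda$.

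The core step is to produce $h\in\exp(\mathfrak{n})$ with ${}^hH=H'$ by a graded approximation along the filtration $\mathfrak{n}^{\geq m}:=\bigoplus_{j\geq m}\bigl(\mathfrak{h}(j)\cap C_{\mathfrak{g}}(X)\bigr)$, which vanishes for $m$ large since the weights are bounded under our hypotheses. Starting from $u_1=1$, suppose $u_m\in\exp(\mathfrak{n})$ satisfies $D_m:={}^{u_m}H-H'\in\mathfrak{n}^{\geq m}$; this holds for $m=1$ by (ii) applied to $\{Y,H,X\}$ and $\{Y',H',X\}$, and for general $m$ it holds because ${}^{u_m}\{Y,H,X\}$ is again a normal triple completing $X$ (recall ${}^{u_m}X=X$ as $u_m\in\exp(C_{\mathfrak{g}}(X))$), so (ii) gives $D_m\in\mathfrak{n}$. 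Writing $D_m=C_m+(\text{terms in }\mathfrak{n}^{\geq m+1})$ with $C_m\in\mathfrak{h}(m)\cap C_{\mathfrak{g}}(X)$, I set $u_{m+1}:=\exp(\tfrac1m C_m)\,u_m$; expanding $\textup{Ad}(\exp(\tfrac1m C_m))$ via Hypothesis \ref{hyp3} and using $[\tfrac1m C_m,H]=-C_m$, one checks that the $\textup{ad}(H)$-weight-$m$ part of ${}^{u_{m+1}}H-H'$ cancels, so $D_{m+1}\in\mathfrak{n}^{\geq m+1}$. After finitely many steps $D_M=0$, and $h:=u_M$ satisfies ${}^hH=H'$. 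By construction $h\in\exp(\mathfrak{n})\subseteq H$ (Hypothesis \ref{hyp3} applied within $\textbf{H}$, using $\mathfrak{n}\subseteq\mathfrak{h}$) and $h\in C_H(X)$ (as $\textup{Ad}(\exp Z)X=X$ for $Z\in C_{\mathfrak{g}}(X)$). Finally ${}^h\{Y,H,X\}=\{{}^hY,H',X\}$ is again a normal $\mathfrak{sl}_2(k)$-triple completing $X$ with neutral element $H'$, so fact (i) forces ${}^hY=Y'$; together with ${}^hH=H'$ this proves the proposition.

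I expect the main obstacle to be the $\mathfrak{sl}_2$-representation-theoretic input in our mixed-characteristic setting: complete reducibility of $\mathfrak{g}$ over $\langle Y,H,X\rangle$ and the resulting inclusions $C_{\mathfrak{g}}(X)\subseteq\bigoplus_{j\geq0}\mathfrak{g}(j)$ and $\ker(\textup{ad}\,X)\cap\textup{im}(\textup{ad}\,X)\subseteq\bigoplus_{j\geq1}\mathfrak{g}(j)$. This is exactly where the highest-weight bounds in Hypotheses \ref{hyp1} and \ref{hyp2} are needed, and I would supply it either by passing to the graded $\mathfrak{f}$-Lie algebra $\overline{\mathfrak{g}}_x$ at a point $x$ fixed by $\lambda$, or by invoking the analogous estimates in [\cite{d}, Appendix A]. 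The remaining points --- $\theta$-equivariance of $\exp$ and $\exp(\mathfrak{n})\subseteq H$ --- are routine, following from $\textup{Ad}(\exp Z)=\sum_i\textup{ad}(Z)^i/i!$ and the fact that a $\theta$-fixed unipotent element of $\textbf{G}$ lies in $(\textbf{G}^\theta)^\circ\subseteq\textbf{H}$.
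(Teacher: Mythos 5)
Your proposal is correct and follows essentially the same route as the paper: Kostant's graded exponential-approximation argument, run $\theta$-equivariantly. The paper (explicitly citing Kostant and Collingwood--McGovern) works with $\mathfrak{h}_X := [\mathfrak{p},X]\cap C_{\mathfrak{h}}(X)$, shows it is a Lie subalgebra contained in $\bigoplus_{i\geq1}C_{\mathfrak{h}}(X)(i)$ with all elements nilpotent, and then builds $W=W_m\in\mathfrak{h}_X$ layer by layer so that $\textup{Ad}(\exp W)(H)=H'$. Your $\mathfrak{n}=\bigoplus_{j\geq1}(\mathfrak{h}(j)\cap C_{\mathfrak{g}}(X))$ is the same subalgebra as $\mathfrak{h}_X$ (since $[\mathfrak{g},X]\cap\mathfrak{h}=[\mathfrak{p},X]$ and, under the $\mathfrak{sl}_2$-complete-reducibility supplied by Hypothesis~\ref{hyp1}, $\ker(\textup{ad}\,X)\cap\textup{im}(\textup{ad}\,X)$ is exactly the positive-weight part of $C_{\mathfrak{g}}(X)$); you just present the approximation multiplicatively as a product of exponentials rather than exponentiating a single accumulated $W$, which is an equivalent bookkeeping. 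Two small points in your favor: you make explicit the last step (your fact~(i)) that pins down $Y'={}^hY$ once ${}^hH=H'$, which the paper leaves implicit; and you are explicit that the $\theta$-stability of the weight decomposition is what keeps the whole construction inside $\mathfrak{h}$. The one place you should be slightly more careful is the assertion ``$\exp(\mathfrak{n})\subseteq H$'': the paper likewise just asserts that $\exp$ carries $\mathfrak{h}\cap\mathcal{N}$ into $H$, and your appeal to ``$\theta$-fixed unipotents lie in $(\textbf{G}^\theta)^\circ$'' is a reasonable heuristic but not a complete argument over $k$ (the natural justification is via Hypothesis~\ref{hyp4}: $\exp(W)$ is a value of the $\theta$-equivariant homomorphism $\varphi$ attached to a normal triple completing $W$, hence lands in $\textbf{H}(k)$). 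With that caveat noted, your argument matches the paper's.
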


\begin{proof} (\emph{a generalization of an argument of Kostant}) In order to verify the claim, we slightly modify the notation and argument given in [\cite{kostant}, Theorem 3.6] and [\cite{col-mc}, Lemma 3.4.7].  

	Define $\mathfrak{h}_X:=[\mathfrak{p}, X] \cap C_{\mathfrak{h}}(X),$ and let $U, V \in \mathfrak{h}_X.$ Since $V = [W, X],$ for some $W \in \mathfrak{p},$ and since $U$ centralizes $X,$ we have 
	
	$$ [U,V] = [U, [W, X]] = [X, [W,U]].$$
Since $W \in \mathfrak{p}$ and $U \in \mathfrak{h},$ it follows that $[W,U] \in \mathfrak{p},$ so $[U,V] \in [\mathfrak{p}, X].$ This shows $\mathfrak{h}_X$ is a Lie subalgebra of $\mathfrak{g}.$

	Kostant also shows that every element of $\mathfrak{g}_X:=[\mathfrak{g}, X] \cap C_{\mathfrak{g}}(X)$ is nilpotent. It follows that every element of $\mathfrak{h}_X$ is nilpotent. (Note that $\mathfrak{h}_X$ is also invariant under ad$(H).$) By Hypothesis \ref{hyp3}, the adjoint action of $\textup{exp}(W)$ for $W \in \mathfrak{h}_X$ on an element of $\mathfrak{g}$ is given by $\textup{Ad}(\textup{exp}(W)) = \sum_{i}\frac{(\textup{ad}(W))^i}{i!}.$ In particular, we have

$$ \textup{Ad}(\textup{exp}(W))(H) = \sum_{i}\frac{(\textup{ad}(W))^i(H)}{i!} \in H + \mathfrak{h}_X. $$

	We will show that for every $V \in \mathfrak{h}_X,$ there exists some $W \in \mathfrak{h}_X$ such that Ad(exp$(W))(H) = H + V.$ Define $C_{\mathfrak{h}}(X)(i):=\{ Z \in C_{\mathfrak{h}}(X) \mid [H,Z] = iZ \}.$ Kostant shows that $\mathfrak{g}_X \subset \oplus_{i=1}^{m} C_{\mathfrak{g}}(X)(i)$ for some natural number $m.$ In particular, $\mathfrak{h}_X \subset  \oplus_{i=1}^{m} C_{\mathfrak{h}}(X)(i).$ We now construct the element $W$ inductively. 
			
	Set $W_1 = -V_1,$ where $V_1$ is the component of $V$ lying in $C_{\mathfrak{h}}(X)(1).$ Then, $W_1$ lies in $\mathfrak{h}_X,$ and we have ad$(W_1)(H) = -[H,W_1] = -W_1 = V_1.$ Again, using Hypothesis \ref{hyp3}, we have 
	
\begin{eqnarray*}
 \textup{Ad(exp}(W_1))(H) - (H+V) &=& 	\sum_{i=0}^{m}\frac{(\textup{ad}(W_1))^i(H)}{i!} - (H+V) \\
 &=& (V_1 - V) + \sum_{i=2}^{m}\frac{(\textup{ad}(W_1))^i(H)}{i!} \in \bigoplus_{i \geq 2} C_{\mathfrak{h}}(X)(i).
\end{eqnarray*}	
The last line results from the fact that the restriction of ad$(H)$ to $\mathfrak{h}_X$ takes strictly positive integral values as eigenvalues.
	
	Thus, we have verified the base case. We now assume that we have constructed elements $W_j$ such that 
\begin{enumerate}
\item $W_j \in \bigoplus_{1 \leq i \leq  j} C_{\mathfrak{h}}(X)(i)$
\item Ad(exp($W_j))(H) - (H+V) \in \bigoplus_{j+1 \leq i \leq m} C_{\mathfrak{h}}(X)(i).$
\end{enumerate}
 Now, let $W_{j+1}'$ be the component of $\textup{Ad(exp}(W_j))(H) - (H+V)$ which lies in $C_{\mathfrak{h}}(X)(j+1).$
	
	Letting $W_{j+1} = W_j + \frac{1}{j+1}W_{j+1}',$ it is clear that $W_{j+1} \in \oplus_{1 \leq i \leq j+1} C_{\mathfrak{h}}(X)(i).$ Moreover, we have
\begin{eqnarray*}
\textup{Ad(exp}(W_{j+1}))(H) - (H+V) &=& \sum_{i=0}^m \frac{(\textup{ad}(W_{j+1}))^i(H)}{i!} - (H+V) \\
&=& H + [W_{j+1}, H] + \cdots - (H+V) \\
&=& H + [W_j, H] + \frac{1}{j+1}[W_{j+1}', H] + \cdots - (H+V) \\
&=& H + [W_j, H] - W_{j+1}' + \cdots - (H+V)
\end{eqnarray*}
Only the terms with indices up to $i=1$ have been expanded in the last line written. If we expand higher terms, we obtain a sum of the form
$$ H + [W_j, H] - W_{j+1}' + \frac{[W_j, [W_j, H]]}{2!} + \frac{[\frac{1}{j+1}W_{j+1}', [W_j, H]]}{2!} - \frac{[W_{j+1}, W_{j+1}']}{2!} \cdots - (H+V) $$
so it becomes clear that expanding will further will give us the sum including $ \textup{Ad(exp}(W_j))(H) - (H+V),$ $-W_{j+1}',$ and terms which lie in weight spaces of $C_{\mathfrak{h}}(X)$ with weights greater than or equal to $(j+2).$ Thus, by definition of $W_{j+1}',$ we have
$$ \textup{Ad(exp}(W_{j+1}))(H) - (H+V) \in \bigoplus_{j+2 \leq i \leq m} C_{\mathfrak{h}}(X)(i).$$
Finally, letting $W = W_m,$ we have Ad(exp$(W))(H) = H + V.$ 
	
	Now, since $[H',X] = 2X = [H,X],$ we have $H'-H \in C_{\mathfrak{h}}(X).$ On the other hand, since $[X, Y' - Y] = H'- H,$ we have $H'-H \in [\mathfrak{p}, X].$ In particular, we have $H'-H \in \mathfrak{h}_X.$ By the argument above, there is some $W \in \mathfrak{h}_X$ such that

$$ \textup{Ad(exp}(W))(H) = H + (H'-H) = H'.$$
By the construction of the element $W$ in the proof, it is clear that $W$ lies in $\mathfrak{h},$ so since exp takes $\mathfrak{h} \cap \mathcal{N}$ into $H,$ we set $h = \textup{exp}(W).$

\end{proof}

\begin{hyp} \label{hyp5} Let $x \in \mathcal{B}(H).$ For all $s \in \R_{>0}$ and for all $t \in \R,$ there exists a map $\phi_{x}: \mathfrak{h}_{x,s} \rightarrow H_{x,s}$ such that for $V \in \mathfrak{h}_{x,s}$ and $W \in \mathfrak{p}_{x,t}$ we have  \end{hyp}

$$ {}^{\phi_x(V)}W = W + [V,W] \textup{ mod } \mathfrak{p}_{x, (s+t)^+}. $$
	
	Hypothesis \ref{hyp5} as stated above is weaker than its counterpart in the group case. More precisely, as in [\cite{adler}, 1.3-1.7], suppose $x \in \mathcal{B}(G).$ Then for all $s \in \mathbb{R}_{> 0}$ and for all $t \in \mathbb{R}$ there exists a map $\phi_x: \mathfrak{g}_{x,s} \rightarrow G_{x,s}$ such that for $V \in \mathfrak{g}_{x,s}$ and $W \in \mathfrak{g}_{x,t}$ we have 

$$ {}^{\phi_x(V)}W = W + [V,W] \textup{ mod  } \mathfrak{g}_{x,(s+t)^+}. $$
From the above equation, we can derive Hypothesis \ref{hyp5} provided that the restriction of $\phi_x$ to $\mathfrak{h}_{x,s}$ maps into $H_{x,s}.$ For more details on this assumption, see [\cite{debacker-reeder}, Appendix B].

\subsection{Obtaining $\mathfrak{sl}_2(k)$-triples from $\mathfrak{sl}_2(\mathfrak{f})$-triples}
	Our next step will be to show how to obtain a normal $\mathfrak{sl}_2(k)$-triple from a normal $\mathfrak{sl}_2(\mathfrak{f})$-triple. We first recall the setup. 
	
	Let $x \in \mathcal{B}(H)$ and suppose $(f,h,e) \subset V_{x,-r}^- \times V_{x,0}^+ \times V_{x,r}^- \subset \overline{\mathfrak{g}}_x$ is a nontrivial normal $\mathfrak{sl}_2(\mathfrak{f})$-triple. Suppose $\overline{\mu} \in \textbf{X}_{*}^{\mathfrak{f}}(\textsf{H}_x)$ is adapted to $\{f,h,e\}.$ Let $\textbf{S}$ be a maximal $k$-split torus of $\textbf{H}$ such that $x \in \mathcal{A}(\textbf{S}, k).$ Let $\textsf{S}$ be the maximal $\mathfrak{f}$-split torus in $\textsf{G}_x$ corresponding to $\textbf{S}.$ Since $\textsf{H}_x$ is a reductive group over $\mathfrak{f},$ all maximal $\mathfrak{f}$-split tori are $\textsf{H}_x(\mathfrak{f})$-conjugate, so, in particular, there is a one-parameter subgroup $\overline{\lambda} \in \textbf{X}_{*}(\textsf{S})$ and an element $\overline{h} \in \textsf{H}_x(\mathfrak{f})$ with $\overline{\lambda} = {}^{\overline{h}}\overline{\mu}.$ Now, let $\lambda \in \textbf{X}_{*}(\textbf{S})$ be a lift of $\overline{\lambda}$ and substitute $ \{ {}^{\overline{h}}f, {}^{\overline{h}}h, {}^{\overline{h}}e \}$ for $ \{ f,h,e \}.$ Under the action of $\lambda$ we have the following grading on the Lie algebra $\mathfrak{g}:$

$$ \mathfrak{g}(i) := \{ X \in \mathfrak{g} \mid {}^{\lambda(t)}X = t^i\cdot X \} \textup{ and } \overline{\mathfrak{g}}_x(i) := \{ v \in \overline{\mathfrak{g}}_x \mid {}^{\overline{\lambda}(t)}v = t^i\cdot v \}.$$ 
For $s \in \mathbb{R},$ we have analogous gradings on $\mathfrak{g}_{x,s}$ and $V_{x,s}$ defined by

$$ \mathfrak{g}_{x,s}(i):= \{ Z \in \mathfrak{g}_{x,s} \mid {}^{\lambda(t)}Z = t^i\cdot  Z \} \textup{ and } V_{x,s}(i):= \{v \in V_{x,s} \mid {}^{\overline{\lambda}(t)}v = t^i \cdot v \}. $$
Define $\mathfrak{p}(i):= \mathfrak{p} \cap \mathfrak{g}(i),$ $\mathfrak{p}_{x,r}(i):= \mathfrak{p}_{x,r} \cap \mathfrak{g}(i),$ and $V_{x,r}^-(i):= V_{x,r}^- \cap V_{x,r}(i).$

\begin{remark} We recall that the Lie bracket on $\mathfrak{g}$ does not preserve $\mathfrak{p}.$ In fact, we have $[V,W] \in \mathfrak{h}$ for all $V,W \in \mathfrak{p}.$ In particular, if $X \in \mathfrak{p},$ and $Y \in \mathfrak{p},$ the element $\textup{ad}(X)^2(Y)$ lies in $\mathfrak{p}.$ This shows why the map in the following lemma is well-defined. \end{remark}

\begin{lemma} \label{sllemma} Suppose Hypothesis \ref{hyp1} holds. If $X \in \mathfrak{p}_{x,r}(2)$ is a lift of $e,$ then, for all $s \in \mathbb{R},$ the map 

$$ \textup{ad}(X)^2: \mathfrak{p}_{x,s-r}(-2) \rightarrow \mathfrak{p}_{x,s+r}(2) $$
is an isomorphism of $R$-modules. \end{lemma}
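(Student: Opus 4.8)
The plan is to reduce the assertion to a statement over the residue field $\mathfrak{f}$, via Hypothesis \ref{hyp1} and classical $\mathfrak{sl}_2$-theory, and then to transfer the resulting isomorphism of graded quotients up to the $R$-lattices using completeness of $k$. Recall first, as in the Remark preceding the lemma, that $[\mathfrak{p},\mathfrak{p}]\subseteq\mathfrak{h}$, so $\mathrm{ad}(X)^2$ is a well-defined $R$-linear endomorphism of $\mathfrak{p}$; it carries $\mathfrak{p}_{x,u}$ into $\mathfrak{p}_{x,u+2r}$ for every $u$, and it raises the $\lambda$-weight by $4$, so it does map $\mathfrak{p}_{x,s-r}(-2)$ into $\mathfrak{p}_{x,s+r}(2)$. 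Moreover, by the definition of the bracket on $\overline{\mathfrak{g}}_x$, the reduction of $\mathrm{ad}(X)^2$ modulo the lattices $\mathfrak{p}_{x,\cdot^{+}}$ is exactly $\mathrm{ad}(e)^2$ acting on $\overline{\mathfrak{g}}_x$.

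The core step is to show that $\mathrm{ad}(e)^2\colon\overline{\mathfrak{g}}_x(-2)\to\overline{\mathfrak{g}}_x(2)$ is an $\mathfrak{f}$-linear isomorphism. By Hypothesis \ref{hyp1}, the $\overline{\lambda}$-grading of $\overline{\mathfrak{g}}_x$ agrees with the $\mathrm{ad}(h)$-eigenspace decomposition and $\overline{\mathfrak{g}}_x$ is a direct sum of irreducible $\langle f,h,e\rangle$-modules of highest weight at most $p-3$. On such a module the weight $(-2)$ and weight $2$ spaces have the same dimension (zero or one), and when both are nonzero the scalars by which $e$ acts from the weight $(-2)$ space to the weight $0$ space and from there to the weight $2$ space are products of integers lying in $\{1,\dots,p-3\}$, hence units of $\mathfrak{f}$; thus $\mathrm{ad}(e)^2$ is an isomorphism on each summand, hence on $\overline{\mathfrak{g}}_x$. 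Since $\theta(e)=-e$, the operator $\mathrm{ad}(e)$ anticommutes with $\theta$, so $\mathrm{ad}(e)^2$ commutes with $\theta$ and preserves the $\theta$-eigenspace decomposition; it also shifts the grading $\overline{\mathfrak{g}}_x=\bigoplus_{s\in\mathbb{R}/\ell\mathbb{Z}}V_{x,s}$ by $2r$, a bijection of the indexing set. Hence $\mathrm{ad}(e)^2$ restricts, for every $s$, to an isomorphism $V_{x,s-r}^{-}(-2)\xrightarrow{\ \sim\ }V_{x,s+r}^{-}(2)$. (Here I use, as in the setup of this subsection, that $\lambda$ normalizes every Moy--Prasad lattice, so that $\mathfrak{p}_{x,u}=\bigoplus_i\mathfrak{p}_{x,u}(i)$ as $R$-modules, and likewise for $V_{x,u}$.)

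It remains to upgrade ``isomorphism on each graded quotient'' to ``isomorphism of lattices''. For injectivity, given $0\neq Y\in\mathfrak{p}_{x,s-r}(-2)$, let $t=\sup\{u:Y\in\mathfrak{g}_{x,u}\}$; this is a break of the filtration with $t\ge s-r$, and the image $\overline{Y}$ of $Y$ in $V_{x,t}^{-}(-2)$ is nonzero. By the core step the image of $\overline{Y}$ under $V_{x,t}^{-}(-2)\to V_{x,t+2r}^{-}(2)$ is nonzero, and that image is the class of $\mathrm{ad}(X)^2Y$; hence $\mathrm{ad}(X)^2Y\notin\mathfrak{p}_{x,(t+2r)^{+}}$, so in particular $\mathrm{ad}(X)^2Y\neq 0$. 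For surjectivity, fix $Z\in\mathfrak{p}_{x,s+r}(2)$ and let $s+r=t_1<t_2<\cdots$ be the breaks of the filtration above $s+r$, so $t_i\to\infty$. Using surjectivity of the graded map at level $t_i$, choose inductively $Y_i\in\mathfrak{p}_{x,t_i-2r}(-2)\subseteq\mathfrak{p}_{x,s-r}(-2)$ with $Z-\mathrm{ad}(X)^2(Y_1+\cdots+Y_i)\in\mathfrak{p}_{x,t_{i+1}}(2)$. Then $Y:=\sum_i Y_i$ converges in the complete $R$-module $\mathfrak{p}_{x,s-r}(-2)$ and $\mathrm{ad}(X)^2Y=Z$.

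The main obstacle is the core step: proving that the induced map over $\mathfrak{f}$ is an isomorphism, not merely injective. This is exactly where the highest-weight bound in Hypothesis \ref{hyp1} is indispensable, since it guarantees that the relevant $\mathfrak{sl}_2$-structure constants are units in $\mathfrak{f}$; in small characteristic without such a bound the map can fail to be surjective. One must also check that $\theta$ interacts with the $\mathfrak{sl}_2$-action well enough to descend the isomorphism to the $(-1)$-eigenspaces, but this is immediate from $\theta(e)=-e$, and the lattice-theoretic part is routine.
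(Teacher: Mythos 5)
Your proof is correct, but it takes a noticeably longer route than the paper. The paper simply cites [\cite{d}, Lemma 4.3.1] to get that $\textup{ad}(X)^2\colon \mathfrak{g}_{x,s-r}(-2)\to\mathfrak{g}_{x,s+r}(2)$ is an $R$-module isomorphism, and then restricts to the $(-1)$-eigenspaces: injectivity on $\mathfrak{p}_{x,s-r}(-2)$ is immediate, and for surjectivity one takes a preimage $Z'\in\mathfrak{g}_{x,s-r}(-2)$ of $Z\in\mathfrak{p}_{x,s+r}(2)$, writes $Z'=Z'_++Z'_-$ via Proposition \ref{thstpa}, and observes that since $\textup{ad}(X)^2$ preserves the $\theta$-eigenspace decomposition the $\mathfrak{p}$-component already hits $Z$. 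You instead reprove the content of DeBacker's Lemma 4.3.1 from first principles: you pass to the $\mathfrak{f}$-Lie algebra $\overline{\mathfrak{g}}_x$, use the highest-weight bound in Hypothesis \ref{hyp1} and classical $\mathfrak{sl}_2(\mathfrak{f})$-theory to show $\textup{ad}(e)^2$ is an isomorphism on each graded piece, note that $\theta(e)=-e$ forces $\textup{ad}(e)^2$ to commute with $\theta$ so that the isomorphism respects the eigenspace decomposition, and then lift to the lattices by a filtration/completeness approximation. The key structural observation is the same in both proofs — $\textup{ad}(X)^2$ (equivalently $\textup{ad}(e)^2$) is $\theta$-equivariant because $X\in\mathfrak{p}$ — and both arguments are sound; the trade-off is that your version is self-contained and makes the role of the characteristic bound explicit, while the paper's version is much shorter by leaning on the group-case result as a black box. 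One small stylistic note: once you know the $\mathfrak{g}$-level map is an isomorphism commuting with $\theta$, the restriction to $\mathfrak{p}=\mathfrak{g}^{-}$ is automatic (the projections $\tfrac{1\pm\theta}{2}$ commute with $\textup{ad}(X)^2$), so the surjectivity clean-up can be stated in one line rather than via a graded approximation.
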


\begin{proof} By [\cite{d}, Lemma 4.3.1], we know that the map $\textup{ad}(X)^2: \mathfrak{g}_{x,s-r}(-2) \rightarrow \mathfrak{g}_{x,s+r}(2)$ is an isomorphism of $R$-modules. Thus, 

$$ \textup{ad}(X)^2: \mathfrak{p}_{x,s-r}(-2) \rightarrow \mathfrak{p}_{x,s+r}(2)$$
is injective. 

	Let $Z \in \mathfrak{p}_{x,s+r}(2) \subset \mathfrak{g}_{x,s+r}(2).$ By [\cite{d}, 4.3.1], there is an element $Z' \in \mathfrak{g}_{x,s-r}(-2)$ such that $(\textup{ad}(X)^2)(Z') = Z.$ By Lemma \ref{thstpa}, we may write $Z'=Z'_+ + Z'_-,$ where $Z'_+ \in \mathfrak{h}_{x,s-r}$ and $Z'_- \in \mathfrak{p}_{x,s-r}.$ By the last line in [\cite{d}, Section 4.3], the projection $\mathfrak{g} \rightarrow \mathfrak{g}(i)$ preserves depth, so we let $W$ denote the projection of $Z'_-$ to the $(-2)$ weight space. Then, since $Z \in \mathfrak{p}_{x,s+r}(2),$ we have $(\textup{ad}(X)^2)(W) = Z.$ Thus, the map is surjective.
\end{proof}

\begin{cor} \label{sllift} Suppose Hypotheses \ref{hyp1} and \ref{hyp2} hold. If $X \in \mathfrak{p}_{x,r}(2)$ is a lift of $e,$ then there are lifts $Y \in \mathfrak{p}_{x,-r}$ of $f$ and $H \in \mathfrak{h}_{x,0}$ of $h$ such that $\{Y,H,X\}$ is a normal $\mathfrak{sl}_2(k)$-triple in $\mathfrak{g}.$ \end{cor}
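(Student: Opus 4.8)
The plan is to obtain $Y$ directly from the isomorphism of Lemma \ref{sllemma}, to set $H := [X,Y]$, and then to read off the three $\mathfrak{sl}_2(k)$-relations from the Jacobi identity together with the injectivity part of Lemma \ref{sllemma}. Normality will come for free, since $X,Y$ will lie in $\mathfrak{p}$ and hence $H\in[\mathfrak{p},\mathfrak{p}]\subseteq\mathfrak{h}$.

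First I would produce $Y$. The element $-2X$ lies in $\mathfrak{p}_{x,r}(2)$, so by Lemma \ref{sllemma} (with $s=0$) there is a unique $Y\in\mathfrak{p}_{x,-r}(-2)$ with $\textup{ad}(X)^2(Y)=-2X$, i.e. $[X,[X,Y]]=-2X$. Put $H:=[X,Y]$. Since $X\in\mathfrak{g}_{x,r}$ and $Y\in\mathfrak{g}_{x,-r}$ we have $H\in\mathfrak{g}_{x,0}$, and $H\in[\mathfrak{p},\mathfrak{p}]\subseteq\mathfrak{h}$, so $H\in\mathfrak{g}_{x,0}\cap\mathfrak{h}=\mathfrak{h}_{x,0}$ by Lemma \ref{mplattice}; moreover $H\in\mathfrak{g}(0)$. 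The relation $[X,Y]=H$ holds by construction, and $[H,X]=[[X,Y],X]=-[X,[X,Y]]=-\textup{ad}(X)^2(Y)=2X$. For the last relation I would compute, via the Jacobi identity and the identities $[X,H]=\textup{ad}(X)^2(Y)=-2X$ and $[H,[X,Y]]=[H,H]=0$,
$$ \textup{ad}(X)([H,Y]+2Y)=[[X,H],Y]+[H,[X,Y]]+2[X,Y]=[-2X,Y]+0+2H=-2H+2H=0. $$
Now $[H,Y]+2Y$ lies in $\mathfrak{p}_{x,-r}(-2)$: it is in $[\mathfrak{h},\mathfrak{p}]\subseteq\mathfrak{p}$, in $\mathfrak{g}_{x,-r}$, and in the $(-2)$-weight space. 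Since $\textup{ad}(X)^2$ is injective on $\mathfrak{p}_{x,-r}(-2)$ by Lemma \ref{sllemma}, so is $\textup{ad}(X)$ there; hence $[H,Y]+2Y=0$. Thus $\{Y,H,X\}$ satisfies the $\mathfrak{sl}_2(k)$-relations, and it is normal.

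It then remains to check that $Y$ reduces to $f$ and $H$ to $h$. I would fix any lift $\tilde{Y}\in\mathfrak{p}_{x,-r}$ of $f$ and let $\tilde{Y}^{\flat}\in\mathfrak{p}_{x,-r}(-2)$ be its weight $(-2)$ component; this projection preserves Moy--Prasad depth, and $\tilde{Y}^{\flat}$ still lifts $f$ because $f$ has $\overline{\lambda}$-weight $-2$. Since reduction is compatible with the bracket on $\overline{\mathfrak{g}}_x$ and $[e,[e,f]]=[e,h]=-2e$, we get $\textup{ad}(X)^2(\tilde{Y}^{\flat})\equiv -2X \bmod \mathfrak{p}_{x,r^+}(2)$, so $\textup{ad}(X)^2(\tilde{Y}^{\flat}-Y)\in\mathfrak{p}_{x,r^+}(2)$; by Lemma \ref{sllemma} this forces $\tilde{Y}^{\flat}-Y\in\mathfrak{p}_{x,(-r)^+}(-2)$. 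Hence $Y\equiv\tilde{Y}^{\flat}\bmod\mathfrak{p}_{x,(-r)^+}$ lifts $f$, and then $H=[X,Y]$ reduces to $[e,f]=h$, with $H\in\mathfrak{h}_{x,0}$ as shown.

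The $\mathfrak{sl}_2$-relations are a short formal computation once $Y$ is chosen as the $\textup{ad}(X)^2$-preimage of $-2X$; the step I expect to require the most care is the last one, namely tracking depths when passing between $\mathfrak{p}_{x,\pm r}$ and $V_{x,\pm r}^{-}$, verifying that the weight-space projection preserves the Moy--Prasad filtration, and checking that $\textup{ad}(X)^2$ (being an $R$-module isomorphism in each degree by Lemma \ref{sllemma}, for every real $s$) identifies the filtration sublattice $\mathfrak{p}_{x,(-r)^+}(-2)$ with $\mathfrak{p}_{x,r^+}(2)$, so that $Y$ is pinned down modulo $\mathfrak{p}_{x,(-r)^+}$ and therefore genuinely lifts $f$.
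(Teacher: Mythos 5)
Your proposal is correct, and for the key verification it takes a genuinely different route from the paper. Both you and the paper construct $Y$ via surjectivity of $\textup{ad}(X)^2 : \mathfrak{p}_{x,-r}(-2)\to\mathfrak{p}_{x,r}(2)$ and set $H=[X,Y]$. The paper then handles the remaining relation $[H,Y]=-2Y$ by invoking Carter's Theorem 5.3.2 to produce some $Y'$ completing $\{H,X\}$ to an $\mathfrak{sl}_2(k)$-triple, projecting $Y'$ into $\mathfrak{p}(-2)$, and concluding $Y=Y'$ from injectivity of $\textup{ad}(X)^2$. You instead establish this relation directly with the Jacobi identity: you show $\textup{ad}(X)\bigl([H,Y]+2Y\bigr)=0$, observe $[H,Y]+2Y\in\mathfrak{p}_{x,-r}(-2)$, and kill it by injectivity of $\textup{ad}(X)$ on that weight space (which follows from injectivity of $\textup{ad}(X)^2$). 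This is cleaner and more self-contained, avoiding the appeal to Carter's Jacobson--Morosov completion. Your verification that $Y$ lifts $f$ also differs slightly in flavor: you compare $Y$ with the weight-$(-2)$ component $\tilde{Y}^{\flat}$ of an arbitrary lift and pin them down modulo $\mathfrak{p}_{x,(-r)^+}$ via the filtration-level isomorphism from Lemma \ref{sllemma}, whereas the paper argues inside $\overline{\mathfrak{g}}_x$ using injectivity of $\textup{ad}(e)^2$ on $\overline{\mathfrak{g}}_x(-2)$ to see $\overline{Y}-f=0$; the latter is marginally shorter, but yours makes explicit the depth bookkeeping that the paper elides. Both are valid; your approach trades one external citation for a short computation.
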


\begin{proof} Let $X \in \mathfrak{p}_{x,r}(2)$ be a lift of $e.$ By the previous lemma, $\textup{ad}(X)^2: \mathfrak{p}_{x,-r}(-2) \rightarrow \mathfrak{p}_{x,r}(2)$ is surjective, so there exists an element $Y \in \mathfrak{p}_{x,-r}(-2)$ with $\textup{ad}(X)^2(Y) = -2X.$ By the proof of [\cite{d}, Lemma 4.3.1], $\textup{ad}(e)^2 : \overline{\mathfrak{g}}_x(-2) \rightarrow \overline{\mathfrak{g}}_x(2)$ is injective, so since ad$(e)^2(f) = -2e$ and ad$(e)^2(\overline{Y} - f) = 0,$ we have that $Y$ is a lift of $f.$ Set $H=[X,Y].$ By a computation, $[H,X] = 2X,$ so in order to show that $\{Y,H,X\}$ is our desired $\mathfrak{sl}_2(k)$-triple, we must verify that $[H,Y] = -2Y.$ 
	
		By [\cite{carter}, Theorem 5.3.2], there exists some $Y' \in \mathfrak{g}$ which completes $\{H,X\}$ to an $\mathfrak{sl}_2(k)$-triple. By projecting $Y'$ to $\mathfrak{p}(-2),$ we can assume it lies in this weight space. Now ad$(X)^2(Y') = \textup{ad}(X)^2(Y),$ so by Lemma \ref{sllemma}, since $\textup{ad}(X)^2$ is injective, we have $Y=Y'.$ 
\end{proof}

\subsection{One-parameter subgroups}

We now fix a one-parameter subgroup $\lambda \in \textbf{X}_{*}^{k}(\textbf{H}).$ The following material is obtained from results in [\cite{d}, Section 4.4].

	Fix an element $X \in \mathcal{N} \cap \mathfrak{p}.$ Suppose Hypothesis \ref{hyp4} holds. Then, there exists a normal $\mathfrak{sl}_2(k)$-triple $\{Y,H,X\}$ completing $X$ and a homomorphism $\varphi: \textbf{SL}_2 \rightarrow \textbf{G}$ so that $H =d\varphi( \begin{pmatrix} 1 & 0 \\ 0 & -1 \end{pmatrix})$ and $Y = d\varphi(\begin{pmatrix} 0 & 0 \\ 1 & 0 \end{pmatrix}).$ Note that such a map is Gal$(K/k$)-equivariant.
	
	We will now exhibit a point $y \in \mathcal{B}(H)$ such that $Y \in \mathfrak{p}_{y,-r}, H \in \mathfrak{h}_{y,0}, $ and $X \in \mathfrak{p}_{y,r}.$ The argument given in the lemma below (excluding the last paragraph) is due to Gopal Prasad.

\begin{lemma} \label{Gopal} (Barbasch and Moy). Suppose Hypothesis \ref{hyp4} holds. There exists some $x \in \mathcal{B}(H)$ such that $Y, H, X \in \mathfrak{g}_{x,0}.$\end{lemma}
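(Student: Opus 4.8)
The plan is to find the point $x$ by exploiting the homomorphism $\varphi: \textbf{SL}_2 \to \textbf{G}$ produced by Hypothesis \ref{hyp4}. Since $\varphi$ is defined over $k$ and $\{Y,H,X\}$ is a normal triple (so $X,Y \in \mathfrak{p}$, $H \in \mathfrak{h}$), the image $\varphi(\textbf{SL}_2)$ is a $k$-subgroup of $\textbf{G}$ on which $\theta$ acts; moreover, because $d\varphi$ carries the standard triple of $\mathfrak{sl}_2$ to $(Y,H,X)$ and $\theta$ fixes $H$ while negating $X$ and $Y$, the involution $\theta$ pulls back along $\varphi$ to the involution of $\textbf{SL}_2$ given by $g \mapsto \operatorname{diag}(1,-1)\cdot (g^t)^{-1}\cdot\operatorname{diag}(1,-1)$ (or, after conjugating, to the standard transpose-inverse involution). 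In particular $\varphi$ restricts to a $k$-homomorphism of the corresponding maximal tori and unipotent subgroups that is compatible with the buildings.

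First I would apply the argument of Prasad (as attributed in the statement) in the group case: by functoriality of the building under the $k$-homomorphism $\varphi$, there is a point $x_0 \in \mathcal{B}(G)$ — concretely, the image under the toral map $\mathcal{B}(\textbf{SL}_2,k)\to\mathcal{B}(G)$ of the origin of the standard apartment of $\textbf{SL}_2$, i.e.\ the point fixed by $\varphi(\textbf{SL}_2(R))$ — such that $Y,H,X \in \mathfrak{g}_{x_0,0}$. This is because the standard triple of $\mathfrak{sl}_2$ lies in $\mathfrak{sl}_2(R)$, which lies in the depth-zero lattice at the origin, and $d\varphi$ is an $R$-module map on the relevant lattices (here one uses Hypothesis \ref{hyp3}/\ref{hyp4}, i.e.\ mild restrictions on $p$, to guarantee that $\varphi$ is "admissible" in the sense that it carries Moy--Prasad lattices to Moy--Prasad lattices — this is the technical heart of Prasad's argument). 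Thus $x_0 \in \mathcal{B}(G)$ and $Y,H,X \in \mathfrak{g}_{x_0,0}$.

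The main obstacle, and the new content beyond the group case, is to replace $x_0$ by a point lying in $\mathcal{B}(H) = \mathcal{B}(G)^\theta$. Here I would use that $x_0$ can be taken $\theta$-fixed: the whole construction of $x_0$ from $\varphi$ is $\theta$-equivariant, since $\theta\circ\varphi = \varphi\circ\theta_0$ where $\theta_0$ is the (inner) involution of $\textbf{SL}_2$ above, and $\theta_0$ fixes the origin of the standard apartment of $\mathcal{B}(\textbf{SL}_2,k)$. Therefore $\theta(x_0) = x_0$, so $x_0 \in \mathcal{B}(G)^\theta = \mathcal{B}(H)$ by Prasad--Yu ([\cite{prasad-yu}, Theorem 1.9], using $p\neq 2$). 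Setting $x := x_0$ gives a point of $\mathcal{B}(H)$ with $Y,H,X \in \mathfrak{g}_{x,0}$. As a last point I would record that, by Lemma \ref{mplattice} and Proposition \ref{thstpa} applied at $x$, membership in $\mathfrak{g}_{x,0}$ combined with $X,Y\in\mathfrak{p}$ and $H\in\mathfrak{h}$ upgrades automatically to $X,Y \in \mathfrak{p}_{x,0} = \mathfrak{p}\cap\mathfrak{g}_{x,0}$ and $H \in \mathfrak{h}_{x,0}$, which is exactly the refined statement one wants in the sequel (and which presumably is sharpened in the next lemma to $Y\in\mathfrak{p}_{x,-r}$, $X\in\mathfrak{p}_{x,r}$ using the grading by $\lambda$). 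The delicate verification is the admissibility of $\varphi$ on lattices; if that is not already available verbatim in the references, I would instead argue more directly by choosing a $\theta$-stable maximal $k$-split torus $\textbf{S}'$ of $\textbf{G}$ through which a conjugate of $\varphi$ factors (Proposition \ref{stdapt}), take $x_0$ on the corresponding apartment so that $H$ lies in $\mathfrak{g}_{x_0,0}$ by the explicit root-space description of Moy--Prasad filtrations, and then verify $X,Y\in\mathfrak{g}_{x_0,0}$ root space by root space using the relation $[H,X]=2X$, $[H,Y]=-2Y$ together with $p\neq 2$.
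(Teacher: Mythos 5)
There is a genuine gap. Your plan hinges on defining $x_0$ as \emph{the} point fixed by $\varphi(\textbf{SL}_2(R))$ (equivalently, as the image of a distinguished vertex under a toral map $\mathcal{B}(\textbf{SL}_2,k)\to\mathcal{B}(G)$), and then arguing that the whole construction of $x_0$ is $\theta$-equivariant, so $\theta(x_0)=x_0$. The problem is that no such canonical point exists: the fixed-point set of a bounded subgroup such as $\varphi(\textbf{SL}_2(R_K))\rtimes\textup{Gal}(K/k)$ in $\mathcal{B}(\textbf{G},K)$ is a nonempty closed convex set, typically of positive dimension (for instance, when $\varphi$ embeds $\textbf{SL}_2$ into a much larger group, the centralizer contributes a whole sub-building of fixed points). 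Likewise, the functorial map of buildings attached to $\varphi$ is only defined up to an affine ambiguity, so $\iota(o)$ is not a well-defined point, and even a chosen $\iota$ has no reason to satisfy $\theta\circ\iota=\iota\circ\theta_0$. Consequently the sentence ``therefore $\theta(x_0)=x_0$'' is unsupported, and the proof does not close. Your ``direct'' alternative at the end has the same issue: $\varphi$ does not factor through a torus, only the adapted cocharacter $\lambda$ does, and choosing $x_0$ in an apartment so that $X,Y\in\mathfrak{g}_{x_0,0}$ is exactly the nontrivial content; moreover nothing in that sketch addresses $\theta$-invariance of the resulting $x_0$.

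What the paper actually does is to work with the \emph{set} rather than a point. Setting $B=\varphi(\textbf{SL}_2(R_K))\rtimes\textup{Gal}(K/k)$, one produces a $B$-fixed point $x'\in\mathcal{B}(\textbf{G},K)$ by the Bruhat--Tits fixed-point theorem and checks $Y,H,X\in\mathfrak{g}_{x'}$ via the $R$-group-scheme functoriality of $[\cite{bt2}, 1.7.6]$ (this is the Barbasch--Moy/Prasad half). The new step is then: let $\Omega=\mathcal{B}(\textbf{G},K)^{B}$. Because $\{Y,H,X\}$ is a \emph{normal} triple, applying $\theta$ to the containment $d\varphi(\mathfrak{sl}_2(R))\subset\mathfrak{g}_{x'}$ gives $d\varphi(\mathfrak{sl}_2(R))\subset\mathfrak{g}_{\theta(x')}$, and by $[\cite{d},\text{Corollary }4.5.5]$ this forces $\theta(x')\in\Omega$; hence $\Omega$ is $\theta$-\emph{stable as a set}. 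Since $\Omega$ is nonempty, closed, convex and $\langle\theta\rangle$ is a bounded group of isometries, a second application of the Bruhat--Tits fixed-point theorem produces a $\theta$-fixed $x\in\Omega$, which lies in $\mathcal{B}(H)$ and still has $Y,H,X\in\mathfrak{g}_x$. Your observation that normality of the triple makes $\theta$ interact well with $\varphi$ is the right germ of the idea, but it has to be deployed as $\theta$-stability of the fixed-point set, not as $\theta$-fixedness of a chosen point.
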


\begin{proof} (\emph{Gopal Prasad}) Let $J = \varphi(\textbf{SL}_2(R_K)) \subset \textbf{G}^{\circ}(K).$ Then, $B:= (J \rtimes Gal(K/k)) $ is a subgroup of the group of polysimplicial automorphisms of $\mathcal{B}(\textbf{G},K).$ Note that Gal$(K/k)$ is a profinite group; in particular, it is compact and bounded. Thus, $B$ is also bounded, so by [\cite{tits}, 2.3.1], there exists a fixed point $x' \in \mathcal{B}(\textbf{G}, K)$ under the action of $B.$ Let $\mathcal{G}$ denote the smooth affine $R$-group scheme whose $R_K$-points form the group stab$_{\textbf{G}^{\circ}(K)}(x')$ and whose generic fiber is $\textbf{G}^{\circ}.$ Let $L(\mathcal{G})$ denote the Lie algebra of $\mathcal{G},$ and let $\mathcal{J}$ denote the $R$-group scheme associated to the parahoric subgroup $\textbf{SL}_2(R_K).$ By [\cite{bt2}, 1.7.6], $\varphi$ induces a map of $R_K$-schemes from $\mathcal{J}$ to $\mathcal{G}.$ Thus $d\varphi$\mbox{\boldmath($\mathfrak{sl}_2$}$(R_K)) \subset$ \mbox{\boldmath$\mathfrak{g}$}$(K)_{x'}.$ Now, since $x'$ is fixed by Gal$(K/k),$ we have $Y,H,X \in \mathfrak{g}_{x'}.$ 

	We have shown that the set of $B$-fixed points $\Omega:=\mathcal{B}(\textbf{G}, K)^{B}$ is nonempty. Since $\{Y,H,X\}$ is a normal triple, we have $d\varphi(\mathfrak{sl}_2(R)) \subset \mathfrak{g}_{\theta(x')}.$ In particular, by [\cite{d}, Corollary 4.5.5], we have $\theta(x') \in \Omega,$ so $\Omega$ is $\theta$-stable. In particular, since $\Omega$ is convex and closed, and $\langle \theta \rangle$ is a bounded group of isometries, there exists a $\theta$-fixed point $x \in \Omega$ for which $Y, H, X \in \mathfrak{g}_x.$

\end{proof}

Under Hypothesis \ref{hyp4}, there is a homomorphism $\varphi: \textbf{SL}_2 \rightarrow \textbf{G}$ with some nice properties with respect to $\{Y,H,X\}.$ Let $\lambda \in \textbf{X}_{*}^{k}(\textbf{G})$ be defined by $\lambda(t) = \varphi \left( \begin{pmatrix} t & 0 \\ 0 & t^{-1} \end{pmatrix} \right) .$ 

\begin{defn} The one-parameter subgroup $\lambda$ described above is said to be adapted to the $\mathfrak{sl}_2(k)$-triple $\{Y,H,X\}.$ \end{defn}

\begin{remark} \label{kempf} In the preliminaries of Section 2.1, we declared that an element $X \in \mathfrak{p}$ is nilpotent provided that there exists some one-parameter subgroup $\mu \in \textbf{X}_*^k(\textbf{G})$ such that 

$$ \lim_{t \rightarrow 0} {}^{\mu(t)}X = 0.$$
However, assuming Hypothesis \ref{hyp4} is valid, we can give an alternate characterization of nilpotence which coincides with this notion. Namely, suppose $X$ lies in $\mathcal{N} \cap \mathfrak{p},$ and suppose $\{Y,H,X\}$ is a normal $\mathfrak{sl}_2(k)$-triple completing $X.$ By Jacobson-Morosov, there exists some one-parameter subgroup $\lambda$ for which ${}^{\lambda(t)}X = t^2X,$ for $t \in k^{\times}.$ Since $\{Y,H,X\}$ is normal, $H$ is $\theta$-fixed; in particular, we may assume $\lambda$ is fixed by $\theta.$ Thus, under Hypothesis \ref{hyp4}, $X$ lies in $\mathcal{N} \cap \mathfrak{p}$ if and only if there exists some one-parameter subgroup in $\textbf{X}_*^k(\textbf{H})$ which annihilates $X$ in the limit described above. \end{remark}

As noted in the remark above, if $\{Y,H,X\}$ is normal, we may assume $\lambda \in \textbf{X}_*^k(\textbf{H}).$ Define $M = C_{\textbf{G}^{\circ}(k)}(\lambda).$

\begin{cor} \label{bsetnonempty} Suppose Hypotheses \ref{hyp2} and \ref{hyp4} hold. There exists some $y \in \mathcal{B}(H)$ such that $Y \in \mathfrak{p}_{y,-r}, X \in \mathfrak{p}_{y,r},$ and $H \in \mathfrak{h}_{y,0}.$ \end{cor}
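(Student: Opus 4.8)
The plan is to reduce Corollary~\ref{bsetnonempty} to Lemma~\ref{Gopal}, which already provides a point $x \in \mathcal{B}(H)$ with $Y, H, X \in \mathfrak{g}_{x,0}$, by exploiting the $\mathbb{G}_m$-action coming from the adapted one-parameter subgroup $\lambda$ to ``push'' $x$ to a new point where the three elements sit in the desired filtration steps $-r$, $0$, $r$. First I would invoke Lemma~\ref{Gopal} to obtain $x \in \mathcal{B}(H)$ with $Y, H, X \in \mathfrak{g}_{x,0}$; since $\{Y,H,X\}$ is normal, $H \in \mathfrak{h}$, and as $H \in \mathfrak{g}_{x,0} \cap \mathfrak{h} = \mathfrak{h}_{x,0}$ by Lemma~\ref{mplattice}, the condition on $H$ is immediate for \emph{any} point in $\mathcal{B}(H)$ containing $x$ in a suitable sense. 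It remains to arrange that $X$ drops into depth $r$ and $Y$ into depth $-r$ simultaneously, and this is exactly what a translation of $x$ in the direction of $\lambda$ achieves: since ${}^{\lambda(t)}X = t^2 X$ and ${}^{\lambda(t)}Y = t^{-2}Y$, the point $y := x + s\cdot\lambda$ (translation in the apartment of a maximal $k$-split torus of $\textbf{H}$ containing the image of $\lambda$, which exists by Remark~\ref{kempf}) satisfies $\mathfrak{g}_{y,0} = {}^{?}$ filtered so that $X$ lies in $\mathfrak{g}_{y,r}$ and $Y$ in $\mathfrak{g}_{y,-r}$ for the correct choice of $s$ depending on $r$.

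More precisely, the key computation is the standard Moy--Prasad translation identity: for a one-parameter subgroup $\lambda$ and $s \in \mathbb{R}$, one has $\mathfrak{g}_{x + s\lambda, t}(i) = \mathfrak{g}_{x,t}(i)$ shifted appropriately, so that an element of weight $i$ under $\lambda$ lying in $\mathfrak{g}_{x,0}$ lies in $\mathfrak{g}_{x+s\lambda,\, si}$ after translation (for $s > 0$; the weight-$i$ piece of $\mathfrak{g}_{x,0}$ moves to depth $si$). Here $X$ has $\lambda$-weight $2$ and $Y$ has $\lambda$-weight $-2$. Choosing $s = r/2$ gives $X \in \mathfrak{g}_{y, r}$ and $Y \in \mathfrak{g}_{y, -r}$, while $H$ has weight $0$ and stays in $\mathfrak{g}_{y,0}$. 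Intersecting with $\mathfrak{p}$ and $\mathfrak{h}$ respectively and applying Lemma~\ref{mplattice} (together with Proposition~\ref{thstpa} to see the decomposition is compatible with $\theta$) yields $X \in \mathfrak{p}_{y,r}$, $Y \in \mathfrak{p}_{y,-r}$, $H \in \mathfrak{h}_{y,0}$. One must check $y \in \mathcal{B}(H)$: this holds because $\lambda \in \textbf{X}_*^k(\textbf{H})$ (using Remark~\ref{kempf}), so the ray $x + s\lambda$ stays in $\mathcal{B}(H)$, as $x$ does.

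I expect the main obstacle to be the bookkeeping around the translation formula for Moy--Prasad filtrations under a cocharacter, i.e.\ verifying that translating $x$ by $s\lambda$ within the apartment of a torus containing $\lambda(\mathbb{G}_m)$ shifts the depth of the weight-$i$ eigenspace by exactly $si$ — and that this is consistent with the fact that $x$ itself need not lie in such an apartment a priori (one first moves $x$ into an apartment containing the image of $\lambda$, using that $X, Y, H \in \mathfrak{g}_{x,0}$ forces compatibility, or one observes the statement is really about $\mathfrak{g}_{x,0}$ which only depends on the parahoric). A cleaner route avoiding this: note that $\lambda(\varpi) \in G$ acts on $\mathcal{B}(G)$ by translation by (a multiple of) $\lambda$ in every apartment containing $\lambda(\mathbb{G}_m)$, and conjugation by $\lambda(\varpi^{1/?})$-type elements rescales the filtration; but since $r$ need not be rational one should instead argue directly with the valuation of root-space coordinates, using that Hypothesis~\ref{hyp2} bounds $\mathrm{ad}(X)$-nilpotency so all relevant weights are bounded. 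I would present the argument via the explicit translation identity, citing~[\cite{d}, Section 4.4] for the group-case analogue and noting that all constructions descend to $\mathfrak{p}$ and $\mathfrak{h}$ by the $\theta$-equivariance established in Section~4.
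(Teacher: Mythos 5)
Your overall strategy matches the paper's: invoke Lemma~\ref{Gopal} for a point $x \in \mathcal{B}(H)$ with $Y,H,X \in \mathfrak{g}_{x,0}$, then translate by $\frac{r}{2}\lambda$ so that the $\lambda$-weight-$\pm 2$ elements $X,Y$ shift to depths $\pm r$ while the weight-$0$ element $H$ stays at depth $0$. You correctly flag the one real technical point — that $x + \frac{r}{2}\lambda$ is only meaningful if $\lambda$ is a translation direction at $x$, i.e.\ if $x$ lies in an apartment attached to a torus whose cocharacter group contains $\lambda$ — but you do not resolve it. The hedged alternatives in your final paragraph (\emph{``one first moves $x$ into an apartment containing the image of $\lambda$, using that $X,Y,H \in \mathfrak{g}_{x,0}$ forces compatibility''}) are not an argument; on the face of it, Lemma~\ref{Gopal} gives you \emph{some} $x$, and nothing you wrote down shows it is compatible with $\lambda$.

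The paper closes exactly this gap. From the proof of Lemma~\ref{Gopal}, the fixed point $x$ is fixed by $\lambda(R^{\times}) \subset J = \varphi(\textbf{SL}_2(R_K))$, and by [\cite{d}, Corollary~4.4.2] this forces $x \in \mathcal{B}(M)$ where $M = C_{\textbf{G}^{\circ}(k)}(\lambda)$. Since $\lambda$ lies in the center of $M$, it acts by translation on every apartment of $\mathcal{B}(M)$, so after choosing an apartment $\mathcal{A} \subset \mathcal{B}(H)$ through $x$ one may form $y := x + \frac{r}{2}\lambda \in \mathcal{A}$. The affine-root computation then runs exactly as you sketched: write $X = \sum_\psi X_\psi$ with $\psi(x) \geq 0$; by Hypothesis~\ref{hyp1} the nonzero $X_\psi$ have $\langle \lambda, \dot\psi\rangle = 2$, so
$$\psi(y) = \psi(x) + \tfrac{r}{2}\langle\lambda,\dot\psi\rangle \geq r,$$
giving $X \in \mathfrak{g}_{y,r}$, and similarly $H \in \mathfrak{g}_{y,0}$, $Y \in \mathfrak{g}_{y,-r}$. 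Normality of the triple then places these in $\mathfrak{p}_{y,r}$, $\mathfrak{h}_{y,0}$, $\mathfrak{p}_{y,-r}$ respectively, as you observed. So: right idea, right translation, right weight bookkeeping — but the step licensing the translation is the heart of the proof, and it needs the explicit $\lambda(R^{\times})$-fixedness from Lemma~\ref{Gopal}, not a vague appeal to compatibility.
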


\begin{proof} Together, Hypotheses \ref{hyp2} and \ref{hyp4} imply that the residue field $\mathfrak{f}$ has cardinality greater than 3. By Lemma \ref{Gopal}, there is an element $x \in \mathcal{B}(H)$ such that $Y,H,X \in \mathfrak{g}_x.$ Since $\lambda(R^{\times}) \subset J$ as in the proof of Lemma \ref{Gopal}, we know that the point $x$ is fixed by $ \lambda (R^{\times}).$ In particular, by [\cite{d}, Corollary 4.4.2], $x$ lies in $\mathcal{B}(M).$ Choose an apartment $\mathcal{A} \subset \mathcal{B}(H)$ which contains $x.$ Since $\lambda$ lies in the center of $M,$ $\lambda$ acts on every apartment in $\mathcal{B}(M)$ by translation. Using this fact, define $y = x + \frac{r}{2} \cdot \lambda \in \mathcal{A}.$ By Lemma \ref{Gopal}, $X \in \mathfrak{p}_{x,0},$ so we write $X = \sum_{ \psi} X_{\psi},$ where $X_{\psi} \in \mathfrak{g}_{\psi},$ for $\psi(x) \geq 0.$ For all such $\psi$ such that $X_{\psi} \neq 0,$ we have $\langle \lambda, \dot{\psi} \rangle = 2$ since $\lambda$ acts by squares on $X$ by Hypothesis \ref{hyp1}. For any such $\psi,$ we have

$$ \psi(y) = \psi(x) + \frac{r}{2} \langle \lambda, \dot{\psi} \rangle \geq r. $$
Therefore, $X$ lies in $\mathfrak{p}_{y,r}.$ By a similar argument, $H \in \mathfrak{h}_{y,0}$ and $Y \in \mathfrak{p}_{y,-r}.$ 

\end{proof}

\section{The parametrization}

		Fix $r \in \mathbb{R}.$ We now discuss the notion of the building set associated to an $\mathfrak{sl}_2(k)$-triple, so we assume that Hypotheses \ref{hyp2} and \ref{hyp4} hold. We follow the discussion in [\cite{d}, Section 5]. Fix $Z \in \mathcal{N}^-$ and $s \in \mathbb{R}.$
		
\subsection{The building set}

\begin{defn} 

$$ \mathcal{B}(Z,s) := \{z \in \mathcal{B}(G) \mid Z \in \mathfrak{g}_{z,s} \}.$$
\end{defn}

From [\cite{d}], we know that $\mathcal{B}(Z,s)$ is nonempty, convex and closed. 

\begin{defn} \label{thetabset} 

$$ \mathcal{B}_{\theta}(Z,s) := \mathcal{B}(Z,s) \cap \mathcal{B}(H). $$
\end{defn}

Corollary \ref{bsetnonempty} tells us that $\mathcal{B}_{\theta}(Z,s)$ is nonempty. From Definition \ref{thetabset}, we see that if $x \in \mathcal{B}_{\theta}(Z,s),$ then $F_{\theta}^*(x) \subset \mathcal{B}_{\theta}(Z,s).$ In particular, $\mathcal{B}_{\theta}(Z,s)$ is the union of generalized $(s,\theta)$-facets of $\mathcal{B}(H).$ Since $\mathcal{B}(Z,s)$ and $\mathcal{B}(H)$ are convex, $\mathcal{B}_{\theta}(Z,s)$ is also convex. 

\begin{lemma} \label{bclosed} $\mathcal{B}_{\theta}(Z,s)$ is closed. \end{lemma}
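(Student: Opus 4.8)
The plan is to deduce that $\mathcal{B}_{\theta}(Z,s)$ is closed by combining the known closedness of its two constituent pieces. Since $\mathcal{B}_{\theta}(Z,s) = \mathcal{B}(Z,s) \cap \mathcal{B}(H)$ by Definition \ref{thetabset}, and $\mathcal{B}(Z,s)$ is closed in $\mathcal{B}(G)$ by the result of DeBacker cited just before Definition \ref{thetabset}, the only additional ingredient needed is that $\mathcal{B}(H)$ — identified via Prasad-Yu's theorem with $\mathcal{B}(G)^{\theta}$, the set of $\theta$-fixed points in $\mathcal{B}(G)$ — is itself a closed subset of $\mathcal{B}(G)$. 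Then $\mathcal{B}_{\theta}(Z,s)$ is an intersection of two closed sets, hence closed.

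First I would recall that $\theta$ acts on $\mathcal{B}(G)$ by an isometry (this is used repeatedly in the paper, e.g. in the proof of Lemma \ref{intersecth}), and that the distance function dist is continuous. The fixed-point set of a continuous self-map of a metric space is always closed: if $\{x_n\} \subset \mathcal{B}(G)^{\theta}$ converges to $x \in \mathcal{B}(G)$, then $\operatorname{dist}(\theta(x), x) \le \operatorname{dist}(\theta(x), \theta(x_n)) + \operatorname{dist}(\theta(x_n), x) = \operatorname{dist}(x, x_n) + \operatorname{dist}(x_n, x) \to 0$, using that $\theta(x_n) = x_n$ and that $\theta$ is an isometry; hence $\theta(x) = x$, so $x \in \mathcal{B}(G)^{\theta} = \mathcal{B}(H)$. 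This shows $\mathcal{B}(H)$ is closed in $\mathcal{B}(G)$.

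Next I would combine the two facts: given a sequence $\{x_n\} \subset \mathcal{B}_{\theta}(Z,s)$ converging to some $x$, the limit $x$ lies in $\mathcal{B}(H)$ by the previous paragraph (the limit is taken inside $\mathcal{B}(H) \subset \mathcal{B}(G)$, so convergence in $\mathcal{B}(H)$ is convergence in $\mathcal{B}(G)$), and $x$ lies in $\mathcal{B}(Z,s)$ since that set is closed; therefore $x \in \mathcal{B}(Z,s) \cap \mathcal{B}(H) = \mathcal{B}_{\theta}(Z,s)$. Alternatively, and even more briefly: $\mathcal{B}(H)$ carries the subspace topology from $\mathcal{B}(G)$, so $\mathcal{B}_{\theta}(Z,s) = \mathcal{B}(Z,s) \cap \mathcal{B}(H)$ is the intersection of the closed set $\mathcal{B}(Z,s)$ with $\mathcal{B}(H)$, which is closed in $\mathcal{B}(H)$.

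I do not anticipate a genuine obstacle here — this is a routine topological bookkeeping lemma. The only point requiring a modicum of care is making explicit that $\mathcal{B}(H)$ is closed in $\mathcal{B}(G)$ (rather than merely a subset), since the statement asserts closedness and one must be clear in which space. That is handled by the isometry/fixed-point argument above, which relies only on facts already in place in the paper (the Prasad-Yu identification $\mathcal{B}(H) = \mathcal{B}(G)^{\theta}$ and the fact that $\theta$ acts by isometries on $\mathcal{B}(G)$).
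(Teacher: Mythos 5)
Your proof is correct and takes essentially the same approach as the paper, which simply observes that $\mathcal{B}_{\theta}(Z,s)$ is the intersection of the closed sets $\mathcal{B}(H)$ and $\mathcal{B}(Z,s)$; you merely supply the routine fixed-point-of-an-isometry argument for why $\mathcal{B}(H) = \mathcal{B}(G)^{\theta}$ is closed, which the paper leaves implicit.
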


\begin{proof} This follows from the fact that $\mathcal{B}(H)$ and $\mathcal{B}(Z,s)$ are closed.
\end{proof}

Fix a (possibly trivial) normal $\mathfrak{sl}_2(k)$-triple $\{Y,H,X\}$ in $\mathfrak{g}.$ 

\begin{defn} Define
$$ \mathcal{B}(Y,H,X):= \mathcal{B}(X,r) \cap \mathcal{B}(Y,-r). $$
\end{defn}

\begin{defn} Define
$$ \mathcal{B}_{\theta}(Y,H,X):= \mathcal{B}(Y,H,X)^{\theta}.$$
\end{defn}

By [\cite{d}, Remark 5.1.5], $\mathcal{B}(Y,H,X)$ is convex. In particular, $\mathcal{B}_{\theta}(Y,H,X)$ is a closed, convex set which is the union of generalized $(r,\theta)$-facets. 

\begin{lemma} \label{maxassoc} Suppose $F_{1,\theta}^*, F_{2,\theta}^*$ are maximal generalized $(r,\theta)$-facets in $\mathcal{B}_{\theta}(Y,H,X).$ Then, $F_{1,\theta}^*$ and $F_{2,\theta}^*$ are strongly $r$-associated. 
\end{lemma}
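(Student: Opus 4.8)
The plan is to reduce the statement to the corresponding fact in the group case, namely \cite{d}, which asserts that any two maximal generalized $r$-facets in a building set $\mathcal{B}(Y,H,X)$ in $\mathcal{B}(G)$ are strongly $r$-associated. First I would take maximal generalized $(r,\theta)$-facets $F_{1,\theta}^*$ and $F_{2,\theta}^*$ in $\mathcal{B}_\theta(Y,H,X)$ and pass to their standard lifts $F_1^*$ and $F_2^*$, the generalized $r$-facets of $\mathcal{B}(G)$ containing them; by Lemma \ref{intersecth} these lifts are $\theta$-stable, and since $F_{i,\theta}^* = (F_i^*)^\theta$ meets $\mathcal{B}(H)$, each $F_i^*$ lies inside $\mathcal{B}(Y,H,X)$ (a point of $F_i^*$ fixed by $\theta$ is in $F_{i,\theta}^* \subset \mathcal{B}_\theta(Y,H,X)$, hence $X \in \mathfrak{g}_{x,r}$ and $Y \in \mathfrak{g}_{x,-r}$, and these conditions only depend on the generalized $r$-facet). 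The key remaining point, and the place I expect the real work, is to show that $F_1^*$ and $F_2^*$ are in fact \emph{maximal} generalized $r$-facets of $\mathcal{B}(Y,H,X)$, not merely generalized $r$-facets contained in it.

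To see maximality of the lifts, suppose some generalized $r$-facet $C^* \subset \mathcal{B}(Y,H,X)$ strictly contains $F_1^*$ in its closure with $\dim C^* > \dim F_1^*$. I would argue that $C^*$ may then be taken $\theta$-stable: the building set $\mathcal{B}(Y,H,X)$ is $\theta$-stable (since $\{Y,H,X\}$ is a normal triple, so $\theta(X) = -X$, $\theta(Y) = -Y$ give the same lattice conditions), and $\theta$ permutes the generalized $r$-facets in the closure of $F_1^*$ of each fixed dimension while fixing $F_1^*$ itself; using the convexity/closedness of the relevant "$\delta$-shrunk" sets together with the fixed-point theorem \cite{tits}, 2.3.1 for the finite group $\langle\theta\rangle$ — exactly as in the proof of Lemma \ref{intersecth} — one produces a $\theta$-stable such $C^*$. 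But then $(C^*)^\theta$ is a generalized $(r,\theta)$-facet of $\mathcal{B}_\theta(Y,H,X)$, and by Proposition \ref{thstpa}(2) together with the dimension compatibility from Proposition \ref{reduction} (the $\theta$-fixed points of a $\theta$-stable $r$-facet of strictly larger dimension give a $(r,\theta)$-facet of strictly larger dimension, once one checks it is nonempty — which it is, being $\theta$-stable), we would get $(C^*)^\theta$ strictly containing $F_{1,\theta}^*$ in its closure with larger dimension, contradicting maximality of $F_{1,\theta}^*$. Hence $F_1^*$, and likewise $F_2^*$, are maximal in $\mathcal{B}(Y,H,X)$.

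With maximality of the lifts established, \cite{d} gives that $F_1^*$ and $F_2^*$ are strongly $r$-associated in the sense of \cite{d}, Definition 3.3.2. By Remark \ref{Gassoc} (which records that strong $r$-associativity of $(r,\theta)$-facets is equivalent to strong $r$-associativity of their standard lifts, via \cite{d}, Lemma 3.3.3 and Proposition \ref{reduction}), this is precisely the statement that $F_{1,\theta}^*$ and $F_{2,\theta}^*$ are strongly $r$-associated, completing the proof. The main obstacle, as indicated, is the passage from maximality of $F_{i,\theta}^*$ to maximality of $F_i^*$: one must rule out a larger-dimensional generalized $r$-facet in the building set above $F_i^*$, and the clean way to do this is to first arrange such a facet to be $\theta$-stable and then descend, so the dimension-comparison lemmas (Proposition \ref{reduction}, Remark \ref{facetremark}) and the isometry fixed-point argument do the heavy lifting.
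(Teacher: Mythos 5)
Your plan is to pass to the standard lifts $F_1^*, F_2^*$, show they are maximal generalized $r$-facets in $\mathcal{B}(Y,H,X)$, invoke DeBacker's group-case result, and descend via Remark \ref{Gassoc}. This has a genuine gap: the claim that $F_{i,\theta}^*$ maximal in $\mathcal{B}_{\theta}(Y,H,X)$ forces $F_i^*$ maximal in $\mathcal{B}(Y,H,X)$ is not only unproved, it is \emph{false} in general, and the failure is precisely what makes this paper nontrivial. The ``noticed'' condition of Definition 6.18 is maximality in $\mathcal{B}_{\theta}(Y,H,X)$, while DeBacker's ``distinguished'' condition is maximality in $\mathcal{B}(Y,H,X)$; the paper's whole point is that the former is strictly weaker. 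Concretely, in the $(\textbf{SL}_3,\textbf{PGL}_2)$ example of Section 1.1, the noticed pairs attached to the orbits $\left(\begin{smallmatrix}0&0&s\\0&0&0\\0&0&0\end{smallmatrix}\right)$ give a minimal nilpotent of $\mathfrak{sl}_3$, whose lift $F^*$ is a vertex and certainly not maximal in $\mathcal{B}(Y,H,X)$. Your proposed repair — use the fixed-point theorem to replace a larger facet $C^*$ by a $\theta$-stable one — does not work either: \ref{intersecth}'s fixed-point argument concerns a single $\theta$-stable facet already in hand; applied to a $\langle\theta\rangle$-orbit of facets above $F_1^*$ it would at best yield a $\theta$-fixed point somewhere, with no control over the dimension of the facet that point lies in (the midpoint of $[c,\theta(c)]$ can easily land back in $\overline{F_1^*}$). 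So you cannot manufacture a $\theta$-stable $C^*$ of larger dimension.

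The paper's proof avoids the lifts entirely and argues inside $\mathcal{B}(H)$. Pick $x_i \in F_{i,\theta}^*$ and an apartment $\mathcal{A}\subset\mathcal{B}(H)$ containing $x_1,x_2$. The set $\mathcal{B}_{\theta}(Y,H,X)$ is convex (intersection of the convex sets $\mathcal{B}(Y,H,X)$ and $\mathcal{B}(H)$), so $[x_1,x_2]\subset\mathcal{B}_{\theta}(Y,H,X)$. If $x_1\notin A(\mathcal{A},F_{2,\theta}^*)$, then points of $(x_1,x_2]$ near $x_2$ leave $A(\mathcal{A},F_{2,\theta}^*)$ and hence lie in a generalized $(r,\theta)$-facet of $\mathcal{B}_{\theta}(Y,H,X)$ of strictly larger dimension, contradicting maximality of $F_{2,\theta}^*$; so $A(\mathcal{A},F_{1,\theta}^*)\subset A(\mathcal{A},F_{2,\theta}^*)$, and symmetrically, giving equality and hence strong $r$-associativity by Lemma \ref{assocalt}. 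In short: use convexity of $\mathcal{B}_{\theta}(Y,H,X)$ directly, not convexity of $\mathcal{B}(Y,H,X)$ together with a lift-and-descend step.
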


\begin{proof} Let $x_i \in F_{i,\theta}^*,$ for $i=1,2,$ and let $\mathcal{A}$ be an apartment of $\mathcal{B}(H)$ containing $x_1$ and $x_2.$ If $x_1 \notin A(\mathcal{A}, F_{2,\theta}^*),$ then, since $\mathcal{B}_{\theta}(Y,H,X)$ is convex, there is some generalized $(r,\theta)$-facet in $\mathcal{B}_{\theta}(Y,H,X)$ of strictly larger dimension than $F_{2,\theta}^* \cap \mathcal{A},$ a contradiction. Thus, $A(\mathcal{A}, F_{1,\theta}^*) \subset A(\mathcal{A}, F_{2,\theta}^*),$ and similarly, $A(\mathcal{A}, F_{2,\theta}^*) \subset A(\mathcal{A}, F_{1,\theta}^*).$ 

\end{proof}

We now suppose that Hypotheses \ref{hyp2}, \ref{hyp4}, and \ref{hyp5} hold. Fix $X \in \mathcal{N}^- \backslash \{ 0 \}$ and $r \in \mathbb{R}.$ Suppose that $\{ Y,H,X \}$ is a normal $\mathfrak{sl}_2(k)$-triple completing $X$ and that $\lambda \in \textbf{X}_*^k(\textbf{H})$ is adapted to $\{Y,H,X\}.$ Fix $x \in \mathcal{B}_{\theta}(Y,H,X).$ We would like for ${}^{H}X$ to be the unique nilpotent $H$-orbit in $\mathfrak{p}$ of minimal dimension which intersects the coset $X + \mathfrak{p}_{x,r^+}$ nontrivially. 
	The next lemma gives us a decomposition of the coset $X + \mathfrak{p}_{x,r^+}$ up to conjugacy by $H_x^+.$ Recall that the one-parameter subgroup $\lambda$ induces a grading on the Lie algebra of $\mathfrak{g}$ as noted in the beginning of Section 5.2. For the following lemma, we imitate the argument in [\cite{d}, 5.2.1].

\begin{lemma} \label{walds1} Assume Hypotheses \ref{hyp2}, \ref{hyp4}, and \ref{hyp5} hold. Then\end{lemma}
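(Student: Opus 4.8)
The plan is to imitate DeBacker's proof of \cite[5.2.1]{d} for the group case and to push the argument into the $(-1)$-eigenspace $\mathfrak{p}$ using $\theta$-equivariance throughout; I expect the conclusion to be a decomposition of the form $X + \mathfrak{p}_{x,r^+} = {}^{H_{x,0^+}}\!\bigl(X + (C_{\mathfrak{p}}(Y) \cap \mathfrak{p}_{x,r^+})\bigr)$ (equivalently, with the span of the non-positive $\lambda$-weight spaces in $\mathfrak{p}_{x,r^+}$ in place of $C_{\mathfrak{p}}(Y)\cap\mathfrak{p}_{x,r^+}$). Set $\lambda$ to be the one-parameter subgroup adapted to $\{Y,H,X\}$ and use the associated grading $\mathfrak{g} = \bigoplus_i \mathfrak{g}(i)$, $\mathfrak{p} = \bigoplus_i \mathfrak{p}(i)$; as in the proof of Corollary \ref{bsetnonempty}, $\lambda(R^\times)$ fixes $x$, so this grading is compatible with the Moy--Prasad filtration at $x$ and each $\mathfrak{p}_{x,s}$ is graded. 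Since $\{Y,H,X\}$ is normal, $Y \in \mathfrak{p}$, hence $\mathrm{ad}(X)$ and $\mathrm{ad}(Y)$ anticommute with $\theta$ while $\mathrm{ad}(H)$ commutes with it; consequently the $\mathfrak{sl}_2$-decomposition $\mathfrak{g} = [\mathfrak{g},X]\oplus C_{\mathfrak{g}}(Y)$ is $\theta$-stable and restricts to $\mathfrak{p} = [\mathfrak{h},X] \oplus C_{\mathfrak{p}}(Y)$.

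First I would record the depth-refined form of the input we need: that $\mathrm{ad}(X)$ carries $\mathfrak{h}_{x,s}$ onto $\mathfrak{p}_{x,s+r}\cap[\mathfrak{h},X]$ with the expected kernel, and that the projections attached to the splitting $\mathfrak{p}=[\mathfrak{h},X]\oplus C_{\mathfrak{p}}(Y)$ preserve Moy--Prasad depth, so that $\mathfrak{p}_{x,s} = (\mathfrak{p}_{x,s}\cap[\mathfrak{h},X]) \oplus (\mathfrak{p}_{x,s}\cap C_{\mathfrak{p}}(Y))$ for every $s$. Both facts follow from \cite[Section 4.3]{d} together with the argument of Lemma \ref{sllemma}, by lifting preimages from $\mathfrak{g}$ and then projecting to the appropriate $\lambda$-weight space inside $\mathfrak{p}$ via Proposition \ref{thstpa}; the point is only to check that these projections land in $\mathfrak{h}$, not merely in $\mathfrak{g}$.

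The core is then a successive-approximation scheme. Given $X' = X + Z$ with $Z \in \mathfrak{p}_{x,r^+}$ of depth $r+\delta$, write $Z = Z' + Z^0$ with $Z' \in \mathfrak{p}_{x,r+\delta}\cap[\mathfrak{h},X]$ and $Z^0 \in \mathfrak{p}_{x,r+\delta}\cap C_{\mathfrak{p}}(Y)$, choose $W \in \mathfrak{h}_{x,\delta}$ with $[W,X] = -Z'$, and apply $\phi_x(W) \in H_{x,\delta} \subset H_{x,0^+}$ from Hypothesis \ref{hyp5}. Using ${}^{\phi_x(W)}X \equiv X - Z' \pmod{\mathfrak{p}_{x,(r+\delta)^+}}$ and ${}^{\phi_x(W)}Z \equiv Z \pmod{\mathfrak{p}_{x,(r+\delta)^+}}$ (the latter because $\mathrm{ad}(W)$ raises the depth of $Z$ by $\delta$), one gets ${}^{\phi_x(W)}(X+Z) = X + Z^0 + Z_1$ with $Z_1 \in \mathfrak{p}_{x,(r+\delta)^+}$ of strictly larger depth. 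Iterating produces a sequence $W_n \in \mathfrak{h}_{x,\delta_n}$ with $\delta_n \to \infty$ (the depths move through the finitely many filtration jumps in each interval of length $\ell$, then repeat via the uniformizer identification), elements $Z_n^0 \in C_{\mathfrak{p}}(Y)\cap\mathfrak{p}_{x,r^+}$ of increasing depth, and partial products $g_n := \phi_x(W_0)\cdots\phi_x(W_{n-1})$. Because $k$ is complete, $H_{x,0^+}$ is a pro-unipotent (hence complete, separated) group, so $g_n \to g \in H_{x,0^+}$, and by continuity of the action on the compact set $X + \mathfrak{p}_{x,r^+}$ we get ${}^g X' = X + \sum_n Z_n^0 \in X + (C_{\mathfrak{p}}(Y)\cap\mathfrak{p}_{x,r^+})$. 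The reverse inclusion is immediate since $\mathfrak{p}_{x,r^+}$ is $H_{x,0^+}$-stable and contains $C_{\mathfrak{p}}(Y)\cap\mathfrak{p}_{x,r^+}$.

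The main obstacle is bookkeeping rather than a single deep step: keeping the entire iteration inside $\mathfrak{p}$ and inside $H_{x,0^+}$, rather than in $\mathfrak{g}$ and $G_x^+$ as in \cite{d}. This is exactly where normality of $\{Y,H,X\}$, the refinement $\mathfrak{p} = [\mathfrak{h},X]\oplus C_{\mathfrak{p}}(Y)$, and the $\mathfrak{h}$-valued (weaker) form of Hypothesis \ref{hyp5} must all be used in concert: at each stage the correction $W$ has to be chosen in $\mathfrak{h}_{x,\delta}$ (so that $\phi_x(W)$ exists and lies in $H_{x,\delta}$), which forces $Z$ to be split along $[\mathfrak{h},X]$ rather than an arbitrary $[\mathfrak{g},X]$. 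The second point requiring care is that the depth gain at each step is bounded below along a cofinal set of real depths so the scheme converges; this is handled, as in \cite[5.2.1]{d}, by running the induction over the finite set of filtration jumps between $r$ and $r+\ell$, and, when $r \notin \mathrm{ord}(k)\otimes_{\mathbb{Z}}\mathbb{Q}$, by first reducing via $\mathfrak{p}_{x,r^+} = \bigcup_{s>r}\mathfrak{p}_{x,s}$ together with the intersection identity established in the proof of Lemma \ref{mplattice}. Once these are in place, the remaining computations are the ones in \cite{d} verbatim.
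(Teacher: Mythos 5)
Your argument is essentially the same as the paper's: both derive the depth-graded decomposition $\mathfrak{p}_{x,s} = C_{\mathfrak{p}_{x,s}}(Y) + \mathrm{ad}(X)(\mathfrak{h}_{x,s-r})$ (you phrase this as $\theta$-stability of $\mathfrak{g}=[\mathfrak{g},X]\oplus C_{\mathfrak{g}}(Y)$ restricting to $\mathfrak{p}=[\mathfrak{h},X]\oplus C_{\mathfrak{p}}(Y)$, while the paper reaches the same identity by intersecting DeBacker's isotypic splittings $\mathfrak{g}(\rho,i)$ with $\mathfrak{p}$ via Proposition~\ref{thstpa}), and both run the identical successive-approximation loop with $\phi_x$ from Hypothesis~\ref{hyp5}, using completeness of $k$ to pass to the limit. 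One small slip: for the easy inclusion ${}^{H_x^+}(X+C_{\mathfrak{p}_{x,r^+}}(Y))\subset X+\mathfrak{p}_{x,r^+}$, $H_x^+$-stability of $\mathfrak{p}_{x,r^+}$ is not enough, because $X$ has depth exactly $r$; you need the stronger fact (cited in the paper from [\cite{adler}, Prop.~1.4.3]) that $H_x^+$ acts trivially on $V_{x,r}=\mathfrak{g}_{x,r}/\mathfrak{g}_{x,r^+}$, so that ${}^hX-X\in\mathfrak{p}_{x,r^+}$.
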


$$ {}^{H_x^+}(X + C_{\mathfrak{p}_{x,r^+}}(Y)) = X + \mathfrak{p}_{x,r^+}. $$

\begin{proof} $``\subset "$: By [\cite{adler}, Prop 1.4.3], $H_x^+$ induces the trivial action on $V_{x,r}.$ \newline
$`` \supset "$: From Hypothesis \ref{hyp1}, we know that as a representation of $\langle Y,H,X \rangle,$ $\mathfrak{g}$ decomposes into a direct sum of irreducible $\langle Y,H,X \rangle$-modules with highest weight at most $p-3.$ Thus, we can write 

$$ \mathfrak{g} = \bigoplus_{\rho \in \mathbb{Z}_{\geq 0}} \mathfrak{g}_{\rho} $$
where $\mathfrak{g}_{\rho}$ is the isotypic component of $\langle Y,H,X \rangle$-modules in $\mathfrak{g}$ with highest weight $\rho.$ In other words, $\mathfrak{g}_{\rho}$ is the direct sum of irreducible $\langle Y,H,X \rangle$-submodules of $\mathfrak{g}$ of dimension ${\rho}+1.$ Let $\mathfrak{g}(\rho,i) := \mathfrak{g}_{\rho} \cap \mathfrak{g}(i),$ and let $\mathfrak{p}(\rho,i):=\mathfrak{g}(\rho, i) \cap \mathfrak{p}.$ Then, $\mathfrak{g}(i) = \oplus_{\rho}\mathfrak{g}(\rho,i)$ and thus $\mathfrak{g} = \oplus_{\rho,i}\mathfrak{g}(\rho,i).$ Also, note that we have that $C_{\mathfrak{g}}(X) = \oplus_{i \geq 0} \mathfrak{g}(i,i)$ since ad$(X)(\mathfrak{g}(i)) = \mathfrak{g}(i+2),$ and $\mathfrak{g}(i,i)$ is the sum of $i$-weight spaces of all irreducible $\langle Y,H,X \rangle$-submodules with highest weight $i.$ Similarly, we have $C_{\mathfrak{g}}(Y) = \oplus_{i \geq 0}\mathfrak{g}(i,-i).$  It follows that $C_{\mathfrak{p}}(Y) = \oplus_{i \geq 0} \mathfrak{p}(i,-i)$ and $C_{\mathfrak{p}}(X) = \oplus_{i \geq 0} \mathfrak{p}(i,i).$ Define $\mathfrak{g}_{x,s}(\rho,i):= \mathfrak{g}_{x,s} \cap \mathfrak{g}(\rho,i).$ By the proof of [\cite{d}, Lemma 5.2.1], since the projection $\mathfrak{g} \rightarrow \mathfrak{g}(i)$ preserves depth, we have the following decompositions:
$$ \mathfrak{g}_{x,s}(i) = \bigoplus_{\rho \in \mathbb{Z}_{\geq 0}} \mathfrak{g}_{x,s}(\rho, i)$$
and
$$ \mathfrak{g}_{x,s} = \bigoplus_{\rho, i}\mathfrak{g}_{x,s}(\rho,i). $$
From the first decomposition, using Proposition \ref{thstpa} it follows that 

$$ \mathfrak{h}_{x,s}(i) = \bigoplus_{\rho \in \mathbb{Z}_{\geq 0}} \mathfrak{h}_{x,s}(\rho, i), $$
and
$$ \mathfrak{p}_{x,s}(i) = \bigoplus_{\rho \in \mathbb{Z}_{\geq 0}} \mathfrak{p}_{x,s}(\rho, i). $$
We claim that the following decompositions hold:
$$ (\dagger) \textup{ For } i > 0,  \textup{ } \mathfrak{p}_{x,s}(i) = \textup{ad}(X)(\mathfrak{h}_{x,s-r}(i-2)) $$
and 
$$ (\dagger \dagger) \textup{ For } i \leq 0, \mathfrak{p}_{x,s}(i) = \mathfrak{p}_{x,s}(-i,i) + \textup{ad}(X)(\mathfrak{h}_{x,s-r}(i-2)).$$
The first decomposition $(\dagger)$ is obtained using decompositions  $\mathfrak{h}_{x,s}(i) = \bigoplus_{\rho \in \mathbb{Z}_{\geq 0}} \mathfrak{h}_{x,s}(\rho, i)$ and $\mathfrak{p}_{x,s}(i) = \bigoplus_{\rho \in \mathbb{Z}_{\geq 0}} \mathfrak{p}_{x,s}(\rho, i)$ and the fact that $\mathfrak{p}(\rho, i) = \{ Z \in \mathfrak{p}(i) \mid (\textup{ad}(X) \circ \textup{ad}(Y))(Z) = j(\rho,i) \cdot Z \},$ which can be found in [\cite{d}, 5.2.1].

	The second decomposition $(\dagger \dagger)$ results from the fact that for $Z \in \mathfrak{p}_{x,s}(i), i \leq 0,$ $Z$ may lie in $C_{\mathfrak{p}}(Y).$ We also use the fact that $\mathfrak{p}_{x,s}(i) = \bigoplus_{\rho \in \mathbb{Z}_{\geq 0}} \mathfrak{p}_{x,s}(\rho, i).$
	
	Now, summing over all $i,$ we obtain 
	
$$ ( \star) \textup{ } \mathfrak{p}_{x,s} = C_{\mathfrak{p}_{x,s}}(Y) + \textup{ad}(X)(\mathfrak{h}_{x,s-r}).$$ 
	
	Let $Z \in \mathfrak{p}_{x,r^+}.$ We will show the existence of elements $h \in H_x^+$ and $C \in C_{\mathfrak{p}_{x,r^+}}(Y)$ such that ${}^{h}(X+C) = X+Z.$ First, let $h_0 = 1$ and $C_0 = 0.$ Now, choose $s_1 \in \R$ with $\mathfrak{p}_{x,r^+} = \mathfrak{p}_{x,s_1} \neq \mathfrak{p}_{x,s_1^+}.$ Using $(\star),$ we can write $Z = C_1' + \textup{ad}(X)(P_1)$ where $C_1' \in C_{\mathfrak{p}_{x,s_1}}(Y)$ and $P_1 \in \mathfrak{h}_{x,(s_1-r)}.$ Applying Hypothesis \ref{hyp5} with $s=s_1-r$ and $t=s_1,$ there exists a map $\phi_x : \mathfrak{h}_{x,s_1-r} \rightarrow H_{x,s_1-r}$ such that 

$$ (1) \textup{ }  {}^{\phi_x(-P_1)}(X+C_0+C_1') = X + C_1' + \textup{ad}(X)(P_1) \textup{ mod } \mathfrak{p}_{x,s_1^+}. $$
Set $h_1' = \phi_x(-P_1).$ Rewriting the above equation, we have ${}^{h_1'h_0}(X+C_0+C_1') = X + Z - Z_1$ for some $Z_1 \in \mathfrak{p}_{x,s_1^+}.$ Let $h_1=h_1'h_0$ and $C_1=C_0+C_1'.$ Now, fix an element $s_2 > s_1$ such that $\mathfrak{p}_{x,s_1^+} = \mathfrak{p}_{x,s_2} \neq \mathfrak{p}_{x,s_2^+}.$ Continuing as in the previous case, from $(\star),$ we write $Z_1 = C_2' + \textup{ad}(X)(P_2)$ where $C_2' \in C_{\mathfrak{p}_{x,s_2}}(Y)$ and $P_2 \in \mathfrak{h}_{x,s_2-r}.$ Applying Hypothesis \ref{hyp5} and (1), there exists a map $\phi_x$ such that 
\begin{eqnarray*}
{}^{\phi_x(-P_2)}(X+C_1+C_2') &=& {}^{h_2'}(X+Z-Z_1+C_2') \textup{ mod } \mathfrak{p}_{x,s_2^+} \\
&=& X+Z-Z_1+C_2' + \textup{ad}(X)P_2 \textup{ mod } \mathfrak{p}_{x,s_2^+} \\
&=& X+Z-Z_2.
 \end{eqnarray*}
where $Z_2 \in \mathfrak{p}_{x,s_2^+}$ and $h_2' = \phi_x(-P_2).$ Set $h_2 = h_2'h_1$ and $C_2=C_1+C_2'.$ Proceeding as above, we obtain a strictly increasing sequence $\{s_i\}$ with $s_1 > r$ such that $h_n \in H_x^+$ and $h_n' \in H_{x,(s_n-r)}.$ Moreover, we have elements $C_n = C_{n-1} + C_n' \in C_{\mathfrak{p}_{x,r^+}}(Y)$ such that $C_n' \in C_{\mathfrak{p}_{x,s_n}}(Y)$ and 

$$ {}^{h_n}(X+C_n) = X+Z \textup{ mod } \mathfrak{p}_{x,s_n^+}.$$
Now set $h=\lim_{n \rightarrow \infty} h_n$ and $C = \lim_{n \rightarrow \infty}C_n.$ Clearly, these elements lie in $H_x^+$ and $C_{\mathfrak{p}_{x,r^+}}(Y),$ respectively. By construction, we have ${}^{h}(X+C) = X+Z.$
\end{proof}

\begin{lemma} \label{walds2} Suppose Hypothesis \ref{hyp4} holds. Then \end{lemma}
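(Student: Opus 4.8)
The statement to be proved (Lemma \ref{walds2}) continues the analysis of the building set $\mathcal{B}_{\theta}(Y,H,X)$ and the coset $X + \mathfrak{p}_{x,r^+}$, and by analogy with [\cite{d}, Section 5.2] the expected content is that the nilpotent $H$-orbit ${}^{H}X$ is the unique nilpotent $H$-orbit in $\mathfrak{p}$ of minimal dimension meeting $X + \mathfrak{p}_{x,r^+}$ (equivalently, meeting the degenerate coset $e$ attached to it). The plan is to combine the decomposition obtained in Lemma \ref{walds1} with a dimension count coming from the $\mathfrak{sl}_2(k)$-triple structure. First I would use Lemma \ref{walds1} to reduce an arbitrary element of $X + \mathfrak{p}_{x,r^+}$, up to $H_x^+$-conjugacy, to an element of the form $X + C$ with $C \in C_{\mathfrak{p}_{x,r^+}}(Y)$; since $H_x^+ \subset H$, it suffices to control the $H$-orbits of such $X + C$.

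The next step is the key Lie-theoretic input: for $C$ in the centralizer $C_{\mathfrak{p}}(Y)$ (which, by the representation-theoretic decomposition recalled in the proof of Lemma \ref{walds1}, is concentrated in the nonpositive weight spaces $\oplus_{i\ge 0}\mathfrak{p}(i,-i)$ for $\lambda$), I would argue that $X + C$ is again nilpotent in $\mathfrak{p}$ and that $\dim {}^{H}(X+C) \geq \dim {}^{H}X$, with equality forcing $X + C$ to be $H$-conjugate to $X$. The inequality should follow from upper semicontinuity of orbit dimension together with the fact that $\lim_{t\to 0}{}^{\lambda(t)}(X+C) $ lies in $\overline{{}^{H}X}$: indeed ${}^{\lambda(t)}X = t^2 X$ and ${}^{\lambda(t)}C \to$ the top-weight-zero part, so the $\lambda$-limit of $X+C$ degenerates into $\overline{{}^H X}$, hence $\dim \overline{{}^H(X+C)} \ge \dim {}^H X$ would need the opposite direction — so more carefully I would instead run Kostant's argument: $C_{\mathfrak{h}}(X+C)$ can be compared to $C_{\mathfrak{h}}(X)$ via the contracting $\lambda$-action, giving $\dim C_{\mathfrak{h}}(X+C) \le \dim C_{\mathfrak{h}}(X)$, i.e. $\dim {}^H(X+C) \ge \dim {}^H X$; and when dimensions agree, the limit argument plus the fact that $X+C$ degenerates to something in $\overline{{}^HX}$ of the same dimension forces $X+C \in {}^{H}X$. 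Here Hypotheses \ref{hyp4} (existence of the $\mathfrak{sl}_2(k)$-triple and the map $\varphi$, so that $\lambda \in \textbf{X}_*^k(\textbf{H})$ really contracts) and \ref{hyp2} are what make the argument go through in positive characteristic.

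Finally I would assemble these pieces: given any nilpotent $H$-orbit $\mathcal{O}$ meeting $X + \mathfrak{p}_{x,r^+}$, pick a representative there, conjugate by $H_x^+$ into $X + C_{\mathfrak{p}_{x,r^+}}(Y)$ using Lemma \ref{walds1}, and apply the dimension inequality to conclude $\dim \mathcal{O} \ge \dim {}^H X$, with equality only for $\mathcal{O} = {}^H X$; since ${}^H X$ itself obviously meets the coset, it is the unique orbit of minimal dimension doing so. The main obstacle I anticipate is the positive-characteristic bookkeeping in the Kostant-type dimension comparison — specifically, making sure the contracting one-parameter subgroup $\lambda$ can be taken inside $\textbf{X}_*^k(\textbf{H})$ (this is Remark \ref{kempf}) and that the centralizer-dimension inequality survives the passage from $\mathfrak{g}$ to $\mathfrak{p}$, where the bracket does not preserve $\mathfrak{p}$; the decomposition $(\star)$ from Lemma \ref{walds1} together with the normality of the triple should be exactly what is needed to push the $\mathfrak{g}$-level statement down to $\mathfrak{p}$.
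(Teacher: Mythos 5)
You have misidentified the content of the lemma. The displayed equation that follows the lemma statement is
$$(X + C_{\mathfrak{p}}(Y)) \cap {}^{H}X = \{ X \},$$
i.e.\ the normal Slodowy-type slice through $X$ meets the orbit ${}^{H}X$ in exactly the point $X$. Your proposal instead argues toward a different (downstream) claim — that ${}^{H}X$ is the unique nilpotent $H$-orbit in $\mathfrak{p}$ of minimal dimension meeting $X+\mathfrak{p}_{x,r^+}$ — which in the paper is the content of Corollary \ref{orbit}, Corollary \ref{orbclosure}, and Lemma \ref{facettoorbit}, and which is proved later using the present lemma as an ingredient. So the proposed argument, even if repaired, would not establish the statement at issue.

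The paper's actual proof of Lemma \ref{walds2} is essentially one line: Waldspurger's result [\cite{walds}, V.7(9)] gives $(X+C_{\mathfrak{g}}(Y)) \cap {}^{G}X = \{X\}$, and since $\mathfrak{p} \subset \mathfrak{g}$ and $H \subset G$, intersecting immediately yields $(X+C_{\mathfrak{p}}(Y)) \cap {}^{H}X \subset \{X\}$; the reverse containment is trivial. If you wanted to avoid citing Waldspurger, the right replacement is a direct transversality argument for the Slodowy slice (along the lines of Kostant--Rallis), not a dimension count across orbits. Separately, the dimension-comparison portion of your write-up is internally unstable — you notice yourself that the contraction argument initially produces an inequality in the wrong direction and then pivot to a centralizer-dimension comparison without fully justifying the passage from $\mathfrak{g}$-level to $\mathfrak{p}$-level; this would need to be tightened even for the statement you thought you were proving.
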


$$ (X + C_{\mathfrak{p}}(Y)) \cap {}^{H}X = \{ X \}.$$

\begin{proof} The result [ \cite{walds}, V.7 (9)] tells us that $(X+C_{\mathfrak{g}}(Y))) \cap {}^{G}X = \{ X\},$ so, consequently, $(X+C_{\mathfrak{p}}(Y)) \cap {}^{H}X \subset \{X\}.$ Since $\{X\}$ is clearly contained in the intersection, the result follows. \end{proof}

\begin{cor} \label{intersection} Suppose Hypotheses \ref{hyp2}, \ref{hyp4}, and \ref{hyp5} hold. Then \end{cor}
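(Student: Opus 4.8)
The plan is to read off Corollary \ref{intersection} as a formal consequence of Lemmas \ref{walds1} and \ref{walds2}, exactly as in the group case of [\cite{d}]. Concretely, I would prove that
$$ (X + \mathfrak{p}_{x,r^+}) \cap {}^{H}X = {}^{H_x^+}X, $$
where $x \in \mathcal{B}_{\theta}(Y,H,X)$ (nonempty by Corollary \ref{bsetnonempty}), so that $X \in \mathfrak{p}_{x,r}$, $Y \in \mathfrak{p}_{x,-r}$, $H \in \mathfrak{h}_{x,0}$, and Hypotheses \ref{hyp2}, \ref{hyp4}, and \ref{hyp5} are in force throughout.

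For the inclusion ${}^{H_x^+}X \subset (X + \mathfrak{p}_{x,r^+}) \cap {}^{H}X$, I would invoke [\cite{adler}, Prop. 1.4.3], which says $H_x^+$ acts trivially on $V_{x,r} = \mathfrak{g}_{x,r}/\mathfrak{g}_{x,r^+}$; hence ${}^{H_x^+}X \subset X + \mathfrak{g}_{x,r^+}$, and since $H$ preserves $\mathfrak{p}$ under the adjoint action this refines to ${}^{H_x^+}X \subset X + \mathfrak{p}_{x,r^+}$. The inclusion into ${}^{H}X$ is immediate since $H_x^+ \subset H$.

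For the reverse inclusion, I would take $Z \in \mathfrak{p}_{x,r^+}$ with $X + Z \in {}^{H}X$. By Lemma \ref{walds1} there exist $h \in H_x^+$ and $C \in C_{\mathfrak{p}_{x,r^+}}(Y)$ with ${}^{h}(X + C) = X + Z$. Then $X + C = {}^{h^{-1}}(X+Z) \in {}^{H}X$ because $h^{-1} \in H$; but also $X + C \in X + C_{\mathfrak{p}}(Y)$, since $C_{\mathfrak{p}_{x,r^+}}(Y) \subset C_{\mathfrak{p}}(Y)$, so Lemma \ref{walds2} forces $X + C = X$, i.e.\ $C = 0$. Therefore $X + Z = {}^{h}X \in {}^{H_x^+}X$, which finishes the argument.

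I do not anticipate any real obstacle here: the substantive content has already been extracted in Lemmas \ref{walds1} and \ref{walds2}. The one point requiring attention is purely bookkeeping — checking that the relevant hypotheses and the containments $X \in \mathfrak{p}_{x,r}$, $Y \in \mathfrak{p}_{x,-r}$, $H \in \mathfrak{h}_{x,0}$ are in place (these follow from the choice of $x$ via Corollary \ref{bsetnonempty}), so that $H_x^+$ and the centralizer decomposition $(\star)$ from the proof of Lemma \ref{walds1} are available and so that conjugation by $h \in H_x^+$ stays inside $H$ (which is what lets us pass from a $G$-orbit statement to the $H$-orbit statement via Lemma \ref{walds2}).
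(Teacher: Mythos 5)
Your proof is correct and follows essentially the same route as the paper: the inclusion ${}^{H_x^+}X \subset X + \mathfrak{p}_{x,r^+}$ via [\cite{adler}, Prop.\ 1.4.3], then Lemma \ref{walds1} to write an arbitrary element of $(X+\mathfrak{p}_{x,r^+})\cap{}^{H}X$ as ${}^{h}(X+C)$ with $h\in H_x^+$ and $C\in C_{\mathfrak{p}_{x,r^+}}(Y)$, and finally Lemma \ref{walds2} applied to ${}^{h^{-1}}(X+Z) = X+C$ to force $C=0$. Your bookkeeping remarks about why $X,Y,H$ lie in the right lattices are accurate and match the paper's setup.
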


$$ (X+\mathfrak{p}_{x,r^+} ) \cap {}^{H}X = {}^{H_x^+}X. $$

\begin{proof} $``\supset"$: This inclusion follows from the first part of the proof of Lemma \ref{walds1}. \newline
$``\subset "$: Using Lemma \ref{walds1}, we have $X+\mathfrak{p}_{x,r^+} = {}^{H_x^+}\left( X + C_{\mathfrak{p}_{x,r^+}}(Y) \right).$ Thus, if $Z$ lies in $(X+\mathfrak{p}_{x,r^+}) \cap {}^{H}X,$ then there exist elements $h_1 \in H_x^+, h_2 \in H$ and $Z_1 \in C_{\mathfrak{p}_{x,r^+}}(Y)$ such that $Z = {}^{h_1}(X+Z_1) = {}^{h_2}X.$ Thus ${}^{h_1^{-1}}Z = X + Z_1 = {}^{h_1^{-1}h_2}X,$ so by Corollary \ref{walds2}, we have

$$ X+Z_1 \in (X+C_{\mathfrak{p}}(Y)) \cap {}^{H}X = \{ X \} $$
Thus, $Z_1 = 0.$ 

\end{proof}

\begin{defn} We denote by $\mathcal{O}_{\theta}(0)$ the set of all nilpotent $H$-orbits in $\mathfrak{p}.$ \end{defn}

In the statement of the following corollary, if $\mathcal{O}_{\theta}$ is an element of $\mathcal{O}_{\theta}(0),$ we let $\overline{\mathcal{O}_{\theta}}$ denote the $p$-adic closure of $\mathcal{O}_{\theta}.$
\begin{cor} \label{orbit} Suppose Hypotheses \ref{hyp2}, \ref{hyp4}, and \ref{hyp5} hold. If $\mathcal{O}_{\theta} \in \mathcal{O}_{\theta}(0)$ such that \end{cor}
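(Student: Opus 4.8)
The statement to be proved is that if $\mathcal{O}_{\theta}\cap(X+\mathfrak{p}_{x,r^+})\neq\emptyset$, then ${}^{H}X\subseteq\overline{\mathcal{O}_{\theta}}$ (so in particular $\dim{}^{H}X\leq\dim\mathcal{O}_{\theta}$, and together with Corollary \ref{intersection} this will pin down $\mathcal{O}_{\theta}={}^{H}X$ once the dimensions coincide). The plan is to produce an explicit sequence inside $\mathcal{O}_{\theta}$ converging $p$-adically to $X$, built from the adapted one-parameter subgroup $\lambda$ and the fact that every nilpotent $H$-orbit in $\mathfrak{p}$ is stable under scaling by squares.

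First I would normalize the intersection point: choose $Z\in\mathcal{O}_{\theta}\cap(X+\mathfrak{p}_{x,r^+})$ and use Lemma \ref{walds1} to find $g\in H_{x}^{+}$ with ${}^{g}Z\in X+C_{\mathfrak{p}_{x,r^+}}(Y)$. Since $H_{x}^{+}\subset H$, the element ${}^{g}Z$ still lies in $\mathcal{O}_{\theta}$, so after replacing $Z$ by ${}^{g}Z$ I may assume $Z=X+C$ with $C\in C_{\mathfrak{p}_{x,r^+}}(Y)$. Decompose $C=\sum_{d\geq 0}C_{d}$ by $\lambda$-weight, so that ${}^{\lambda(t)}C_{d}=t^{-d}C_{d}$; only non-positive $\lambda$-weights appear because $C\in\ker(\textup{ad}(Y))$, which in the $\langle Y,H,X\rangle$-module decomposition of Hypothesis \ref{hyp1} is spanned by lowest-weight vectors. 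That each $C_{d}$ genuinely lies in $\mathfrak{p}$ uses Proposition \ref{thstpa} together with the depth-compatibility of the projections $\mathfrak{g}\to\mathfrak{g}(i)$, exactly as in the proof of Lemma \ref{walds1}.

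Next I would record that $c^{2}\mathcal{O}_{\theta}=\mathcal{O}_{\theta}$ for every $c\in k^{\times}$: if $W\in\mathcal{O}_{\theta}$ is nonzero, Hypothesis \ref{hyp4} and Remark \ref{kempf} supply $\mu\in\textbf{X}_{*}^{k}(\textbf{H})$ adapted to a normal $\mathfrak{sl}_{2}(k)$-triple completing $W$, so $c^{2}W={}^{\mu(c)}W\in{}^{H}W=\mathcal{O}_{\theta}$, and applying this with $c^{-1}$ gives equality. Now for $n\geq 1$ set $W_{n}:=\varpi^{2n}\cdot{}^{\lambda(\varpi^{-n})}Z$. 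Since $\lambda\in\textbf{X}_{*}^{k}(\textbf{H})$ we have ${}^{\lambda(\varpi^{-n})}Z\in\mathcal{O}_{\theta}$, hence $W_{n}=(\varpi^{n})^{2}\cdot{}^{\lambda(\varpi^{-n})}Z\in\mathcal{O}_{\theta}$ by the previous remark. On the other hand, since ${}^{\lambda(\varpi^{-n})}X=\varpi^{-2n}X$,
$$ W_{n}=\varpi^{2n}\Big(\varpi^{-2n}X+\sum_{d\geq 0}\varpi^{nd}C_{d}\Big)=X+\sum_{d\geq 0}\varpi^{\,n(d+2)}C_{d}, $$
which tends to $X$ in $\mathfrak{p}$ as $n\to\infty$, because $d+2\geq 2$ for every $d$ and the sum is finite. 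Thus $X\in\overline{\mathcal{O}_{\theta}}$, and since the adjoint action of $H$ on $\mathfrak{p}$ is by homeomorphisms, $\overline{\mathcal{O}_{\theta}}$ is $H$-stable, giving ${}^{H}X\subseteq\overline{\mathcal{O}_{\theta}}$.

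The step I expect to cause the most trouble is the normalization in the second paragraph, and specifically the claim that the $\lambda$-homogeneous components $C_{d}$ lie in $\mathfrak{p}$ and carry only $\lambda$-weights $\leq 0$: this is where one must know that the $\lambda$-grading is simultaneously compatible with the Moy--Prasad filtration and with the decomposition $\mathfrak{g}=\mathfrak{h}\oplus\mathfrak{p}$, invoking Proposition \ref{thstpa}, Hypothesis \ref{hyp1}, and the depth-preserving grading recalled in Section 5.2. Everything else is the square-scaling remark and the one-line limit computation above.
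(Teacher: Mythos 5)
Your proof is correct and follows essentially the same route as the paper (attributed there to Waldspurger): reduce via Lemma \ref{walds1} to $Z=X+C$ with $C\in C_{\mathfrak{p}_{x,r^+}}(Y)$ living in non-positive $\lambda$-weight spaces, then contract $C$ using $\lambda$ while rescaling by a second cocharacter to keep $X$ fixed. The only cosmetic difference is that the paper packages the rescaling into the single cocharacter $t\mapsto\lambda(t)^{-1}\mu(t)\in H$ and takes $t\to 0$, whereas you separate it into a square-scaling fact $c^2\mathcal{O}_\theta=\mathcal{O}_\theta$ and a discrete sequence $W_n$ with $t=\varpi^n$ — the same mechanism, since your square-scaling lemma is itself proved by the existence of $\mu$.
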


$$ (X+\mathfrak{p}_{x,r^+}) \cap \mathcal{O}_{\theta} \neq \emptyset, $$
then ${}^{H}X \subset \overline{\mathcal{O}_{\theta}}.$ 

\begin{proof} (\emph{J.L. Waldspurger}) Let $Z \in (X + \mathfrak{p}_{x,r^+}) \cap \mathcal{O}_{\theta}.$ Then by Lemma \ref{walds1}, there exist elements $h \in H_x^+$ and $C \in C_{\mathfrak{p}_{x,r^+}}(Y)$ such that ${}^{h}(X+C) =Z \in \mathcal{O}_{\theta}.$ Inverting $h,$ we have $X+C \in \mathcal{O}_{\theta},$ which is therefore nilpotent. By Jacobson-Morosov, there exists a one-parameter subgroup $\mu \in \textbf{X}^{k}_{*}(\textbf{H})$ such that ${}^{\mu(t)}(X+C) = t^2\cdot (X+C)$ for all $t \in k^{\times}.$ Recall that we let $\lambda$ denote the one-parameter subgroup adapted to $\mathfrak{sl}_2(k)$-triple $\{Y,H,X\}.$ In particular, ${}^{\lambda(t)}X = t^2\cdot X,$ for $t \in k^{\times}.$ Moreover, from the proof of Lemma \ref{walds1}, we know that $C_{\mathfrak{p}}(Y) = \oplus_{i \geq 0} \mathfrak{p}(i,-i),$ so, in particular, $C \in \oplus_{i \leq 0}\mathfrak{p}(i).$ Thus, 

$$ \lim_{t \rightarrow 0}{}^{\lambda(t)^{-1}\mu(t)}(X+C) = \lim_{t \rightarrow 0}{}^{\lambda(t)^{-1}}t^2(X+C) = X+\lim_{t \rightarrow 0}{}^{\lambda(t)^{-1}}C = X.$$ 
\end{proof}

\begin{cor} \label{orbclosure} Suppose Hypotheses \ref{hyp2}, \ref{hyp4}, and \ref{hyp5} hold. Choose $F_{\theta}^* \in \mathcal{F}_{\theta}(r)$ such that $F_{\theta}^* \subset \mathcal{B}_{\theta}(Y,H,X).$ If $\mathcal{O}_{\theta} \in \mathcal{O}_{\theta}(0)$ such that \end{cor}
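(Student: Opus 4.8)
The plan is to reduce the assertion directly to Corollary \ref{orbit}, the only work being the passage from a single point of $\mathcal{B}(H)$ to a generalized $(r,\theta)$-facet. First I would fix a point $x \in F_{\theta}^*$; such a point exists since every element of $\mathcal{F}_{\theta}(r)$ is nonempty. Because $F_{\theta}^* \subset \mathcal{B}_{\theta}(Y,H,X)$, this $x$ lies in $\mathcal{B}_{\theta}(Y,H,X)$, which is precisely the standing hypothesis on the base point under which Corollary \ref{orbit} was proved. Next I would note that, by the definition of $\mathfrak{g}_{F_{\theta}^*}^+$ and of $\mathfrak{p}_{F_{\theta}^*}^+ = \mathfrak{p} \cap \mathfrak{g}_{F_{\theta}^*}^+$ --- which is independent of the choice of $x \in F_{\theta}^*$, since $\mathfrak{g}_{x,r^+}$ is constant on $F_{\theta}^*$ by the very definition of a generalized $(r,\theta)$-facet (Definition \ref{genfacet}) --- we have $\mathfrak{p}_{F_{\theta}^*}^+ = \mathfrak{p}_{x,r^+}$, so that the coset $X + \mathfrak{p}_{F_{\theta}^*}^+$ coincides with $X + \mathfrak{p}_{x,r^+}$.

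With this identification, the hypothesis $(X + \mathfrak{p}_{F_{\theta}^*}^+) \cap \mathcal{O}_{\theta} \neq \emptyset$ becomes $(X + \mathfrak{p}_{x,r^+}) \cap \mathcal{O}_{\theta} \neq \emptyset$, and applying Corollary \ref{orbit} to this $x$ gives ${}^{H}X \subset \overline{\mathcal{O}_{\theta}}$, as required. If the displayed conclusion is instead phrased as $\dim {}^{H}X \leq \dim \mathcal{O}_{\theta}$, with equality only when $\mathcal{O}_{\theta} = {}^{H}X$, this is an immediate consequence: containment in the $p$-adic closure forces the dimension inequality, and an $H$-orbit contained in the closure of another orbit of the same dimension must coincide with it.

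There is essentially no obstacle here; the lattices $\mathfrak{g}_{x,r}$ and $\mathfrak{g}_{x,r^+}$, hence $\mathfrak{p}_{x,r}$ and $\mathfrak{p}_{x,r^+}$, are constant as $x$ ranges over $F_{\theta}^*$, so moving from a point to a generalized $(r,\theta)$-facet is pure bookkeeping. The only thing worth recording explicitly is that Corollary \ref{orbit} presupposes Hypotheses \ref{hyp2}, \ref{hyp4}, and \ref{hyp5}, that $\{Y,H,X\}$ is a normal $\mathfrak{sl}_2(k)$-triple completing $X$, and that $\lambda \in \textbf{X}_{*}^{k}(\textbf{H})$ is adapted to it; all of these are in force by the standing assumptions of this section, so the reduction is legitimate.
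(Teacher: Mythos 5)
Your proof is correct and follows the same route as the paper: pick $x \in F_{\theta}^*$, observe that $F_{\theta}^* \subset \mathcal{B}_{\theta}(Y,H,X)$ puts $x$ in the setting of Corollary \ref{orbit}, identify $\mathfrak{p}_{F_{\theta}^*}^+$ with $\mathfrak{p}_{x,r^+}$, and invoke Corollary \ref{orbit}. The paper's proof is a one-liner noting that $X \in \mathfrak{p}_{x,r} = \mathfrak{p}_{F_{\theta}^*}$ and then citing Corollary \ref{orbit}; your version is the same argument with the bookkeeping about the independence of $\mathfrak{p}_{x,r^+}$ from the choice of $x \in F_{\theta}^*$ made explicit.
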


$$ (X+\mathfrak{p}_{F_{\theta}^*}^+) \cap \mathcal{O}_{\theta} \neq \emptyset,$$ 
then ${}^{H}X \subset \overline{\mathcal{O}_{\theta}}.$ 

\begin{proof} If $x \in F_{\theta}^*,$ then $X \in \mathfrak{p}_{x,r} = \mathfrak{p}_{F_{\theta}^*}.$ Thus, by Corollary \ref{orbit}, ${}^{H}X \subset \overline{\mathcal{O}_{\theta}}.$
\end{proof}

\begin{defn} Define $I_r^n:=\{ (F_{\theta}^*,v) \in I_r \mid$ $v$ is $degenerate$ in $V_{F_{\theta}^*}^- \}.$ \end{defn}

	Suppose $(F_{\theta}^*,e) \in I_r^n.$ Let $x \in F_{\theta}^*.$ If $e$ is trivial, then we call the normal $\mathfrak{sl}_2(\mathfrak{f})$-triple completing $e$ (in $V_{x,-r}^- \times V_{x,0}^+ \times V_{x,r}^-)$ the trivial $\mathfrak{sl}_2(\mathfrak{f})$-triple. Similarly, given a trivial $\mathfrak{sl}_2(\mathfrak{f})$-triple, we declare that the $\mathfrak{sl}_2(k)$-triple lifting our $\mathfrak{sl}_2(\mathfrak{f})$-triple is the trivial $\mathfrak{sl}_2(k)$-triple. 

\begin{lemma} \label{facettoorbit} Suppose all hypotheses from Section 5 hold, and let $(F_{\theta}^*,e) \in I_r^n.$ \newline
\begin{enumerate}
\item Fix $x \in F_{\theta}^*.$ There exists a normal $\mathfrak{sl}_2(\mathfrak{f})$-triple $(f,h,e) \in V_{x,-r}^- \times V_{x,0}^+ \times V_{x,r}^-$ completing $e$ and a normal $\mathfrak{sl}_2(k)$-triple $\{Y,H,X\}$ which lifts $\{f,h,e\}.$ \newline
\item For any $x \in F_{\theta}^*,$ for any normal $\mathfrak{sl}_2(\mathfrak{f})$-triple $(f,h,e) \in V_{x,-r}^- \times V_{x,0}^+ \times V_{x,r}^-$ completing $e,$ and for any normal $\mathfrak{sl}_2(k)$-triple $\{Y,H,X\}$ which lifts $\{f,h,e\},$ we have $F_{\theta}^* \subset \mathcal{B}_{\theta}(Y,H,X),$ and ${}^{H}X$ is the unique nilpotent $H$-orbit in $\mathfrak{p}$ of minimal dimension which intersects the coset $e$ nontrivially. 
\end{enumerate}
\end{lemma}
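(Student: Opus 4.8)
The argument splits along the two parts of the statement. For (1) the main tools are Hypothesis~\ref{hyp1} and Corollary~\ref{sllift}; for (2) they are Corollary~\ref{orbit} together with standard facts about $p$-adic nilpotent orbits.

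\emph{Part (1).} If $e$ is trivial, take the trivial triples. Assume $e\neq 0$. Since $(F_\theta^*,e)\in I_r^n$, the coset $e$ is degenerate, so it has a lift $E\in\mathfrak{p}_{F_\theta^*}\cap\mathcal{N}=\mathfrak{p}_{x,r}\cap\mathcal{N}$, and $E\notin\mathfrak{p}_{x,r^+}$ because $e\neq0$. Applying Hypothesis~\ref{hyp1} to $E$ produces $H_0\in\mathfrak{h}_{x,0}$ and $Y_0\in\mathfrak{p}_{x,-r}$ whose images $(f,h,e)$ in $V_{x,-r}\times V_{x,0}\times V_{x,r}\subset\overline{\mathfrak{g}}_x$ form an $\mathfrak{sl}_2(\mathfrak{f})$-triple; since $e\in V_{x,r}^-$ this triple is normal by Remark~\ref{normal} (indeed $h\in V_{x,0}^+$ and $f\in V_{x,-r}^-$ directly, as $H_0\in\mathfrak{h}_{x,0}$ and $Y_0\in\mathfrak{p}_{x,-r}$). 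To lift $\{f,h,e\}$ to a normal $\mathfrak{sl}_2(k)$-triple I invoke Corollary~\ref{sllift}, which requires the adapted cocharacter to lie in a maximal $\mathfrak{f}$-split torus of $\textsf{H}_x$. So let $\overline{\mu}\in\textbf{X}_*^{\mathfrak{f}}(\textsf{H}_x)$ be adapted to $\{f,h,e\}$ (Hypothesis~\ref{hyp1}), fix a maximal $k$-split torus $\textbf{S}$ of $\textbf{H}$ with $x\in\mathcal{A}(\textbf{S},k)$, choose $\overline{h}\in\textsf{H}_x(\mathfrak{f})$ conjugating $\overline{\mu}$ into the maximal $\mathfrak{f}$-split torus $\textsf{S}$ of $\textsf{H}_x$ corresponding to $\textbf{S}$, and lift $\overline{h}$ to $h_0\in H_x$. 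Because ${}^{h_0}e$ is homogeneous of weight $2$ for the grading of $\overline{\lambda}:={}^{\overline{h}}\overline{\mu}$ (Hypothesis~\ref{hyp1}) and the projection $\mathfrak{g}\to\mathfrak{g}(2)$ preserves depth, the weight-$2$ part of any lift of ${}^{h_0}e$ in $\mathfrak{p}_{x,r}$ is again a lift, giving $X_0\in\mathfrak{p}_{x,r}(2)$ over ${}^{h_0}e$. Corollary~\ref{sllift} completes $\{{}^{h_0}f,{}^{h_0}h,{}^{h_0}e\}$ to a normal $\mathfrak{sl}_2(k)$-triple $\{Y_1,H_1,X_0\}$, and conjugating back, $\{Y,H,X\}:=\{{}^{h_0^{-1}}Y_1,{}^{h_0^{-1}}H_1,{}^{h_0^{-1}}X_0\}$ is a normal $\mathfrak{sl}_2(k)$-triple (normality is preserved since $h_0\in H$ preserves $\mathfrak{h}$ and $\mathfrak{p}$) lifting $\{f,h,e\}$ (the reductions $\mathfrak{p}_{x,\pm r}\to V_{x,\pm r}^-$ and $\mathfrak{h}_{x,0}\to V_{x,0}^+$ are $H_x$-equivariant).

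\emph{Part (2), building-set containment.} Let $x\in F_\theta^*$ and let $\{f,h,e\}$, $\{Y,H,X\}$ be as in the statement. Since $X$ lifts $e\in V_{x,r}^-$ and $Y$ lifts $f\in V_{x,-r}^-$, we have $X\in\mathfrak{p}_{x,r}\subset\mathfrak{g}_{x,r}$ and $Y\in\mathfrak{p}_{x,-r}\subset\mathfrak{g}_{x,-r}$, so $x\in\mathcal{B}(X,r)\cap\mathcal{B}(Y,-r)\cap\mathcal{B}(H)=\mathcal{B}_\theta(Y,H,X)$. As $\mathcal{B}_\theta(Y,H,X)$ is a union of generalized $(r,\theta)$-facets (the remarks preceding Lemma~\ref{maxassoc}), it follows that $F_\theta^*=F_\theta^*(x)\subset\mathcal{B}_\theta(Y,H,X)$.

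\emph{Part (2), minimality and uniqueness.} If $e=0$ then $X=0$ and ${}^{H}X=\{0\}$, which is the unique nilpotent $H$-orbit of dimension $0$ and meets the coset $e=\mathfrak{p}_{x,r^+}$; so assume $e\neq0$. By Hypothesis~\ref{hyp4} there is a one-parameter subgroup $\lambda$ adapted to $\{Y,H,X\}$; by Remark~\ref{kempf} we may take it in $\textbf{X}_*^k(\textbf{H})$, and it satisfies $\lim_{t\to0}{}^{\lambda(t)}X=\lim_{t\to0}t^2X=0$, so $X\in\mathcal{N}^-$ and ${}^{H}X\in\mathcal{O}_\theta(0)$; moreover $X\in{}^{H}X\cap(X+\mathfrak{p}_{x,r^+})$, so ${}^{H}X$ meets the coset $e$. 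Now let $\mathcal{O}_\theta\in\mathcal{O}_\theta(0)$ with $\mathcal{O}_\theta\cap(X+\mathfrak{p}_{x,r^+})\neq\emptyset$. Using $x\in\mathcal{B}_\theta(Y,H,X)$ and Corollary~\ref{orbit}, ${}^{H}X\subset\overline{\mathcal{O}_\theta}$, hence $\dim{}^{H}X\leq\dim\mathcal{O}_\theta$; thus ${}^{H}X$ has minimal dimension among all nilpotent $H$-orbits in $\mathfrak{p}$ meeting $e$. If in addition $\dim\mathcal{O}_\theta=\dim{}^{H}X$, then ${}^{H}X$ is an $H$-orbit contained in $\overline{\mathcal{O}_\theta}$ of the same dimension as the orbit $\mathcal{O}_\theta$, and since $\overline{\mathcal{O}_\theta}\setminus\mathcal{O}_\theta$ is a union of $H$-orbits of strictly smaller dimension, ${}^{H}X=\mathcal{O}_\theta$. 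Hence ${}^{H}X$ is the unique minimal-dimensional nilpotent $H$-orbit meeting $e$; in particular it depends only on $(F_\theta^*,e)$, independently of the choices of $x$, of $\{f,h,e\}$, and of $\{Y,H,X\}$.

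The hard part is Part (1): turning the $\mathfrak{sl}_2(\mathfrak{f})$-triple into a \emph{normal} $\mathfrak{sl}_2(k)$-triple forces the detour of conjugating the adapted cocharacter into a maximal $\mathfrak{f}$-split torus (so that Corollary~\ref{sllift} and the depth-preserving weight-$2$ projection apply) and then conjugating back inside $H_x$, all while tracking normality. Part (2) is essentially assembly of the earlier corollaries, the one non-formal ingredient being the standard fact that the boundary of a $p$-adic nilpotent orbit is a union of orbits of strictly smaller dimension.
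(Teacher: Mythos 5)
Your proof is correct and follows essentially the same route as the paper: Hypothesis 5.2 plus Corollary 5.16 for part (1), and the building-set containment plus Corollary 6.11/6.12 (and the standard boundary-of-orbit fact) for part (2). You supply more detail than the paper — in particular you explicitly track the $H_x$-conjugation needed to place the adapted cocharacter in a maximal $\mathfrak{f}$-split torus before invoking Corollary 5.16 and then conjugate back, a step the paper leaves implicit in its standing setup for Section 5.2 — but the underlying argument is the same.
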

\begin{proof} If $e$ is trivial, then all the conclusions are obvious. Assume $e$ is nontrivial. By Hypothesis \ref{hyp1}, there exist elements $H \in \mathfrak{h}_{x,0}$ and $Y \in \mathfrak{p}_{x,-r}$ such that the image $\{f,h,e\}$ of the triple $\{Y,H,X\}$ forms a normal $\mathfrak{sl}_2(\mathfrak{f})$-triple in $\overline{\mathfrak{g}}_x.$ By Corollary \ref{sllift}, there is a normal $\mathfrak{sl}_2(k)$-triple lifting $\{f,h,e\}.$ 

	For (2), suppose $x \in F_{\theta}^*,$ the triple $\{f,h,e\} \textup{ }(\subset V_{x,-r}^- \times V_{x,0}^+ \times V_{x,r}^-)$ is a normal $\mathfrak{sl}_2(\mathfrak{f})$-triple in $\overline{\mathfrak{g}}_x$ completing $e,$ and $\{Y,H,X\}$ is a normal $\mathfrak{sl}_2(k)$-triple which lifts $\{f,h,e\}.$ Note that since $x \in F_{\theta}^*,$ we have $F_{\theta}^* \subset \mathcal{B}_{\theta}(Y,H,X)$ since $X \in \mathfrak{p}_{x,r}$ and $Y \in \mathfrak{p}_{x,-r}.$ Now, if ${}^{H}{Z}$ is a nilpotent $H$-orbit such that $(X + \mathfrak{p}_{x,r^+}) \cap {}^{H}Z \neq \emptyset,$ then by Corollary \ref{orbclosure}, we have ${}^{H}X \subset \overline{{}^{H}Z}.$ Thus, since the closure of any $H$-orbit is the union of the the orbit itself and orbits of strictly smaller dimension, ${}^{H}X$ is the unique nilpotent $H$-orbit in $\mathfrak{p}$ of minimal dimension which intersects the coset $e$ nontrivially.  
\end{proof}

\begin{defn} \label{orbdef} Suppose all the hypotheses of Section 5 hold. For $(F_{\theta}^*, e) \in I_r^n$, let $\mathcal{O}_{\theta}(F_{\theta}^*,e)$ denote the unique nilpotent $H$-orbit of minimal dimension which intersects the coset $e$ nontrivially. \end{defn}

\begin{remark} \label{hremark} If $h \in H$ and $(F_{\theta}^*,e) \in I_r^n,$ then it is clear from Definition \ref{orbdef} that $\mathcal{O}_{\theta}(hF_{\theta}^*,{}^{h}e) = \mathcal{O}_{\theta}(F_{\theta}^*,e).$ \end{remark}

\begin{lemma} \label{welldef} Suppose all the hypotheses of Section 5 hold. The map $\varphi: I_r^n \rightarrow \mathcal{O}_{\theta}(0)$ defined by $(F_{\theta}^*, e) \mapsto \mathcal{O}_{\theta}(F_{\theta}^*,e)$ induces a well-defined map from $I_r^n/\sim$ to $\mathcal{O}_{\theta}(0).$ \end{lemma}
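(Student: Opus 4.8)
The plan is to show that the map $\varphi$ defined on $I_r^n$ is constant on $\sim$-equivalence classes; once this is done, the induced map on $I_r^n/\sim$ is automatically well-defined. So suppose $(F_{1,\theta}^*, e_1) \sim (F_{2,\theta}^*, e_2)$ in $I_r^n$. By Definition \ref{anequiv}, there exist an element $h \in H$ and an apartment $\mathcal{A} \subset \mathcal{B}(H)$ such that $F_{1,\theta}^* \cap \mathcal{A}$ and $F_{2,\theta}^* \cap \mathcal{A}$ are nonempty, the affine spaces $A(\mathcal{A}, F_{1,\theta}^*)$ and $A(\mathcal{A}, hF_{2,\theta}^*)$ coincide, and $e_1 = {}^{h}e_2$ in $V_{F_{1,\theta}^*}^- = V_{hF_{2,\theta}^*}^-$ under the identification $i^-$ from Lemma \ref{assocident}.

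The first step is to reduce to the case $h = 1$: by Remark \ref{hremark}, we have $\mathcal{O}_{\theta}(F_{2,\theta}^*, e_2) = \mathcal{O}_{\theta}(hF_{2,\theta}^*, {}^{h}e_2)$, so after replacing $(F_{2,\theta}^*, e_2)$ by $(hF_{2,\theta}^*, {}^{h}e_2)$ we may assume that $F_{1,\theta}^*$ and $F_{2,\theta}^*$ are strongly $r$-associated — i.e., $A(\mathcal{A}, F_{1,\theta}^*) = A(\mathcal{A}, F_{2,\theta}^*)$ for the apartment $\mathcal{A}$ — and that $e_1 = e_2 =: e$ under $i^-$. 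The key point of the second step is then to produce a single nilpotent element that simultaneously lifts $e$ with respect to both facets. By Lemma \ref{assocident}, the map $\mathfrak{p}_{F_{1,\theta}^*} \cap \mathfrak{p}_{F_{2,\theta}^*} \to V_{F_{i,\theta}^*}^-$ is surjective (for each $i$), so there is a common lift $E \in \mathfrak{p}_{F_{1,\theta}^*} \cap \mathfrak{p}_{F_{2,\theta}^*}$ of $e$; moreover, since $e$ is degenerate and degeneracy is characterized intrinsically by Lemma \ref{deglemma}, we may (after adjusting $E$ by $H_x^+$-conjugacy if needed, or by invoking Lemma \ref{facettoorbit}(1) applied at a point $x_1 \in F_{1,\theta}^* \cap \mathcal{A}$) arrange $E = X$ to be part of a normal $\mathfrak{sl}_2(k)$-triple $\{Y,H,X\}$ with $X \in \mathfrak{p}_{F_{1,\theta}^*} \cap \mathfrak{p}_{F_{2,\theta}^*}$. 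Here Lemma \ref{assocident} is used again to ensure that $Y$ (the lift of $f$) can also be taken in $\mathfrak{p}_{F_{1,\theta}^*} \cap \mathfrak{p}_{F_{2,\theta}^*}$, so that $F_{i,\theta}^* \subset \mathcal{B}_{\theta}(Y,H,X)$ for $i = 1,2$.

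The third step applies Lemma \ref{facettoorbit}(2) to each facet with this one fixed triple $\{Y,H,X\}$: since $F_{1,\theta}^* \subset \mathcal{B}_{\theta}(Y,H,X)$ and the coset $e$ in $V_{F_{1,\theta}^*}^-$ is exactly $X + \mathfrak{p}_{F_{1,\theta}^*}^+$, the lemma gives that ${}^{H}X$ is the unique nilpotent $H$-orbit of minimal dimension meeting $e$, i.e. $\mathcal{O}_{\theta}(F_{1,\theta}^*, e) = {}^{H}X$; identically, $\mathcal{O}_{\theta}(F_{2,\theta}^*, e) = {}^{H}X$. Hence the two orbits agree and $\varphi$ factors through $I_r^n/\sim$. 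I expect the main obstacle to be the bookkeeping in the second step — namely, verifying that the common lift $E$ coming from Lemma \ref{assocident} can actually be completed to a normal $\mathfrak{sl}_2(k)$-triple whose $Y$ also lies in the intersection $\mathfrak{p}_{F_{1,\theta}^*} \cap \mathfrak{p}_{F_{2,\theta}^*}$; this requires combining the surjectivity in Lemma \ref{assocident} with Hypothesis \ref{hyp1}/Corollary \ref{sllift} at a suitable point and checking that the $H_x^+$-conjugation used in Corollary \ref{intersection}/Lemma \ref{walds1} does not move $X$ out of $\mathfrak{p}_{F_{2,\theta}^*}$ (which holds because $H_x^+ \subset H_{F_{2,\theta}^*}^+$ when $x$ is chosen in $F_{1,\theta}^* \cap F_{2,\theta}^* \cap \mathcal{A}$, using that the two facets share the same affine span and that $H_x^+$ fixes the relevant lattices). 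Everything else is a direct appeal to the lemmas already established.
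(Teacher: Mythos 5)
Your overall strategy is the right one (reduce to $h=1$ via Remark~\ref{hremark}, produce a single nilpotent lift $X$ that works for both facets, and apply Lemma~\ref{facettoorbit}(2)), and this matches the paper's plan. But the step you flag as "the main obstacle" is indeed unresolved, and one of the ways you propose to patch it is based on a false premise. You suggest choosing $x \in F_{1,\theta}^* \cap F_{2,\theta}^* \cap \mathcal{A}$ and using $H_x^+ \subset H_{F_{2,\theta}^*}^+$; but distinct generalized $(r,\theta)$-facets form a partition of $\mathcal{B}(H)$, so $F_{1,\theta}^* \cap F_{2,\theta}^*$ is empty whenever $F_{1,\theta}^* \neq F_{2,\theta}^*$. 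There is no point lying in both facets, so this route collapses. Your other suggestion — apply Lemma~\ref{facettoorbit}(1) at some $x_1 \in F_{1,\theta}^* \cap \mathcal{A}$ — gives a triple $\{Y,H,X\}$ with $X \in \mathfrak{p}_{F_{1,\theta}^*}$, but gives no reason for $X$ (or $Y$) to lie in $\mathfrak{p}_{F_{2,\theta}^*}$, so Lemma~\ref{facettoorbit}(2) cannot be applied to $F_{2,\theta}^*$ with that triple. The surjectivity in Lemma~\ref{assocident} does hand you \emph{some} lift $E \in \mathfrak{p}_{F_{1,\theta}^*} \cap \mathfrak{p}_{F_{2,\theta}^*}$, but an arbitrary such $E$ need not be completable to a normal $\mathfrak{sl}_2(k)$-triple: Corollary~\ref{sllift} requires the lift to lie in the weight-2 space $\mathfrak{p}_{x,r}(2)$ for a grading coming from an adapted cocharacter, and that gradings at $x_1$ and $x_2$ match is precisely what must be arranged.

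What the paper does to close this gap: after reducing to $h=1$, it takes $x_i \in C(F_{i,\theta}^*)$, completes $e_1$ to a normal $\mathfrak{sl}_2(\mathfrak{f})$-triple at $x_1$, and takes an adapted cocharacter $\overline{\lambda} \in \textbf{X}_*^{\mathfrak{f}}(\textsf{H}_{x_1})$. It then conjugates $\overline{\lambda}$ by some $h' \in H_{x_1}$ into the $\mathfrak{f}$-split torus $\textsf{S}$ coming from the torus $\textbf{S}$ attached to the common apartment $\mathcal{A}$, and crucially invokes Lemma~\ref{normassoc} to replace $h'$ by an element $h'' \in H_{x_1} \cap H_{x_2}$ acting the same way on $V_{F_{i,\theta}^*}^-$. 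Lifting ${}^{\overline{h'}}\overline{\lambda}$ to $\lambda \in \textbf{X}_*^k(\textbf{S})$ yields a grading $\mathfrak{g}(\cdot)$ that is simultaneously compatible with both lattices $\mathfrak{p}_{F_{i,\theta}^*}$, because both facets lie in $\mathcal{A}(\textbf{S},k)$ and the projection onto graded pieces preserves depth. Only then does Lemma~\ref{assocident} (combined with that depth-preserving projection) produce a lift $X \in \mathfrak{p}_{F_{1,\theta}^*}(2) \cap \mathfrak{p}_{F_{2,\theta}^*}(2)$ of ${}^{h''}e_i$, to which Corollary~\ref{sllift} applies at each $x_i$ separately. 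This is the step missing from your argument: you need the adapted cocharacter inside the torus of the shared apartment, adjusted via Lemma~\ref{normassoc} so that the conjugation is implemented by an element of $H_{x_1} \cap H_{x_2}$, before the common lift can be completed on both sides.
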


\begin{proof} Suppose $(F_{i,\theta}^*,e_i) \in I_r^n$ and $(F_{1,\theta}^*,e_1) \sim (F_{2,\theta}^*,e_2).$ Suppose $e_i \in V_{F_{i,\theta}^*}^-$ is nontrivial. Choose $x_i \in C(F_{i,\theta}^*).$ Since $(F_{1,\theta}^*,e_1) \sim (F_{2,\theta}^*,e_2),$ there exists an element $h \in H$ and an apartment $\mathcal{A} \subset \mathcal{B}(H)$ such that

\begin{enumerate}
\item $A(\mathcal{A},F_{1,\theta}^*) = A(\mathcal{A},hF_{2,\theta}^*) \neq \emptyset, $
\item $e_1 = {}^{h}e_2 \textup{ in } V_{F_{1,\theta}^*}^- = V_{hF_{2,\theta}^*}^-. $
\end{enumerate}
As a result of Remark \ref{hremark}, we assume now that $h=1.$ Let $\textbf{S}$ be the maximal $k$-split torus of $\textbf{H}$ corresponding to the apartment $\mathcal{A}.$ Let $\textsf{S}$ denote the maximal $\mathfrak{f}$-split torus inside $\textsf{H}_{x_1}$ corresponding to $\textbf{S}.$ By the previous lemma, we can complete $e_1$ to a normal $\mathfrak{sl}_2(\mathfrak{f})$-triple $(f,h,e_1) \in V_{x_1,-r}^- \times V_{x_1,0}^+ \times V_{x_1,r}^-.$ Suppose that $\overline{\lambda} \in \textbf{X}_{*}^{\mathfrak{f}}(\textsf{H}_{x_1})$ is adapted to this triple. Since any one-parameter subgroup is contained in some maximal $\mathfrak{f}$-split torus, there exists some $h' \in H_{x_1}$ such that ${}^{\overline{h'}}\overline{\lambda} \in \textbf{X}_{*}^{\mathfrak{f}}(\textsf{S}).$ By Lemma \ref{normassoc}, there is an element $h'' \in H_{x_1} \cap H_{x_2}$ such that its image in Aut$_{\mathfrak{f}}(V_{F_{i,\theta}^*}^-)$ coincides with the image of $h'.$ In other words, we have Ad$(h'')\vert_{V_{F_{i,\theta}^*}^-} = \textup{Ad}(h')\vert_{V_{F_{i,\theta}^*}^-}.$ In summary, we have

$$ {}^{h'}e_1 = {}^{h''}e_1 = {}^{h''}e_2 \textup{ in } V_{F_{1,\theta}^*}^- = V_{F_{1,\theta}^*}^- = V_{F_{2,\theta}^*}^-.$$

Now let $\lambda \in \textbf{X}_{*}^{k}(\textbf{S})$ be a lift of ${}^{\overline{h'}}\overline{\lambda}.$ As usual, we have a grading of $\mathfrak{g}$ under the action of $\lambda.$ As in the proof of Lemma \ref{walds1}, we also have the decomposition

$$ \mathfrak{g}_{F_{i,\theta}^*} = \bigoplus_{j}\mathfrak{g}_{F_{i,\theta}^*}(j).$$
By Lemma \ref{assocident}, there is a lift $X \in \mathfrak{p}_{F_{1,\theta}^*}(2) \cap \mathfrak{p}_{F_{2,\theta}^*}(2)$ of ${}^{h''}e_i.$ Note that $X$ lifts ${}^{h'}e_i$ and ${}^{h''^{-1}}X$ lifts $e_i.$ We apply Corollary \ref{sllift} and Lemma \ref{facettoorbit} to conclude

$$ \mathcal{O}_{\theta}(F_{i,\theta}^*, e_i) = \mathcal{O}_{\theta}(F_{i,\theta}^*, {}^{h''}e_i) = {}^{H}X. $$
\end{proof}

Now, in order to obtain a bijection between depth $r$ cosets and nilpotent $H$-orbits, we have to shrink $I_r^n/ \sim.$ We do this by restricting to \emph{noticed} orbits in $\mathfrak{p}_{F_{\theta}^*}/\mathfrak{p}_{F_{\theta}^*}^+.$ 

\begin{defn} Define $I_r^d \subset I_r^n$ to be those pairs $(F_{\theta}^*,e) \in I_r^n$ such that for any $x \in F_{\theta}^*,$ for any normal $\mathfrak{sl}_2(\mathfrak{f})$-triple $(f,h,e) \in V_{x,-r} \times V_{x,0} \times V_{x,r}$ completing $e,$ and for any normal $\mathfrak{sl}_2(k)$-triple $\{Y,H,X\}$ in $\mathfrak{g}$ which lifts $\{f,h,e\},$ we have that $F_{\theta}^* $ is a maximal generalized $(r,\theta)$-facet in $\mathcal{B}_{\theta}(Y,H,X).$ \end{defn}

In order to simplify notation below, we will refer to a generalized $(0,\theta)$-facet as a generalized $\theta$-facet.

\begin{remark} In the case when $r=0,$ we can give an alternate characterization of noticed nilpotent orbits inside a Lie algebra which is analogous to the definition in [\cite{noel}, Definition 2.1]. In particular, if a pair $(F_{\theta}^*,e)$ lies in $I_0^d,$ then $C_{V_{F_{\theta}^*}^+}(e)$ does not contain certain non-central semisimple elements. We give a precise formulation below. \end{remark}

Suppose $(F_{\theta}^*, e) \in I_0^n,$ and let $x \in F_{\theta}^*.$ In this paragraph, we let $\textsf{L}_x$ denote the Lie algebra of $\textsf{G}_x.$ Under some restrictions on the characteristic of $\mathfrak{f},$ it is shown in [\cite{carter}, Proposition 5.7.4] that if $\{ f, h, e\}$ is an $\mathfrak{sl}_2(\mathfrak{f})$-triple in $\textsf{L}_x$ completing $e,$ then $C_{\textsf{L}_x}(e)$ is a subalgebra of $\textsf{L}_x$ of the form $\mathfrak{c} \oplus \mathfrak{u},$ where $\mathfrak{c}$ is a reductive subalgebra which centralizes the triple $\{f,h,e\}$ and $\mathfrak{u}$ is a nilpotent ideal in $C_{\textsf{L}_x}(e).$ In particular, we may consider the $\mathfrak{f}$-rank of $\mathfrak{c}$ in the sense of [\cite{borel}, 21.1].  By [\cite{carter}, Proposition 5.9.3], if $\{f',h',e\}$ is another $\mathfrak{sl}_2(\mathfrak{f})$-triple in $\textsf{L}_x$ completing $e,$ and if $\mathfrak{c}'$ is defined relative to $\{f', h', e\},$ then $\mathfrak{c}$ and $\mathfrak{c}'$ are conjugate by an element of $C_{\textsf{G}_x(\mathfrak{f})}(e).$ In particular, the $\mathfrak{f}$-rank of $\mathfrak{c}$ and $\mathfrak{c}'$ are the same. 	
	
	In the following proposition, we take the $\mathfrak{f}$-rank of $C_{\textsf{L}_x}(e)$ to mean the $\mathfrak{f}$-rank of the centralizer $C_{\textsf{L}_x}( \textup{im } \phi),$ where $\phi : \mathfrak{sl}_2(\mathfrak{f}) \rightarrow V_{F_{\theta}^*}$ is an $\mathfrak{f}$-map whose image contains $e.$
	
\begin{prop} Let $(F_{\theta}^*,e) \in I_0^n.$ Suppose Hypothesis \ref{hyp1} holds, $\mathfrak{f}$ is finite, and $p > 3(j-1),$ where $j$ is the Coxeter number as defined in [\cite{carter}, Section 1.9]. The following are equivalent: 
\begin{enumerate} 
\item $C_{V_{F_{\theta}^*}^+}(e) \cap [V_{F_{\theta}^*}, V_{F_{\theta}^*}]$ has $\mathfrak{f}$-rank equal to zero.
\item $(F_{\theta}^*, e)$ lies in $I_0^d.$
\end{enumerate}
\end{prop}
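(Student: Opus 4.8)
The plan is to connect the condition that $F_\theta^*$ is a \emph{maximal} generalized $(r,\theta)$-facet in $\mathcal{B}_\theta(Y,H,X)$ — which, after taking $\theta$-fixed points and using Remark~\ref{Gassoc} and the $r=0$ setting, is essentially the statement that the associated one-parameter subgroup $\lambda$ (equivalently $h$) is central in the centralizer $C_{\textsf{G}_x}(e)$ modulo the nilpotent radical — with the rank condition on $C_{V_{F_\theta^*}^+}(e) \cap [V_{F_\theta^*}, V_{F_\theta^*}]$. The key translation is that $\mathcal{B}_\theta(Y,H,X)$ is, up to the center, built from the building of the reductive group $\textsf{C}_x := C_{\textsf{G}_x}(\operatorname{im}\phi)$ over $\mathfrak{f}$ together with the splitting data, and that $F_\theta^*$ is maximal in $\mathcal{B}_\theta(Y,H,X)$ precisely when the $\theta$-fixed part of $\textsf{C}_x$ is anisotropic modulo the center coming from $\lambda$. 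So the strategy is: (i) identify $C_{V_{F_\theta^*}^+}(e)$ with the Lie algebra of $(\textsf{C}_x)^\theta$ (or rather its reductive part $\mathfrak{c}^\theta$, using Hypothesis~\ref{hyp1} and the Carter decomposition $C_{\textsf{L}_x}(e) = \mathfrak{c}\oplus\mathfrak{u}$ together with Lemma~\ref{thstcoset}); (ii) show that intersecting with $[V_{F_\theta^*}, V_{F_\theta^*}]$ corresponds to passing to the semisimple part, i.e.\ quotienting out the central torus in $\mathfrak{c}^\theta$ spanned by the image of $d\lambda$; and (iii) show that the $\mathfrak{f}$-rank of what remains is zero if and only if $\lambda$ acts as a maximal split central torus, which is exactly the maximality of $F_\theta^*$.

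More concretely, I would proceed as follows. First, fix $x \in F_\theta^*$ and a normal $\mathfrak{sl}_2(\mathfrak{f})$-triple $\{f,h,e\}$ completing $e$, with adapted $\overline\lambda \in \textbf{X}_*^{\mathfrak{f}}(\textsf{H}_x)$; lift to a normal $\mathfrak{sl}_2(k)$-triple $\{Y,H,X\}$ and adapted $\lambda \in \textbf{X}_*^k(\textbf{H})$ via Corollary~\ref{sllift}, Lemma~\ref{facettoorbit}, and Remark~\ref{kempf}. Since $r=0$, the point $x+\tfrac{r}{2}\lambda = x$ already lies in $\mathcal{B}_\theta(Y,H,X)$, and by the analogue of [\cite{d}, Lemma 5.3.*] (the group-case description of $\mathcal{B}(Y,H,X)$ via $\mathcal{B}(C_{\textbf G}(\lambda))$), combined with Proposition~\ref{thstpa} and Proposition~\ref{reduction} to descend to $\theta$-fixed points, one gets that $\mathcal{B}_\theta(Y,H,X)$ is, near $x$, a translate of $\mathcal{B}(H\cap C_{\textbf G}(\lambda))$. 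Thus $F_\theta^*$ is maximal in $\mathcal{B}_\theta(Y,H,X)$ iff $F^*(x) \cap \mathcal{B}_\theta(Y,H,X)$ contains an open neighborhood in that sub-building, iff the reduced building of $(H\cap C_{\textbf G}(\lambda))$ modulo the split central torus generated by $\lambda$ is a point — i.e.\ iff that group is anisotropic modulo center. Passing to the special fiber at $x$ (the residue-field picture), anisotropy mod center of this $p$-adic group is equivalent to $\mathfrak{f}$-rank zero of the semisimple part of $(\mathfrak{c})^\theta$, where $\mathfrak{c} = C_{\textsf{L}_x}(\operatorname{im}\phi)_{\mathrm{red}}$; this is where the bound $p > 3(j-1)$ and finiteness of $\mathfrak{f}$ enter, via [\cite{carter}, Prop.\ 5.7.4, 5.9.3] to guarantee the Levi/reductive-centralizer structure and its $\mathfrak{f}$-conjugacy class.

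The remaining bookkeeping is to match $(\mathfrak{c})^\theta_{\mathrm{ss}}$ with $C_{V_{F_\theta^*}^+}(e) \cap [V_{F_\theta^*}, V_{F_\theta^*}]$. Using Hypothesis~\ref{hyp1}, $V_{F_\theta^*} = \overline{\mathfrak{g}}_x(0) = \textsf{L}_x$, and $C_{V_{F_\theta^*}}(e) = C_{\textsf{L}_x}(e) = \mathfrak{c}\oplus\mathfrak{u}$ with $\mathfrak{c}$ centralizing the whole triple, so $\mathfrak{c} = C_{\textsf{L}_x}(\operatorname{im}\phi)$; taking $\theta$-fixed points and invoking Lemma~\ref{thstcoset} (which respects the bracket and hence the derived subalgebra) gives $C_{V_{F_\theta^*}^+}(e) = \mathfrak{c}^\theta \oplus \mathfrak{u}^\theta$. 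Since $\mathfrak{u}$ is a nilpotent ideal it contributes no semisimple $\mathfrak{f}$-split torus, so the $\mathfrak{f}$-rank of $C_{V_{F_\theta^*}^+}(e) \cap [V_{F_\theta^*}, V_{F_\theta^*}]$ equals the $\mathfrak{f}$-rank of the semisimple part of $\mathfrak{c}^\theta$ (the intersection with the derived algebra kills the central torus, which in $\mathfrak{c}^\theta$ is exactly the span of $d\overline\lambda$ together with any further $\theta$-fixed central part, and one must check the latter is also killed — this uses that any noncentral semisimple $\theta$-fixed element of $C_{V_{F_\theta^*}^+}(e)$ lying in the center of $\mathfrak{c}$ would still yield a larger split torus in $\mathcal{B}_\theta(Y,H,X)$, contradicting maximality). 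Then (1) $\Leftrightarrow$ anisotropy mod center $\Leftrightarrow$ (2) by the previous paragraph.

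The main obstacle I expect is step (ii)--(iii): carefully establishing that $F_\theta^*$ maximal in $\mathcal{B}_\theta(Y,H,X)$ is genuinely equivalent to the $\mathfrak{f}$-rank of the \emph{semisimple} part of $\mathfrak{c}^\theta$ being zero, rather than merely necessary. The subtlety is that the building set $\mathcal{B}_\theta(Y,H,X)$ lives in $\mathcal{B}(H)$ while the relevant centralizer is computed in the reductive quotient $\textsf{G}_x$ over $\mathfrak{f}$; one direction (a split torus in $\mathfrak{c}^\theta_{\mathrm{ss}}$ produces a genuinely higher-dimensional $\theta$-stable facet inside $\mathcal{B}_\theta(Y,H,X)$, contradicting maximality) requires lifting an $\mathfrak{f}$-split torus to a $k$-split torus and running the $x + \epsilon\mu$ argument as in the proof of Lemma~\ref{deglemma} and Corollary~\ref{bsetnonempty}; the converse requires knowing that \emph{every} facet of $\mathcal{B}_\theta(Y,H,X)$ strictly containing $F_\theta^*$ arises this way, which is the content of the group-case structure theory in [\cite{d}, Section 5] applied to $C_{\textbf G}(\lambda)$ and then pushed through $\theta$-fixed points via Proposition~\ref{reduction} and Lemma~\ref{maxassoc}. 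Handling the enlarged-versus-reduced building distinction (the center of $H$, resp.\ of $C_{\textbf G}(\lambda)$) cleanly throughout is the fiddly part.
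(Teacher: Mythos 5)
There is a concrete error in step (ii) of your plan: you claim that the central torus in $\mathfrak{c}^\theta$ is ``spanned by the image of $d\lambda$,'' where $\mathfrak{c}$ is the reductive centralizer of the triple $\{f,h,e\}$ from Carter. But $d\overline\lambda$ has image $h$, and $h$ does \emph{not} centralize $e$ (since $[h,e]=2e\neq 0$ when $e$ is nontrivial), so $h\notin\mathfrak{c}$ at all. What the intersection with $[V_{F_\theta^*},V_{F_\theta^*}]$ actually removes is $Z(V_{F_\theta^*})^\theta$ --- the $\theta$-fixed center of the ambient reductive quotient --- which has nothing to do with $\lambda$. Relatedly, your description of $\mathcal{B}_\theta(Y,H,X)$ near $x$ as a translate of $\mathcal{B}(H\cap C_{\textbf G}(\lambda))$ is not right either: $\mathcal{B}(Y,H,X)$ imposes the constraint that both $X$ and $Y$ (hence the full image of $\varphi$) lie in the appropriate lattices, which is strictly stronger than centralizing $\lambda$ alone; the relevant centralizer is that of $\varphi(\textbf{SL}_2)$, not of $\lambda$. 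Both errors cluster around conflating the centralizer of $e$ (or of the triple) with the centralizer of $\lambda$, which are genuinely different objects.

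The paper's proof avoids these pitfalls entirely by arguing facet-by-facet rather than invoking a global identification of the building set. For $(1)\Rightarrow(2)$ (by contrapositive): a larger generalized $\theta$-facet $C_\theta^*\supset F_\theta^*$ in $\mathcal{B}_\theta(Y,H,X)$ produces, via the Levi structure $V_{C_\theta^*}=C_{V_{F_\theta^*}}(d\mu)$ for a suitable one-parameter subgroup $\mu\in\textbf{X}_*^{\mathfrak{f}}(\textsf{S})$ (using [\cite{helminck-wang}, Lemma 3.3]), a noncentral semisimple element of $V_{F_\theta^*}^+$ centralizing $e$, directly falsifying (1). For $(2)\Rightarrow(1)$: given a nonzero $\mathfrak{f}$-split semisimple $s\in C_{V_{F_\theta^*}^+}(e)\cap[V_{F_\theta^*},V_{F_\theta^*}]$, the paper lifts $s$ to a one-parameter subgroup over $k$ (via [\cite{kim}, Prop.\ 5.4] and [\cite{prasad-yu}, Theorem 1.9]), walks from $x$ towards $x+\mu$ to reach a strictly larger generalized $\theta$-facet $C_\theta^*$, and then completes $e$ to an $\mathfrak{sl}_2$-triple inside $V_{C_\theta^*}$ and lifts it to $\mathfrak{g}_{C_\theta^*}$, contradicting maximality. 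Your global anisotropy reformulation of maximality --- even if corrected to use $C_{\textbf G}(\operatorname{im}\varphi)$ and $Z(V_{F_\theta^*})^\theta$ in place of $C_{\textbf G}(\lambda)$ and $d\lambda$ --- would still require proving the facet-by-facet translation as a separate lemma, which is precisely the content the paper's direct argument supplies. As written, the proposal has a genuine gap and cannot compile into a proof without substantial repair at steps (ii) and (iii).
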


\begin{proof} (1) $\Rightarrow$ (2) : Suppose there is some $x \in F_{\theta}^*$ for which there exists a lift $\{Y,H,X\} \subset V_{x,0}$ of an $\mathfrak{sl}_2(\mathfrak{f})$-triple in $\overline{\mathfrak{g}}_x$ completing $e$ such that $F_{\theta}^*$ is not maximal in $\mathcal{B}_{\theta}(Y,H,X).$ Then, there exists some generalized $\theta$-facet $C_{\theta}^*$ in $\mathcal{B}_{\theta}(Y,H,X)$ containing $F_{\theta}^*$ in its closure. In particular, by [\cite{d}, Cor. 3.2.19], we may identify $V_{C_{\theta}^*}$ with a Levi subalgebra of the parabolic subalgebra $\mathfrak{g}_{C_{\theta}^*} / \mathfrak{g}_{F_{\theta}^*}^+,$ which are both $\theta$-stable by the proof of Proposition \ref{thstpa}. Let $\mathcal{A}$ be an apartment in $\mathcal{B}(H)$ containing $x,$ and let $\textbf{S}$ be the maximal $k$-split torus in $\textbf{H}$ corresponding to $\mathcal{A}.$ By [\cite{kim}, Proposition 5.4] and [\cite{prasad-yu}, Theorem 1.9], we can choose some $\theta$-stable maximal $k$-split torus $\textbf{S}'$ containing $\textbf{S}$ such that $\mathcal{A}(\textbf{S}, k) \subset \mathcal{A}(\textbf{S}', k).$ Let $\textsf{S}$ denote the maximal $\mathfrak{f}$-split torus in $\textsf{H}_{x}$ corresponding to $\textbf{S}.$ In the notation of [\cite{helminck-wang}, Lemma 3.3] (letting $k=\mathfrak{f}$ and $\lambda = \mu$), there exists some $\mu \in \textbf{X}_*^{\mathfrak{f}}(\textsf{S})$ such that $V_{C_{\theta}^*} = C_{V_{F_{\theta}^*}}(d\mu).$ In particular, since $C_{\theta}^* \subset \mathcal{B}_{\theta}(Y,H,X),$ we have $e \in V_{C_{\theta}^*}.$ Thus, there is a semisimple element in $V_{F_{\theta}^*}^+$ which centralizes $e$ and lies in the Lie algebra of some $\mathfrak{f}$-split torus in $\textsf{H}_x.$ Moreover, this element does not lie in the center of $V_{F_{\theta}^*}$ since $V_{C_{\theta}^*}$ is properly contained in $V_{F_{\theta}^*}.$ \newline

(2) $\Rightarrow$ (1): We prove the contrapositive. Let $x \in F_{\theta}^*.$ Suppose there is some element $s \in C_{V_{F_{\theta}^*}^+}(e) \cap [V_{F_{\theta}^*}, V_{F_{\theta}^*}],$ which lies in the Lie algebra of some $\mathfrak{f}$-split torus in $\textsf{H}_x.$ By [\cite{borel}, 8.15(d)], we may assume there is some one-parameter subgroup $\overline{\mu} \in \textbf{X}_*^{\mathfrak{f}}(\textsf{H}_x)$ for which $s \in \textup{im}(d\overline{\mu}).$ By [\cite{kim}, Proposition 3.4.1] or [\cite{helminck-wang}, Proposition 2.3], there is  some $\theta$-stable maximal $\mathfrak{f}$-split torus $\textsf{T}$ in $\textsf{G}_x$ such that $\overline{\mu} \in \textbf{X}_*^{\mathfrak{f}}(\textsf{T}).$ Since $\overline{\mu}$ is $\theta$-fixed, $\overline{\mu}$ determines a $\theta$-stable $\mathfrak{f}$-parabolic subgroup $\textsf{P}$ of $\textsf{G}_x$ which contains a $\theta$-stable $\mathfrak{f}$-Levi subgroup $\textsf{M}:=C_{\textsf{G}_x}(\overline{\mu}).$ Let $\mathfrak{m}$ denote the Lie algebra of $\textsf{M},$ which lies inside the Lie algebra of $\textsf{P}.$ By [\cite{kim}, Proposition 5.4], we can choose a $\theta$-stable maximal $k$-split torus $\textbf{T}$ such that $\mathcal{A}(\textbf{T}, k)$ contains $x$ and $\mathcal{A}(\textbf{T}, k)^{\theta}$ is an affine space whose dimension is equal to the dimension of a maximal $k$-split torus of $\textbf{H}.$ Using the embedding from [\cite{prasad-yu}, Theorem 1.9], we must have that $\mathcal{A}(\textbf{T}, k)^{\theta}$ is an apartment of $\mathcal{B}(H)$ containing $x.$ The image of $\textbf{T}$ in $\textsf{G}_x$ is maximal $\mathfrak{f}$-split torus $\textsf{S}$ for which $\textsf{S}^{\theta}$ is a maximal $\mathfrak{f}$-split torus in $\textsf{H}_x.$ Thus, by conjugacy of maximal $\mathfrak{f}$-split tori in $\textsf{H}_x,$ we may assume that the image of $\textbf{T}$ in $\textsf{G}_x$ is $\textsf{T}.$ Using the identification of $\textbf{X}_*^k(\textbf{T})$ with $\textbf{X}_*^{\mathfrak{f}}(\textsf{T}),$ let $\mu$ be a lift of $\overline{\mu}$ in $\textbf{X}_*^k(\textbf{T}) \otimes \mathbb{R},$ let $C$ be the first $\theta$-facet of $\mathcal{A}(\textbf{T}, k)$ encountered when moving from $x$ to $x + \mu,$ and let $C_{\theta}^*$ be the generalized $\theta$-facet containing $C.$  Then, the vector space of $\mathfrak{f}$-rational points of the Lie algebra of $\textsf{P}$ is of the form $\mathfrak{g}_{C_{\theta}^*} / \mathfrak{g}_{F_{\theta}^*}^+.$ By our choice of $\overline{\mu},$ we have $e \in \mathfrak{m} \subset \mathfrak{g}_{C_{\theta}^*} / \mathfrak{g}_{F_{\theta}^*}.$ 

	Let $e = X' + \mathfrak{g}_{F_{\theta}^*}^+,$ for some representative $X'$ which lies in $\mathfrak{g}_{C_{\theta}^*}.$ If $e$ is trivial, then we complete $e$ to the trivial $\mathfrak{sl}_2(\mathfrak{f})$-triple inside $V_{C_{\theta}^*}$ and let $\{Y,H,X\}$ be the trivial $\mathfrak{sl}_2(k)$-triple in $\mathfrak{g}_{C_{\theta}^*}$ lifting $\{f,h,e\}.$ This shows that $F_{\theta}^*$ cannot be maximal in $\mathcal{B}_{\theta}(Y,H,X).$ 
	
	Suppose $e$ is nontrivial. We want to show that there exists some lift $X$ of $e$ in $\mathfrak{g}_{C_{\theta}^*} \cap \mathcal{N}.$ Since $e$ is degenerate in $V_{F_{\theta}^*},$ using the same argument as that given in the last paragraph of Lemma \ref{deglemma}, there exists some $X \in \mathfrak{g}_{C_{\theta}^*} \cap \mathcal{N}$ whose image in $\mathfrak{g}_{C_{\theta}^*} / \mathfrak{g}_{F_{\theta}^*}^+$ is $e.$ Since $e$ is a nonzero element of $\mathfrak{m},$ it does not lie in the kernel $\mathfrak{g}_{C_{\theta}^*}^+ / \mathfrak{g}_{F_{\theta}^*}^+$ of the projection from $\mathfrak{g}_{C_{\theta}^*} / \mathfrak{g}_{F_{\theta}^*}^+$ onto $\mathfrak{m}.$ In particular, we have that $X \notin \mathfrak{g}_{C_{\theta}^*}^+.$ Thus, by Hypothesis \ref{hyp1}, there exists some $\mathfrak{sl}_2(\mathfrak{f})$-triple $\{ f, h, e\}$ in $V_{C_{\theta}^*}$ completing $e,$ and by Corollary \ref{sllift}, there exists a lift $\{ Y, H, X\}$ of $\{ f, h, e\}$ which lies in $\mathfrak{g}_{C_{\theta}^*}.$ This shows that $F_{\theta}^*$ cannot be maximal in $\mathcal{B}_{\theta}(Y,H,X).$ 

\end{proof}

\begin{lemma} \label{distinguished} Suppose all hypotheses in Section 5 hold. If $(F_{\theta}^*,e) \in I_r^n$ and $e$ is nontrivial, then $(F_{\theta}^*,e) \in I_r^d$ if and only if there exists some $x \in F_{\theta}^*,$ a normal $\mathfrak{sl}_2(\mathfrak{f})$-triple $(f,h,e) \in V_{x,-r}^- \times V_{x,0}^+ \times V_{x,r}^-$ completing $e,$ and a normal $\mathfrak{sl}_2(k)$-triple $\{Y,H,X\}$ in $\mathfrak{g}$ lifting $\{f,h,e\}$ such that $F_{\theta}^*$ is a maximal generalized $(r,\theta)$-facet of $\mathcal{B}_{\theta}(Y,H,X).$

\end{lemma}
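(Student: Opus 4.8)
The forward implication is trivial: if $(F_{\theta}^*,e) \in I_r^d$, then by definition \emph{every} choice of $x$, of normal $\mathfrak{sl}_2(\mathfrak{f})$-triple completing $e$, and of normal $\mathfrak{sl}_2(k)$-triple lifting it, yields $F_{\theta}^*$ maximal in the corresponding $\mathcal{B}_{\theta}(Y,H,X)$; in particular at least one such choice exists by Lemma \ref{facettoorbit}(1), so the stated condition holds. The content is in the converse: we must promote the existence of \emph{one} good triple to maximality in \emph{all} buildings $\mathcal{B}_{\theta}(Y',H',X')$ arising from arbitrary normal $\mathfrak{sl}_2(k)$-triples lifting arbitrary normal $\mathfrak{sl}_2(\mathfrak{f})$-triples completing $e$ (at arbitrary $x \in F_{\theta}^*$).

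\textbf{Plan for the converse.} Suppose some $x_0 \in F_{\theta}^*$, some normal $\mathfrak{sl}_2(\mathfrak{f})$-triple $(f_0,h_0,e)$ in $V_{x_0,-r}^- \times V_{x_0,0}^+ \times V_{x_0,r}^-$, and some normal $\mathfrak{sl}_2(k)$-triple $\{Y_0,H_0,X_0\}$ lifting it, with $F_{\theta}^*$ maximal in $\mathcal{B}_{\theta}(Y_0,H_0,X_0)$; and let $x \in F_{\theta}^*$, $(f,h,e) \subset V_{x,-r}^- \times V_{x,0}^+ \times V_{x,r}^-$, $\{Y,H,X\}$ be any other such data. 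The first step is to pass to a common point: since $x, x_0 \in F_{\theta}^*$ and (by Lemma \ref{thfacetpa} / Remark \ref{patr}) $H_{x_0}$-type arguments move points around within $F_{\theta}^*$, and since $\mathcal{O}_\theta(F_{\theta}^*,e)$ is independent of all choices by Lemma \ref{welldef} (and Remark \ref{hremark}), I would first reduce to comparing the two sets of data \emph{at the same point} $x$. Concretely, $X_0$ is then a lift of $e$ at $x$ as well (after an $H_{F_{\theta}^*}$-translate), and so is $X$. The second step is to invoke the conjugacy results: by Hypothesis \ref{hyp1}/Remark \ref{normal} the two $\mathfrak{sl}_2(\mathfrak{f})$-triples completing $e$ at $x$ are conjugate by an element $\bar g \in C_{\textsf{H}_x(\mathfrak{f})}(e)$ (Carter, Proposition 5.9.3 as used in the noticed proposition), hence by an element $g \in H_x$ normalizing the relevant lattices; and by Hypothesis \ref{hyp4} (the Kostant-type Proposition) any two normal $\mathfrak{sl}_2(k)$-triples completing a \emph{fixed} $X$ are $C_H(X)$-conjugate. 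Combining, the two triples $\{Y_0,H_0,X_0\}$ and $\{Y,H,X\}$ at $x$ are conjugate by some $h^* \in H_x$ (after also conjugating $X_0$ to $X$, again using that both lift $e$ at $x$ and Lemma \ref{assocident}/\cite{d},3.5.1 to choose a common lift, then the $C_H$-statement).

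\textbf{Concluding step.} Once the two triples are related by $h^* \in H_x$ with $h^* x = x$, the building sets satisfy $\mathcal{B}_{\theta}(Y,H,X) = h^* \cdot \mathcal{B}_{\theta}(Y_0,H_0,X_0)$ (the defining conditions $X \in \mathfrak{g}_{z,r}$, $Y \in \mathfrak{g}_{z,-r}$ transform equivariantly, and $\theta$-fixedness is preserved since $h^* \in H$), and $h^*$ fixes $x \in F_{\theta}^*$ hence stabilizes $F_{\theta}^*$ by Lemma \ref{normpa}. Therefore $F_{\theta}^*$ is maximal in $\mathcal{B}_{\theta}(Y_0,H_0,X_0)$ if and only if it is maximal in $\mathcal{B}_{\theta}(Y,H,X)$: maximality is transported by the $H$-action, which preserves the poset of generalized $(r,\theta)$-facets and their dimensions. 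Since $x$, $(f,h,e)$, and $\{Y,H,X\}$ were arbitrary, this gives $(F_{\theta}^*,e) \in I_r^d$.

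\textbf{Main obstacle.} The delicate point is the second step — assembling a \emph{single} element of $H_x$ that simultaneously conjugates one normal $\mathfrak{sl}_2(k)$-triple to the other while fixing the point $x$ (equivalently, preserving $\mathfrak{g}_{x,r}$ and $\mathfrak{g}_{x,r^+}$). One has the $\mathfrak{f}$-level conjugacy inside $\textsf{H}_x(\mathfrak{f})$ for the residual $\mathfrak{sl}_2(\mathfrak{f})$-triples and the $k$-level conjugacy inside $C_H(X)$ for the lifts, but lifting the $\mathfrak{f}$-conjugator to $H_x$ and checking the resulting $k$-triples still complete a common $X$ (so that the Kostant-type proposition applies) requires care with Corollary \ref{sllift}, Lemma \ref{sllemma}, and the grading machinery of Section 5.2; one must verify the lifts can be chosen in the correct weight space so that ad$(X)^2$-injectivity forces agreement, exactly as in the proof of Corollary \ref{sllift}. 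Everything else is bookkeeping with the equivariance of the Moy-Prasad data under $H$.
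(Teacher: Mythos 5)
Your reduction steps match the paper's: pass to a common point $x \in F_{\theta}^*$ (since the lattices $\mathfrak{g}_{x,r}, \mathfrak{g}_{x,r^+}$ depend only on $F_{\theta}^*$, the triple $\{Y',H',X'\}$ automatically lives in $\mathfrak{p}_{x,-r} \times \mathfrak{h}_{x,0} \times \mathfrak{p}_{x,r}$; no translating is needed), then conjugate by an element of $H_x^+$ to force $X'=X$ via Corollary~\ref{intersection}, then invoke the final claim of Hypothesis~\ref{hyp4} to obtain $h \in C_H(X)$ conjugating $\{Y,H,X\}$ to $\{Y',H',X'\}$.

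The gap is exactly where you put your finger on it, and it is not just delicate but unresolved: your concluding step requires the conjugator $h^*$ to lie in $H_x$, so that it stabilizes $F_{\theta}^*$ and transports ``maximal in $\mathcal{B}_{\theta}(Y_0,H_0,X_0)$'' directly to ``maximal in $\mathcal{B}_{\theta}(Y,H,X)$.'' Hypothesis~\ref{hyp4} only delivers $h \in C_H(X)$, not $h \in H_x \cap C_H(X)$, and your suggestion of using the $\mathfrak{f}$-level conjugacy (Carter 5.9.3) together with a lift to $H_x$ does not obviously produce an element that simultaneously handles the $\mathfrak{sl}_2(k)$-triples at the $k$-level. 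The paper's proof does not need this. It observes that since $h$ is an isometry with $h^{-1}\mathcal{B}_{\theta}(Y',H',X) = \mathcal{B}_{\theta}(Y,H,X)$, these two sets have the same maximal-facet dimension (Lemma~\ref{maxassoc} guarantees all maximal facets inside a fixed $\mathcal{B}_{\theta}(Y,H,X)$ are strongly $r$-associated, hence equidimensional). If $F_{\theta}^*$ were not maximal in $\mathcal{B}_{\theta}(Y',H',X)$, its dimension would be strictly smaller than that common maximal dimension, contradicting the hypothesis that $F_{\theta}^*$ is maximal in $\mathcal{B}_{\theta}(Y,H,X)$. No stabilization of $F_{\theta}^*$ by $h$ is required — only that $H$ preserves the dimension of facets. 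You should replace your ``concluding step'' with this dimension-counting argument; it eliminates the obstacle entirely rather than confronting it.
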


\begin{proof} ``$\Rightarrow:$" This is a consequence of the definition. \newline
``$\Leftarrow:$" Let $x \in F_{\theta}^*, \{f,h,e\},$ and $\{Y,H,X\}$ be data satisfying the hypotheses of the lemma such that $F_{\theta}^*$ is a maximal generalized $(r,\theta)$-facet of $\mathcal{B}_{\theta}(Y,H,X).$
	Suppose there is some $x' \in F_{\theta}^*,$ a normal $\mathfrak{sl}_2(\mathfrak{f})$-triple $(f',h',e) \in V_{x',-r}^- \times V_{x',0}^+ \times V_{x',r}^-$ completing $e,$ and a normal $\mathfrak{sl}_2(k)$-triple $\{Y',H',X'\}$ in $\mathfrak{g}$ lifting $\{f',h',e\}.$ We argue by contradiction, and suppose $F_{\theta}^*$ is not maximal in $\mathcal{B}_{\theta}(Y',H',X').$ Since $x,x' \in F_{\theta}^*,$ we have $\mathfrak{g}_{x',-r} = \mathfrak{g}_{x,-r}$ and $\mathfrak{g}_{x,r} = \mathfrak{g}_{x',r}.$ Since $[\mathfrak{g}_{x',-r}, \mathfrak{g}_{x',r}] \subset \mathfrak{g}_{x,0},$ the lift $(Y',H',X')$ lies in $\mathfrak{p}_{x,-r} \times \mathfrak{h}_{x,0} \times \mathfrak{p}_{x,r},$ so we may assume $x'=x.$ By Lemma \ref{facettoorbit}, we have that ${}^{H}X = \mathcal{O}_{\theta}(F_{\theta}^*,e) = {}^{H}X'.$ Combining this with Corollary \ref{intersection}, $X$ is $H_x^+$-conjugate to $X'.$ Thus, by replacing $\{Y'H',X'\}$ with some $H_x^+$-conjugate, we may assume $X=X'.$ By the last line of Hypothesis \ref{hyp4}, there exists some element $h \in C_{H}(X)$ with ${}^{h}Y=Y'$ and ${}^{h}H=H'.$ In particular, $Y \in {}^{h^{-1}}\mathfrak{g}_{x,-r} = \mathfrak{g}_{h^{-1}x,-r}$ and $X \in {}^{h^{-1}}\mathfrak{g}_{x,r} = \mathfrak{g}_{h^{-1}x,r},$ so $h^{-1}\mathcal{B}_{\theta}(Y',H',X) = \mathcal{B}_{\theta}(Y,H,X).$ This shows that $\mathcal{B}_{\theta}(Y',H',X)$ and $\mathcal{B}_{\theta}(Y,H,X)$ have the same dimension. However, we assumed that $F_{\theta}^*$ was not maximal in $\mathcal{B}(Y',H',X),$ so $h^{-1}F_{\theta}^*$ is not maximal in $\mathcal{B}_{\theta}(Y,H,X),$ which is a contradiction since the action of $H$ preserves dimension.

\end{proof}

\begin{remark} Suppose $(F_{1,\theta}^* , e_1) \sim (F_{2,\theta}^*, e_2).$ As a consequence of the proof of Lemma \ref{distinguished}, we have $(F_{1,\theta}^*, e_1) \in I_r^d$ if and only if $(F_{2,\theta}^*,e_2) \in I_r^d.$
\end{remark}

\begin{thm} Suppose all hypotheses of Section 5 hold. There is a bijective correspondence between $I_r^d/\sim$ and $\mathcal{O}_{\theta}(0)$ given by the map that sends $(F_{\theta}^*,e)$ to $\mathcal{O}_{\theta}(F_{\theta}^*,e).$\end{thm}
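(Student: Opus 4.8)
The strategy is to exploit the corresponding result in the group case $[\cite{d}, \textup{Theorem 5.6.1}]$ together with the reductions established in Sections 4 and 5, treating the symmetric space data as living inside the group data via the Prasad--Yu embedding. Concretely, I would verify that the map $(F_{\theta}^*,e) \mapsto \mathcal{O}_{\theta}(F_{\theta}^*,e)$ is (i) well-defined on $I_r^d/\!\sim$, (ii) surjective onto $\mathcal{O}_{\theta}(0)$, and (iii) injective, handling the trivial orbit as a separate easy case throughout (a pair $(F_{\theta}^*,0)$ maps to the zero orbit, and one checks directly which such pairs lie in $I_r^d$).

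For well-definedness, Lemma \ref{welldef} already shows the map descends to $I_r^n/\!\sim$, and the remark following Lemma \ref{distinguished} shows $I_r^d$ is a union of $\sim$-classes, so (i) is immediate. For surjectivity, fix a nonzero nilpotent $H$-orbit $\mathcal{O}_{\theta} = {}^{H}X$ and a normal $\mathfrak{sl}_2(k)$-triple $\{Y,H,X\}$ completing $X$ (Hypothesis \ref{hyp4}); by Corollary \ref{bsetnonempty} there is $x \in \mathcal{B}_{\theta}(Y,H,X)$, hence a generalized $(r,\theta)$-facet $F_{\theta}^*$ through $x$ lying in $\mathcal{B}_{\theta}(Y,H,X)$. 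Enlarging $F_{\theta}^*$ to a \emph{maximal} generalized $(r,\theta)$-facet $\tilde F_{\theta}^*$ of $\mathcal{B}_{\theta}(Y,H,X)$ — using convexity of $\mathcal{B}_{\theta}(Y,H,X)$ — and letting $\tilde e$ be the image of $X$ in $V_{\tilde F_{\theta}^*}^-$, I get a pair with $(\tilde F_{\theta}^*, \tilde e) \in I_r^d$ (by Lemma \ref{distinguished}) and $\mathcal{O}_{\theta}(\tilde F_{\theta}^*,\tilde e) = {}^{H}X = \mathcal{O}_{\theta}$ (by Lemma \ref{facettoorbit}(2)); here one must check $\tilde e \neq 0$, which holds since $X \in \mathfrak{p}_{\tilde F_{\theta}^*}\setminus \mathfrak{p}_{\tilde F_{\theta}^*}^+$ (were $X$ in the smaller lattice, $\tilde F_{\theta}^*$ would fail to be maximal by the argument in Corollary \ref{sllift}/Hypothesis \ref{hyp1}).

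For injectivity, suppose $(F_{1,\theta}^*,e_1),(F_{2,\theta}^*,e_2) \in I_r^d$ with $\mathcal{O}_{\theta}(F_{1,\theta}^*,e_1) = \mathcal{O}_{\theta}(F_{2,\theta}^*,e_2) = {}^{H}X$. Choose $x_i \in F_{i,\theta}^*$, complete each $e_i$ to a normal $\mathfrak{sl}_2(\mathfrak{f})$-triple and lift (Lemma \ref{facettoorbit}(1)) to normal $\mathfrak{sl}_2(k)$-triples $\{Y_i,H_i,X_i\}$ with $X_i$ a lift of $e_i$; by Lemma \ref{facettoorbit}(2), ${}^{H}X_i = {}^{H}X$, so after translating by an element of $H$ (permissible by Remark \ref{hremark}) we may take $X_1 = X_2 = X$, and then by Hypothesis \ref{hyp4} (last line) and conjugating one triple by an element of $C_H(X)$ we may take $Y_1 = Y_2 = Y$ and $H_1 = H_2 = H$. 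Now both $F_{1,\theta}^*$ and $F_{2,\theta}^*$ are maximal generalized $(r,\theta)$-facets of the \emph{same} set $\mathcal{B}_{\theta}(Y,H,X)$ (using Lemma \ref{distinguished}), so by Lemma \ref{maxassoc} they are strongly $r$-associated; picking an apartment $\mathcal{A}$ meeting both, Lemma \ref{normassoc} and Corollary \ref{intersection} let me adjust by $H_{x_1}\cap H_{x_2}$ so that $X$ represents both $e_1$ and $e_2$ under the identification $i^-$ of $V_{F_{1,\theta}^*}^-$ with $V_{F_{2,\theta}^*}^-$. This yields $(F_{1,\theta}^*,e_1)\sim(F_{2,\theta}^*,e_2)$. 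The main obstacle is bookkeeping in this last step: carefully tracking the various $H_x$- and $C_H(X)$-conjugations so that a single element $X$ simultaneously lifts $e_1$ and $e_2$ in the identified vector space, which requires combining Corollary \ref{intersection}, Lemma \ref{normassoc}, and the identification maps $i^{\pm}$ without circularity — exactly the kind of argument run in the transitivity proof of Lemma \ref{equivI}, which I would mirror.
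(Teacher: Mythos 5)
Your proof follows the paper's argument essentially step for step: well-definedness via Lemma \ref{welldef} and the remark after Lemma \ref{distinguished}; surjectivity by completing $X$ to a normal triple, choosing a maximal generalized $(r,\theta)$-facet in $\mathcal{B}_\theta(Y,H,X)$, and invoking Lemma \ref{facettoorbit}(2); injectivity by lifting both cosets, using Hypothesis \ref{hyp4} to conjugate the two $\mathfrak{sl}_2(k)$-triples into agreement, and then using Lemma \ref{maxassoc} to deduce strong $r$-association. You also helpfully flag the need to check $\tilde e \neq 0$ and that $(\tilde F_\theta^*,\tilde e)$ actually lands in $I_r^d$, which the paper treats as implicit.

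One remark on the injectivity step you were worried about. After applying Hypothesis \ref{hyp4} to obtain a single $h\in H$ with $\{Y_1,H_1,X_1\}=\{{}^hY_2,{}^hH_2,{}^hX_2\}$, no further adjustment by $H_{x_1}\cap H_{x_2}$ (and no appeal to Lemma \ref{normassoc} or Corollary \ref{intersection}) is needed: by construction $X_1$ lifts $e_1$ in $V_{F_{1,\theta}^*}^-$, and since $X_1={}^hX_2$, it also lifts ${}^he_2$ in $V_{hF_{2,\theta}^*}^-$. Because $F_{1,\theta}^*$ and $hF_{2,\theta}^*$ are strongly $r$-associated (Lemma \ref{maxassoc}), the single element $X_1\in\mathfrak{p}_{F_{1,\theta}^*}\cap\mathfrak{p}_{hF_{2,\theta}^*}$ already witnesses $e_1={}^he_2$ under the identification $i^-$ of Lemma \ref{assocident}, and this is precisely the form of the relation in Definition \ref{anequiv} — you do not need $e_1=e_2$ on the nose, only $e_1={}^he_2$ for the $h$ you already have. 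Tracking this makes the ``bookkeeping'' you were uneasy about collapse to a one-line observation, which is what the paper does.
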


\begin{proof} We have already shown that the map which sends $(F_{\theta}^*,e)$ to $\mathcal{O}_{\theta}(F_{\theta}^*,e)$ is well-defined in Lemma \ref{welldef}. We will first show that the map restricted to $I_r^d$ is injective. Suppose $\mathcal{O}_{\theta}(F_{1,\theta}^*,e_1) = \mathcal{O}_{\theta}(F_{2,\theta}^*,e_2).$ Recall from Corollary \ref{Cnonempty} that for $F_{\theta}^* \in \mathcal{F}(r),$ the set $C(F_{\theta}^*)$ is nonempty. Choose $x_i \in C(F_{i,\theta}^*)$ and complete $e_i$ to an $\mathfrak{sl}_2(\mathfrak{f})$-triple $(f_i,h_i,e_i)$ in $V_{F_{i,\theta}^*}^- \times V_{F_{i,\theta}^*}^+ \times V_{F_{i,\theta}^*}^-.$ By Corollary \ref{sllift}, we can lift these $\mathfrak{sl}_2(\mathfrak{f})$-triples to $\mathfrak{sl}_2(k)$-triples $\{Y_i,H_i,X_i\}$ in $\mathfrak{g}.$ By Lemma \ref{facettoorbit}, $\mathcal{O}_{\theta}(F_{i,\theta}^*,e_i) = {}^{H}X_i.$ Since $\mathcal{O}_{\theta}(F_{1,\theta}^*,e_1) = \mathcal{O}_{\theta}(F_{2,\theta}^*,e_2),$ we thus have ${}^{H}X_1 = {}^{H}X_2,$ so by Hypothesis \ref{hyp4}, there exists an $h \in H$ such that $\{Y_1,H_1,X_1\} = \{{}^{h}Y_2,{}^{h}H_2,{}^{h}X_2\}.$ By Lemma \ref{facettoorbit}, we have $F_{1,\theta}^* \subset \mathcal{B}_{\theta}(Y_1,H_1,X_1),$ and $hF_{2,\theta}^* \subset \mathcal{B}_{\theta}(Y_1,H_1,X_1),$ so since $(F_{i,\theta}^*,e_i) \in I_r^d,$ $F_{1,\theta}^*$ and $hF_{2,\theta}^*$ are maximal generalized $(r,\theta)$-facets of $\mathcal{B}_{\theta}(Y_1,H_1,X_1).$ Thus, by Lemma \ref{maxassoc}, $F_{1,\theta}^*$ and $hF_{2,\theta}^*$ are strongly $r$-associated. In particular, there exists an apartment $\mathcal{A} \subset \mathcal{B}(H)$ such that \begin{eqnarray*}
 A(\mathcal{A}, F_{1,\theta}^*) = A(\mathcal{A}, hF_{2,\theta}^*) \neq \emptyset. \\
 \end{eqnarray*}

As $X_1$ has image $e_1$ in $V_{F_{1,\theta}^*}$ and $X_1$ has image ${}^{h}e_2$ in $V_{hF_{2,\theta}^*},$ the element $X_1$ lies in $\mathfrak{p}_{F_{1,\theta}^*} \cap \mathfrak{p}_{hF_{2,\theta}^*},$ and

$$e_1 = {}^{h}e_2 \textup{ in } V_{F_{1,\theta}^*} = V_{hF_{2,\theta}^*}$$ 
Thus, in particular, $(F_{1,\theta}^*,e_1) \sim (F_{2,\theta}^*,e_2),$ i.e the map is injective. 
	
	For surjectivity, first let $\{ 0 \}$ be the trivial orbit. Let $F_{\theta}^*$ be an open generalized $(r,\theta)$-facet, and let $e$ be the trivial element of $V_{F_{\theta}^*}^-.$ Then, $(F_{\theta}^*,e)$ maps to $\{0\}.$ Now, suppose $\mathcal{O}$ is nontrivial and let $X \in \mathcal{O}.$ Complete $X$ to an $\mathfrak{sl}_2(k)$-triple $\{Y,H,X\}$ and choose a maximal generalized $(r,\theta)$-facet $F_{\theta}^* \subset \mathcal{B}_{\theta}(Y,H,X).$ Let $e$ denote the image of $X$ in $V_{F_{\theta}^*}.$ Then by Lemma \ref{facettoorbit} (2), we have $\mathcal{O}_{\theta}(F_{\theta}^*,e) = {}^{H}X.$

\end{proof}

\section{Appendix A}

\subsection{Calculation of nilpotent orbits associated to the pair $(\textbf{SL}_3, \textbf{PGL}_2).$}

Recall that we have an involution $\theta: \textbf{SL}_3 \rightarrow \textbf{SL}_3$ defined by $A \mapsto J(A^t)^{-1}J,$ where $ J= \left(\begin{array}{ccc}
0 & 0 & 1 \\
0 & 1 & 0   \\
1 & 0 & 0 
\end{array}\right). $ The Lie algebra of $\textbf{SL}_3$, denoted $\mathfrak{sl}_3,$ has a decomposition on the level of $k$-points which is given by

$$  \mathfrak{sl}_3(k) =  \left(\begin{array}{ccc}
a & b & 0 \\
c & 0 & -b   \\
0 & -c & -a 
\end{array}\right) \oplus  \left(\begin{array}{ccc}
x & y & s \\
z & -2x & y   \\
u & z & x 
\end{array}\right), $$
with $a,b,c,s,u,x,y,z \in k.$
Let $X =  \left(\begin{array}{ccc}
x & y & s \\
z & -2x & y   \\
u & z & x 
\end{array}\right)$ be nilpotent. We would like to find a nicer representative for $X$ up to $H = \textbf{PGL}_2(k)$-conjugacy. 

	By Remark \ref{normal}, since $X$ is nilpotent, there is a one-parameter subgroup $\lambda: \textbf{GL}_1 \rightarrow \textbf{H}$ such that ${}^{\lambda(t)}X = t^2X.$ On the other hand, since the image of $\lambda$ lies in a maximal $k$-split torus of $\textbf{H},$ there is some $h \in \textbf{H}$ and $n \in \mathbb{Z}$ such that $({}^{h}\lambda)(t) =  \left(\begin{array}{ccc}
t^n & 0 & 0 \\
0 & 1 & 0   \\
0 & 0 & t^{-n} 
\end{array}\right).$ Thus, we have $ {}^{{}^{({}^{h}\lambda)(t)}}({}^{h}X) = t^2({}^{h}X). $
Letting $Z = {}^{h}X =  \left(\begin{array}{ccc}
u & v & w \\
r & -2u & v   \\
s & r & u 
\end{array}\right) ,$ we have 
$$  \left(\begin{array}{ccc}
u & t^nv & t^{2n}w \\
t^{-n}r & -2u & t^nv   \\
t^{-2n}s & t^{-n}r & u 
\end{array}\right) =  \left(\begin{array}{ccc}
t^2u & t^2v & t^2w \\
t^2r & -2ut^2 & t^2v   \\
t^2s & t^2r & t^2u 
\end{array}\right).$$ Assume $X$ is nontrivial. If $v \neq 0,$ we must have $n=2,$ but this forces all other entries to be zero. If $w \neq 0,$ then, $n$ must be equal to 1, but this forces all other entries to be zero. Thus, the nilpotent $H$-conjugacy classes in the $(-1)$-eigenspace of $\mathfrak{sl}_3(k)$ under $d\theta$ lie in one of the three subsets of $\mathfrak{sl}_3(k):$

$$ \{ \textup{triv}\}, \left\{  \left(\begin{array}{ccc}
0 & v & 0 \\
0 & 0 & v   \\
0 & 0 & 0 
\end{array}\right)  \mid v \in \mathbb{Q}_p^{\times} \right\}, \textup{and} \left\{ \left(\begin{array}{ccc}
0 & 0 & w \\
0 & 0 & 0   \\
0 & 0 & 0 
\end{array}\right) \mid w \in \mathbb{Q}_p^{\times} \right\},$$
where $\textup{triv}$ denotes the trivial orbit.

We note that all matrices of the form  $\left(\begin{array}{ccc}
0 & v & 0 \\
0 & 0 & v   \\
0 & 0 & 0 
\end{array}\right)$ are $\textbf{H}(k)$-conjugate by the diagonal maximal $\mathbb{Q}_p$-split torus. Matrices of the form $\left(\begin{array}{ccc}
0 & 0 & w \\
0 & 0 & 0   \\
0 & 0 & 0 
\end{array}\right)$ split up into four conjugacy classes which are parametrized by $(\mathbb{Q}_p)^{\times}/(\mathbb{Q}_p^{\times})^2.$

\end{document}